\theoremstyle{plain}
\newtheorem{theorem}{Theorem}[section]
\newtheorem*{theoremA}{Main Theorem}
\newtheorem{lemma}[theorem]{Lemma}
\newtheorem{proposition}[theorem]{Proposition}
\newtheorem{proposition-definition}[theorem]{Proposition-Definition}
\newtheorem{corollary}[theorem]{Corollary}
\newtheorem{claim}[theorem]{Claim}
\theoremstyle{definition}
\newtheorem{definition}{Definition}[section]
\theoremstyle{remark}
\newtheorem{remark}[theorem]{Remark}
\newenvironment{Steps}{\setcounter{step}{0}}{}
\newcounter{step}
\newcommand{\Proofstep}{\par\refstepcounter{step}Step~\thestep.\space\ignorespaces}
\newcommand{\ZZ}{\mathbb{Z}}
\newcommand{\QQ}{\mathbb{Q}}
\newcommand{\RR}{\mathbb{R}}
\newcommand{\CC}{\mathbb{C}}
\newcommand{\FF}{\mathbb{F}}
\newcommand{\KK}{\mathbb{K}}
\newcommand{\OO}{\mathcal{O}}
 \newcommand{\esssup}{\mathop{\operator@font ess.sup}\displaylimits}
 \newcommand{\essinf}{\mathop{\operator@font ess.inf}\displaylimits}
 \def\BIG#1{%
  {\hbox{$\left#1\vbox to20.5\p@{}\right.\n@space$}}}
 \let\@@pmod\pmod 
 \DeclareRobustCommand{\pmod}{\@ifstar\@pmods\@@pmod} 
 \def\@pmods#1{\mkern4mu({\operator@font mod}\mkern 6mu#1)} 
\newcommand{\Image}{\mathop{\mathrm{Image}}\nolimits}
\renewcommand{\leq}{\leqslant}
\renewcommand{\geq}{\geqslant}
\DeclareMathOperator{\Coker}{Coker}
\DeclareMathOperator{\aHz}{\widehat{\Gamma}^{\rm ss}}
\DeclareMathOperator{\aHzsm}{\widehat{\Gamma}^{\rm s}}
\DeclareMathOperator{\aHzf}{\widehat{\Gamma}^{\rm f}}
\DeclareMathOperator{\zdiv}{div}
\DeclareMathOperator{\Spec}{Spec}
\DeclareMathOperator{\Supp}{Supp}
\DeclareMathOperator{\Rat}{Rat}
\DeclareMathOperator{\DV}{\mathfrak{V}}
\DeclareMathOperator{\BC}{BC}
\DeclareMathOperator{\Pic}{Pic}
\DeclareMathOperator{\PicSch}{\mathbf{Pic}}
\DeclareMathOperator{\NS}{NS}
\DeclareMathOperator{\Div}{Div}
\DeclareMathOperator{\Cl}{Cl}
\DeclareMathOperator{\aDiv}{\widehat{Div}}
\DeclareMathOperator{\aCl}{\widehat{Cl}}
\DeclareMathOperator{\aBDiv}{\widehat{\mathbb{D}iv}}
\DeclareMathOperator{\ord}{ord}
\DeclareMathOperator{\vol}{vol}
\DeclareMathOperator{\avol}{\widehat{vol}}
\DeclareMathOperator{\rk}{rk}
\DeclareMathOperator{\Mod}{\widehat{Mod}}
\newcommand{\aC}{C_{\ell^1}}
\newcommand{\horiz}{{\rm horiz}}
\newcommand{\quot}{{\rm quot}}
\newcommand{\sub}{{\rm sub}}
\newcommand{\sbullet}{{\scriptscriptstyle\bullet}}
\newcommand{\essmin}{\qopname\relax m{ess.min}}
\newcommand{\ah}{\operatorname{\widehat{\ell}^{\ast}}}
\newcommand{\ahs}[1]{\widehat{\ell}_{#1}^{\ast}}
\newcommand{\als}{\operatorname{\widehat{\ell}^{s}}}
\newcommand{\alss}{\operatorname{\widehat{\ell}^{ss}}}
\newcommand{\aHzs}[1]{\operatorname{\widehat{\Gamma}^{#1}}}
\title[The continuity of volumes]{Adelic Cartier divisors with base conditions and the continuity of volumes}
\author{Hideaki Ikoma}
\address{Department of Mathematics, Kyoto University, Kyoto, 606-8502, Japan}
\email{ikoma@math.kyoto-u.ac.jp}
\subjclass{Primary 14G40; Secondary 11G50}
\keywords{Arakelov theory, adelic divisors, base conditions, arithmetic volumes}
\begin{document}
%\selectlanguage{english}

\begin{abstract}
In the previous paper \cite{IkomaDiff1}, we introduced a notion of pairs of adelic $\RR$-Cartier divisors and $\RR$-base conditions.
The purpose of this paper is to propose an extended notion of adelic $\RR$-Cartier divisors that we call an $\ell^1$-adelic $\RR$-Cartier divisors, and to show that the arithmetic volume function defined on the space of pairs of $\ell^1$-adelic $\RR$-Cartier divisors and $\RR$-base conditions is continuous along the directions of $\ell^1$-adelic $\RR$-Cartier divisors.
\end{abstract}

\maketitle
\tableofcontents

%%%
\section{Introduction}

In Arakelov geometry, it is essentially important whether or not an adelic line bundle has a nonzero small section.
The asymptotic number of the small sections of high powers of an adelic line bundle $\overline{L}$ is encoded in an invariant which we call the \emph{arithmetic volume} of $\overline{L}$ and denote by $\avol(\overline{L})$.
The notion of arithmetic volume was first introduced by Moriwaki in a series of papers \cite{MoriwakiCont,MoriwakiContExt,MoriwakiAdelic}, where he proved that the arithmetic volume has many good properties such as the global continuity, the positive homogeneity, the birational invariance, etc.
A purpose of this paper is to give a generalization of Moriwaki's arithmetic volume function, and study its fundamental properties.

Let $K$ be a number field, and let $O_K$ be the ring of integers of $K$.
Let $M_K^{\rm fin}$ be the set of all the finite places of $K$.
For each $v\in M_K^{\rm fin}$, $K_v$ denotes the $v$-adic completion of $K$, and $\widetilde{K}_v$ denotes the residue field at $v$.
Let $X$ be a normal projective $K$-variety, and let $\Rat(X)$ be the field of rational functions on $X$.
For each $v\in M_K^{\rm fin}\cup\{\infty\}$, let $X_v^{\rm an}$ be the associated analytic space over $v$ (see section~\ref{subsubsec:Berkovich:analytic:space} for detail).
Let $D$ be an $\RR$-Cartier divisor on $X$ endowed with a $D$-Green function $g_{\infty}$ on $X_{\infty}^{\rm an}$.
To an $O_K$-model $(\mathscr{X},\mathscr{D})$ of $(X,D)$, we can associate an adelic $\RR$-Cartier divisor
\[
(\mathscr{D},g_{\infty})^{\rm ad}:=\left(D,\sum_{v\in M_K^{\rm fin}}g_v^{(\mathscr{X},\mathscr{D})}[v]+g_{\infty}[\infty]\right).
\]
We then define the \emph{$\ell^1$-distance} of two such models $(\mathscr{X}_1,\mathscr{D}_1)$ and $(\mathscr{X}_1,\mathscr{D}_2)$ as
\[
\sum_{v\in M_K^{\rm fin}}\sup_{x\in X_v^{\rm an}}\left|g_v^{(\mathscr{X}_1,\mathscr{D}_1)}(x)-g_v^{(\mathscr{X}_2,\mathscr{D}_2)}(x)\right|.
\]
For example, let $v_1,v_2,\dots$ be a sequence in $M_K^{\rm fin}$, and let $\mathscr{F}_i$ be the fiber of $\mathscr{X}$ over $v_i$.
The sequence of $O_K$-models
\[
\left(\left(\mathscr{X},\sum_{i=1}^n\frac{1}{2^i\log\sharp \widetilde{K}_{v_i}}\mathscr{F}_i\right)\right)_{n\geq 1}
\]
is then a Cauchy sequence in the $\ell^1$-distance.
However, it does not have a limit in the space of adelic $\RR$-Cartier divisors.
A basic principle of functional analysis tells us that function spaces should be complete, so we decide to extend the notion of adelic $\RR$-Cartier divisors so as the above sequence is to converge.
For each $v\in M_K^{\rm fin}\cup\{\infty\}$, we put $C(X_v^{\rm an})$ as the Banach algebra of $\RR$-valued continuous functions on $X_v^{\rm an}$ endowed with the supremum norm.
If $v=\infty$, we impose the condition that the functions in $C(X_{\infty}^{\rm an})$ are invariant under the complex conjugation map.
We define the space $\aC(X)$ of continuous functions on $X$ as the $\ell^1$-direct sum of the family $\left(C(X_v^{\rm an})\right)_{v\in M_K^{\rm fin}\cup\{\infty\}}$ endowed with the $\ell^1$-norm $\|\cdot\|_{\ell^1}$.
We say that a couple $\overline{D}=\left(D,\sum_{v\in M_K^{\rm fin}\cup\{\infty\}}g_v[v]\right)$ of an $\RR$-Cartier divisor $D$ and an adelic $D$-Green function $\sum_{v\in M_K^{\rm fin}\cup\{\infty\}}g_v[v]$ is an \emph{$\ell^1$-adelic $\RR$-Cartier divisor} if there exists an $O_K$-model $(\mathscr{X},\mathscr{D})$ of $(X,D)$ such that $\left\|\overline{D}-(\mathscr{D},g_{\infty})^{\rm ad}\right\|_{\ell^1}<+\infty$, and denote by $\aDiv_{\RR}^{\ell^1}(X)$ the $\RR$-vector space of all the $\ell^1$-adelic $\RR$-Cartier divisors on $X$.

There are several advantages of such an extension.
For example, the quotient space $\aCl_{\RR}^{\ell^1}(X)$ of $\aDiv_{\RR}^{\ell^1}(X)$ by the $\RR$-subspace generated by principal adelic Cartier divisors admits an essentially unique norm that makes $\aCl_{\RR}^{\ell^1}(X)$ into a Banach space (see section~\ref{subsec:definition:of:ell1:adelic}), which should be a proper arithmetic analogue of the space of numerical classes of $\RR$-Cartier divisors in algebraic geometry.
In particular, any surjective natural homomorphism $\aCl_{\RR}^{\ell^1}(X)\to\aCl_{\RR}^{\ell^1}(Y)$ is automatically an open mapping.
We expect that such a formalism will open a way for applying the powerful machinery of functional analysis, such as the duality theory, the semigroup theory, the spectral theory, etc., to the study of adelic $\RR$-Cartier divisors.

In the previous paper \cite{IkomaDiff1}, we introduced a notion of $\RR$-base conditions, and defined the arithmetic volumes for pairs of adelic $\RR$-Cartier divisors and $\RR$-base conditions.
An \emph{$\RR$-base condition} $\mathcal{V}$ on $X$ is defined as a formal $\RR$-linear combination
\[
 \mathcal{V}=\sum_{\nu}\nu(\mathcal{V})[\nu]
\]
such that $\nu$ are normalized discrete valuations of $\Rat(X)$ and such that $\nu(\mathcal{V})$ are zero for all but finitely many $\nu$.
A discrete valuation $\nu$ assigns to $D$ an \emph{order of vanishing along $\nu$} defined as $\nu(f)$, where $f$ is a local equation defining $D$ around the center $c_X(\nu)$ of $\nu$ on $X$.
We denote by $\BC_{\RR}(X)$ the $\RR$-vector space of all the $\RR$-base conditions on $X$.
Given a pair $(\overline{D};\mathcal{V})$ of $\overline{D}\in\aDiv_{\RR}^{\ell^1}(X)$ and $\mathcal{V}\in\BC_{\RR}(X)$, we can define
\[
 \als\left(\overline{D};\mathcal{V}\right):=\log\left(1+\sharp\left\{\phi\in\Rat(X)^{\times}\,:\,\text{$\overline{D}+\widehat{(\phi)}>0$, $\nu(D+(\phi))\geq\nu(\mathcal{V})$, $\forall\nu$}\right\}\right)
\]
as a nonnegative real number (see Proposition~\ref{prop:basic:ah:of:ell1:adelic:divisors}), and can define the \emph{arithmetic volume} of $(\overline{D};\mathcal{V})$ as
\[
 \avol\left(\overline{D};\mathcal{V}\right):=\lim_{\substack{m\in\ZZ, \\ m\to+\infty}}\frac{\als\left(m\overline{D};m\mathcal{V}\right)}{m^{\dim X+1}/(\dim X+1)!}
\]
(see Proposition~\ref{prop:finiteness:of:arithmetic:volumes}).
We will establish the following result (Theorem~\ref{thm:Continuity_Est3}).

\begin{theoremA}
Let $X$ be a normal, projective, and geometrically connected $K$-variety.
Let $V$ be a finite-dimensional $\RR$-subspace of $\aDiv_{\RR}^{\ell^1}(X)$, let $\|\cdot\|_V$ be a norm on $V$, let $\Sigma$ be a finite set of points on $X$, and let $B\in\RR_{>0}$.
Given any $\varepsilon>0$, there exists a $\delta>0$ such that
\[
 \left|\avol\left(\overline{D}+(0,\bm{f});\mathcal{V}\right)-\avol\left(\overline{E};\mathcal{V}\right)\right|\leq\varepsilon
\]
for every $\overline{D},\overline{E}\in V$ with $\max\left\{\left\|\overline{D}\right\|_V,\left\|\overline{E}\right\|_V\right\}\leq B$ and $\left\|\overline{D}-\overline{E}\right\|_V\leq\delta$, $\bm{f}\in \aC(X)$ with $\|\bm{f}\|_{\ell^1}\leq\delta$, and $\mathcal{V}\in\BC_{\RR}(X)$ with $\{c_X(\nu)\,:\,\nu(\mathcal{V})>0\}\subset\Sigma$.
\end{theoremA}

This paper comprises two parts.
First, in section~\ref{sec:prelim}, after showing preliminary results on base conditions (section~\ref{subsec:base:conditions}) and the change of norms (section~\ref{subsec:comparison:of:norms}), we prove in section~\ref{subsec:FundEst} the fundamental estimate of numbers of small sections of pairs, which is the key step to show Theorem~\ref{thm:Continuity_Est3}.

Next, section~\ref{sec:ell1adelic} will be devoted to introducing the notion of $\ell^1$-adelic $\RR$-Cartier divisors and showing Theorem~\ref{thm:Continuity_Est3}.
After recalling basic facts on the adelically normed vector spaces (section~\ref{subsubsec:prelim:adelically:normed}), the Berkovich analytic spaces (section~\ref{subsubsec:Berkovich:analytic:space}), and the $D$-Green functions (section~\ref{subsubsec:model:functions}), we will introduce basic definitions on the $\ell^1$-adelic setting in sections~\ref{subsec:space:of:continuous:functions} and \ref{subsec:definition:of:ell1:adelic}.
We will define the arithmetic volumes of pairs of $\ell^1$-adelic $\RR$-Cartier divisors and $\RR$-base conditions in section~\ref{subsec:arithmetic:volume:function} and give a proof of Theorem~\ref{thm:Continuity_Est3} in section~\ref{subsec:ContHomog}.

\subsection{Notation and terminology}\label{subsec:notation_and_terminology}

\subsubsection{}
\label{subsubsec:NT:R-modules}
Let $R$ be a ring and let $M$ be an $R$-module.
Given a subset $\Gamma$ of $M$, we denote by $\langle\Gamma\rangle_R$ the $R$-submodule of $M$ spanned by $\Gamma$.
In this paper, we adopt the dot-product notation, that is, for $\bm{a}=(a_1,\dots,a_r)\in R^r$ and $\bm{m}=(m_1,\dots,m_r)\in M^r$, we write
\[
 \bm{a}\cdot\bm{m}=a_1m_1+\dots+a_rm_r.
\]
Moreover, for $\bm{a}=(a_1,\dots,a_r)\in\RR^r$, $\|\bm{a}\|_1$ denotes the $\ell^1$-norm of $\bm{a}$:
\[
\|\bm{a}\|_1:=|a_1|+\dots+|a_r|.
\]
\medskip

\subsubsection{}
\label{subsubsec:NT:normed:Z-modules}
A \emph{normed $\ZZ$-module} $\overline{M}:=(M,\|\cdot\|)$ is a finitely generated $\ZZ$-module $M$ endowed with a norm $\|\cdot\|$ on $M_{\RR}=M\otimes_{\ZZ}\RR$.
For such an $\overline{M}$, we set
\[
\aHzsm(\overline{M}):=\left\{m\in M\,:\,\|m\otimes 1\|\leq 1\right\},\quad \als(\overline{M}):=\log\sharp\aHzsm(\overline{M})
\]
and
\[
\aHz(\overline{M}):=\left\{m\in M\,:\,\|m\otimes 1\|<1\right\},\quad \alss(\overline{M}):=\log\sharp\aHz(\overline{M}).
\]
Let $\ast=\text{s or ss}$.
The following properties are fundamental.
\begin{enumerate}
\item[(a)] Let $\overline{M}$ be a normed $\ZZ$-module and let
\[
0\to M'\to M\to M''\to 0
\]
be an exact sequence of $\ZZ$-modules.
We endow $M_{\RR}'$ (respectively, $M_{\RR}''$) with the subspace norm $\|\cdot\|_{\sub}$ (respectively, quotient norm $\|\cdot\|_{\quot}$) induced from $\overline{M}$.
One then has
\begin{equation}
\label{eqn:elementary:property:of:ah:exact:sequence}
\ah(\overline{M})\leq\ah(\overline{M}')+\ah(\overline{M}'')+3\rk M'+2\log(\rk M')!.
\end{equation}
In fact, if $\ast=\text{s}$, then the inequality is nothing but \cite[Proposition~2.1(4)]{MoriwakiCont} and, if $\ast=\text{ss}$, then it follows from the $\ast=\text{s}$ case by replacing $\|\cdot\|$ with $e^{\varepsilon}\|\cdot\|$ for $\varepsilon>0$ and taking $\varepsilon\downarrow 0$.

\item[(b)] If we replace $\|\cdot\|$ with $e^{-\lambda}\|\cdot\|$ for a $\lambda\in\RR_{\geq 0}$, then
\begin{equation}
\label{eqn:elementary:property:of:ah:rescaling}
\ah(M,\|\cdot\|) \leq \ah(M,e^{-\lambda}\|\cdot\|) \leq \ah(M,\|\cdot\|)+(\lambda+2)\rk M
\end{equation}
(see the proof of \cite[Lemma~2.9]{Yuan09}).

\item[(c)] If $M'$ is a $\ZZ$-submodule of $M$ with $M/M'$ torsion, then
\begin{equation}
\label{eqn:elementary:property:of:ah:rescaling:2}
\ah(M',\|\cdot\|) \leq \ah(M,\|\cdot\|) \leq \ah(M',\|\cdot\|)+\log\sharp(M/M')+2\rk M
\end{equation}
(see \cite[Lemma~1.3.3, (1.3.3.4)]{MoriwakiAdelic}).

\item[(d)] Let $M$ be a finitely generated $\ZZ$-module, and let $\|\cdot\|^{1},\|\cdot\|^{2}$ be two norms on $M_{\RR}$.
If $\|\cdot\|^{1}\leq\|\cdot\|^{2}$, then
\begin{equation}
\label{eqn:elementary:property:of:ah:norm:increase}
\ah(M,\|\cdot\|^{1})\geq \ah(M,\|\cdot\|^{2}).
\end{equation}

\item[(e)] Let $\langle\cdot,\cdot\rangle^1$ and $\langle\cdot,\cdot,\rangle^2$ be two Hermitian inner products on $M_{\CC}=M\otimes_{\ZZ}\CC$, and let $\|\cdot\|^1$ and $\|\cdot\|^2$ be the associated norms on $M_{\RR}$, respectively.
Let $e_1,\dots,e_l$ be any basis for $M_{\CC}$.
If $\|\cdot\|^1\leq\|\cdot\|^2$, then
\begin{align}
&\ah(M,\|\cdot\|^1)-\ah(M,\|\cdot\|^2)-3\rk M-2\log(\rk M)! \label{eqn:fundamental:property:of:norms3}\\
&\qquad\qquad\qquad\qquad\qquad\qquad\qquad\qquad \leq -\frac{1}{2}\log\frac{\det\left(\langle e_i,e_j\rangle^1\right)_{i,j}}{\det\left(\langle e_i,e_j\rangle^2\right)_{i,j}} \nonumber
\end{align}
(see \cite[Proposition~2.1(2)]{MoriwakiCont}).
The right-hand side does not depend on a specific choice of $e_1,\dots,e_l$.
The $\ast=\text{ss}$ case follows by the same arguments as in (a) above.
\end{enumerate}
We will also use the elementary inequalities
\[
\log n!\leq n\log n\quad\text{and}\quad \log n\leq n
\]
for every $n\in\ZZ_{>0}$.
\medskip

\subsubsection{}
\label{subsubsec:NT:non-Archimedean:fields}
Let $k$ be a field endowed with a non-Archimedean absolute value $|\cdot|$.
We write
\begin{equation}
k^{\circ}:=\{a\in k\,:\,|a|\leq 1\},\quad k^{\circ\circ}:=\{a\in k\,:\,|a|<1\},\quad\text{and}\quad \widetilde{k}:=k^{\circ}/k^{\circ\circ}.
\end{equation}
\medskip

\subsubsection{}
\label{subsubsec:NT:number:fields}
Let $K$ be a number field and let $O_K$ be the ring of integers of $K$.
Let $M_K^{\rm fin}$ be the set of all the finite places of $K$ and set
\begin{equation}
M_K:=M_K^{\rm fin}\cup\{\infty\}.
\end{equation}
Set $K_{\infty}:=\CC$ and set $|\alpha|_{\infty}:=\sqrt{\alpha\overline{\alpha}}$ for $\alpha\in\CC$.
For $v\in M_K^{\rm fin}$, we denote by $\mathfrak{p}_v$ the prime ideal of $O_K$ corresponding to $v$, by $K_v^{\circ}=\projlim_{n\in\ZZ_{>0}} O_K/\mathfrak{p}_v^n$ the $v$-adic completion of $O_K$, and by $K_v$ the quotient field of $K_v^{\circ}$.
We put
\begin{equation}
K_v^{\circ\circ}:=\mathfrak{p}_vK_v^{\circ}\quad\text{and}\quad\widetilde{K}_v:=K_v^{\circ}/K_v^{\circ\circ}.
\end{equation}
We will write a uniformizer of $K_v$ by $\varpi_v$.
We define the order of an $\alpha\in K_v^{\circ}$ as
\begin{equation}
\ord_v(\alpha):=\begin{cases}\max\left\{n\geq 0\,:\,\alpha\in (K_v^{\circ\circ})^n\right\} & \text{if $\alpha\neq 0$ and} \\ +\infty & \text{if $\alpha=0$,}\end{cases}
\end{equation}
and extend it to a map from $K_v$ by linearity.
The (normalized) $v$-adic absolute value on $K_v$ is defined as
\begin{equation}
|\alpha|_v:=\left(\sharp\widetilde{K}_v\right)^{-\ord_v(\alpha)}
\end{equation}
for $\alpha\in K_v$.

%%%
\section{Fundamental estimate}\label{sec:prelim}

\subsection{Base conditions}
\label{subsec:base:conditions}

\subsubsection{}
Let $F$ be a field.
A \emph{normalized discrete valuation} $\nu$ on $F$ is a surjective map from $F$ to $\ZZ\cup\{+\infty\}$ such that
\begin{enumerate}
\item[(a)] $\nu(f)=+\infty$ if and only if $f=0$,
\item[(b)] $\nu(f\cdot g)=\nu(f)+\nu(g)$ for $f,g\in F$, and
\item[(c)] $\nu(f+g)\geq\min\{\nu(f),\nu(g)\}$ for $f,g\in F$.
\end{enumerate}
We set $F_{\nu}^{\circ}:=\left\{f\in F\,:\,\nu(f)\geq 0\right\}$ and $F_{\nu}^{\circ\circ}:=\left\{f\in F\,:\,\nu(f)>0\right\}$.
Since $(F_{\nu}^{\circ})^{\times}=\left\{f\in F\,:\,\nu(f)=0\right\}$, $F_{\nu}^{\circ\circ}$ is a maximal ideal of $F_{\nu}^{\circ}$.
We denote by $\DV(F)$ the set of all the normalized discrete valuations on $F$.

\subsubsection{}
Let $S$ be a reduced, irreducible, and separated scheme and let $F:=\Rat(S)$ be the field of rational functions on $S$.
We assume the condition that,
\begin{enumerate}
\item[($\star$)] for every $\nu\in\DV(F)$, there exists a unique point $c_S(\nu)\in S$ such that
\[
 \OO_{S,c_S(\nu)}\subset F_{\nu}^{\circ}\quad\text{and}\quad\mathfrak{m}_{c_S(\nu)}=F_{\nu}^{\circ\circ}\cap \OO_{S,c_S(\nu)}.
\]
\end{enumerate}
We call $c_S(\nu)$ the \emph{center} of $\nu$ on $S$.
By the valuative criterion of properness, if $S$ is proper over $\Spec(\ZZ)$, then $S$ satisfies the condition ($\star$).

\begin{remark}
If $S$ is a proper variety over a field $k$, then we always assume that a valuation $\nu\in\DV(F)$ is trivial on $k$.
In particular, such a valuation always has a unique center $c_S(\nu)$ on $S$, and the condition ($\star$) is satisfied.
\end{remark}

An \emph{$\RR$-base condition} $\mathcal{V}$ on $S$ is defined as a finite formal sum
\[
 \mathcal{V}:=\sum_{\nu\in\DV(\Rat(S))}\nu(\mathcal{V})[\nu]
\]
with real coefficients $\nu(\mathcal{V})$.
We denote by $\BC_{\RR}(S)$ the $\RR$-vector space of all the $\RR$-base conditions on $S$.
We write $\mathcal{V}\geq 0$ if $\nu(\mathcal{V})\geq 0$ for every $\nu\in\DV(\Rat(S))$.

\subsubsection{}
Let $S$ be a reduced, irreducible, and projective scheme over a field or $\ZZ$.
Let $L$ be a line bundle on $S$, let $\nu\in\DV(\Rat(S))$, and let $\eta$ be a local frame of $L$ around $c_S(\nu)$.
Given any $s\in H^0(L)\setminus\{0\}$, one can write $s_{c_S(\nu)}=f\eta_{c_S(\nu)}$ with $f\in \OO_{S,c_S(\nu)}\setminus\{0\}$.
If $\eta'$ is another local frame of $L$ around $c_S(\nu)$, then $\eta'/\eta$ is invertible in $\OO_{S,c_S(\nu)}$.
So, if we write $s_{c_S(\nu)}=f'\eta_{c_S(\nu)}'$ with $f'\in\OO_{S,c_S(\nu)}\setminus\{0\}$, then $f/f'$ is invertible in $\OO_{S,c_S(\nu)}$ and $\nu(f)=\nu(f')$.
We define
\begin{equation}
 \nu(s):=\nu(f),
\end{equation}
which does not depend on a specific choice of $\eta$.
The following properties are obvious.
\begin{enumerate}
\item[(a)] If $s\in H^0(L)$ does not pass through $c_S(\nu)$, then $f$ is invertible around $c_S(\nu)$ and
\begin{equation}
\label{eqn:elementary:valuation:a}
 \nu(s)=0.
\end{equation}
\item[(b)] For $s,t\in H^0(L)$ and $\nu\in\DV(\Rat(S))$,
\begin{equation}
\label{eqn:elementary:valuation:b}
 \nu(s+t)\geq\min\left\{\nu(s),\nu(t)\right\}.
\end{equation}
\item[(c)] For two line bundles $L,M$ on $S$, $s\in H^0(L)$, $t\in H^0(M)$, and $\nu\in\DV(\Rat(S))$, one has
\begin{equation}
\label{eqn:elementary:valuation:c}
 \nu(s\otimes t)=\nu(s)+\nu(t).
\end{equation}
\end{enumerate}
For a pair $(L;\mathcal{V})$ of a line bundle $L$ and a $\mathcal{V}\in\BC_{\RR}(S)$, we set
\begin{equation}
 H^0(L;\mathcal{V}):=\left\{s\in H^0(L)\,:\,\text{$\nu(s)\geq\nu(\mathcal{V})$ for all $\nu\in\DV(\Rat(S))$}\right\}.
\end{equation}

\subsubsection{}
Let $k$ be a field or $\ZZ$.
Let $S$ be a reduced, irreducible, normal, and projective $k$-scheme, and let $\KK=\text{$\RR$, $\QQ$, or $\ZZ$}$.
A \emph{$\KK$-Cartier divisor} on $S$ is an $\KK$-linear combination
\[
D=a_1D_1+\dots+a_rD_r
\]
such that $a_i\in\KK$ and such that $D_i$ are Cartier divisors.
We denote by $\Div_{\KK}(S)$ the $\KK$-module of all the $\KK$-Cartier divisors on $S$.
If $\KK=\ZZ$, we simply write $\Div(S):=\Div_{\ZZ}(S)$ as usual.

Each $\nu\in\DV(\Rat(S))$ can extend to a map $\nu:\Rat(S)^{\times}\otimes_{\ZZ}\RR\to\RR$ by linearity.
Given a $D\in\Div_{\RR}(S)$ and a $\nu\in\DV(\Rat(S))$, we take a local equation $f$ defining $D$ around $c_S(\nu)$, and define
\begin{equation}
\nu(D):=\nu(f),
\end{equation}
which does not depend on a specific choice of $f$ (see \cite[Definition~2.2]{IkomaDiff1}).
Given a pair $(D;\mathcal{V})$ of an $\RR$-Cartier divisor $D$ and a $\mathcal{V}\in\BC_{\RR}(S)$, we set
\begin{equation}
H^0(D;\mathcal{V}):=\left\{\phi\in\Rat(S)^{\times}\,:\,\begin{array}{l}\text{$D+(\phi)\geq 0$ and $\nu(D+(\phi))\geq\nu(\mathcal{V})$} \\ \text{for all $\nu\in\DV(\Rat(S))$}\end{array}\right\}\cup\{0\},
\end{equation}
and define
\begin{equation}
\vol(D;\mathcal{V}):=\limsup_{\substack{m\in\ZZ, \\ m\to+\infty}}\frac{\rk_kH^0(mD;m\mathcal{V})}{m^{\dim S}/(\dim S)!}.
\end{equation}

\subsubsection{}
Let $\mathscr{X}$ be a projective arithmetic variety over $\Spec(\ZZ)$; namely, $\mathscr{X}$ is a reduced and irreducible scheme projective and flat over $\Spec(\ZZ)$.
Let $\mathscr{X}(\CC)$ be the complex analytic space associated to $\mathscr{X}_{\CC}:=\mathscr{X}\times_{\Spec(\ZZ)}\Spec(\CC)$.
A continuous Hermitian line bundle on $\mathscr{X}$ is a couple $(\mathscr{L},|\cdot|^{\overline{\mathscr{L}}})$ of a line bundle $\mathscr{L}$ on $\mathscr{X}$ and a continuous Hermitian metric $|\cdot|^{\overline{\mathscr{L}}}$ on $\mathscr{L}(\CC)$.

\begin{definition}
Let $(\overline{\mathscr{L}};\mathcal{V})$ be a pair of a continuous Hermitian line bundle $\overline{\mathscr{L}}$ on $\mathscr{X}$ and a $\mathcal{V}\in\BC_{\RR}(\mathscr{X})$.
The $\ZZ$-module
\[
 H^0(\mathscr{L};\mathcal{V})=\left\{s\in H^0(\mathscr{L})\,:\,\text{$\nu(s)\geq\nu(\mathcal{V})$ for all $\nu\in\DV(\Rat(\mathscr{X}))$}\right\}
\]
is endowed with the \emph{supremum norm} $\|\cdot\|_{\sup}^{\overline{\mathscr{L}}}$ defined as
\begin{equation}
\label{eqn:definition:of:supremum:norm}
\|s\|_{\sup}^{\overline{\mathscr{L}}}:=\sup_{x\in \mathscr{X}(\CC)}|s|^{\overline{\mathscr{L}}}(x)
\end{equation}
for $s\in H^0(\mathscr{L})$.
We will abbreviate
\begin{equation}
\label{eqn:ah:for:short:1}
 \ah\left(\overline{\mathscr{L}};\mathcal{V}\right):=\ah\left(H^0(\mathscr{L};\mathcal{V}),\|\cdot\|_{\sup}^{\overline{\mathscr{L}}}\right)
\end{equation}
for $\ast=\text{s and ss}$ (see Notation and terminology~\ref{subsubsec:NT:normed:Z-modules}).
\end{definition}

\subsection{Comparison of norms}
\label{subsec:comparison:of:norms}

Let $T$ be a finite disjoint union
\[
 T=\bigcup_{i=1}^lT_i
\]
of compact complex K\"ahler manifolds $T_i$ of pure dimension $d$.
Let $\omega$ be a K\"ahler form on $T$ and let $\Omega=\omega^{\wedge d}$ be the volume form on $T$ associated to $\omega$.
Let $\overline{M}=(M,h^{\overline{M}})$ be a line bundle $M$ on $T$ endowed with a $C^{\infty}$-Hermitian metric $h^{\overline{M}}$.
The \emph{supremum norm} of $s\in H^0(M)$ is defined as
\[
 \|s\|_{\sup}^{\overline{M}}:=\sup_{t\in T}|s|^{\overline{M}}(t),\quad\text{where}\quad |s|^{\overline{M}}(t):=\sqrt{h^{\overline{M}}(s,s)(t)}.
\]
The \emph{$L^2$-inner product} of $s_1,s_2\in H^0(M)$ with respect to $\Omega$ is defined as
\[
 \langle s_1,s_2\rangle_{L^2}^{\overline{M}}:=\int_Th^{\overline{M}}(s_1,s_2)(t)\,\Omega,
\]
and the \emph{$L^2$-norm} of $s$ is $\|s\|_{L^2}^{\overline{M}}:=\sqrt{\langle s,s\rangle_{L^2}^{\overline{M}}}$.
In the rest of this subsection, we study the effects of the change of norms to the numbers of small sections.

\subsubsection{}
The first one (Proposition~\ref{cor:Bernstein:Markov}) gives us a direct (not optimal) relation between the supremum norms and the subspace norms induced by a fixed nonzero section.

\begin{lemma}[see {\cite[Lemma~1.1.4]{MoriwakiCont}}]\label{lem:Bernstein:Markov:2}
Let $\overline{\bm{M}}=(\overline{M}_1,\dots,\overline{M}_r)$ be $C^{\infty}$-Hermitian line bundles on $T$ and let $U$ be an open subset of $T$.
Assume that $U\cap T_i$ are nonempty for all $i$.
There then exists a positive constant $C_1\geq 1$ depending only on $\overline{\bm{M}}$, $U$, and $T$ such that
\[
 \sup_{t\in U}|s|^{\bm{a}\cdot\overline{\bm{M}}}(t) \leq \|s\|_{\sup}^{\bm{a}\cdot\overline{\bm{M}}} \leq C_1^{\|\bm{a}\|_1}\sup_{t\in U}|s|^{\bm{a}\cdot\overline{\bm{M}}}(t)
\]
for every $\bm{a}\in\ZZ_{\geq 0}^r$ and $s\in H^0(\bm{a}\cdot\bm{M})$.
\end{lemma}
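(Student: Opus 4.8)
The statement is a Bernstein--Markov type inequality, uniform in the multidegree $\bm{a}$, and the plan is to reduce it to the rank-one case $r=1$ and a single K\"ahler manifold, then prove that case by a compactness argument. The left inequality is trivial, since the supremum over $T$ dominates the supremum over the subset $U$. Only the right inequality requires work: one must produce a constant $C_1\geq 1$, depending on $\overline{\bm M}$, $U$, $T$ but not on $\bm a$ or $s$, such that $\|s\|_{\sup}^{\bm a\cdot\overline{\bm M}}\leq C_1^{\|\bm a\|_1}\sup_{t\in U}|s|^{\bm a\cdot\overline{\bm M}}(t)$.

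\textbf{Step 1: reduction to one component.} Since $T=\bigcup_{i=1}^l T_i$ is a disjoint union, a global section $s$ restricts to sections $s|_{T_i}$, and both $\|s\|_{\sup}$ and $\sup_U|s|$ are the maxima over $i$ of the corresponding quantities on $T_i$. Hence it suffices to prove the inequality on each connected component separately and take the largest of the resulting constants; so we may assume $T$ is a connected compact K\"ahler manifold of dimension $d$ and $U\neq\emptyset$ is open in $T$.

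\textbf{Step 2: the single-degree (rank-one) estimate by compactness.} First fix one Hermitian line bundle $\overline M$ (one of the $\overline M_j$ or, more to the point, the "unit" situation) and prove: for a fixed $C^\infty$-Hermitian line bundle $\overline N$ on $T$ there is $c(\overline N)\geq 1$ with $\|t\|_{\sup}^{\overline N}\leq c(\overline N)\sup_{x\in U}|t|^{\overline N}(x)$ for all $t\in H^0(N)$. This is the classical fact that on a connected complex manifold a holomorphic section is controlled on all of $T$ by its values on a nonempty open set $U$; concretely, the linear map $H^0(N)\to C(\overline U)$, $t\mapsto t|_{\overline U}$ (with the metric) is injective because $T$ is connected and sections are holomorphic, and $H^0(N)$ is finite-dimensional, so by equivalence of norms on a finite-dimensional space the two norms $t\mapsto\|t\|_{\sup}^{\overline N}$ and $t\mapsto\sup_{\overline U}|t|^{\overline N}$ are comparable, giving the constant $c(\overline N)$.

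\textbf{Step 3: uniformity in the multidegree.} The subtle point — and the main obstacle — is that $c(\overline N)$ a priori depends on $\overline N$, hence on $\bm a$ if we take $N=\bm a\cdot\bm M$, and there are infinitely many such $\bm a$. The remedy is to bound $c(\bm a\cdot\overline{\bm M})$ by $C_1^{\|\bm a\|_1}$ using a fixed finite amount of geometric data. Choose a slightly larger relatively compact open set $U'$ with $\overline{U'}\subset T$ and $U'\cap T_i$ still... (here just $U'\Supset$ a point of $U$), actually choose an open $V$ with $\overline V$ compact, $\overline V\subset U$ won't do since we need $U$ fixed; instead fix once and for all a point $x_0\in U$, a small coordinate polydisc $\Delta$ around $x_0$ with $\overline{2\Delta}\subset U$ (in suitable coordinates, with trivializations of each $M_j$ over $2\Delta$), and use the sub-mean-value / maximum-principle estimate: for a holomorphic function $h$ on $2\Delta$ one has $\sup_{\Delta}|h|\leq\sup_{\partial(2\Delta)}$-type bounds, but more usefully, a global section $s$ of $\bm a\cdot\bm M$ over all of $T$ satisfies, via a covering of $T$ by finitely many coordinate charts $W_\alpha$ (depending only on $T,\overline{\bm M}$) and transition functions, the propagation estimate $\sup_{W_\alpha}|s|^{\bm a\cdot\overline{\bm M}}\leq A^{\|\bm a\|_1}\sup_{W_\beta}|s|^{\bm a\cdot\overline{\bm M}}$ for adjacent charts, where $A\geq 1$ depends only on the supremum norms of the transition functions of the $M_j$ and of the metrics $h^{\overline M_j}$ over the (fixed, finite) overlaps. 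Chaining across the finitely many charts of a connected $T$ (diameter of the nerve bounded by a fixed number $N(T)$) gives $\|s\|_{\sup}^{\bm a\cdot\overline{\bm M}}\leq A^{N(T)\|\bm a\|_1}\sup_{W_{\alpha_0}}|s|^{\bm a\cdot\overline{\bm M}}$ for the chart $W_{\alpha_0}$ containing $x_0$, and then shrinking from $W_{\alpha_0}$ down into $U$ by one more sub-mean-value estimate on the polydisc $\Delta$ (Cauchy estimates applied to the holomorphic coefficient of $s$ in the local frame of $\bm a\cdot\bm M$) costs another factor $B^{\|\bm a\|_1}$. Taking $C_1:=A^{N(T)}B$ yields the claim. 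I expect Step 3 — making the constant genuinely multiplicative in $\|\bm a\|_1$ rather than, say, exponential in $\max_j a_j$ times $r$, and checking it really depends only on $\overline{\bm M}$, $U$, $T$ — to be the delicate bookkeeping, but it is exactly the content of \cite[Lemma~1.1.4]{MoriwakiCont}, so the proof can cite that source and sketch only the reduction in Steps 1--2.
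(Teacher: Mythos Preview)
The paper gives no proof of this lemma; it simply cites \cite[Lemma~1.1.4]{MoriwakiCont}. Your proposal ultimately does the same, so in that sense there is nothing to compare. Your Steps~1 and~2 are correct and harmless, though Moriwaki's lemma already covers the multi-component, multi-bundle situation, so the reductions are not strictly needed.

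Your Step~3, however, contains a genuine gap if taken as an independent argument. The claimed ``propagation estimate'' $\sup_{W_\alpha}|s|^{\bm a\cdot\overline{\bm M}}\leq A^{\|\bm a\|_1}\sup_{W_\beta}|s|^{\bm a\cdot\overline{\bm M}}$ for adjacent charts does \emph{not} follow from bounds on transition functions and metrics: those data only relate the two local representations on the overlap $W_\alpha\cap W_\beta$, and there is no local mechanism that bounds $\sup_{W_\alpha}|f|$ by $\sup_{W_\alpha\cap W_\beta}|f|$ for a holomorphic $f$ (take $f(z)=z^n$ on an annulus). Likewise your final ``shrinking from $W_{\alpha_0}$ down into $U$'' via Cauchy estimates goes in the wrong direction: Cauchy and sub-mean-value inequalities bound values on a \emph{smaller} set by values on a \emph{larger} one, whereas here you need to control the global supremum by the supremum on the small set $U$. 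The chaining strategy therefore cannot work as written. What actually drives Moriwaki's argument is a global input: one observes that $\log|s|^{\bm a\cdot\overline{\bm M}}$ is $\omega$-plurisubharmonic up to a defect bounded by $C_0\|\bm a\|_1\,\omega$ (since $c_1(\bm a\cdot\overline{\bm M})\leq C_0\|\bm a\|_1\,\omega$ for a fixed $C_0$), and then invokes a Green's-function estimate on the compact K\"ahler manifold to bound $\sup_T\log|s|-\sup_U\log|s|$ linearly in $\|\bm a\|_1$. If you want to sketch Step~3 rather than just cite, that is the ingredient you are missing.
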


\begin{proposition}\label{cor:Bernstein:Markov}
Let $\overline{\bm{M}}=(\overline{M}_1,\dots,\overline{M}_r)$ and $\overline{E}$ be $C^{\infty}$-Hermitian line bundles on $T$.
Fix an $s_0\in H^0(E)\setminus\{0\}$.
The $\CC$-vector space $H^0(\bm{a}\cdot\bm{M})$ is endowed with the two norms $\|\cdot\|_{\sup}^{\bm{a}\cdot\overline{\bm{M}}}$ and $\|\cdot\|_{\sup,\sub(s_0^{\otimes b})}^{\bm{a}\cdot\overline{\bm{M}}+b\overline{E}}$, where $\|\cdot\|_{\sup,\sub(s_0^{\otimes b})}^{\bm{a}\cdot\overline{\bm{M}}+b\overline{E}}$ is the subspace norm induced from $\left(H^0(\bm{a}\cdot\bm{M}+bE),\|\cdot\|_{\sup}^{\bm{a}\cdot\overline{\bm{M}}+b\overline{E}}\right)$ via $H^0(\bm{a}\cdot\bm{M})\xrightarrow{\otimes s_0^{\otimes b}} H^0(\bm{a}\cdot\bm{M}+bE)$.
There then exists a constant $C_2\geq 1$ depending only on $\overline{\bm{M}}$, $(\overline{E},s_0)$, and $T$ such that
\[
\|\cdot\|_{\sup}^{\bm{a}\cdot\overline{\bm{M}}}\leq C_2^{\|\bm{a}\|_1+b}\|\cdot\|_{\sup,\sub(s_0^{\otimes b})}^{\bm{a}\cdot\overline{\bm{M}}+b\overline{E}}
\]
for every $\bm{a}\in\ZZ_{\geq 0}^r$ and $b\in\ZZ_{\geq 0}$.
\end{proposition}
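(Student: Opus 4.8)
The plan is to reduce the claimed comparison to Lemma~\ref{lem:Bernstein:Markov:2} by choosing for $U$ the (Zariski-)open locus on which the fixed section $s_0$ does not vanish. First I would set $U:=\{t\in T\,:\,s_0(t)\neq 0\}$; this is open in $T$, and since $s_0\in H^0(E)\setminus\{0\}$ and each $T_i$ is connected (being a connected Kähler manifold of pure dimension $d$, or after passing to connected components, which only changes constants), the intersection $U\cap T_i$ is a nonempty open subset of $T_i$ for every $i$. Apply Lemma~\ref{lem:Bernstein:Markov:2} with this $U$ to the family $\overline{\bm{M}}$: we obtain a constant $C_1\geq 1$, depending only on $\overline{\bm{M}}$, $U$ (hence on $(\overline{E},s_0)$), and $T$, with
\[
\|s\|_{\sup}^{\bm{a}\cdot\overline{\bm{M}}}\leq C_1^{\|\bm{a}\|_1}\sup_{t\in U}|s|^{\bm{a}\cdot\overline{\bm{M}}}(t)
\]
for all $\bm{a}\in\ZZ_{\geq 0}^r$ and $s\in H^0(\bm{a}\cdot\bm{M})$.

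Next I would relate the right-hand side to the subspace norm via multiplication by $s_0^{\otimes b}$. By definition of the subspace norm induced through $H^0(\bm{a}\cdot\bm{M})\xrightarrow{\otimes s_0^{\otimes b}}H^0(\bm{a}\cdot\bm{M}+bE)$, we have
\[
\|s\|_{\sup,\sub(s_0^{\otimes b})}^{\bm{a}\cdot\overline{\bm{M}}+b\overline{E}}=\|s\otimes s_0^{\otimes b}\|_{\sup}^{\bm{a}\cdot\overline{\bm{M}}+b\overline{E}}\geq\sup_{t\in U}|s\otimes s_0^{\otimes b}|^{\bm{a}\cdot\overline{\bm{M}}+b\overline{E}}(t)=\sup_{t\in U}|s|^{\bm{a}\cdot\overline{\bm{M}}}(t)\cdot|s_0|^{\overline{E}}(t)^{b}.
\]
On the compact set $T$ the continuous function $|s_0|^{\overline{E}}$ attains a finite maximum $C_3:=\sup_{t\in T}|s_0|^{\overline{E}}(t)>0$; but what we actually need is a lower bound for $|s_0|^{\overline{E}}$ on $U$, which is \emph{not} bounded below in general (it tends to $0$ near $\partial U$), so a direct pointwise estimate fails. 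The remedy — and this is the main obstacle — is to avoid pointwise division by $|s_0|^{\overline{E}}$ and instead argue again through Lemma~\ref{lem:Bernstein:Markov:2}: apply the lemma a second time to the enlarged family $(\overline{\bm{M}},\overline{E})$ with the \emph{same} open set $U$, producing a constant $C_1'\geq 1$ (depending only on $\overline{\bm{M}}$, $\overline{E}$, $U$, $T$, hence on the allowed data) such that
\[
\|s\otimes s_0^{\otimes b}\|_{\sup}^{\bm{a}\cdot\overline{\bm{M}}+b\overline{E}}\geq\sup_{t\in U}|s\otimes s_0^{\otimes b}|^{\bm{a}\cdot\overline{\bm{M}}+b\overline{E}}(t)\geq (C_1')^{-\|\bm{a}\|_1-b}\,\|s\otimes s_0^{\otimes b}\|_{\sup}^{\bm{a}\cdot\overline{\bm{M}}+b\overline{E}},
\]
which is vacuous; so this route also needs care. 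The correct fix is the following: since $s_0$ is a fixed global section, the function $u:=-\log|s_0|^{\overline{E}}$ is a continuous function on $U$ which extends, as a plurisubharmonic-type weight, and $\sup_{t\in U}|s|^{\bm{a}\cdot\overline{\bm{M}}}(t)$ can be bounded by choosing $C_2$ so that $C_2^{\|\bm{a}\|_1+b}\geq C_1^{\|\bm{a}\|_1}\cdot C_3^{-b}$ whenever we can instead use $\|s\otimes s_0^{\otimes b}\|_{\sup}\geq \big(\sup_U|s|^{\bm{a}\cdot\overline{\bm{M}}}\big)\cdot\big(\inf_{K}|s_0|^{\overline{E}}\big)^b$ on a fixed compact $K\Subset U$ with $K\cap T_i\neq\emptyset$ — which is legitimate because Lemma~\ref{lem:Bernstein:Markov:2} is stated for an arbitrary open $U$ meeting every $T_i$, so we may shrink $U$ to such a $K^\circ$ from the start.

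So the clean argument I would write is: fix a compact subset $K$ of $\{s_0\neq 0\}$ with $K\cap T_i\neq\emptyset$ for all $i$ (possible since $\{s_0\neq 0\}$ is open dense in each connected $T_i$), put $c:=\inf_{t\in K}|s_0|^{\overline{E}}(t)>0$, and apply Lemma~\ref{lem:Bernstein:Markov:2} with $U:=\mathrm{int}(K)$ to get $C_1\geq 1$ depending only on $\overline{\bm{M}},(\overline{E},s_0),T$. Then for all $\bm{a}\in\ZZ_{\geq 0}^r$, $b\in\ZZ_{\geq 0}$, $s\in H^0(\bm{a}\cdot\bm{M})$,
\[
\|s\|_{\sup}^{\bm{a}\cdot\overline{\bm{M}}}\leq C_1^{\|\bm{a}\|_1}\sup_{t\in K}|s|^{\bm{a}\cdot\overline{\bm{M}}}(t)\leq C_1^{\|\bm{a}\|_1}c^{-b}\sup_{t\in K}|s|^{\bm{a}\cdot\overline{\bm{M}}}(t)\,|s_0|^{\overline{E}}(t)^b\leq C_1^{\|\bm{a}\|_1}c^{-b}\,\|s\otimes s_0^{\otimes b}\|_{\sup}^{\bm{a}\cdot\overline{\bm{M}}+b\overline{E}},
\]
where the middle inequality uses $c\leq 1$ after possibly enlarging $c$ to $\max\{c,1\}$ in the exponent bookkeeping, or more simply one takes $C_2:=\max\{C_1,c^{-1},1\}$ so that $C_1^{\|\bm{a}\|_1}c^{-b}\leq C_2^{\|\bm{a}\|_1+b}$. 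This yields $\|\cdot\|_{\sup}^{\bm{a}\cdot\overline{\bm{M}}}\leq C_2^{\|\bm{a}\|_1+b}\|\cdot\|_{\sup,\sub(s_0^{\otimes b})}^{\bm{a}\cdot\overline{\bm{M}}+b\overline{E}}$ with $C_2$ depending only on $\overline{\bm{M}}$, $(\overline{E},s_0)$, and $T$, as required. The only genuinely non-formal point, and the one I would be most careful about, is the choice of the compact $K$ meeting every component inside the non-vanishing locus of $s_0$ — i.e. using that a nonzero holomorphic section on a connected Kähler manifold has dense non-vanishing locus — so that Lemma~\ref{lem:Bernstein:Markov:2} is applicable and $c>0$.
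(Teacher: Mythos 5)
Your final ``clean argument'' is correct and is essentially the paper's own proof: choose a nonempty open set meeting every $T_i$ on which $|s_0|^{\overline{E}}$ is bounded below by some $\delta>0$, apply Lemma~\ref{lem:Bernstein:Markov:2} on that set, and take $C_2=\max\{C_1,\delta^{-1}\}$. The only cosmetic points are that you should choose $K$ so that its \emph{interior} meets every component (the paper simply picks such an open $U$ directly, which also silently uses that $s_0$ does not vanish identically on any $T_i$), and the exploratory detours preceding the final paragraph could be deleted.
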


\begin{proof}
We choose a nonempty open subset $U$ of $T$ such that
\[
 \delta:=\inf_{t\in U}|s_0|^{\overline{E}}(t)>0
\]
and such that $T_i\cap U\neq\emptyset$ for all $i$.
By Lemma~\ref{lem:Bernstein:Markov:2}, there is a $C_1\geq 1$ such that
\[
\|s\|_{\sup}^{\bm{a}\cdot\overline{\bm{M}}} \leq C_1^{\|\bm{a}\|_1} \sup_{t\in U}|s|^{\bm{a}\cdot\overline{\bm{M}}}(t)
\]
for every $\bm{a}\in\ZZ_{\geq 0}^r$ and $s\in H^0(\bm{a}\cdot\bm{M})$.
Hence
\begin{align*}
\|s\|_{\sup}^{\bm{a}\cdot\overline{\bm{M}}} &\leq \delta^{-b}C_1^{\|\bm{a}\|_1} \sup_{t\in U}|s\otimes s_0^{\otimes b}|^{\bm{a}\cdot\overline{\bm{M}}+b\overline{E}}(t) \\
&\leq \max\{\delta^{-1},C_1\}^{\|\bm{a}\|_1+b}\|s\otimes s_0^{\otimes b}\|_{\sup}^{\bm{a}\cdot\overline{\bm{M}}+b\overline{E}}
\end{align*}
for every $\bm{a}\in\ZZ_{\geq 0}^r$, $b\in\ZZ_{\geq 0}$, and $s\in H^0(\bm{a}\cdot\bm{M})$.
\end{proof}

\subsubsection{}
Let $T$ and $\Omega$ be as above, and consider the $L^2$-norms with respect to $\Omega$.
Let $\overline{M}=(M,|\cdot|^{\overline{M}})$ be a $C^{\infty}$-Hermitian line bundle on $T$, let $V$ be a linear series belonging to $M$, and let $e_{1},\dots, e_{l}$ be an $L^2$-orthonormal basis for $V$.
We define the \emph{Bergman distortion function} $\beta(V;\overline{M},\Omega)$ as
\begin{equation}
 \beta(V;\overline{M},\Omega)(x):=\sum_{i=1}^{l}|e_{i}|^{\overline{M}}(x)^2
\end{equation}
for $x\in T$.
It is easy to see that $\beta(V;\overline{M},\Omega)$ does not depend on a specific choice of $e_{1},\dots, e_{l}$.
If $V=H^0(M)$, then we abbreviate
\begin{equation}
\beta(\overline{M}):=\beta(H^0(M);\overline{M},\Omega)
\end{equation}
for simplicity.
The distortion function has the following elementary properties.
\begin{enumerate}
\renewcommand{\labelenumi}{(\alph{enumi})}
\item If $W$ is a linear series containing $V$, then $\beta(V;\overline{M},\Omega)\leq\beta(W;\overline{M},\Omega)$.
\item For a $c\in\RR_{>0}$, $\beta(V;\overline{M},c\Omega)=c^{-1}\beta(V;\overline{M},\Omega)$.
\suspend{enumerate}

\begin{proposition}[{\cite[Theorem~1.2.1]{MoriwakiContExt}}]
\label{prop:distortion:corollary}
Let $\overline{A}$ and $\overline{\bm{B}}=(\overline{B}_1,\dots,\overline{B}_r)$ be $C^{\infty}$-Hermitian line bundles on $T$ such that $A$ and $\bm{B}$ are all ample and such that the Hermitian metrics are all pointwise positive definite.
Suppose that the volume form is given as $\Omega:=c_1(\overline{A})^{\wedge d}$.
There then exists a constant $C_3>0$ such that
\[
\left\|\beta\left(a\overline{A}-\bm{b}\cdot\overline{\bm{B}}\right)\right\|_{\sup}\leq C_3a^{d}
\]
for every $a\in\ZZ_{>0}$ and $\bm{b}\in\ZZ^r_{\geq 0}$.
\end{proposition}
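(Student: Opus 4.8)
The plan is to reduce the statement, via the extremal (variational) characterization of the Bergman distortion function, to a uniform pointwise estimate for sections, and then to establish that estimate by a submean-value inequality on balls whose radius shrinks like $a^{-1/2}$. First I would recall that for any $C^{\infty}$-Hermitian line bundle $\overline{M}$ on $T$ and any $x\in T$ one has
\[
\beta(\overline{M})(x)=\sup\left\{\frac{\bigl(|s|^{\overline{M}}(x)\bigr)^2}{\bigl(\|s\|_{L^2}^{\overline{M}}\bigr)^2}\ :\ s\in H^0(M)\setminus\{0\}\right\},
\]
the supremum being $0$ when $H^0(M)=0$: expanding $s=\sum_ic_ie_i$ in an $L^2$-orthonormal basis and applying the Cauchy--Schwarz inequality to $s(x)=\sum_ic_ie_i(x)$ in the fibre at $x$ gives ``$\leq$'', with equality for a suitable choice of the coefficients. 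So it is enough to produce a constant $C'>0$, independent of $a$, $\bm{b}$, $x$ and $s$, with
\[
\bigl(|s|^{\,a\overline{A}-\bm{b}\cdot\overline{\bm{B}}}(x)\bigr)^2\ \leq\ C'\,a^{d}\,\bigl(\|s\|_{L^2}^{\,a\overline{A}-\bm{b}\cdot\overline{\bm{B}}}\bigr)^2
\]
for all sufficiently large $a$, all $\bm{b}\in\ZZ^r_{\geq0}$, all $x\in T$ and all nonzero $s\in H^0(aA-\bm{b}\cdot\bm{B})$; the finitely many remaining values of $a$ are handled separately at the end.

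To prove this pointwise estimate I would fix once and for all a finite atlas of $T$ by holomorphic coordinate charts, a Lebesgue-type radius $\rho_0>0$ such that every point of $T$ has a coordinate ball of radius $\rho_0$ about it contained in some chart, and, on each chart, a smooth local potential $\theta$ with $dd^c\theta=c_1(\overline{A})$, arranging the $C^2$-norms of the $\theta$ to be bounded by one constant uniform over $T$ (possible since $c_1(\overline{A})$ is smooth on the compact $T$). Writing $\overline{M}:=a\overline{A}-\bm{b}\cdot\overline{\bm{B}}$ and $\log\bigl(|s|^{\overline{M}}\bigr)^2=\log|f|^2-\psi$ in a local trivialization (so $dd^c\psi=c_1(\overline{M})=a\,c_1(\overline{A})-\sum_ib_i\,c_1(\overline{B}_i)$), one computes, as currents,
\[
dd^c\Bigl(\log\bigl(|s|^{\overline{M}}\bigr)^2+a\,\theta\Bigr)=dd^c\log|f|^2+\sum_ib_i\,c_1(\overline{B}_i)\ \geq\ 0,
\]
because the metrics on the $\overline{B}_i$ are pointwise positive-definite; the point is that the potentially very negative term $-\sum_ib_i\,c_1(\overline{B}_i)$ has been cancelled by the correction $a\,\theta$, which is what will make all constants independent of $\bm{b}$. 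Thus $\log\bigl(|s|^{\overline{M}}\bigr)^2+a\,\theta$ is plurisubharmonic, in particular subharmonic, on each chart.

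Then, for $x\in T$ and $a\geq\rho_0^{-2}$, I would apply the submean-value inequality over the Euclidean ball $B$ of radius $a^{-1/2}$ about $x$ (which lies in some chart, by the choice of $\rho_0$), followed by Jensen's inequality for $\log$, to obtain
\[
\log\bigl(|s|^{\overline{M}}(x)\bigr)^2\ \leq\ \log\!\left(\frac{1}{|B|}\int_B\bigl(|s|^{\overline{M}}\bigr)^2\,dV\right)+a\left(\frac{1}{|B|}\int_B\theta\,dV-\theta(x)\right),
\]
with $dV$ Lebesgue measure and $|B|$ the Euclidean volume. Since the first-order Taylor term of $\theta$ averages to $0$ over $B$ and $\|\theta\|_{C^2}$ is uniformly bounded, the last parenthesis is $O(a^{-1})$ uniformly, so $a(\cdots)$ is bounded by a uniform constant; meanwhile $|B|$ is a fixed constant times $a^{-d}$ (real dimension $2d$), and comparing $dV$ with the smooth positive form $\Omega=c_1(\overline{A})^{\wedge d}$ on the relatively compact chart gives $\int_B\bigl(|s|^{\overline{M}}\bigr)^2\,dV\leq C_\Omega\bigl(\|s\|_{L^2}^{\overline{M}}\bigr)^2$ with $C_\Omega$ uniform over $T$. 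Combining these three facts yields the pointwise estimate, hence the stated bound for all $a\geq\rho_0^{-2}$. For the finitely many $a<\rho_0^{-2}$, ampleness of the $B_i$ forces $H^0(aA-\bm{b}\cdot\bm{B})=0$ for all but finitely many $\bm{b}$ (a nonzero section would place $aA$ numerically above $\sum_ib_iB_i$ modulo an effective class, bounding $\|\bm{b}\|_1$), so $\beta(a\overline{A}-\bm{b}\cdot\overline{\bm{B}})$ ranges over finitely many continuous functions on the compact $T$, all bounded; enlarging $C_3$ to swallow their suprema finishes the argument.

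The hard part will be the calibration of the ball radius: it has to be of order $a^{-1/2}$, so that the curvature error $a\cdot O(\text{radius}^2)$ stays $O(1)$ while the normalizing factor $1/|B|$ produces exactly the power $a^{d}$; a ball of fixed radius would only give a bound exponential in $a$. The other point that needs care — that every constant is independent of $\bm{b}$ — hinges entirely on the sign cancellation in the curvature computation above, i.e. on the positive-definiteness of the metrics on the $\overline{B}_i$.
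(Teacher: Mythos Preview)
The paper does not prove this proposition: it is stated as a citation of \cite[Theorem~1.2.1]{MoriwakiContExt} and used as a black box. Your proposal, by contrast, supplies a self-contained argument, and the approach is sound. The extremal characterization of $\beta$, the plurisubharmonicity of $\log(|s|^{\overline{M}})^2+a\theta$ (the sign cancellation you highlight is exactly the reason the bound is uniform in $\bm{b}$), the submean-value inequality on balls of radius $a^{-1/2}$, and the comparison of Lebesgue measure with $\Omega$ on compact charts together give the pointwise bound with the correct power $a^d$. The disposal of the finitely many small $a$ via ampleness of the $B_i$ is also fine.

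What you have written is essentially the classical upper-bound half of the Tian--Bouche Bergman-kernel asymptotics, adapted so that the twisting by $-\bm{b}\cdot\overline{\bm{B}}$ is absorbed for free by positivity; Moriwaki's proof in the cited reference proceeds along closely related lines. Two small points worth tightening in a final write-up: first, when you invoke ``$dd^c\log|f|^2\geq 0$'' you are implicitly using the Lelong--Poincar\'e formula (it equals the current of integration on $\{f=0\}$), so say so; second, make explicit that plurisubharmonic implies subharmonic for the flat Laplacian in each chart, which is what licenses the Euclidean submean-value inequality. Neither is a gap, just a matter of exposition.
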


\subsubsection{}
Let $\mathscr{X}$ be a projective arithmetic variety over $\Spec(\ZZ)$, let $\overline{\mathscr{M}}$ and $\overline{\mathscr{A}}$ be continuous Hermitian line bundles on $\mathscr{X}$, and fix an $s_0\in H^0(\mathscr{A})\setminus\{0\}$.
The $\ZZ$-module $H^0(\mathscr{M};\mathcal{V})$ is endowed with the supremum norm $\|\cdot\|_{\sup}^{\overline{\mathscr{M}}}$ and the $L^2$-norm $\|\cdot\|_{L^2}^{\overline{\mathscr{M}}}$.
Let $\|\cdot\|_{\sup,\sub(s_0)}^{\overline{\mathscr{M}}+\overline{\mathscr{A}}}$ (respectively, $\|\cdot\|_{L^2,\sub(s_0)}^{\overline{\mathscr{M}}+\overline{\mathscr{A}}}$) be the subspace norm induced via $H^0(\mathscr{M};\mathcal{V})\xrightarrow{\otimes s_0}H^0(\mathscr{M}+\mathscr{A})$; namely,
\begin{equation}
\|s\|_{\sup,\sub(s_0)}^{\overline{\mathscr{M}}+\overline{\mathscr{A}}}:=\|s\otimes s_0\|_{\sup}^{\overline{\mathscr{M}}+\overline{\mathscr{A}}}\quad\text{(respectively, }\|s\|_{L^2,\sub(s_0)}^{\overline{\mathscr{M}}+\overline{\mathscr{A}}}:=\|s\otimes s_0\|_{L^2}^{\overline{\mathscr{M}}+\overline{\mathscr{A}}}\text{)}
\end{equation}
for $s\in H^0(\mathscr{M};\mathcal{V})$.
For $\ast=\text{s and ss}$, we write
\begin{equation}
\ahs{\sub(s_0)}\left(\overline{\mathscr{M}};\mathcal{V}\right):=\ah\left(H^0(\mathscr{M};\mathcal{V}),\|\cdot\|_{\sup,\sub(s_0)}^{\overline{\mathscr{M}}+\overline{\mathscr{A}}}\right)
\end{equation}
for short.
The next one plays a key role in showing the main estimate in section~\ref{subsec:FundEst}.

\begin{theorem}\label{thm:Continuity_Est1}
Let $\mathscr{X}$ be a projective arithmetic variety of dimension $d+1$ over $\Spec(\ZZ)$.
We assume that the generic fiber $\mathscr{X}_{\QQ}$ is smooth over $\Spec(\QQ)$.
Let $\overline{\pmb{\mathscr{L}}}=(\overline{\mathscr{L}}_1,\dots,\overline{\mathscr{L}}_r)$ and $\overline{\mathscr{A}}$ be $C^{\infty}$-Hermitian line bundles on $\mathscr{X}$, and fix an $s_0\in\aHzsm(\overline{\mathscr{A}})\setminus\{0\}$.
If the Hermitian metrics of $\overline{\mathscr{L}}_1+\overline{\mathscr{A}},\dots,\overline{\mathscr{L}}_r+\overline{\mathscr{A}}$, and $\overline{\mathscr{A}}$ are all pointwise positive definite, then there exists a constant $C_4>0$ depending only on $\overline{\pmb{\mathscr{L}}}$, $(\overline{\mathscr{A}},s_0)$, and $\mathscr{X}$ such that
\[
\ahs{\sub(s_0^{\otimes b})}\left(\bm{a}\cdot\overline{\pmb{\mathscr{L}}};\mathcal{V}\right) \leq \ah\left(\bm{a}\cdot\overline{\pmb{\mathscr{L}}};\mathcal{V}\right)+C_4\|\bm{a}\|_1^{d}(b+\log\|\bm{a}\|_1)
\]
for $\ast=\mathrm{s},\,\mathrm{ss}$, $\bm{a}\in\ZZ_{\geq 0}^r$ with $\|\bm{a}\|_1>0$, $b\in\ZZ_{\geq 0}$, and $\mathcal{V}\in\BC_{\RR}(\mathscr{X})$.
\end{theorem}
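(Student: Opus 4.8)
The plan is to compare the supremum norm on $H^0(\mathscr{M};\mathcal{V})$ with the subspace norm induced by $\otimes s_0^{\otimes b}$ by routing through the $L^2$-norms, where the change-of-metric estimates are available via the Bergman distortion function. Write $\overline{\mathscr{M}}=\bm{a}\cdot\overline{\pmb{\mathscr{L}}}$, set $\mathscr{N}=\mathscr{M}+b\mathscr{A}$, and let $T=\mathscr{X}(\CC)$, which is a disjoint union of compact complex manifolds (smooth since $\mathscr{X}_{\QQ}$ is smooth), of pure dimension $d$. Fix the K\"ahler form $\omega=c_1(\overline{\mathscr{A}})$ (pointwise positive definite by hypothesis) and the associated volume form $\Omega=\omega^{\wedge d}$, and use the $L^2$-norms with respect to $\Omega$ throughout. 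The comparison proceeds in four steps:
\begin{enumerate}
\item[(i)] \emph{From $\sup$ to $\sup$ via $s_0^{\otimes b}$ (archimedean, no base condition).} Apply Proposition~\ref{cor:Bernstein:Markov} with $\overline{\bm{M}}=\overline{\pmb{\mathscr{L}}}$ and $(\overline{E},s_0)$ replaced by $(\overline{\mathscr{A}},s_0)$ to get a constant $C_2\geq 1$ with $\|\cdot\|_{\sup}^{\overline{\mathscr{M}}}\leq C_2^{\|\bm{a}\|_1+b}\|\cdot\|_{\sup,\sub(s_0^{\otimes b})}^{\overline{\mathscr{M}}+b\overline{\mathscr{A}}}$ on $H^0(\bm{a}\cdot\bm{M})$, hence also on the subspace $H^0(\mathscr{M};\mathcal{V})$ with the induced norms.
\item[(ii)] \emph{From $L^2$ to $\sup$ on $H^0(\mathscr{N})$.} By the definition of the Bergman distortion function and Cauchy--Schwarz, $|s|^{\overline{\mathscr{N}}}(x)^2\leq \beta(\overline{\mathscr{N}})(x)\,(\|s\|_{L^2}^{\overline{\mathscr{N}}})^2$ for all $s\in H^0(\mathscr{N})$ and $x\in T$, so $\|\cdot\|_{\sup}^{\overline{\mathscr{N}}}\leq \|\beta(\overline{\mathscr{N}})\|_{\sup}^{1/2}\,\|\cdot\|_{L^2}^{\overline{\mathscr{N}}}$. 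Since $\overline{\mathscr{N}}=\bm{a}\cdot\overline{\pmb{\mathscr{L}}}+b\overline{\mathscr{A}}=(\|\bm{a}\|_1+b)\overline{\mathscr{A}}-\bigl(\bm{a}\cdot(\overline{\mathscr{A}}-\overline{\pmb{\mathscr{L}}})\bigr)$ with $\overline{\mathscr{A}}$ ample and pointwise-positive and with $\overline{\mathscr{L}}_i+\overline{\mathscr{A}}$ (equivalently $\overline{\mathscr{A}}-(\overline{\mathscr{A}}-\overline{\mathscr{L}}_i)$ pointwise positive), Proposition~\ref{prop:distortion:corollary} applied with $\overline{A}=\overline{\mathscr{A}}$ and $\overline{\bm{B}}=(\overline{\mathscr{A}}-\overline{\mathscr{L}}_1,\dots,\overline{\mathscr{A}}-\overline{\mathscr{L}}_r)$ (after possibly absorbing $b$ copies of a trivial summand to make the leading coefficient exactly $\|\bm{a}\|_1+b$) yields $\|\beta(\overline{\mathscr{N}})\|_{\sup}\leq C_3(\|\bm{a}\|_1+b)^d$. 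Thus the $L^2\to\sup$ distortion on $H^0(\mathscr{N})$ costs a factor polynomial in $\|\bm{a}\|_1+b$, whose logarithm is $O(d\log(\|\bm{a}\|_1+b))$.
\item[(iii)] \emph{From $\sup$ to $L^2$ on $H^0(\mathscr{N})$.} The trivial bound $\|\cdot\|_{L^2}^{\overline{\mathscr{N}}}\leq \vol(T,\Omega)^{1/2}\|\cdot\|_{\sup}^{\overline{\mathscr{N}}}$ costs only a constant, independent of $\bm{a},b$.
\item[(iv)] \emph{Counting.} Combine (ii) and (iii) to compare the $L^2$- and $\sup$-subspace norms on $H^0(\mathscr{M};\mathcal{V})$ induced via $\otimes s_0^{\otimes b}$, then use the rescaling inequality \eqref{eqn:elementary:property:of:ah:rescaling} (with $\lambda=O(d\log(\|\bm{a}\|_1+b))$ and $\rk H^0(\mathscr{M};\mathcal{V})\leq \rk H^0(\mathscr{M})=O(\|\bm{a}\|_1^d)$) and \eqref{eqn:elementary:property:of:ah:norm:increase} to pass between the corresponding $\ah$-invariants; finally glue with step (i).
\end{enumerate}

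A genuine point requiring care is the interplay with the base condition $\mathcal{V}$: all the distortion estimates in Propositions~\ref{cor:Bernstein:Markov} and \ref{prop:distortion:corollary} are stated for the \emph{full} spaces $H^0(\bm{a}\cdot\bm{M})$ and $H^0(\mathscr{N})$, not for the subspace cut out by $\mathcal{V}$. The resolution is that both comparisons used here are \emph{pointwise norm inequalities} on sections (e.g. $|s|^{\overline{\mathscr{N}}}\leq\|\beta(\overline{\mathscr{N}})\|_{\sup}^{1/2}\|s\|_{L^2}^{\overline{\mathscr{N}}}$, valid for every $s\in H^0(\mathscr{N})$), hence restrict verbatim to any subspace; the subspace norm is just the restriction of the ambient norm, so the same inequalities hold between the induced subspace norms on $H^0(\mathscr{M};\mathcal{V})$. (One should note that $\otimes s_0^{\otimes b}$ maps $H^0(\mathscr{M};\mathcal{V})$ into $H^0(\mathscr{N})$, using \eqref{eqn:elementary:valuation:a} and \eqref{eqn:elementary:valuation:c} since $s_0$ is a global section, so $\nu(s\otimes s_0^{\otimes b})=\nu(s)+b\,\nu(s_0)\geq\nu(s)\geq\nu(\mathcal{V})$; strictly one needs $\mathcal{V}$ unchanged, which is the setup.) A second small point is that $s_0\in\aHzsm(\overline{\mathscr{A}})$, i.e. $\|s_0\|_{\sup}^{\overline{\mathscr{A}}}\leq 1$, is what keeps the constant $\delta^{-1}$ in Proposition~\ref{cor:Bernstein:Markov} under control and lets $C_4$ depend only on $\overline{\pmb{\mathscr{L}}},(\overline{\mathscr{A}},s_0),\mathscr{X}$.

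The main obstacle — really the only nonformal step — is step (ii): one must present $\overline{\mathscr{N}}=\bm{a}\cdot\overline{\pmb{\mathscr{L}}}+b\overline{\mathscr{A}}$ in the exact shape $a'\overline{A}-\bm{b}'\cdot\overline{\bm{B}}$ required by Proposition~\ref{prop:distortion:corollary}, with $\overline{A},\overline{\bm{B}}$ ample and pointwise positive, and with $a'=O(\|\bm{a}\|_1+b)$, so that the resulting bound $\|\beta(\overline{\mathscr{N}})\|_{\sup}\leq C_3(a')^d$ is polynomial of the right degree. The natural choice $\overline{A}=\overline{\mathscr{A}}$, $\overline{B}_i=\overline{\mathscr{A}}-\overline{\mathscr{L}}_i$ works because the positivity hypotheses on $\overline{\mathscr{L}}_i+\overline{\mathscr{A}}$ and $\overline{\mathscr{A}}$ are precisely what is needed (after noting $a'\overline{A}-\bm{a}\cdot\overline{\bm{B}}=\bm{a}\cdot(\overline{\mathscr{L}}_i)+(a'-\|\bm{a}\|_1)\overline{\mathscr{A}}$, so taking $a'=\|\bm{a}\|_1+b$ absorbs the $b\overline{\mathscr{A}}$ exactly). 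Once this identification is made, everything else is an assembly of the cited inequalities, and the final accounting gives the claimed error $C_4\|\bm{a}\|_1^d(b+\log\|\bm{a}\|_1)$: the $b$-term comes from step (i)'s factor $C_2^{\|\bm{a}\|_1+b}$ contributing $\lambda=O(\|\bm{a}\|_1+b)$ times $\rk=O(\|\bm{a}\|_1^d)$ — wait, that would give $\|\bm{a}\|_1^{d+1}$; here one instead keeps the $\sup$-to-$\sup$ comparison of step (i) \emph{outside} the rescaling estimate and applies \eqref{eqn:elementary:property:of:ah:rescaling} only to the $L^2$-vs-$\sup$ discrepancy of step (ii), whose $\lambda=O(d\log(\|\bm{a}\|_1+b))$, yielding the $\log\|\bm{a}\|_1$ term; the raw factor $C_2^{\|\bm{a}\|_1+b}$ from step (i) is handled separately, again via \eqref{eqn:elementary:property:of:ah:rescaling}, contributing $(\|\bm{a}\|_1+b)\cdot O(\|\bm{a}\|_1^d)$, i.e. exactly the $b\,\|\bm{a}\|_1^d$ and $\|\bm{a}\|_1^{d+1}$-free shape $C_4\|\bm{a}\|_1^d(b+\log\|\bm{a}\|_1)$ once one observes $\|\bm{a}\|_1\cdot\|\bm{a}\|_1^d$ is absorbed into $\log$ only if... in fact the statement does allow the $b\cdot\|\bm{a}\|_1^d$ term but the pure $\|\bm{a}\|_1^{d+1}$ term must cancel, which it does because step (i)'s constant $C_2$ can be taken to satisfy $\log C_2=0$ after replacing it by a bound linear in the section, i.e. the correct reading is that the $\|\bm{a}\|_1$-linear exponent in step (i) is genuinely there and gets absorbed as part of the allowed $\|\bm{a}\|_1^d\log\|\bm{a}\|_1$ only when $d\geq 1$; the careful bookkeeping of these exponents is exactly what the proof must nail down.
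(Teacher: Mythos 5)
There is a genuine gap, and you half-recognize it yourself in your closing paragraph. Your quantitative route from the subspace sup-norm back to the ambient sup-norm is Proposition~\ref{cor:Bernstein:Markov}, which only gives $\|\cdot\|_{\sup}^{\bm{a}\cdot\overline{\pmb{\mathscr{L}}}}\leq C_2^{\|\bm{a}\|_1+b}\|\cdot\|_{\sup,\sub(s_0^{\otimes b})}$ with $C_2>1$ in general (it comes from Lemma~\ref{lem:Bernstein:Markov:2} and from $\delta^{-1}=(\inf_U|s_0|)^{-1}\geq 1$; there is no way to force $\log C_2=0$, since $s_0$ may vanish somewhere and $\|s_0\|_{\sup}\leq 1$). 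Feeding this into \eqref{eqn:elementary:property:of:ah:rescaling} with $\rk H^0(\bm{a}\cdot\pmb{\mathscr{L}};\mathcal{V})=O(\|\bm{a}\|_1^d)$ produces an error of order $(\|\bm{a}\|_1+b)\|\bm{a}\|_1^d\log C_2$, i.e.\ it contains a term $\|\bm{a}\|_1^{d+1}$, which is \emph{not} majorized by $C_4\|\bm{a}\|_1^{d}(b+\log\|\bm{a}\|_1)$; your suggestion that it can be ``absorbed'' into the $\log\|\bm{a}\|_1$ term is false. The $L^2$ detour in your steps (ii)--(iv) does not repair this, because those steps only control the sup-vs-$L^2$ discrepancy (a harmless $O(\|\bm{a}\|_1^d\log(\|\bm{a}\|_1+b))$); the whole difficulty is the norm change caused by multiplication by $s_0^{\otimes b}$, and for that you have nothing better than the exponential factor $C_2^{\|\bm{a}\|_1+b}$.

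The paper's proof avoids exactly this by never using a multiplicative sup-norm comparison for the $s_0^{\otimes b}$-twist in this theorem (Proposition~\ref{cor:Bernstein:Markov} is reserved for Step~5 of Theorem~\ref{thm:Continuity_Est2}, where a cruder bound suffices). Instead, after passing to $L^2$-norms (error $O(\|\bm{a}\|_1^d\log\|\bm{a}\|_1)$), it picks an $L^2$-orthonormal basis $e_1,\dots,e_l$ of $H^0(\bm{a}\cdot\pmb{\mathscr{L}};\mathcal{V})$ diagonalizing the twisted inner product, applies the Gram-determinant inequality \eqref{eqn:fundamental:property:of:norms3} so that the whole norm change is encoded in $-\tfrac12\sum_i\log\int|e_i|^2|s_0|^{2b}\,\Omega$, and then uses Jensen's inequality (each $|e_i|^2\Omega$ is a probability measure) together with the distortion bound $\sum_i|e_i|^2=\beta\big(H^0(\bm{a}\cdot\pmb{\mathscr{L}};\mathcal{V});\bm{a}\cdot\overline{\pmb{\mathscr{L}}},\Omega\big)\leq D_1\|\bm{a}\|_1^{d}$ (Proposition~\ref{prop:distortion:corollary} applied to the bundle $\bm{a}\cdot\overline{\pmb{\mathscr{L}}}$ itself, not to $\bm{a}\cdot\overline{\pmb{\mathscr{L}}}+b\overline{\mathscr{A}}$) to get the bound $D_1\|\bm{a}\|_1^{d}\,b\int(-\log|s_0|)\,\Omega$. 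The point is that $-\log|s_0|$ is integrable even where $s_0$ vanishes, so the $b$-dependence comes out linear with coefficient $\|\bm{a}\|_1^{d}$ and no $\|\bm{a}\|_1^{d+1}$ term appears. Your step (ii) identification for the distortion function and your remarks about subspaces and about $\nu(s\otimes s_0^{\otimes b})\geq\nu(\mathcal{V})$ are fine, but without the determinant--Jensen mechanism (or some substitute of the same strength) the stated estimate is not reached.
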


\begin{proof}
We set the volume form as
\[
\Omega:=c_1\left(\overline{\mathscr{L}}_1+\dots+\overline{\mathscr{L}}_r+r\overline{\mathscr{A}}\right)^{\wedge d},
\]
and consider the $L^2$-norms with respect to $\Omega$.
By Proposition~\ref{prop:distortion:corollary}, there exists a constant $D_1>0$ such that
\begin{align*}
&\left\|\beta\left(H^0\left(\bm{a}\cdot\pmb{\mathscr{L}};\mathcal{V}\right);\bm{a}\cdot\overline{\pmb{\mathscr{L}}},\Omega\right)\right\|_{\sup} \\
& \quad \leq \left\|\beta\left(\|\bm{a}\|_1(\overline{\mathscr{L}}_1+\dots+\overline{\mathscr{L}}_r+r\overline{\mathscr{A}})-\sum_{i=1}^r(\|\bm{a}\|_1-a_i)(\overline{\mathscr{L}}_i+\overline{\mathscr{A}})-\|\bm{a}\|_1\overline{\mathscr{A}}\right)\right\|_{\sup} \\
& \quad \leq D_1\|\bm{a}\|_1^{d}
\end{align*}
for every $\bm{a}\in\ZZ_{\geq 0}^r$ with $\|\bm{a}\|_1>0$ and $\mathcal{V}\in\BC_{\RR}(\mathscr{X})$.

We fix an $L^2$-orthonormal basis $e_1,\dots,e_l$ for $H^0(\bm{a}\cdot\pmb{\mathscr{L}};\mathcal{V})$ in which the Hermitian form,
\[
(s,t)\mapsto \left\langle s\otimes s_0^{\otimes b},t\otimes s_0^{\otimes b}\right\rangle_{L^2}^{\bm{a}\cdot\overline{\pmb{\mathscr{L}}}+b\overline{\mathscr{A}}},
\]
is diagonalized.
By \cite[Corollary~1.1.2]{MoriwakiCont}, we can change the supremum norms to the $L^2$-norms up to error term $O(\|\bm{a}\|_1^d\log\|\bm{a}\|_1)$ (see \cite[Proof of Lemma~1.3.3]{MoriwakiContExt}).
Since
\[
\|\cdot\|_{L^2,\sub(s_0^{\otimes b})}^{\bm{a}\cdot\overline{\pmb{\mathscr{L}}}+b\overline{\mathscr{A}}}\leq\|\cdot\|_{L^2}^{\bm{a}\cdot\overline{\pmb{\mathscr{L}}}}\cdot\left(\|s_0\|_{\sup}^{\overline{\mathscr{A}}}\right)^b\leq\|\cdot\|_{L^2}^{\bm{a}\cdot\overline{\pmb{\mathscr{L}}}},
\]
we can apply the inequality \eqref{eqn:fundamental:property:of:norms3} and find a constant $D_2>0$ such that
\begin{align}
&\ahs{\sub(s_0^{\otimes b})}\left(\bm{a}\cdot\overline{\pmb{\mathscr{L}}};\mathcal{V}\right)-\ah\left(\bm{a}\cdot\overline{\pmb{\mathscr{L}}};\mathcal{V}\right)-D_2\|\bm{a}\|_1^d\log\|\bm{a}\|_1 \label{eqn:distortion:corollary:1}\\
&\qquad\qquad \leq -\frac{1}{2}\log\left(\frac{\det\left(\langle e_i\otimes s_0^{\otimes b},e_j\otimes s_0^{\otimes b}\rangle_{L^2}^{\bm{a}\cdot\overline{\pmb{\mathscr{L}}}+b\overline{\mathscr{A}}}\right)_{i,j}}{\det\left(\langle e_i, e_j \rangle_{L^2}^{\bm{a}\cdot\overline{\pmb{\mathscr{L}}}}\right)_{i,j}}\right) \nonumber\\
&\qquad\qquad =-\frac{1}{2}\sum_{i=1}^l\log\int_{\mathscr{X}(\CC)}\left(|e_i|^{\bm{a}\cdot\overline{\pmb{\mathscr{L}}}}\right)^2\cdot \left(|s_0|^{\overline{\mathscr{A}}}\right)^{2b}\,\Omega \nonumber
\end{align}
for every $\bm{a}\in\ZZ_{\geq 0}$ with $\|\bm{a}\|_1>0$, $b\in\ZZ_{\geq 0}$, and $\mathcal{V}\in\BC_{\RR}(\mathscr{X})$.

Since $\int_{\mathscr{X}(\CC)}\left(|e_i|^{\bm{a}\cdot\overline{\pmb{\mathscr{L}}}}\right)^2\,\Omega=1$ for each $i$ and $s_0$ does not vanish identically on each connected component of $\mathscr{X}(\CC)$, one can apply Jensen's inequality \cite[Theorem~3.3]{RudinTrilogy2} to the right-hand side of \eqref{eqn:distortion:corollary:1} and obtain
\begin{align*}
&\ahs{\sub(s_0^{\otimes b})}\left(\bm{a}\cdot\overline{\pmb{\mathscr{L}}};\mathcal{V}\right)-\ah\left(\bm{a}\cdot\overline{\pmb{\mathscr{L}}};\mathcal{V}\right)-D_2\|\bm{a}\|_1^d\log\|\bm{a}\|_1 \\
&\qquad\qquad\qquad\qquad \leq \sum_{i=1}^l\int_{\mathscr{X}(\CC)}\left(|e_i|^{\bm{a}\cdot\overline{\pmb{\mathscr{L}}}}\right)^2\cdot(-b\log|s_0|)\,\Omega \\
&\qquad\qquad\qquad\qquad \leq\left(D_1\int_{\mathscr{X}(\CC)}-\log|s_0|\,\Omega\right)\cdot \|\bm{a}\|_1^{d}b.
\end{align*}
\end{proof}

\subsection{Main estimate: the case of models}\label{subsec:FundEst}

Let $\mathscr{X}$ be a projective arithmetic variety over $\Spec(\ZZ)$ of dimension $d+1$.
Given a family $\overline{\pmb{\bm{\mathscr{L}}}}:=(\overline{\mathscr{L}}_1,\dots,\overline{\mathscr{L}}_r)$ of continuous Hermitian line bundles on $\mathscr{X}$ and an $\bm{a}=(a_1,\dots,a_r)\in\ZZ^r$, we write
\[
\bm{a}\cdot\overline{\pmb{\bm{\mathscr{L}}}}:=a_1\overline{\mathscr{L}}_1+\dots+a_r\overline{\mathscr{L}}_r,\quad\bm{a}\cdot\pmb{\bm{\mathscr{L}}}:=a_1\mathscr{L}_1+\dots+a_r\mathscr{L}_r,
\]
and $\|\bm{a}\|_1:=|a_1|+\dots+|a_r|$ as in Notation and terminology~\ref{subsubsec:NT:R-modules}.
The purpose of this section is to show the following estimate.

\begin{theorem}\label{thm:Continuity_Est2}
Let $\mathscr{X}$ be a projective arithmetic variety of dimension $d+1$ over $\Spec(\ZZ)$.
Assume that the generic fiber $\mathscr{X}_{\QQ}$ is smooth over $\Spec(\QQ)$.
Let $\overline{\pmb{\mathscr{L}}}:=(\overline{\mathscr{L}}_1,\dots,\overline{\mathscr{L}}_r)$ be a family of $C^{\infty}$-Hermitian line bundles on $\mathscr{X}$ and let $\Sigma$ be a finite set of points on $\mathscr{X}$.
Let $\overline{\mathscr{A}}$ be any continuous Hermitian line bundle on $\mathscr{X}$.
There then exists a constant $C>0$ depending only on $\mathscr{X}$, $\overline{\pmb{\mathscr{L}}}$, $\overline{\mathscr{A}}$, and $\Sigma$ such that
\[
\ah\left(\bm{a}\cdot\overline{\pmb{\mathscr{L}}}+b\overline{\mathscr{A}};\mathcal{V}\right)-\ah\left(\bm{a}\cdot\overline{\pmb{\mathscr{L}}};\mathcal{V}\right) \leq C\left((\|\bm{a}\|_1+|b|)^{d}|b|+\|\bm{a}\|_1^d\log\|\bm{a}\|_1\right)
\]
for every $\bm{a}\in\ZZ^r$ with $\|\bm{a}\|_1>0$, $b\in\ZZ$, and $\mathcal{V}\in\BC_{\RR}(\mathscr{X})$ with
\[
\{c_{\mathscr{X}}(\nu)\,:\,\nu(\mathcal{V})>0\}\subset\Sigma.
\]
\end{theorem}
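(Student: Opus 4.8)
\textbf{Proof proposal for Theorem~\ref{thm:Continuity_Est2}.}

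The plan is to reduce the comparison of $\ah(\bm{a}\cdot\overline{\pmb{\mathscr{L}}}+b\overline{\mathscr{A}};\mathcal{V})$ and $\ah(\bm{a}\cdot\overline{\pmb{\mathscr{L}}};\mathcal{V})$ to the situation covered by Theorem~\ref{thm:Continuity_Est1}, after first arranging that all the relevant Hermitian metrics are $C^{\infty}$ and pointwise positive definite, and that $b$ may be assumed $\geq 0$. The sign reduction is standard: replacing $(\bm{a},b)$ by $(\bm{a},-b)$ and $\overline{\mathscr{A}}$ by $-\overline{\mathscr{A}}$ swaps the two terms on the left-hand side, so it suffices to treat $b\geq 0$ and, symmetrically, $b\leq 0$; in each case one wants the one-sided bound. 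To get into the positive-definite regime, I would fix once and for all a $C^{\infty}$-Hermitian line bundle $\overline{\mathscr{B}}$ on $\mathscr{X}$ that is ``sufficiently positive,'' e.g.\ with $\overline{\mathscr{B}}$ ample, the metrics of $\overline{\mathscr{L}}_i+\overline{\mathscr{B}}$ and of $\overline{\mathscr{B}}\pm\overline{\mathscr{A}}$ all pointwise positive definite, and a nonzero global section $s_0$ of the underlying $\mathscr{B}$ with $\|s_0\|_{\sup}\leq 1$ (after rescaling the metric). Such a $\overline{\mathscr{B}}$ and $s_0$ exist because ampleness is an open condition and tensoring by a high power of an ample bundle both makes the relevant metrics positive and produces small sections.

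The key step is then a telescoping argument using the subspace norm induced by $\otimes s_0$. For a fixed $N\in\ZZ_{>0}$ (to be taken proportional to $\|\bm{a}\|_1+|b|$) write $\bm{a}\cdot\overline{\pmb{\mathscr{L}}}+b\overline{\mathscr{A}} = (\bm{a}\cdot\overline{\pmb{\mathscr{L}}}+N\overline{\mathscr{B}}) + (b\overline{\mathscr{A}}-N\overline{\mathscr{B}})$, and compare $H^0$ of the left-hand bundle with $H^0$ of $\bm{a}\cdot\overline{\pmb{\mathscr{L}}}+N\overline{\mathscr{B}}$ via the inclusion $\otimes s_0^{\otimes N}$ twisted by a section trivializing $b\overline{\mathscr{A}}-N\overline{\mathscr{B}}$; more robustly, I would apply Theorem~\ref{thm:Continuity_Est1} to the family $(\overline{\mathscr{L}}_1,\dots,\overline{\mathscr{L}}_r,\overline{\mathscr{A}})$ (enlarged by $\overline{\mathscr{A}}$), with $\overline{\mathscr{A}}$ in the role of the twisting bundle $\overline{\mathscr{A}}$ there replaced by $\overline{\mathscr{B}}$, the section $s_0\in\aHzsm(\overline{\mathscr{B}})$, and exponent vector $(\bm{a},b)$. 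That yields
\[
\ahs{\sub(s_0^{\otimes N})}\!\left((\bm{a},b)\cdot\overline{\pmb{\mathscr{L}}}';\mathcal{V}\right)\leq \ah\!\left((\bm{a},b)\cdot\overline{\pmb{\mathscr{L}}}';\mathcal{V}\right)+C_4(\|\bm{a}\|_1+|b|)^{d}\bigl(N+\log(\|\bm{a}\|_1+|b|)\bigr),
\]
where $\overline{\pmb{\mathscr{L}}}'=(\overline{\mathscr{L}}_1,\dots,\overline{\mathscr{L}}_r,\overline{\mathscr{A}})$. On the other hand, since $(\bm{a},b)\cdot\overline{\pmb{\mathscr{L}}}'+N\overline{\mathscr{B}} = \bm{a}\cdot\overline{\pmb{\mathscr{L}}}+b\overline{\mathscr{A}}+N\overline{\mathscr{B}}$, choosing $N$ large enough (but $O(\|\bm{a}\|_1+|b|)$) that $b\overline{\mathscr{A}}+N\overline{\mathscr{B}}$ has a nonvanishing section with controlled supremum norm lets one pass from $\bm{a}\cdot\overline{\pmb{\mathscr{L}}}$ to $\bm{a}\cdot\overline{\pmb{\mathscr{L}}}+b\overline{\mathscr{A}}+N\overline{\mathscr{B}}$ at the cost of another $O((\|\bm{a}\|_1+|b|)^d N)$ term via the trivial inclusion of sublattices and \eqref{eqn:elementary:property:of:ah:rescaling:2}; combining the two inequalities and the reverse direction (swapping the roles of the two bundles) gives the asserted two-sided estimate with a total error $O((\|\bm{a}\|_1+|b|)^d|b| + \|\bm{a}\|_1^d\log\|\bm{a}\|_1)$, after noting $N=O(\|\bm{a}\|_1+|b|)$ and collecting terms.

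Throughout, the base condition $\mathcal{V}$ is carried along passively: the only place it interacts with the geometry is that all maps $\otimes s_0$ and the section trivializing the auxiliary bundle must not change the valuation $\nu(s)$ at the finitely many centers $c_{\mathscr{X}}(\nu)\in\Sigma$ with $\nu(\mathcal{V})>0$. This is why $\Sigma$ enters the constant $C$: one chooses $\overline{\mathscr{B}}$ and $s_0$ (and, for each $N$, the auxiliary section of $b\overline{\mathscr{A}}+N\overline{\mathscr{B}}$) so that $s_0$ and those sections do not vanish at any point of $\Sigma$, which is possible because $\Sigma$ is finite and the bundles involved are generated by global sections after a bounded twist, uniformly over the (finitely many possible) centers — here one uses that ``$s_0$ avoids $\Sigma$'' forces $\nu(s\otimes s_0^{\otimes N})=\nu(s)$ for all $\nu$ with $c_{\mathscr{X}}(\nu)\in\Sigma$ by \eqref{eqn:elementary:valuation:a} and \eqref{eqn:elementary:valuation:c}, so $H^0(\mathscr{M};\mathcal{V})\otimes s_0^{\otimes N}\subset H^0(\mathscr{M}+N\mathscr{B};\mathcal{V})$ and the subspace-norm comparison of Theorem~\ref{thm:Continuity_Est1} applies verbatim to the pairs with base condition. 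The main obstacle I anticipate is precisely this uniformity: making sure that a single auxiliary $\overline{\mathscr{B}}$ (with one fixed $s_0$) simultaneously achieves positive-definiteness of all the needed metrics, smallness $\|s_0\|_{\sup}\leq 1$, nonvanishing on $\Sigma$, and — for the $N$-dependent step — that $b\overline{\mathscr{A}}+N\overline{\mathscr{B}}$ admits a section nonvanishing on $\Sigma$ with $\log$ of its supremum norm of size $O(\|\bm{a}\|_1+|b|)$, with all implied constants depending only on $\mathscr{X},\overline{\pmb{\mathscr{L}}},\overline{\mathscr{A}},\Sigma$ and not on $\bm{a},b,\mathcal{V}$; the linear (in $N$, hence in $\|\bm{a}\|_1+|b|$) growth of the error is exactly what produces the $(\|\bm{a}\|_1+|b|)^d|b|$ shape once $N$ is optimized, so one must be careful that $N$ can indeed be taken $O(|b|)$ (not $O(\|\bm{a}\|_1+|b|)$) when $b$ is the only source of negativity — i.e.\ when $b\geq 0$ one needs no twist at all in one direction, and when $b<0$ one needs $N=O(|b|)$ — which is what keeps the final bound sharp.
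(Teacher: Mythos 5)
There is a genuine gap at the heart of your argument: you never control the sections of $\bm{a}\cdot\pmb{\mathscr{L}}+b\mathscr{A}$ that do \emph{not} come from $\bm{a}\cdot\pmb{\mathscr{L}}$. Your application of Theorem~\ref{thm:Continuity_Est1} to the enlarged family $(\overline{\mathscr{L}}_1,\dots,\overline{\mathscr{L}}_r,\overline{\mathscr{A}})$ only compares two norms on the \emph{same} module $H^0(\bm{a}\cdot\pmb{\mathscr{L}}+b\mathscr{A};\mathcal{V})$ (sup norm versus subspace norm via $\otimes s_0^{\otimes N}$), with $\ah(\bm{a}\cdot\overline{\pmb{\mathscr{L}}}+b\overline{\mathscr{A}};\mathcal{V})$ sitting on the larger side; it establishes no link to $\ah(\bm{a}\cdot\overline{\pmb{\mathscr{L}}};\mathcal{V})$. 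The step that is supposed to supply that link --- passing from $\bm{a}\cdot\overline{\pmb{\mathscr{L}}}$ to $\bm{a}\cdot\overline{\pmb{\mathscr{L}}}+b\overline{\mathscr{A}}+N\overline{\mathscr{B}}$ ``via the trivial inclusion of sublattices and \eqref{eqn:elementary:property:of:ah:rescaling:2}'' --- is invalid: \eqref{eqn:elementary:property:of:ah:rescaling:2} requires the quotient $M/M'$ to be torsion, whereas the cokernel of $H^0(\bm{a}\cdot\pmb{\mathscr{L}};\mathcal{V})\xrightarrow{\otimes t}H^0(\bm{a}\cdot\pmb{\mathscr{L}}+b\mathscr{A}+N\mathscr{B};\mathcal{V})$ has large positive rank (the generic ranks differ by a quantity of order $(\|\bm{a}\|_1+b+N)^d-\|\bm{a}\|_1^{d}$), so $\log\sharp(M/M')$ is infinite and the inequality says nothing. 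Moreover such an inclusion in this direction only yields the easy lower bound for the larger bundle; the content of the theorem is the \emph{upper} bound, i.e.\ bounding the extra sections, and that is exactly what your sketch omits. (You also say that for $b\geq0$ ``no twist is needed in one direction,'' but $b\geq0$ is precisely the hard case.)

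The paper closes exactly this gap by exhibiting the cokernel concretely and bounding it: with $k\mathscr{Y}=\zdiv(s_0^{\otimes k})$ one has the exact sequence of Claim~\ref{clm:main:exact:sequence}, $0\to H^0(\bm{a}\cdot\pmb{\mathscr{L}};\mathcal{V})\xrightarrow{\otimes s_0^{\otimes b}}H^0(\bm{a}\cdot\pmb{\mathscr{L}}+b\mathscr{A};\mathcal{V})\to H^0_{\mathscr{X}|b\mathscr{Y}}(\bm{a}\cdot\pmb{\mathscr{L}}+b\mathscr{A};\mathcal{V})\to 0$ (here your observation that $s_0$ must avoid $\Sigma$ so that $\nu(\,\cdot\,\otimes s_0^{\otimes b})=\nu(\,\cdot\,)$ for the relevant $\nu$ is indeed needed, and Theorem~\ref{thm:Continuity_Est1} is then applied to the \emph{sub}term to return from the subspace norm to $\ah(\bm{a}\cdot\overline{\pmb{\mathscr{L}}};\mathcal{V})$ at cost $O(\|\bm{a}\|_1^d(b+\log\|\bm{a}\|_1))$). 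The remaining, and essential, work is to show that the quotient term satisfies $\ahs{\quot(\mathscr{X}|b\mathscr{Y})}\leq C(\|\bm{a}\|_1+b)^{d}b$; this is done by an induction over the thickenings $k\mathscr{Y}$, $k=1,\dots,b-1$, using the exact sequences $0\to-k\mathscr{A}|_{\mathscr{Y}}\to\OO_{(k+1)\mathscr{Y}}\to\OO_{k\mathscr{Y}}\to0$, the single-divisor bound of Claim~\ref{clm:Snapper:and:aSnapper} (which itself needs the horizontal/vertical splitting of $\mathscr{Y}$), and the norm comparison of Proposition~\ref{cor:Bernstein:Markov} together with Moriwaki's lemma to control the rescaling incurred at each step. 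Your proposal names a ``telescoping argument'' but never sets up this filtration or any exact sequence, so the factor $b$ in the bound $(\|\bm{a}\|_1+|b|)^d|b|$ is not actually produced by the written argument; as it stands the proof does not go through.
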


\begin{proof}
We divide the proof into five steps.
\medskip

\begin{Steps}
\Proofstep
We may assume $\mathcal{V}\geq 0$.
By considering $\pm\overline{\mathscr{L}}_1,\dots,\pm\overline{\mathscr{L}}_r$, and $\pm\overline{\mathscr{A}}$, one can observe that it suffices to show the theorem for $\bm{a}\in\ZZ_{\geq 0}^r$ with $\|\bm{a}\|_1>0$ and $b\in\ZZ_{\geq 0}$.
Moreover, if the theorem is true for an $\overline{\mathscr{A}}$, then it is also true for any $\overline{\mathscr{A}}'$ with $\overline{\mathscr{A}}'\leq\overline{\mathscr{A}}$ in place of $\overline{\mathscr{A}}$.
Hence we can assume without loss of generality that $\overline{\mathscr{A}}$ has the following four properties.
\begin{enumerate}
\item[(a)] $\mathscr{A}$ is ample on $\mathscr{X}$.
\item[(b)] The Hermitian metric of $\overline{\mathscr{A}}$ is $C^{\infty}$, and the Hermitian metrics of $\overline{\mathscr{L}}_1+\overline{\mathscr{A}},\dots,\overline{\mathscr{L}}_r+\overline{\mathscr{A}}$, and $\overline{\mathscr{A}}$ are all pointwise positive definite.
\item[(c)] For every $n\gg 1$, $\langle\aHz(n\overline{\mathscr{A}})\rangle_{\ZZ}=H^0(n\mathscr{A})$ (see Notation and terminology~\ref{subsubsec:NT:R-modules} and \cite[Lemma~5.3]{IkomaRem}).
\item[(d)] There is a nonzero small section $s_0\in\aHzsm(\overline{\mathscr{A}})$ such that $\zdiv(s_0)_{\QQ}$ is smooth and such that $s_0$ does not pass through any point in $\Sigma$.
\end{enumerate}
\medskip

\Proofstep
For each $k\in\ZZ_{>0}$, we set
\begin{equation}
k\mathscr{Y}:=\zdiv(s_0^{\otimes k}).
\end{equation}
For $\bm{a}\in\ZZ^r$ and $b\in\ZZ$, we consider the $\ZZ$-module
\begin{multline}
 H^0_{\mathscr{X}|k\mathscr{Y}}(\bm{a}\cdot\pmb{\bm{\mathscr{L}}}+b\mathscr{A};\mathcal{V}) \\
 :=\Image\left(H^0(\bm{a}\cdot\pmb{\bm{\mathscr{L}}}+b\mathscr{A};\mathcal{V})\to H^0((\bm{a}\cdot\pmb{\bm{\mathscr{L}}}+b\mathscr{A})|_{k\mathscr{Y}})\right)
\end{multline}
endowed with the quotient norm $\|\cdot\|_{\sup,\quot(\mathscr{X}|k\mathscr{Y})}^{\bm{a}\cdot\overline{\pmb{\mathscr{L}}}+b\overline{\mathscr{A}}}$ induced from
\[
\left(H^0(\bm{a}\cdot\pmb{\mathscr{L}}+b\mathscr{A};\mathcal{V}),\|\cdot\|_{\sup}^{\bm{a}\cdot\overline{\pmb{\mathscr{L}}}+b\overline{\mathscr{A}}}\right).
\]

By abuse of notation, we will abbreviate, for $\sbullet=\sub(\text{---})$, $\quot(\mathscr{X}|\text{---})$, etc.,
\[
\ahs{\sbullet}\left(\bm{a}\cdot\overline{\pmb{\mathscr{L}}}+b\overline{\mathscr{A}};\mathcal{V}\right):=\ah\left(H^0_{?}(\bm{a}\cdot\pmb{\mathscr{L}}+b\mathscr{A};\mathcal{V}),\|\cdot\|_{\sup,\sbullet}^{?}\right)
\]
for simplicity, which in practice will cause no confusion.

 By Snapper's theorem \cite[page 295]{Kleiman}, one can find a constant $C>0$ depending only on $\pmb{\mathscr{L}}$, $\mathscr{A}$, $\mathscr{X}$, and $\mathscr{Y}$ such that
\begin{equation}
\label{eqn:Snappers:theorem}
h^0(\bm{a}\cdot\pmb{\mathscr{L}}+b\mathscr{A})\leq C(\|\bm{a}\|_1+b)^d\quad\text{and}\quad h^0((\bm{a}\cdot\pmb{\mathscr{L}}+b\mathscr{A})|_{\mathscr{Y}})\leq C(\|\bm{a}\|_1+b)^{d-1}
\end{equation}
for every $\bm{a}\in\ZZ_{\geq 0}^r$ and $b\in\ZZ_{\geq 0}$ with $\|\bm{a}\|_1+b>0$.

In the rest of the proof, the constant $C$ will be fittingly changed without explicit mentioning of it.

\begin{claim}
\label{clm:Snapper:and:aSnapper}
There exists a constant $C>0$ depending only on $\overline{\pmb{\mathscr{L}}}$, $\overline{\mathscr{A}}$, and $\mathscr{Y}$ such that
\[
\ahs{\quot(\mathscr{X}|\mathscr{Y})}\left(\bm{a}\cdot\overline{\pmb{\mathscr{L}}}+b\overline{\mathscr{A}};\mathcal{V}\right)\leq C(\|\bm{a}\|_1+b)^{d}
\]
for every $\bm{a}\in\ZZ_{\geq 0}^r$ and $b\in\ZZ_{\geq 0}$ with $\|\bm{a}\|_1+b>0$ and $\mathcal{V}\in\BC_{\RR}(\mathscr{X})$.
\end{claim}

\begin{proof}
It suffices to show the estimate
\[
\ah\left((\bm{a}\cdot\overline{\pmb{\mathscr{L}}}+b\overline{\mathscr{A}})|_{\mathscr{Y}}\right)\leq C(\|\bm{a}\|_1+b)^{d}
\]
for $\bm{a}\in\ZZ_{\geq 0}^r$ and $b\in\ZZ_{\geq 0}$ with $\|\bm{a}\|_1+b>0$.
Let $\mathscr{Y}_{\horiz}$ be the horizontal part of $\mathscr{Y}$, that is, the Zariski closure of $\mathscr{Y}_{\QQ}$ in $\mathscr{X}$.
Let $\mathcal{I}$ (respectively, $\mathcal{I}_{\horiz}$) be the ideal sheaf defining $\mathscr{Y}$ (respectively, $\mathscr{Y}_{\horiz}$) in $\mathscr{X}$.
By the properties (a) and (c) of Step 1, one finds an $n\in\ZZ_{>0}$ and nonzero small sections $t_i\in\aHzsm(n\overline{\mathscr{A}}-\overline{\mathscr{L}}_i)$ for $i=1,\dots,r$ such that each $t_i$ does not pass through any associated point of $\OO_{\mathscr{X}}/\mathcal{I}_{\horiz}$ and $\mathcal{I}_{\horiz}/\mathcal{I}$.

First, one finds a constant $C>0$ such that
\begin{align}
\ah\left((\bm{a}\cdot\overline{\pmb{\mathscr{L}}}+b\overline{\mathscr{A}})|_{\mathscr{Y}_{\horiz}}\right) & \leq \ah\left((n\|\bm{a}\|_1+b)\overline{\mathscr{A}}|_{\mathscr{Y}_{\horiz}}\right) \label{eqn:Snapper:and:aSnapper1}\\
& \leq C(\|\bm{a}\|_1+b)^d \nonumber
\end{align}
for every $\bm{a}\in\ZZ_{\geq 0}^r$ and $b\in\ZZ_{\geq 0}$ with $\|\bm{a}\|_1+b>0$ (see for example \cite[Theorem~2.8]{Boucksom_Chen}).

Next, $\mathcal{I}_{\horiz}/\mathcal{I}$ is a torsion sheaf having support of dimension $\leq d$.
So, by Snapper's theorem, one has
\begin{align}
\log\sharp H^0\left((\bm{a}\cdot\pmb{\mathscr{L}}+b\mathscr{A})\otimes\mathcal{I}_{\horiz}/\mathcal{I}\right) & \leq \log\sharp H^0\left((n\|\bm{a}\|_1+b)\mathscr{A}\otimes\mathcal{I}_{\horiz}/\mathcal{I}\right) \label{eqn:Snapper:and:aSnapper2}\\
& \leq C(\|\bm{a}\|_1+b)^d \nonumber
\end{align}
for every $\bm{a}\in\ZZ_{\geq 0}^r$ and $b\in\ZZ_{\geq 0}$ with $\|\bm{a}\|_1+b>0$.

Applying \eqref{eqn:elementary:property:of:ah:exact:sequence} to the exact sequence
\[
0\to(\bm{a}\cdot\pmb{\mathscr{L}}+b\mathscr{A})\otimes(\mathcal{I}_{\horiz}/\mathcal{I})\to(\bm{a}\cdot\pmb{\mathscr{L}}+b\mathscr{A})|_{\mathscr{Y}}\to(\bm{a}\cdot\pmb{\mathscr{L}}+b\mathscr{A})|_{\mathscr{Y}_{\horiz}}\to 0,
\]
one obtains, by \eqref{eqn:Snapper:and:aSnapper1} and \eqref{eqn:Snapper:and:aSnapper2},
\begin{align*}
&\ah\left((\bm{a}\cdot\overline{\pmb{\mathscr{L}}}+b\overline{\mathscr{A}})|_{\mathscr{Y}}\right)\\
&\qquad\quad \leq\ah\left((\bm{a}\cdot\overline{\pmb{\mathscr{L}}}+b\overline{\mathscr{A}})|_{\mathscr{Y}_{\horiz}}\right)+\log\sharp H^0\left((\bm{a}\cdot\pmb{\mathscr{L}}+b\mathscr{A})\otimes(\mathcal{I}_{\horiz}/\mathcal{I})\right) \\
&\qquad\quad \leq C(\|\bm{a}\|_1+b)^d
\end{align*}
for every $\bm{a}\in\ZZ_{\geq 0}^r$ and $b\in\ZZ_{\geq 0}$ with $\|\bm{a}\|_1+b>0$.
\end{proof}
\medskip

\Proofstep
We begin the estimation with the following claim.

\begin{claim}
\label{clm:main:exact:sequence}
Let $k\in\ZZ_{>0}$, let $\mathscr{M}$ be a line bundle on $\mathscr{X}$, and let $\mathcal{V}\in\BC_{\RR}(\mathscr{X})$ such that $\{c_{\mathscr{X}}(\nu)\,:\,\nu(\mathcal{V})>0\}\subset\Sigma$.
\begin{enumerate}
\item Tensoring by $s_0^{\otimes k}$ induces a homomorphism $H^0(\mathscr{M};\mathcal{V})\to H^0(\mathscr{M}+k\mathscr{A};\mathcal{V})$.
\item The sequence
\[
 0\to H^0(\mathscr{M};\mathcal{V})\xrightarrow{\otimes s_0^{\otimes k}} H^0(\mathscr{M}+k\mathscr{A};\mathcal{V})\xrightarrow{q} H^0_{\mathscr{X}|k\mathscr{Y}}(\mathscr{M}+k\mathscr{A};\mathcal{V})\to 0
\]
is exact.
\end{enumerate}
\end{claim}

\begin{proof}
(1): Let $t\in H^0(\mathscr{M};\mathcal{V})$.
By the property (d) of Step 1, one has $\nu(s_0)=0$ for every $\nu$ with $\nu(\mathcal{V})>0$.
By \eqref{eqn:elementary:valuation:c},
\[
 \nu(t\otimes s_0^{\otimes k})=\nu(t)+k\nu(s_0)\begin{cases} =\nu(t)\geq\nu(\mathcal{V}) & \text{if $\nu(\mathcal{V})>0$ and} \\ \geq 0 & \text{if $\nu(\mathcal{V})=0$} \end{cases}
\]
for every $\nu\in\DV(\Rat(\mathscr{X}))$; hence $t\otimes s_0^{\otimes k}\in H^0(\mathscr{M}+k\mathscr{A};\mathcal{V})$.

(2): Suppose that $t\in H^0(\mathscr{M}+k\mathscr{A};\mathcal{V})$ satisfies $q(t)=0$; hence one finds a $t_0\in H^0(\mathscr{M})$ such that $t=t_0\otimes s_0^{\otimes k}$.
By \eqref{eqn:elementary:valuation:c} and the property (d) of Step 1,
\[
 \nu(t_0)\begin{cases} =\nu(t)\geq\nu(\mathcal{V}) & \text{if $\nu(\mathcal{V})>0$ and} \\ \geq 0 & \text{if $\nu(\mathcal{V})=0$} \end{cases}
\]
for every $\nu\in\DV(\Rat(\mathscr{X}))$; hence $t_0\in H^0(\mathscr{M};\mathcal{V})$.
\end{proof}

If $b=0$, then the theorem is obvious, so that we can assume $b>0$.
We apply \eqref{eqn:elementary:property:of:ah:exact:sequence} to the exact sequence
\begin{equation}
 0\to H^0(\bm{a}\cdot\pmb{\bm{\mathscr{L}}};\mathcal{V})
 \xrightarrow{\otimes s_0^{\otimes b}} H^0(\bm{a}\cdot\pmb{\bm{\mathscr{L}}}+b\mathscr{A};\mathcal{V})
 \longrightarrow H^0_{\mathscr{X}|b\mathscr{Y}}(\bm{a}\cdot\pmb{\bm{\mathscr{L}}}+b\mathscr{A};\mathcal{V})\to 0,
\end{equation}
and obtain, by \eqref{eqn:Snappers:theorem} and Theorem~\ref{thm:Continuity_Est1},
\begin{align}
& \ah\left(\bm{a}\cdot\overline{\pmb{\mathscr{L}}}+b\overline{\mathscr{A}};\mathcal{V}\right) \label{eqn:fund_est_1}\\
& \quad \leq\ahs{\sub(s_0^{\otimes b})}\left(\bm{a}\cdot\overline{\pmb{\mathscr{L}}};\mathcal{V}\right)+\ahs{\quot(\mathscr{X}|b\mathscr{Y})}\left(\bm{a}\cdot\overline{\pmb{\mathscr{L}}}+b\overline{\mathscr{A}};\mathcal{V}\right) \nonumber\\
& \qquad\qquad\qquad\qquad\qquad\qquad\qquad\qquad\qquad\qquad +C\|\bm{a}\|_1^{d}(1+\log\|\bm{a}\|_1) \nonumber\\
& \quad \leq\ah\left(\bm{a}\cdot\overline{\pmb{\mathscr{L}}};\mathcal{V}\right)+\ahs{\quot(\mathscr{X}|b\mathscr{Y})}\left(\bm{a}\cdot\overline{\pmb{\mathscr{L}}}+b\overline{\mathscr{A}};\mathcal{V}\right)+C\|\bm{a}\|_1^{d}(b+\log\|\bm{a}\|_1) \nonumber
\end{align}
for every $\bm{a}\in\ZZ_{\geq 0}^r$ with $\|\bm{a}\|_1>0$ and $b\in\ZZ_{>0}$.
\medskip

\Proofstep
We are going to estimate the middle term $\ahs{\quot(\mathscr{X}|b\mathscr{Y})}\left(\bm{a}\cdot\overline{\pmb{\mathscr{L}}}+b\overline{\mathscr{A}};\mathcal{V}\right)$ in the right-hand side of \eqref{eqn:fund_est_1}.
For each $k\in\ZZ_{>0}$, we identify $-k\mathscr{A}$ with an ideal sheaf of $\OO_{\mathscr{X}}$ via the morphism $-k\mathscr{A}\xrightarrow{\otimes s_0^{\otimes k}}\OO_{\mathscr{X}}$.
The inclusions $-(k+1)\mathscr{A}\xrightarrow{\otimes s_0} -k\mathscr{A}\xrightarrow{\otimes s_0^{\otimes k}}\OO_{\mathscr{X}}$ induce an injective morphism
\begin{multline*}
 \sigma_k:-k\mathscr{A}|_{\mathscr{Y}}=\Coker\left(-(k+1)\mathscr{A}\xrightarrow{\otimes s_0}-k\mathscr{A}\right) \\
 \to \Coker\left(-(k+1)\mathscr{A}\xrightarrow{\otimes s_0^{\otimes(k+1)}}\OO_{\mathscr{X}}\right)=\OO_{(k+1)\mathscr{Y}}.
\end{multline*}

\begin{claim}
For each $k\in\ZZ_{>0}$, $\sigma_k$ induces a homomorphism
\[
H^0_{\mathscr{X}|\mathscr{Y}}(\bm{a}\cdot\pmb{\mathscr{L}}+(b-k)\mathscr{A};\mathcal{V})\to H^0_{\mathscr{X}|(k+1)\mathscr{Y}}(\bm{a}\cdot\pmb{\mathscr{L}}+b\mathscr{A};\mathcal{V}).
\]
\end{claim}

\begin{proof}
This is obvious because $\sigma_k$ induces a homomorphism
\[
H^0\left((\bm{a}\cdot\pmb{\mathscr{L}}+(b-k)\mathscr{A})|_{\mathscr{Y}}\right)\to H^0\left((\bm{a}\cdot\pmb{\mathscr{L}}+b\mathscr{A})|_{(k+1)\mathscr{Y}}\right)
\]
and the diagram
\[
\xymatrix{
 H^0\left((\bm{a}\cdot\pmb{\mathscr{L}}+(b-k)\mathscr{A})|_{\mathscr{Y}}\right) \ar[r]^-{\sigma_k} & H^0\left((\bm{a}\cdot\pmb{\mathscr{L}}+b\mathscr{A})|_{(k+1)\mathscr{Y}}\right) \\
 H^0(\bm{a}\cdot\pmb{\mathscr{L}}+(b-k)\mathscr{A};\mathcal{V}) \ar[u] \ar[r]^-{\otimes s_0^{\otimes k}} & H^0(\bm{a}\cdot\pmb{\mathscr{L}}+b\mathscr{A};\mathcal{V}) \ar[u]
}
\]%
is commutative.
\end{proof}

A commutative diagram of $\OO_{\mathscr{X}}$-modules:
\[
\xymatrix{
 0 \ar[r] & -k\mathscr{A}|_{\mathscr{Y}} \ar[r]^-{\sigma_k} & \OO_{(k+1)\mathscr{Y}} \ar[r] & \OO_{k\mathscr{Y}} \ar[r] & 0 \\
 0 \ar[r] & -k\mathscr{A} \ar[u] \ar[r]^-{\otimes s_0^{\otimes k}} & \OO_{\mathscr{X}} \ar[u] \ar[r] & \OO_{k\mathscr{Y}} \ar@{=}[u] \ar[r] & 0,
}
\]
yields a commutative diagram of $\ZZ$-modules:
{\tiny
\[
\xymatrix{
 0 \ar[r] & H^0_{\mathscr{X}|\mathscr{Y}}(\bm{a}\cdot\pmb{\mathscr{L}}+(b-k)\mathscr{A};\mathcal{V}) \ar[r]^-{\sigma_k} & H^0_{\mathscr{X}|(k+1)\mathscr{Y}}(\bm{a}\cdot\pmb{\mathscr{L}}+b\mathscr{A};\mathcal{V}) \ar[r] & H^0_{\mathscr{X}|k\mathscr{Y}}(\bm{a}\cdot\pmb{\mathscr{L}}+b\mathscr{A};\mathcal{V}) \ar[r] & 0 \\
 0 \ar[r] & H^0(\bm{a}\cdot\pmb{\mathscr{L}}+(b-k)\mathscr{A};\mathcal{V}) \ar[r]^-{\otimes s_0^{\otimes k}} \ar[u] & H^0(\bm{a}\cdot\pmb{\mathscr{L}}+b\mathscr{A};\mathcal{V}) \ar[r] \ar[u] & H^0_{\mathscr{X}|k\mathscr{Y}}(\bm{a}\cdot\pmb{\mathscr{L}}+b\mathscr{A};\mathcal{V}) \ar[r] \ar@{=}[u] & 0.
}
\]}%
Since the right vertical arrow is an identity and the lower horizontal sequence is exact (see Claim~\ref{clm:main:exact:sequence}),
one sees that the upper horizontal sequence of the diagram is also exact.
Applying \eqref{eqn:elementary:property:of:ah:exact:sequence} to the upper horizontal sequence, one obtains
\begin{align}
&\ahs{\quot(\mathscr{X}|(k+1)\mathscr{Y})}\left(\bm{a}\cdot\overline{\pmb{\mathscr{L}}}+b\overline{\mathscr{A}};\mathcal{V}\right) \label{eqn:fund_est_2}\\
 & \qquad \leq\ahs{\quot(\mathscr{X}|(k+1)\mathscr{Y}),\sub(\sigma_k)}\left(\bm{a}\cdot\overline{\pmb{\mathscr{L}}}+(b-k)\overline{\mathscr{A}};\mathcal{V}\right) \nonumber\\
 &\qquad +\ahs{\quot(\mathscr{X}|k\mathscr{Y})}\left(\bm{a}\cdot\overline{\pmb{\mathscr{L}}}+b\overline{\mathscr{A}};\mathcal{V}\right)+C(\|\bm{a}\|_1+b)^{d-1}\left(1+\log(\|\bm{a}\|_1+b)\right) \nonumber
\end{align}
for every $\bm{a}\in\ZZ_{\geq 0}^r$ and $b,k\in \ZZ_{>0}$ with $\|\bm{a}\|_1+b>0$ and $k\leq b$ (see \eqref{eqn:Snappers:theorem}).
\medskip

\Proofstep
By applying \cite[Lemma~3.4(2)]{MoriwakiCont} to the right square of the commutative diagram
{\tiny
\[
\xymatrix{
0 \ar[r] & H^0(\bm{a}\cdot\pmb{\mathscr{L}}+(b-k-1)\mathscr{A};\mathcal{V}) \ar[r]^-{\otimes s_0^{\otimes (k+1)}} & H^0(\bm{a}\cdot\pmb{\mathscr{L}}+b\mathscr{A};\mathcal{V}) \ar[r] & H^0_{\mathscr{X}|(k+1)\mathscr{Y}}(\bm{a}\cdot\pmb{\mathscr{L}}+b\mathscr{A};\mathcal{V}) \ar[r] & 0 \\
0 \ar[r] & H^0(\bm{a}\cdot\pmb{\mathscr{L}}+(b-k-1)\mathscr{A};\mathcal{V}) \ar[r]^-{\otimes s_0} \ar@{=}[u] & H^0(\bm{a}\cdot\pmb{\mathscr{L}}+(b-k)\mathscr{A};\mathcal{V}) \ar[r] \ar[u]_-{\otimes s_0^{\otimes k}} & H^0_{\mathscr{X}|\mathscr{Y}}(\bm{a}\cdot\pmb{\mathscr{L}}+(b-k)\mathscr{A};\mathcal{V}) \ar[r] \ar[u]_-{\sigma_k} & 0,
}
\]}%
and by using Proposition~\ref{cor:Bernstein:Markov}, one can get a constant $D$ with $0<D\leq 1$ such that
\begin{align*}
 \|\cdot\|_{\sup,\quot(\mathscr{X}|(k+1)\mathscr{Y}),\sub(\sigma_k)}^{\bm{a}\cdot\overline{\pmb{\mathscr{L}}}+b\overline{\mathscr{A}}} & =\|\cdot\|_{\sup,\sub(s_0^{\otimes k}),\quot(\mathscr{X}|\mathscr{Y})}^{\bm{a}\cdot\overline{\pmb{\mathscr{L}}}+b\overline{\mathscr{A}}} \\
 & \geq D^{\|\bm{a}\|_1+b}\|\cdot\|_{\sup,\quot(\mathscr{X}|\mathscr{Y})}^{\bm{a}\cdot\overline{\pmb{\mathscr{L}}}+(b-k)\overline{\mathscr{A}}}
\end{align*}
on $H^0_{\mathscr{X}|\mathscr{Y}}(\bm{a}\cdot\pmb{\mathscr{L}}+(b-k)\mathscr{A};\mathcal{V})$, where $\|\cdot\|_{\sup,\quot(\mathscr{X}|(k+1)\mathscr{Y}),\sub(\sigma_k)}^{\bm{a}\cdot\overline{\pmb{\mathscr{L}}}+b\overline{\mathscr{A}}}$ is the subspace norm induced from
\[
\left(H^0_{\mathscr{X}|(k+1)\mathscr{Y}}(\bm{a}\cdot\pmb{\mathscr{L}}+b\mathscr{A};\mathcal{V}),\|\cdot\|_{\sup,\quot(\mathscr{X}|(k+1)\mathscr{Y})}^{\bm{a}\cdot\overline{\pmb{\mathscr{L}}}+b\overline{\mathscr{A}}}\right)
\]
via $\sigma_k$, and $\|\cdot\|_{\sup,\sub(s_0^{\otimes k}),\quot(\mathscr{X}|\mathscr{Y})}^{\bm{a}\cdot\overline{\pmb{\mathscr{L}}}+b\overline{\mathscr{A}}}$ is the quotient norm induced from
\[
\left(H^0(\bm{a}\cdot\pmb{\mathscr{L}}+(b-k)\mathscr{A};\mathcal{V}),\|\cdot\|_{\sup,\sub(s_0^{\otimes k})}^{\bm{a}\cdot\overline{\pmb{\mathscr{L}}}+\overline{\mathscr{A}}}\right).
\]
Hence, by \eqref{eqn:elementary:property:of:ah:rescaling}, \eqref{eqn:elementary:property:of:ah:norm:increase}, \eqref{eqn:Snappers:theorem}, and Claim~\ref{clm:Snapper:and:aSnapper}, one gets a constant $C>0$ such that
\begin{multline}
\label{eqn:fund:est:Berstein:Markov}
\ahs{\quot(\mathscr{X}|(k+1)\mathscr{Y}),\sub(\sigma_k)}\left(\bm{a}\cdot\overline{\pmb{\mathscr{L}}}+(b-k)\overline{\mathscr{A}};\mathcal{V}\right) \\
\leq \ahs{\quot(\mathscr{X}|\mathscr{Y})}\left(\bm{a}\cdot\overline{\pmb{\mathscr{L}}}+(b-k)\overline{\mathscr{A}};\mathcal{V}\right) + C(\|\bm{a}\|_1+b)^{d} \leq C(\|\bm{a}\|_1+b)^{d}
\end{multline}
for $\bm{a}\in\ZZ_{\geq 0}^r$ and $b,k\in\ZZ_{>0}$ with $\|\bm{a}\|_1+b>0$ and $k\leq b$.

By \eqref{eqn:fund_est_2} and \eqref{eqn:fund:est:Berstein:Markov},
\begin{multline}
\label{eqn:fund_est_3}
 \ahs{\quot(\mathscr{X}|(k+1)\mathscr{Y})}\left(\bm{a}\cdot\overline{\pmb{\mathscr{L}}}+b\overline{\mathscr{A}};\mathcal{V}\right) \leq\ahs{\quot(\mathscr{X}|k\mathscr{Y})}\left(\bm{a}\cdot\overline{\pmb{\mathscr{L}}}+b\overline{\mathscr{A}};\mathcal{V}\right) \\
 +C(\|\bm{a}\|_1+b)^{d}.
\end{multline}
By summing up \eqref{eqn:fund_est_3} for $k=1,2,\dots,b-1$ and by using Claim~\ref{clm:Snapper:and:aSnapper} again, one has
\begin{align*}
\ahs{\quot(\mathscr{X}|b\mathscr{Y})}\left(\bm{a}\cdot\overline{\pmb{\mathscr{L}}}+b\overline{\mathscr{A}};\mathcal{V}\right) & \leq\ahs{\quot(\mathscr{X}|\mathscr{Y})}\left(\bm{a}\cdot\overline{\pmb{\mathscr{L}}}+b\overline{\mathscr{A}};\mathcal{V}\right)+C(\|\bm{a}\|_1+b)^{d}b \\
& \leq C(\|\bm{a}\|_1+b)^{d}b.
\end{align*}
Therefore, by \eqref{eqn:fund_est_1}, one obtains
\[
\ah\left(\bm{a}\cdot\overline{\pmb{\mathscr{L}}}+b\overline{\mathscr{A}};\mathcal{V}\right)\leq\ah\left(\bm{a}\cdot\overline{\pmb{\mathscr{L}}};\mathcal{V}\right)+C\left((\|\bm{a}\|_1+b)^{d}b+\|\bm{a}\|_1^d\log\|\bm{a}\|_1\right)
\]
for every $\bm{a}\in\ZZ_{\geq 0}^r$ with $\|\bm{a}\|_1>0$ and $b\in\ZZ_{>0}$.
\end{Steps}
\end{proof}

%%%
\section{Arithmetic volumes of $\ell^1$-adelic $\RR$-Cartier divisors}\label{sec:ell1adelic}

\subsection{Preliminaries}
\label{subsec:prelim}

In this section, we recall definitions and basic properties of adelically normed vector spaces (section~\ref{subsubsec:prelim:adelically:normed}), Berkovich analytic spaces (section~\ref{subsubsec:Berkovich:analytic:space}), and adelic Green functions (section~\ref{subsubsec:model:functions}).

\subsubsection{}\label{subsubsec:prelim:adelically:normed}

Let $K$ be a number field.
Let $\overline{V}:=\left(V,(\|\cdot\|_v^{\overline{V}})_{v\in M_K}\right)$ be a couple of a finite-dimensional $K$-vector space $V$ and a collection $(\|\cdot\|_v^{\overline{V}})_{v\in M_K}$ such that each $\|\cdot\|_v^{\overline{V}}$ is a $(K_v,|\cdot|_v)$-norm on $V_{K_v}=V\otimes_KK_v$ and such that, if $v\in M_K^{\rm fin}$, then $\|\cdot\|_v^{\overline{V}}$ is non-Archimedean.
For such a $\overline{V}$, we set
\begin{equation}
\aHzf(\overline{V}):=\left\{s\in V\,:\,\text{$\|s\|_v^{\overline{V}}\leq 1$ for every $v\in M_K^{\rm fin}$}\right\},
\end{equation}
\begin{equation}
\aHzsm(\overline{V}):=\left\{s\in\aHzf(\overline{V})\,:\,\|s\|_{\infty}^{\overline{V}}\leq 1\right\},\quad
\aHz(\overline{V}):=\left\{s\in\aHzsm(\overline{V})\,:\,\|s\|_{\infty}^{\overline{V}}<1\right\},
\end{equation}
and $\ah(\overline{V}):=\log\sharp\aHzs{\ast}(\overline{V})$ for $\ast=\text{s and ss}$.
Note that $\aHzf(\overline{V})$ is a $O_K$-submodule of $V$ and $\ah(\overline{V})$ may be infinite.

We set, for $\lambda\in\RR$,
\[
\mathcal{F}^{\lambda}(\overline{V}):=\left\langle s\in\aHzf(\overline{V})\,:\,\|s\|_{\infty}^{\overline{V}}\leq e^{-\lambda}\right\rangle_K
\]
(see Notation and terminology~\ref{subsubsec:NT:R-modules}), and set
\begin{equation}
\label{eqn:definition:of:e:max}
e_{\max}(\overline{V}):=\sup\left\{\lambda\in\RR\,:\,\mathcal{F}^{\lambda}(\overline{V})\neq 0\right\}.
\end{equation}

\begin{proposition}
\label{prop:definition:of:adelically:normed:vector:spaces}
Let $\overline{V}=\left(V,(\|\cdot\|_v^{\overline{V}})_{v\in M_K}\right)$ be a couple of a finite-dimensional $K$-vector space $V$ and a collection $(\|\cdot\|_v^{\overline{V}})_{v\in M_K}$ such that each $\|\cdot\|_v^{\overline{V}}$ is a $(K_v,|\cdot|_v)$-norm on $V_{K_v}$ and such that, if $v\in M_K^{\rm fin}$, then $\|\cdot\|_v^{\overline{V}}$ is non-Archimedean.
\begin{enumerate}
\item The following are equivalent.
\begin{enumerate}
\item For each $s\in V$, $\|s\|_v^{\overline{V}}\leq 1$ for all but finitely many $v\in M_K$.
\item $\aHzf(\overline{V})$ contains an $O_K$-submodule $E$ of $V$ satisfying $E_K=V$.
\end{enumerate}
\item Suppose that $\overline{V}$ satisfies the equivalent conditions of (1).
The following are then equivalent.
\begin{enumerate}
\item $\aHzf(\overline{V})$ is a finitely generated $O_K$-module.
\item $\aHzsm(\overline{V})$ is finite.
\item $\aHz(\overline{V})$ is finite.
\item $e_{\max}(\overline{V})<+\infty$.
\end{enumerate}
\end{enumerate}
\end{proposition}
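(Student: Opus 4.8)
The proposition concerns a candidate adelically normed vector space $\overline{V}$ where we only know each $\|\cdot\|_v^{\overline{V}}$ is a $(K_v,|\cdot|_v)$-norm (non-Archimedean at finite places), and we must characterize when $\aHzf(\overline{V})$ is ``reasonable''. For part (1), the plan is to prove (a)$\Rightarrow$(b) by picking a $K$-basis $e_1,\dots,e_l$ of $V$: by (a) each $e_j$ lies in $\aHzf(\overline{V})$ after removing finitely many places, i.e. there is a nonzero $c\in O_K$ with $\|c e_j\|_v^{\overline{V}}\le 1$ for all $v\in M_K^{\rm fin}$ (absorb the finitely many bad places into $c$ using that the $\mathfrak{p}_v$-adic absolute values shrink by scaling by high powers of generators, and that at non-Archimedean places scaling by $c\in O_K$ never increases the norm since $|c|_v\le 1$). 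Then $E:=\bigoplus_j O_K(ce_j)\subset\aHzf(\overline{V})$ and $E_K=V$. The converse (b)$\Rightarrow$(a) is immediate: if $E\subset\aHzf(\overline{V})$ with $E_K=V$, write any $s\in V$ as $s=a^{-1}t$ with $t\in E$, $a\in O_K\setminus\{0\}$; then $\|s\|_v^{\overline{V}}=|a|_v^{-1}\|t\|_v^{\overline{V}}\le|a|_v^{-1}$, which is $\le 1$ for all but the finitely many $v$ dividing $a$.

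For part (2), assume the equivalent conditions of (1), so $\aHzf(\overline{V})$ is an $O_K$-submodule of $V$ containing a full-rank free submodule $E$. The strategy is to run the implications around a cycle. For (a)$\Rightarrow$(b): if $\aHzf(\overline{V})$ is a finitely generated $O_K$-module then it is a full-rank lattice in $V$, and $\aHzsm(\overline{V})$ is the intersection of this lattice with the unit ball of the single Archimedean norm $\|\cdot\|_\infty^{\overline{V}}$, hence finite by discreteness. (b)$\Rightarrow$(c) is trivial since $\aHz(\overline{V})\subset\aHzsm(\overline{V})$. For (c)$\Rightarrow$(d): if $e_{\max}(\overline{V})=+\infty$ then for every $\lambda$ there is a nonzero $s_\lambda\in\aHzf(\overline{V})$ with $\|s_\lambda\|_\infty^{\overline{V}}\le e^{-\lambda}$; scaling $s_\lambda$ by a unit or small integer keeps it in $\aHzf(\overline{V})$, and in fact a whole line's worth of $O_K$-multiples with bounded Archimedean norm appears — more carefully, for $\lambda$ large the $O_K$-module generated by $s_\lambda$ meets $\aHz(\overline{V})$ in infinitely many elements (since $\|as_\lambda\|_\infty^{\overline{V}}=|a|_\infty\,e^{-\lambda}<1$ for infinitely many $a\in O_K$ once $e^{-\lambda}$ is small), contradicting finiteness of $\aHz(\overline{V})$; this gives $\lnot$(d)$\Rightarrow\lnot$(c). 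Finally (d)$\Rightarrow$(a): if $e_{\max}(\overline{V})<+\infty$, then $\mathcal{F}^\lambda(\overline{V})=0$ for $\lambda$ large, meaning every $s\in\aHzf(\overline{V})$ has $\|s\|_\infty^{\overline{V}}>e^{-\lambda_0}$ for some fixed $\lambda_0$; combined with the non-Archimedean constraints $\|s\|_v^{\overline{V}}\le 1$, the set $\aHzf(\overline{V})$ sits inside the preimage of a compact subset of $V_\infty$ under the lattice $E$, and by a standard finiteness argument (an $O_K$-submodule of $V$ sandwiched between $E$ and a set with bounded Archimedean norm is finitely generated — indeed contained in $\frac{1}{N}E$ for suitable $N$) it is finitely generated.

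**Main obstacle.** The routine-looking but genuinely delicate point is (d)$\Rightarrow$(a): one must upgrade ``$\aHzf(\overline{V})$ has bounded Archimedean norm on nonzero elements'' to ``$\aHzf(\overline{V})$ is a finitely generated $O_K$-module.'' The subtlety is that $\aHzf(\overline{V})$ is a priori only an $O_K$-submodule of a finite-dimensional $K$-vector space with no upper bound; finite generation requires comparing it against the fixed lattice $E$ from part (1) and showing $E\subset\aHzf(\overline{V})\subset M^{-1}E$ for some integer $M$. This bound comes from the fact that the minimum of $\|\cdot\|_\infty^{\overline{V}}$ over nonzero lattice points of $E$ is positive (Minkowski / discreteness) together with $e_{\max}<+\infty$ forcing a \emph{lower} bound on Archimedean norms in $\aHzf(\overline{V})$ — but making the two bounds interact to pin down a common denominator $M$ requires care with how the non-Archimedean norms interact with the $O_K$-module structure, and is where I would spend the bulk of the verification.
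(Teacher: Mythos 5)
Your part (1) is correct and is essentially the paper's own argument (the paper even takes $E=\aHzf(\overline{V})$ itself after clearing denominators, just as you clear denominators on a basis); note also that the paper does not prove part (2) at all but refers to Boucksom--Chen and Bombieri--Gubler, so for (2) you are supplying an argument the paper omits. Within your cycle, (a)$\Rightarrow$(b)$\Rightarrow$(c) are fine, and (c)$\Rightarrow$(d) is salvageable even though your parenthetical is shaky: under the intended reading of $\|\cdot\|_{\infty}^{\overline{V}}$ (a conjugation-invariant norm controlling all archimedean embeddings), the set of $a\in O_K$ of bounded archimedean size is \emph{finite}, so ``infinitely many $a\in O_K$'' is wrong for a fixed $\lambda$; but you do not need it, since if $e_{\max}(\overline{V})=+\infty$ the elements $s_{\lambda}$, $\lambda=1,2,\dots$, are already infinitely many distinct members of $\aHz(\overline{V})$ (a fixed nonzero element has a fixed positive archimedean norm, so it can occur for only finitely many $\lambda$).

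The genuine gap is (d)$\Rightarrow$(a), which you yourself flag as the ``main obstacle'' and leave unproved, and whose sketch points in the wrong direction. You assert that $\aHzf(\overline{V})$ ``sits inside the preimage of a compact subset \dots a set with bounded Archimedean norm'': this is false. The condition $e_{\max}(\overline{V})<+\infty$ gives a \emph{lower} bound $\|s\|_{\infty}^{\overline{V}}>e^{-\lambda_0}$ for all nonzero $s\in\aHzf(\overline{V})$, while archimedean norms of elements of $\aHzf(\overline{V})$ are unbounded above (e.g.\ $ns$ for $n\in\ZZ_{>0}$ stays in $\aHzf(\overline{V})$); so there is no sandwich between $E$ and a bounded set, and the containment $\aHzf(\overline{V})\subset\frac{1}{N}E$ is exactly what has to be proved, not a tool. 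The correct (and short) mechanism is discreteness rather than boundedness: the lower bound says that the neighborhood $\left\{x\,:\,\|x\|_{\infty}^{\overline{V}}\leq e^{-\lambda_0}\right\}$ of $0$ in $V\otimes_{\QQ}\RR$ meets $\aHzf(\overline{V})$ only in $0$ (all norms on this finite-dimensional real space being equivalent), so $\aHzf(\overline{V})$ is a discrete subgroup of $V\otimes_{\QQ}\RR$, hence free of finite rank as a $\ZZ$-module, hence a finitely generated $O_K$-module. Until this implication is written out along these lines (or delegated to the cited references, as the paper does), your proof of (2) is incomplete.
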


\begin{proof}
(1) (a) $\Rightarrow$ (b): For each $s\in V$, one can find an $n\geq 1$ such that $\|ns\|_v^{\overline{V}}=|n|_v\|s\|_v^{\overline{V}}\leq 1$ for every $v\in M_K$ by the condition (a).
Thus $ns\in\aHzf(\overline{V})$, which implies $V=\aHzf(\overline{V})_K$.

(b) $\Rightarrow$ (a): For each $s\in V$, there exists an $\alpha\in O_K$ such that $\alpha s\in E$.
Hence $\|\alpha s\|_v^{\overline{V}}=\|s\|_v^{\overline{V}}\leq 1$ for all but finitely many $v\in M_K$.

For the assertion (2), we refer to \cite[Proposition~2.4]{Boucksom_Chen} and \cite[Proposition~C.2.4]{Bombieri_Gubler}.
\end{proof}

\begin{definition}
An \emph{adelically normed $K$-vector space} is a couple $\left(V,(\|\cdot\|_v^{\overline{V}})_{v\in M_K}\right)$ satisfying the all conditions in Proposition~\ref{prop:definition:of:adelically:normed:vector:spaces}(1),(2).
Notice that here the existence of an $O_K$-model of $\overline{V}$ that defines $\|\cdot\|_v^{\overline{V}}$ except for finitely many $v$ is not assumed while it is in the classical definition in \cite[(1.6)]{ZhangAdelic} and in \cite[Definition~3.1]{Gaudron07}.

Let $\lambda\in\RR$ and let $v\in M_K$.
We define an adelically normed $K$-vector space $\overline{V}(\lambda[v])=\left(V,(\|\cdot\|_w^{\overline{V}(\lambda[v])})_{w\in M_K}\right)$ as
\begin{equation}
\|\cdot\|_w^{\overline{V}(\lambda[v])}:=\begin{cases}\|\cdot\|_w^{\overline{V}} & \text{if $w\neq v$ and} \\ e^{-\lambda}\|\cdot\|_v^{\overline{V}} & \text{if $w=v$.} \end{cases}
\end{equation}
\end{definition}

\begin{lemma}
\label{lem:adelic:rescaling:adelically:normed:vector:spaces}
Let $\lambda\in\RR_{\geq 0}$ and let $v\in M_K^{\rm fin}$.
If we set $p\ZZ:=\mathfrak{p}_v\cap\ZZ$, then
\[
0 \leq \ah\left(\overline{V}(\lambda[v])\right)-\ah\left(\overline{V}\right) \leq \left(\left\lceil\frac{\lambda}{-\log|p|_v}\right\rceil\log(p)+2\right)\dim_{\QQ}V.
\]
\end{lemma}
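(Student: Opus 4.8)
The plan is to reduce the adelic statement to the purely Archimedean rescaling estimate \eqref{eqn:elementary:property:of:ah:rescaling} by controlling how much the finite-place rescaling at $v$ shrinks the $O_K$-lattice $\aHzf(\overline{V})$. More precisely, rescaling $\|\cdot\|_v^{\overline{V}}$ by $e^{-\lambda}$ with $\lambda\geq 0$ only makes $\aHzf$ smaller, so if I set $\overline{W}:=\overline{V}(\lambda[v])$ then $\aHzf(\overline{W})$ is an $O_K$-submodule of $\aHzf(\overline{V})$, and the two are equal after inverting $p$; this will let me compare the two finitely generated $\ZZ$-modules underlying the lattices and then apply the $\ast=\text{s,ss}$ estimates from section~\ref{subsubsec:NT:normed:Z-modules}.

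\textbf{Key steps.} First I would record the lower bound $\ah(\overline{V}(\lambda[v]))\geq\ah(\overline{V})$: since $\lambda\geq 0$ the rescaled $v$-adic norm is no larger than the original one, hence $\aHzf(\overline{V}(\lambda[v]))\subset\aHzf(\overline{V})$ while the Archimedean norm at $\infty$ is unchanged, so $\aHzs{\ast}(\overline{V}(\lambda[v]))\subset\aHzs{\ast}(\overline{V})$. Second, I would describe the index: because $\|\cdot\|_v^{\overline{W}}=e^{-\lambda}\|\cdot\|_v^{\overline{V}}$ and all other norms agree, an element $s\in\aHzf(\overline{V})$ lies in $\aHzf(\overline{W})$ as soon as it is divisible by a suitable power of $\varpi_v$ (equivalently of $p$) — concretely, $\varpi_v^{m}\aHzf(\overline{V})\subset\aHzf(\overline{W})$ once $|\varpi_v|_v^{m}\leq e^{-\lambda}$, i.e. $m=\lceil\lambda/(-\log|\varpi_v|_v)\rceil$; and since $-\log|p|_v=e_v\cdot(-\log|\varpi_v|_v)\geq -\log|\varpi_v|_v$ one even gets $p^{m'}\aHzf(\overline{V})\subset\aHzf(\overline{W})$ with $m'=\lceil\lambda/(-\log|p|_v)\rceil$. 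Hence, viewing both as finitely generated $\ZZ$-modules of the same rank $N:=[K:\QQ]\dim_{\QQ}V$, one has $\sharp\big(\aHzf(\overline{V})/\aHzf(\overline{W})\big)\leq \sharp\big(\aHzf(\overline{V})/p^{m'}\aHzf(\overline{V})\big)=p^{m'N}$. Third, I would combine this with \eqref{eqn:elementary:property:of:ah:rescaling:2}: with $M=\aHzf(\overline{V})$ and $M'=\aHzf(\overline{W})$, each equipped with the \emph{Archimedean} norm $\|\cdot\|_\infty^{\overline{V}}$ (which is the common norm, since rescaling at $v\in M_K^{\rm fin}$ does not touch $\infty$), the quantities $\ah(M,\|\cdot\|_\infty)$ and $\ah(M',\|\cdot\|_\infty)$ are exactly $\ah(\overline{V})$ and $\ah(\overline{W})$; then \eqref{eqn:elementary:property:of:ah:rescaling:2} gives
\[
\ah(\overline{W})-\ah(\overline{V})\leq \log\sharp(M/M')+2\rk M\leq m'N\log p+2N,
\]
which is the claimed bound after substituting $m'=\lceil\lambda/(-\log|p|_v)\rceil$ and $N=[K:\QQ]\dim_{\QQ}V$ — note $\dim_\QQ V=[K:\QQ]\dim_K V$, consistent with the statement's $\dim_{\QQ}V$.

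\textbf{Minor point to check / main obstacle.} The only genuinely delicate bookkeeping is identifying the two $\ZZ$-module-with-Archimedean-norm structures that feed into \eqref{eqn:elementary:property:of:ah:rescaling:2} and verifying the hypotheses there: one needs $M/M'$ torsion (clear, since $p^{m'}M\subset M'$ and both have the same rank) and one needs that $\aHzs{\ast}(\overline{V})$ really is computed as $\ah(\aHzf(\overline{V}),\|\cdot\|_\infty^{\overline{V}})$ in the sense of section~\ref{subsubsec:NT:normed:Z-modules} — this is immediate from the definitions of $\aHzsm$ and $\aHz$ as the elements of $\aHzf$ with $\|\cdot\|_\infty\leq 1$ (resp.\ $<1$). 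For the $\ast=\text{ss}$ case one applies \eqref{eqn:elementary:property:of:ah:rescaling:2} verbatim (it is stated for $\ah$, i.e.\ uniformly in $\ast$), or rescales the Archimedean norm by $e^{\varepsilon}$ and lets $\varepsilon\downarrow 0$ exactly as in the remark following \eqref{eqn:elementary:property:of:ah:exact:sequence}. I expect no real obstacle beyond keeping the ramification factor $e_v$ straight — and since we only use the inequality $-\log|p|_v\geq -\log|\varpi_v|_v$ in the ``safe'' direction, even a wasteful estimate there suffices.
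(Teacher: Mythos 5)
Your overall strategy is the paper's: compare the lattices $\aHzf(\overline{V})$ and $\aHzf(\overline{V}(\lambda[v]))$ via multiplication by $p^{n_\lambda}$ with $n_\lambda=\lceil\lambda/(-\log|p|_v)\rceil$, bound the resulting index by $p^{n_\lambda\rk}$, and feed this into \eqref{eqn:elementary:property:of:ah:rescaling:2}. However, as written your inclusions are systematically reversed, and this breaks the stated chain of reasoning. Since $\lambda\geq 0$, the rescaled norm $e^{-\lambda}\|\cdot\|_v^{\overline{V}}$ is \emph{smaller}, so its unit ball is \emph{larger}: one has $\aHzf(\overline{V})\subset\aHzf(\overline{V}(\lambda[v]))$, not $\aHzf(\overline{V}(\lambda[v]))\subset\aHzf(\overline{V})$ as you claim in your first step (your stated inclusion would in fact give $\ah(\overline{V}(\lambda[v]))\leq\ah(\overline{V})$, contradicting the lower bound you are trying to record). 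Consequently, in your second step the inclusion $\varpi_v^{m}\aHzf(\overline{V})\subset\aHzf(\overline{V}(\lambda[v]))$ is trivially true already for $m=0$ and carries no information; the inclusion that does the work, and where the condition $|\varpi_v|_v^{m}\leq e^{-\lambda}$ (resp.\ $m'\geq\lambda/(-\log|p|_v)$) is actually needed, is $p^{n_\lambda}\aHzf(\overline{V}(\lambda[v]))\subset\aHzf(\overline{V})$, which is what the paper proves. Likewise the quotient you bound, $\aHzf(\overline{V})/\aHzf(\overline{V}(\lambda[v]))$, is not defined (the second module is not contained in the first); the relevant quotient is $\aHzf(\overline{V}(\lambda[v]))/\aHzf(\overline{V})$, bounded by $\sharp\bigl(\aHzf(\overline{V}(\lambda[v]))/p^{n_\lambda}\aHzf(\overline{V}(\lambda[v]))\bigr)$. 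Finally, with your assignment $M=\aHzf(\overline{V})$, $M'=\aHzf(\overline{V}(\lambda[v]))$, the inequality \eqref{eqn:elementary:property:of:ah:rescaling:2} (which requires $M'\subset M$) would bound $\ah(M)-\ah(M')=\ah(\overline{V})-\ah(\overline{V}(\lambda[v]))$, i.e.\ the wrong difference; you must take $M=\aHzf(\overline{V}(\lambda[v]))$ and $M'=\aHzf(\overline{V})$.

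If you swap the roles of the two lattices throughout, your argument becomes exactly the paper's proof, so the gap is one of direction-keeping rather than of ideas; but as written the key steps are either vacuous or misapply the cited estimate. One further slip: the rank entering the bound is $\rk_{\ZZ}\aHzf(\overline{V}(\lambda[v]))=\dim_{\QQ}V=[K:\QQ]\dim_K V$, not $N=[K:\QQ]\dim_{\QQ}V$ as you set it; with your $N$ the final constant would be off by a factor $[K:\QQ]$ and would not match the lemma's statement.
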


\begin{proof}
Set
\[
n_{\lambda}:=\left\lceil\frac{\lambda}{-\log|p|_v}\right\rceil.
\]
We are going to show
\begin{equation}
\label{eqn:adelic:rescaling:adelically:normed:vector:spaces:1}
p^{n_{\lambda}}\aHzf\left(\overline{V}(\lambda[v])\right)\subset\aHzf\left(\overline{V}\right).
\end{equation}
Suppose that $s\in \aHzf\left(\overline{V}(\lambda[v])\right)$.
Then
\[
\|p^{n_{\lambda}}s\|_w^{\overline{V}}=|p|_w^{n_{\lambda}}\|s\|_w^{\overline{V}}\leq\begin{cases} 1 & \text{if $w\neq v$ and} \\ e^{\lambda}|p|_v^{n_{\lambda}}  & \text{if $w=v$.}\end{cases}
\]
Since $\lambda+n_{\lambda}\log|p|_v\leq 0$, we have $p^{n_{\lambda}}s\in\aHzf\left(\overline{V}\right)$.

We apply \eqref{eqn:elementary:property:of:ah:rescaling:2} to the inclusion $\aHzf\left(\overline{V}\right)\subset\aHzf\left(\overline{V}(\lambda[v])\right)$, and obtain
\begin{align*}
\ah\left(\overline{V}(\lambda[v])\right) & \leq\ah\left(\overline{V}\right)+\log\sharp\left(\aHzf\left(\overline{V}(\lambda[v])\right)/\aHzf\left(\overline{V}\right)\right)+2\dim_{\QQ}V \\
& \leq\ah\left(\overline{V}\right)+\log\sharp\left(\aHzf\left(\overline{V}(\lambda[v])\right)/p^{n_{\lambda}}\aHzf\left(\overline{V}(\lambda[v])\right)\right)+2\dim_{\QQ}V \\
&\leq\ah\left(\overline{V}\right)+\left(n_{\lambda}\log(p)+2\right)\dim_{\QQ}V
\end{align*}
by \eqref{eqn:adelic:rescaling:adelically:normed:vector:spaces:1}.
\end{proof}

\subsubsection{}
\label{subsubsec:Berkovich:analytic:space}
Let $X$ be a normal, projective, and geometrically connected $K$-variety.
For $v=\infty$, we denote by $X_{\infty}^{\rm an}$ the complex analytic space associated to $X_{\CC}:=X\times_{\Spec(\QQ)}\Spec(\CC)$.
For $v\in M_K^{\rm fin}$, we denote by $(X_v^{\rm an},\rho_v:X_v^{\rm an}\to X_{K_v})$ the Berkovich analytic space associated to $X_{K_v}$ (see \cite{BerkovichBook}).
For each $x\in X_v^{\rm an}$, we denote by $\kappa(x)$ the residue field of $\rho_v(x)\in X_{K_v}$ and by $|\cdot|_x$ the corresponding norm on $\kappa(x)$.
Given a local function $f$ on $X_{K_v}$ defined around $\rho_v(x)$, we write
\begin{equation}
|f|(x):=|f(\rho_v(x))|_x.
\end{equation}

An \emph{$O_K$-model} of $X$ is a reduced, irreducible, projective, and flat $O_K$-scheme with generic fiber $\mathscr{X}_{K}\simeq X$.
Given an $O_K$-model $\mathscr{X}$ of $X$, we set
\begin{equation}
\widetilde{\mathscr{X}}_v:=\mathscr{X}\times_{\Spec(O_K)}\Spec(\widetilde{K}_v).
\end{equation}
For each $x\in X_v^{\rm an}$, the morphism $\rho_v(x):\Spec(\kappa(x))\to\mathscr{X}_{K_v^{\circ}}$ uniquely extends to a morphism $\Spec(\kappa(x)^{\circ})\to\mathscr{X}_{K_v^{\circ}}$ by the valuative criterion of properness.
We define $r_v^{\mathscr{X}}(x)$ as the image of the closed point of $\Spec(\kappa(x)^{\circ})$.

Let $\mathscr{U}=\Spec(\mathscr{A})$ be an affine open subscheme of $\mathscr{X}_{K_v^{\circ}}$ with $\mathscr{U}\cap\widetilde{\mathscr{X}}_v\neq\emptyset$, and set $U=\mathscr{U}_{K_v}=\Spec(A)$.
We put
\begin{equation}
U_{v,\mathscr{U}}^{\rm an}:=\left\{x\in U_v^{\rm an}\,:\,\text{$|f|(x)\leq 1$ for all $f\in\mathscr{A}$}\right\}.
\end{equation}

\begin{lemma}
\begin{enumerate}
\item $U_{v,\mathscr{U}}^{\rm an}=\left(r_v^{\mathscr{X}}\right)^{-1}\left(\mathscr{U}\cap\widetilde{\mathscr{X}}_v\right)$.
\item $U_{v,\mathscr{U}}^{\rm an}$ is compact.
\end{enumerate}
\end{lemma}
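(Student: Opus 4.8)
The plan is to prove (1) by unwinding the definition of the reduction map $r_v^{\mathscr{X}}$ through the valuative criterion of properness, and then to deduce (2) by realizing $U_{v,\mathscr{U}}^{\rm an}$ as a closed subspace of a closed polydisk in an affine Berkovich space. For (1), I would fix $x\in X_v^{\rm an}$, put $R:=\kappa(x)^{\circ}$, and first note that the value group of $\kappa(x)$ embeds into $(\RR_{>0},\times)$, so that $R$ is a valuation ring of rank at most one; hence $\Spec(R)$ has exactly a generic point $\eta$ and a closed point $s$, and the only open subset of $\Spec(R)$ containing $s$ is $\Spec(R)$ itself. Let $\widetilde{\rho}\colon\Spec(R)\to\mathscr{X}_{K_v^{\circ}}$ be the extension of $\rho_v(x)$ furnished by the valuative criterion, so that by definition $r_v^{\mathscr{X}}(x)=\widetilde{\rho}(s)$. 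Since $K_v^{\circ}\hookrightarrow R$ is a local homomorphism of valuation rings, $s$ lies over the closed point of $\Spec(K_v^{\circ})$, so $r_v^{\mathscr{X}}(x)$ always lies in the closed fibre $\mathscr{X}_{K_v^{\circ}}\times_{K_v^{\circ}}\widetilde{K}_v=\widetilde{\mathscr{X}}_v$ (using $\mathscr{X}_{K_v^{\circ}}=\mathscr{X}\times_{\Spec(O_K)}\Spec(K_v^{\circ})$ and $\widetilde{K}_v=K_v^{\circ}/\mathfrak{p}_vK_v^{\circ}$). Therefore $r_v^{\mathscr{X}}(x)\in\mathscr{U}\cap\widetilde{\mathscr{X}}_v$ if and only if $\widetilde{\rho}(s)\in\mathscr{U}$.

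From here the chain of equivalences I would carry out is: $\widetilde{\rho}(s)\in\mathscr{U}$ $\iff$ $\widetilde{\rho}$ factors through the open subscheme $\mathscr{U}=\Spec(\mathscr{A})$ (because $\mathscr{U}$ is open and the only open subset of $\Spec(R)$ containing $s$ is everything) $\iff$ the comorphism of $\widetilde{\rho}$ is a ring map $\mathscr{A}\to\kappa(x)^{\circ}$ $\iff$ $\rho_v(x)\in U=\mathscr{U}_{K_v}$ (equivalently $x\in U_v^{\rm an}$) and $|f|(x)=|f(\rho_v(x))|_x\leq 1$ for all $f\in\mathscr{A}$. The only non-formal step in this chain is the last "$\Leftarrow$": given $x\in U_v^{\rm an}$ with $|f|(x)\leq 1$ for all $f\in\mathscr{A}$, the homomorphism $\rho_v(x)^{\#}\colon\mathscr{A}\to\kappa(x)$ lands in $\kappa(x)^{\circ}$, hence defines a morphism $\Spec(\kappa(x)^{\circ})\to\mathscr{U}\hookrightarrow\mathscr{X}_{K_v^{\circ}}$ extending $\rho_v(x)$, which by the \emph{uniqueness} clause of the valuative criterion must equal $\widetilde{\rho}$; thus $r_v^{\mathscr{X}}(x)=\widetilde{\rho}(s)\in\mathscr{U}$. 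Putting this together yields $U_{v,\mathscr{U}}^{\rm an}=(r_v^{\mathscr{X}})^{-1}(\mathscr{U}\cap\widetilde{\mathscr{X}}_v)$.

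For (2), I would first observe that $\mathscr{X}_{K_v^{\circ}}$ is projective over the Noetherian ring $K_v^{\circ}$, so $\mathscr{A}$ is a finitely generated $K_v^{\circ}$-algebra; picking generators $f_1,\dots,f_n\in\mathscr{A}$ and using that $|\cdot|(x)$ is a multiplicative seminorm extending $|\cdot|_v$, one gets $U_{v,\mathscr{U}}^{\rm an}=\{x\in U_v^{\rm an}:|f_i|(x)\leq 1,\ i=1,\dots,n\}$. The $f_i$ also generate $A=\mathscr{A}\otimes_{K_v^{\circ}}K_v$ over $K_v$, so they define a closed immersion $U=\Spec(A)\hookrightarrow\mathbb{A}^n_{K_v}$, $T_i\mapsto f_i$, which analytifies to a closed immersion $U_v^{\rm an}\hookrightarrow(\mathbb{A}^n_{K_v})^{\rm an}$ of Berkovich spaces (see \cite{BerkovichBook}). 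Under this immersion $U_{v,\mathscr{U}}^{\rm an}$ is carried homeomorphically onto the intersection of the closed subset $U_v^{\rm an}$ with the closed unit polydisk $\{x:|T_i|(x)\leq 1\text{ for all }i\}=\mathcal{M}(K_v\langle T_1,\dots,T_n\rangle)$; the latter is compact, being the Berkovich spectrum of a $K_v$-affinoid algebra, so $U_{v,\mathscr{U}}^{\rm an}$ is a closed subset of a compact space and hence compact. Alternatively, (2) follows from (1) together with the anticontinuity of the reduction map $r_v^{\mathscr{X}}$ and the compactness of $X_v^{\rm an}$.

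I expect the main obstacle to be purely organisational rather than substantive: in (1) one must be careful to translate the set-theoretic condition "$r_v^{\mathscr{X}}(x)\in\mathscr{U}$" into the algebraic one "$\mathscr{A}\to\kappa(x)$ lands in $\kappa(x)^{\circ}$", which rests on the rank-at-most-one description of $\Spec(\kappa(x)^{\circ})$ and the uniqueness part of the valuative criterion; once (1) is in place, (2) is a formal consequence of standard facts about Berkovich analytifications and the compactness of affinoid spaces.
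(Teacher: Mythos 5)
Your argument is correct. Part (1) is essentially the paper's own proof, only with the implicit steps made explicit: you spell out that $\kappa(x)^{\circ}$ is a rank-one valuation ring whose spectrum has only the two points $\eta,s$ (so that $r_v^{\mathscr{X}}(x)\in\mathscr{U}$ forces the whole extension $\Spec(\kappa(x)^{\circ})\to\mathscr{X}_{K_v^{\circ}}$ to land in $\mathscr{U}$), that locality of $K_v^{\circ}\to\kappa(x)^{\circ}$ puts $r_v^{\mathscr{X}}(x)$ in the closed fibre, and that the converse direction uses the uniqueness clause of the valuative criterion; the paper states these facts more tersely but the content is the same. Part (2) is where you genuinely diverge: the paper proves compactness from scratch by embedding $U_v^{\rm an}$ into the (generally infinite) product $\prod_{f\in\mathscr{A}}\RR_{\geq 0}$ via $x\mapsto(|f|(x))_f$, invoking Tychonoff for $\prod_{f\in\mathscr{A}}[0,1]$, and verifying by a net argument (the limit of the values defines a multiplicative seminorm on $A$ extending $|\cdot|_v$) that the embedding is a closed map, so that $U_{v,\mathscr{U}}^{\rm an}=u^{-1}(J)$ is compact. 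You instead use that $\mathscr{A}$ is a finitely generated $K_v^{\circ}$-algebra to cut the condition down to finitely many generators (valid because the seminorms are non-Archimedean and bounded by $1$ on $K_v^{\circ}$), and then realize $U_{v,\mathscr{U}}^{\rm an}$ as a closed subset of the compact affinoid polydisk $\mathcal{M}(K_v\langle T_1,\dots,T_n\rangle)$ via the analytified closed immersion $U\hookrightarrow\mathbb{A}^n_{K_v}$; your alternative via anticontinuity of $r_v^{\mathscr{X}}$ and compactness of $X_v^{\rm an}$ also works. Your route is shorter and more conceptual but leans on standard facts from \cite{BerkovichBook} (compactness of affinoid spectra, analytification of closed immersions, or anticontinuity of the reduction map), whereas the paper's argument is deliberately self-contained, needing only Tychonoff's theorem and the elementary net computation.
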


\begin{proof}
(1): If $x\in U_{v,\mathscr{U}}^{\rm an}$, then the image of the homomorphism $\mathscr{A}\to\kappa(x)$ is in $\kappa(x)^{\circ}$, so $r_v^{\mathscr{X}}(x)\in\mathscr{U}$.
Conversely, if $r_v^{\mathscr{X}}(x)\in\mathscr{U}\cap\widetilde{\mathscr{X}}_v$, then $\rho_v(x)\in U$ and $x\in\rho_v^{-1}(U)=U_v^{\rm an}$.
Since $r_v^{\mathscr{X}}(x)\in\mathscr{U}$, the image of the morphism $\Spec(\kappa(x)^{\circ})\to\mathscr{X}_{K_v^{\circ}}$ is in $\mathscr{U}$, so $f(\rho_v(x))\in\kappa(x)^{\circ}$ for every $f\in\mathscr{A}$.

(2): The map
\[
u:U_{v}^{\rm an}\to I:=\prod_{f\in\mathscr{A}}\RR_{\geq 0},\quad x\mapsto (|f|(x))_{f\in\mathscr{A}},
\]
is injective and continuous, where $I$ is endowed with the product topology.
By Tychonoff's theorem, $J:=\prod_{f\in\mathscr{A}}[0,1]$ is a compact subset of $I$, and $U_{v,\mathscr{U}}^{\rm an}=u^{-1}(J)$.
Thus it suffices to show that $u$ is a closed map.
Suppose that $\left(u(x_{\alpha})\right)_{\alpha}$ is a net in $I$ that converges to $\left(\lambda_f\right)_{f\in\mathscr{A}}\in I$.
For each $f\in\mathscr{A}$, we set $|f|_x:=\lambda_f$.

\begin{claim}
\label{clm:limit:of:the:net}
$|\cdot|_x$ extends to a multiplicative seminorm on $A$ whose restriction to $K_v$ is $|\cdot|_v$.
\end{claim}

\begin{proof}[{Proof of Claim~\ref{clm:limit:of:the:net}}]
Since, for every $\alpha$, $|\cdot|_{x_{\alpha}}$ satisfies the conditions:
\begin{itemize}
\item $|a|(x_{\alpha})=|a|_v$ for $a\in K_v$,
\item $|f-g|(x_{\alpha})\leq |f|(x_{\alpha})+|g|(x_{\alpha})$ for $f,g\in\mathscr{A}$, and
\item $|fg|(x_{\alpha})=|f|(x_{\alpha})|g|(x_{\alpha})$ for $f,g\in\mathscr{A}$,
\end{itemize}
we know that the limit $|\cdot|_x$ is a multiplicative seminorm on $\mathscr{A}$.
For a general $f\in A$, we can take an $n\geq 0$ such that $\varpi_v^nf\in\mathscr{A}$, and define
\[
|f|_x:=|\varpi_v|_v^{-n}|\varpi_v^nf|_x,
\]
which does not depend on a specific choice of $n\geq 0$.
Then $|\cdot|_x$ is a multiplicative seminorm on $A$.
\end{proof}

By Claim~\ref{clm:limit:of:the:net}, $|\cdot|_x$ corresponds to a point $x\in U_v^{\rm an}$.
Since $|f|_{x_{\alpha}}\to |f|_x$ for every $f\in A$, the net $(x_{\alpha})_{\alpha}$ converges to $x$ in the Gel\textprime fand topology, and $(\lambda_f)_{f\in\mathscr{A}}=u(x)$.
It implies that $u$ is a closed map.
\end{proof}

Let $\widetilde{\mathscr{X}}_{v,\rm gen}$ be the set of all the generic points of irreducible components of $\widetilde{\mathscr{X}}_v$.
For each $\xi\in\widetilde{\mathscr{X}}_{v,{\rm gen}}$, $\left(r_v^{\mathscr{X}}\right)^{-1}(\xi)$ consists of a single point $x_{\xi}$ given as
\begin{equation}
|\phi|_{x_{\xi}}:=\left(\sharp\widetilde{K}_v\right)^{-\frac{\ord_{\xi}(\phi)}{\ord_{\xi}(\varpi_v)}}
\end{equation}
for $\phi\in\Rat(X)$.
We set $\Gamma(X_v^{\rm an}):=\left\{x_{\xi}\,:\,\xi\in\widetilde{\mathscr{X}}_{v,{\rm gen}}\right\}$ (see also \cite[Proposition~2.4.4 and Corollary~2.4.5]{BerkovichBook}).

\begin{lemma}
\label{lem:maximum:modulus:principle}
Suppose that $\mathscr{A}$ is integrally closed in $A$.
Then, for each $f\in A$,
\[
\max_{x\in U_{v,\mathscr{U}}^{\rm an}}\left\{|f|(x)\right\}=\max_{x\in\Gamma(X_v^{\rm an})\cap U_{v,\mathscr{U}}^{\rm an}}\left\{|f|(x)\right\}.
\]
\end{lemma}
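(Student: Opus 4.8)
The inequality $\max_{x\in\Gamma(X_v^{\rm an})\cap U_{v,\mathscr{U}}^{\rm an}}|f|(x)\leq\max_{x\in U_{v,\mathscr{U}}^{\rm an}}|f|(x)$ is trivial since $\Gamma(X_v^{\rm an})\cap U_{v,\mathscr{U}}^{\rm an}\subset U_{v,\mathscr{U}}^{\rm an}$, so the content is the reverse inequality: the supremum of $|f|$ over the affinoid piece $U_{v,\mathscr{U}}^{\rm an}$ is attained at one of the finitely many divisorial points $x_{\xi}$, $\xi\in\widetilde{\mathscr{X}}_{v,{\rm gen}}$. The plan is to reduce this to a Shilov-boundary / reduction-map statement. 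First I would recall that $U_{v,\mathscr{U}}^{\rm an}$ is the Berkovich affinoid $\mathcal{M}(\widehat{\mathscr{A}\otimes_{O_K}K_v^{\circ}})$ — more precisely, since $\mathscr{U}=\Spec(\mathscr{A})$ is an affine open of the $O_K$-model and $U_{v,\mathscr{U}}^{\rm an}=(r_v^{\mathscr{X}})^{-1}(\mathscr{U}\cap\widetilde{\mathscr{X}}_v)$ by the previous lemma, it is the analytic domain of points with good reduction in $\mathscr{U}$, and $r_v^{\mathscr{X}}$ restricted to it is the canonical reduction map to $\mathscr{U}\cap\widetilde{\mathscr{X}}_v$.

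The key fact I would invoke is the classical description of the Shilov boundary of the reduction of a formal/model affinoid: when $\mathscr{A}$ is integrally closed in $A$ (the integral-closedness hypothesis is exactly what makes the reduction behave well — so that $\widetilde{\mathscr{A}\otimes K_v^{\circ}}$ is the reduced fiber ring and its minimal primes are the $\xi\in\widetilde{\mathscr{X}}_{v,{\rm gen}}$ lying in $\mathscr{U}$), the reduction map sends $U_{v,\mathscr{U}}^{\rm an}$ onto $\mathscr{U}\cap\widetilde{\mathscr{X}}_v$, and the preimages of the \emph{generic points} of the irreducible components are exactly the single points $x_{\xi}$ described in the excerpt (this is the Berkovich ``Proposition 2.4.4 / Corollary 2.4.5'' picture cited right before the lemma). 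The set $\{x_{\xi}\}$ is the Shilov boundary of the affinoid, meaning precisely that for every element $f$ of the affinoid algebra (hence for every $f\in A$, after clearing a power of $\varpi_v$ to land in $\mathscr{A}$) one has $\sup_{U_{v,\mathscr{U}}^{\rm an}}|f| = \max_{\xi}|f|(x_{\xi})$. Concretely I would argue: for $f\in\mathscr{A}$ the function $x\mapsto|f|(x)$ on the affinoid attains its max, say $\mu=\sup|f|$, and $\mu=\|f\|_{\rm spectral}$ equals $\lim_n\|f^n\|^{1/n}$ where $\|\cdot\|$ is the sup-seminorm on the affinoid algebra; since $\mathscr{A}$ is integrally closed in $A$, the power-bounded elements are exactly $\mathscr{A}$ (up to the relevant completion), so $\|g\|\le1$ iff $g\in\mathscr{A}$, and therefore $\mu$ is characterized by: $\mu\le c$ iff $f^n/c^{\lceil\cdot\rceil}$-type elements lie in $\mathscr{A}$, a condition that can be tested on the reduction $\widetilde{f}\in\widetilde{\mathscr{A}}$ and hence at the minimal primes $\xi$ of $\widetilde{\mathscr{A}}$, i.e. at the points $x_{\xi}$.

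The main obstacle, and the place where the integral-closedness hypothesis is really used, is justifying that the supremum seminorm on $U_{v,\mathscr{U}}^{\rm an}$ is the $\varpi_v$-adic (Gauss-type) norm read off from the model $\mathscr{A}$ — equivalently that $\mathscr{A}\widehat{\otimes}K_v^{\circ}$ is the ring of power-bounded functions. Without integral-closedness this can fail (the ``true'' affinoid algebra is the integral closure, and extra divisorial points could appear). I would handle this by: (i) noting $\mathscr{A}$ is a finitely generated $O_K$-algebra, flat over $O_K$, with $\mathscr{A}\otimes K_v = A\otimes_K K_v$ reduced and $\mathscr{A}$ integrally closed in it, so by Tate/Raynaud theory $\mathscr{A}\widehat{\otimes}_{O_K}K_v^{\circ}$ is an admissible formal affine whose associated affinoid has $\mathscr{A}\widehat{\otimes}K_v^{\circ}$ itself (no normalization needed) as unit ball; (ii) citing the standard statement (e.g. Bosch--Güntzer--Remmert, or Berkovich op.\ cit.) that the Shilov boundary of such a model-affinoid is the finite set of inverse images of the generic points of the (reduced) special fibre; (iii) translating ``generic points of the special fibre inside $\mathscr{U}$'' into ``$\widetilde{\mathscr{X}}_{v,{\rm gen}}\cap (\mathscr{U}\cap\widetilde{\mathscr{X}}_v)$'' and observing that $\Gamma(X_v^{\rm an})\cap U_{v,\mathscr{U}}^{\rm an}$ is exactly the corresponding $\{x_{\xi}\}$, by the explicit formula $|\phi|_{x_{\xi}}=(\sharp\widetilde{K}_v)^{-\ord_{\xi}(\phi)/\ord_{\xi}(\varpi_v)}$. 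Assembling (i)--(iii) gives, for $f\in A$ written as $\varpi_v^{-n}g$ with $g\in\mathscr{A}$, the equality $\sup_{U_{v,\mathscr{U}}^{\rm an}}|f| = |\varpi_v|_v^{-n}\sup_{U_{v,\mathscr{U}}^{\rm an}}|g| = |\varpi_v|_v^{-n}\max_{\xi}|g|(x_{\xi}) = \max_{\xi}|f|(x_{\xi})$, which is the claim.
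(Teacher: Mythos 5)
Your plan is essentially correct, but it routes through noticeably heavier machinery than the paper does, and the comparison is instructive. The paper's proof is short and completion-free: it chooses $\xi_0\in\widetilde{\mathscr{X}}_{v,{\rm gen}}$ with $x_{\xi_0}\in U_{v,\mathscr{U}}^{\rm an}$ maximizing $|f|(x_{\xi})$, sets $n:=\ord_{\xi_0}(\varpi_v)$ and $l:=\ord_{\xi_0}(f)$, observes that then $\ord_{\xi}(\varpi_v^{-l}f^n)\geq 0$ at the vertical generic points, and invokes \cite[Lemma~2.3(3)]{IkomaRem} --- a valuative characterization of the integrally closed subring $\mathscr{A}\subset A$ via orders along the components of the special fibre --- to conclude $\varpi_v^{-l}f^n\in\mathscr{A}$; the bound $|f|(x)\leq|\varpi_v|_v^{l/n}=|f|(x_{\xi_0})$ on all of $U_{v,\mathscr{U}}^{\rm an}$ then follows directly from the definition of $U_{v,\mathscr{U}}^{\rm an}$. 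Your ``concretely I would argue'' passage is the same raise-to-the-$n$-th-power-and-divide-by-$\varpi_v^l$ trick, so the core idea coincides; what differs is that you package it in the general theory of canonical reductions and Shilov boundaries of strictly affinoid algebras (Berkovich, Bosch--G\"untzer--Remmert). That forces you to identify the $\varpi_v$-adic completion of $\mathscr{A}$ with the ring of power-bounded elements of the completed affinoid algebra, which is exactly your step (i) and is not automatic: you need the integral closedness of $\mathscr{A}$ in $A$ to persist after completion (true here because $\mathscr{A}$ is of finite type over the complete discrete valuation ring $K_v^{\circ}$, hence excellent), and you need the minimal primes of the canonical reduction to match the points of $\widetilde{\mathscr{X}}_{v,{\rm gen}}$ lying in $\mathscr{U}$. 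Your argument goes through once those citations are made precise, and it buys a conceptual explanation (``$\Gamma(X_v^{\rm an})\cap U_{v,\mathscr{U}}^{\rm an}$ is the Shilov boundary''), whereas the paper gets the same inequality with no completions and no affinoid formalism, at the cost of one previously established lemma. A minor slip in your setup: in the paper $\mathscr{A}$ is already a $K_v^{\circ}$-algebra (an affine open of $\mathscr{X}_{K_v^{\circ}}$), not an $O_K$-algebra to be base-changed; this does not affect the substance.
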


\begin{proof}
Since $\mathscr{U}\cap\widetilde{\mathscr{X}}_v\neq\emptyset$, we have $\Gamma(X_v^{\rm an})\cap U_{v,\mathscr{U}}^{\rm an}\neq\emptyset$.
The inequality $\geq$ is obvious, so that we are going to show the reverse.
Choose a $\xi_0\in\widetilde{\mathscr{X}}_{v,{\rm gen}}$ such that
\[
|f|(x_{\xi_0})=\max_{x\in\Gamma(X_v^{\rm an})\cap U_{v,\mathscr{U}}^{\rm an}}\left\{|f|(x)\right\}.
\]
If we set $n:=\ord_{\xi_0}(\varpi_v)$ and $l:=\ord_{\xi_0}(f)$, then $\ord_{\xi}(\varpi_v^{-l}f^n)\geq 0$ for every $\xi\in\widetilde{\mathscr{X}}_{v,{\rm gen}}$.
By \cite[Lemma~2.3(3)]{IkomaRem}, it implies $\varpi_v^{-l}f^n\in\mathscr{A}$.
Hence
\[
|f|(x)\leq |\varpi_v|_v^{\frac{l}{n}}=|f|(x_{\xi_0})
\]
for every $x\in U_{v,\mathscr{U}}^{\rm an}$.
\end{proof}

\subsubsection{}
\label{subsubsec:model:functions}
Let $X$ be a normal, projective, and geometrically connected $K$-variety, let $\KK=\text{$\RR$, $\QQ$, or $\ZZ$}$, and let $D$ be a $\KK$-Cartier divisor on $X$.
The \emph{support} of $D$ is a Zariski closed subset defined as
\begin{equation}
\Supp(D):=\bigcup_{\substack{\text{$Z$: prime Weil divisor,} \\ \ord_Z(D)\neq 0}}Z
\end{equation}
(see \cite[Notation and terminology~2]{IkomaDiff1}).
Let $v\in M_K$.
A \emph{$D$-Green function} on $X_v^{\rm an}$ is a continuous map $g_v:(X\setminus\Supp(D))_v^{\rm an}\to\RR$ such that, for each $x\in X_v^{\rm an}$,
\begin{equation}
g_v(x)+\log|f|^2(x)
\end{equation}
extends to a continuous function around $x$, where $f\in\Rat(X)^{\times}\otimes_{\ZZ}\KK$ is a local equation defining $D$ around $\rho_v(x)$ (see \cite[Definition~2.1.1]{MoriwakiAdelic}).
If $v=\infty$, we assume that a $D$-Green function is invariant under the complex conjugation map.
We then set
\begin{equation}
\aDiv_{\KK}^{\rm tot}(X):=\left\{\left(D,\sum_{v\in M_K}g_v^{\overline{D}}[v]\right)\,:\,\begin{array}{l} \text{$D\in\Div_{\KK}(X)$ and $g_v^{\overline{D}}$ is a $D$-Green} \\ \text{function on $X_v^{\rm an}$ for each $v\in M_K$}\end{array}\right\}.
\end{equation}

An element $\overline{D}\in\aDiv_{\RR}^{\rm tot}(X)$ is called \emph{effective} if
\begin{equation}
D\geq 0\quad\text{and}\quad \essinf_{x\in X_v^{\rm an}}g_v^{\overline{D}}(x)\geq 0,\quad\forall v\in M_K,
\end{equation}
and, for $\overline{D},\overline{E}\in\aDiv_{\RR}^{\rm tot}(X)$, we write $\overline{D}\leq\overline{E}$ if $\overline{E}-\overline{D}$ is effective.
Each $g_v^{\overline{D}}$ defines the supremum norm on $H^0(D)$ as
\begin{equation}
\|\phi\|_{v,\sup}^{\overline{D}}:=\sup_{x\in X_v^{\rm an}}|\phi|(x)\exp\left(\frac{1}{2}g_v^{\overline{D}}(x)\right)
\end{equation}
for $\phi\in H^0(D)$ (see \cite[Proposition~2.1.3]{MoriwakiAdelic}).

In the following, we impose on $\nu\in\DV(\Rat(X))$ a condition that the restriction of $\nu$ to $K$ is trivial (see section~\ref{subsec:base:conditions}).
Given a $\overline{D}\in\aDiv_{\RR}^{\rm tot}(X)$ and a $\mathcal{V}\in\BC_{\RR}(X)$, we set
\begin{equation}
\aHzs{\ast}\left(\overline{D};\mathcal{V}\right):=\aHzs{\ast}\left(H^0(D;\mathcal{V}),(\|\cdot\|_{v,\sup}^{\overline{D}})_{v\in M_K}\right)
\end{equation}
for $\ast=\text{f, s, and ss}$, and set $\ah\left(\overline{D};\mathcal{V}\right):=\log\sharp\aHzs{\ast}\left(\overline{D};\mathcal{V}\right)$ for $\ast=\text{s and ss}$ (see section~\ref{subsubsec:prelim:adelically:normed} and \eqref{eqn:ah:for:short:1}).
An \emph{$O_K$-model} of a couple $(X,D)$ is a couple $(\mathscr{X},\mathscr{D})$ such that $\mathscr{X}$ is a normal $O_K$-model of $X$ and such that $\mathscr{D}$ is an $\RR$-Cartier divisor on $\mathscr{X}$ satisfying $\mathscr{D}|_X=D$.
Given an $O_K$-model $(\mathscr{X},\mathscr{D})$ of $(X,D)$ and a $v\in M_K^{\rm fin}$, we define the $D$-Green function associated to $(\mathscr{X},\mathscr{D})$ as
\begin{equation}
 g_v^{(\mathscr{X},\mathscr{D})}(x):=-\log |f'|^2(x),
\end{equation}
where $f'$ is a local equation defining $\mathscr{D}$ around $r_v^{\mathscr{X}}(x)$.

Let $\KK:=\text{$\RR$, $\QQ$, or $\ZZ$}$.
A couple $\overline{\mathscr{D}}=(\mathscr{D},g_{\infty}^{\overline{\mathscr{D}}})$ on $\mathscr{X}$ such that $(\mathscr{X},\mathscr{D})$ is an $O_K$-model of $(X,D)$ with $\mathscr{D}\in\Div_{\KK}(\mathscr{X})$ and such that $g_{\infty}^{\overline{\mathscr{D}}}$ is a $D$-Green function on $X_{\infty}^{\rm an}$ is called an \emph{arithmetic $\KK$-Cartier divisor} on $\mathscr{X}$.
If $X$ is smooth and $g_{\infty}^{\overline{\mathscr{D}}}$ is of $C^{\infty}$-type, then $\overline{\mathscr{D}}$ is said to be \emph{of $C^{\infty}$-type} (see \cite[section~2.3]{MoriwakiZar}).
We denote by $\aDiv_{\KK}(\mathscr{X})$ (respectively, $\aDiv_{\KK}(\mathscr{X};C^{\infty})$) the $\KK$-module of all the arithmetic $\KK$-Cartier divisors (respectively, arithmetic $\KK$-Cartier divisors of $C^{\infty}$-type) on $\mathscr{X}$.
If $\KK=\ZZ$ and $\text{--- $=$ a blank or $C^{\infty}$}$, we will abbreviate $\aDiv(\mathscr{X};\text{---}):=\aDiv_{\ZZ}(\mathscr{X};\text{---})$ as usual.

Given a couple $(\overline{\mathscr{D}};\mathcal{V})$ of a $\overline{\mathscr{D}}\in\aDiv_{\RR}(\mathscr{X})$ and a $\mathcal{V}\in\BC_{\RR}(\mathscr{X})$, we abbreviate
\begin{equation}
\ah\left(\overline{\mathscr{D}};\mathcal{V}\right):=\ah\left(H^0(\overline{\mathscr{D}};\mathcal{V}),\|\cdot\|_{\infty,\sup}^{\overline{\mathscr{D}}}\right)
\end{equation}
for $\ast=\text{s and ss}$ (see Notation and terminology~\ref{subsubsec:NT:normed:Z-modules} and \eqref{eqn:ah:for:short:1}), and define
\begin{equation}
\avol\left(\overline{\mathscr{D}};\mathcal{V}\right):=\limsup_{\substack{m\in\ZZ, \\ m\to+\infty}}\frac{\als\left(m\overline{\mathscr{D}};m\mathcal{V}\right)}{m^{\dim\mathscr{X}}/(\dim\mathscr{X})!}.
\end{equation}
Moreover, the \emph{adelization} of $\overline{\mathscr{D}}\in\aDiv_{\RR}(\mathscr{X})$ is defined as
\begin{equation}
\overline{\mathscr{D}}^{\rm ad}:=\left(D,\sum_{v\in M_K^{\rm fin}}g_v^{(\mathscr{X},\mathscr{D})}[v]+g_{\infty}[\infty]\right),
\end{equation}
which belongs to $\aDiv_{\RR}^{\rm tot}(X)$.

\subsection{The space of continuous functions}
\label{subsec:space:of:continuous:functions}

Let $K$ be a number field, and let $X$ be a projective and geometrically connected $K$-variety.
For each $v\in M_K$, we denote by $C(X_v^{\rm an})$ the space of $\RR$-valued continuous functions on $X_v^{\rm an}$ that are assumed to be invariant under the complex conjugation if $v=\infty$.
We endow $C(X_v^{\rm an})$ with the supremum norm:
\[
\|f\|_{\sup}:=\sup_{x\in X_v^{\rm an}}|f(x)|_{\infty}
\]
for $f\in C(X_v^{\rm an})$, where $|f(x)|_{\infty}$ denotes the usual absolute value of the real number $f(x)$ (see Notation and terminology~\ref{subsubsec:NT:number:fields}).
By elementary arguments, $(C(X_v^{\rm an}),\|\cdot\|_{\sup})$ is a Banach algebra for every $v\in M_K$.
We denote by
\begin{equation}
C_{\rm tot}(X):=\prod_{v\in M_K}C(X_v^{\rm an})=\left\{\bm{f}=\sum_{v\in M_K}f_v[v]\,:\,f_v\in C(X_v^{\rm an})\right\}
\end{equation}
the algebraic direct product of the family $\left(C(X_v^{\rm an})\right)_{v\in M_K}$, and by
\begin{equation}
C(X):=\bigoplus_{v\in M_K}C(X_v^{\rm an})
\end{equation}
the algebraic direct sum of $\left(C(X_v^{\rm an})\right)_{v\in M_K}$.
The \emph{$\ell^1$-norm} of an $\bm{f}\in C_{\rm tot}(X)$ is
\begin{equation}
\|\bm{f}\|_{\ell^1}:=\sum_{v\in M_K}\|f_v\|_{\sup},
\end{equation}
where the sum is taken with respect to the net indexed by all the finite subsets of $M_K$, and the \emph{$\ell^1$-direct sum} of $(C(X_v^{\rm an}))_{v\in M_K}$ is given as
\[
\aC(X):=\left\{\bm{f}=(f_v)_{v\in M_K}\,:\,\|\bm{f}\|_{\ell^1}<+\infty\right\}
\]
endowed with the $\ell^1$-norm.
For $\bm{f},\bm{g}\in C_{\rm tot}(X)$, we write $\bm{f}\leq\bm{g}$ if $f_v\leq g_v$ for every $v\in M_K$.
If $\bm{f},\bm{g}\in\aC(X)$, then the entrywise product $\bm{f}\bm{g}$ satisfies
\[
\|\bm{f}\bm{g}\|_{\ell^1}\leq\sum_{v\in M_K}\|f_v\|_{\sup}\|g_v\|_{\sup}\leq \sup_{v\in M_K}\left\{\|f_v\|_{\sup}\right\}\cdot\|\bm{g}\|_{\ell^1}\leq\|\bm{f}\|_{\ell^1}\cdot\|\bm{g}\|_{\ell^1},
\]
so $\bm{f}\bm{g}\in\aC(X)$.
By the same arguments as in \cite[page 67, Theorem~3.11]{RudinTrilogy2}, one verifies that $(\aC(X),\|\cdot\|_{\ell^1})$ is a Banach algebra.
Note that $C_{\rm tot}(\Spec(K))$ is just $\RR^{M_K}$ and $\aC(\Spec(K))=\ell^1(M_K)$ is just the $\ell^1$-sequence space indexed by $M_K$.
We will identify $C_{\rm tot}(\Spec(K))$ with the space of constant functions in $C_{\rm tot}(X)$.

\begin{lemma}
\label{lem:approximation:theorem:continuous:functions}
Let $\bm{f}\in\aC(X)$.
Given any $\varepsilon>0$, there exists a $\bm{h}\in C(X)$ such that
\[
\bm{h}\leq \bm{f}\quad\text{and}\quad\|\bm{f}-\bm{h}\|_{\ell^1}\leq\varepsilon.
\]
\end{lemma}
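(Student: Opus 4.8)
The plan is to reduce the statement to a finite problem by exploiting the summability of $\bm{f}=(f_v)_{v\in M_K}$, and then to handle the finitely many remaining places one at a time. First I would observe that, since $\|\bm{f}\|_{\ell^1}=\sum_{v\in M_K}\|f_v\|_{\sup}<+\infty$, the family $(\|f_v\|_{\sup})_{v\in M_K}$ is summable, so given $\varepsilon>0$ there is a finite subset $S\subset M_K$ such that $\sum_{v\notin S}\|f_v\|_{\sup}\leq\varepsilon/2$. For each $v\notin S$ I simply discard $f_v$: that is, I set $h_v$ to be the function that is identically $-\|f_v\|_{\sup}$ on $X_v^{\rm an}$ if $v\in M_K^{\rm fin}$ (so $h_v\in C(X_v^{\rm an})$ is a constant, hence certainly a legitimate element with $h_v\leq f_v$), and likewise a constant at $\infty$; or, even more cheaply, since we only need a function in $C(X)$ that is below $f_v$ and close to it in sup-norm, and since constants are continuous, taking $h_v\equiv -\|f_v\|_{\sup}$ gives $\|f_v-h_v\|_{\sup}\leq 2\|f_v\|_{\sup}$. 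Summing over $v\notin S$ this contributes at most $2\cdot(\varepsilon/2)$... so to be safe I would instead choose $S$ so that $\sum_{v\notin S}\|f_v\|_{\sup}\leq\varepsilon/4$, making the tail contribution at most $\varepsilon/2$.

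For the finitely many places $v\in S$, I need $h_v\in C(X_v^{\rm an})$ with $h_v\leq f_v$ and $\|f_v-h_v\|_{\sup}\leq\varepsilon/(2\sharp S)$. This is trivial: take $h_v:=f_v-\varepsilon/(2\sharp S)$, a continuous function (invariant under complex conjugation if $v=\infty$, since $f_\infty$ is), with $h_v\leq f_v$ and $\|f_v-h_v\|_{\sup}=\varepsilon/(2\sharp S)$. Assembling $\bm{h}:=\sum_{v\in M_K}h_v[v]$, only finitely many components are nonzero (the $h_v$ for $v\in S$, together with—if one used the constant $-\|f_v\|_{\sup}$ for $v\notin S$—infinitely many nonzero constants, which would put $\bm h$ outside $C(X)$). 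To land in the algebraic direct sum $C(X)=\bigoplus_{v}C(X_v^{\rm an})$ I must take $h_v=0$ for $v\notin S$; then $h_v\leq f_v$ fails unless $f_v\geq 0$ there. So the cleanest route is: for $v\notin S$ set $h_v=0$ and accept the error $\|f_v-h_v\|_{\sup}=\|f_v\|_{\sup}$, which is fine for the $\ell^1$-estimate but violates $h_v\leq f_v$ when $f_v$ takes positive values.

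The one genuine point, then, is reconciling the pointwise bound $\bm h\leq\bm f$ with membership in $C(X)$. The fix is to keep a \emph{single} negative constant absorbing the whole tail: enlarge $S$ to include $\infty$, and for $v=\infty$ replace $h_\infty$ by $h_\infty':=h_\infty-\sum_{v\notin S}\|f_v\|_{\sup}$, i.e. subtract the tail mass at the archimedean place (or at any fixed place). Then for $v\notin S$ we have $h_v=0$ and, pointwise, $f_v\geq -\|f_v\|_{\sup}$, which is not $\leq 0$ in general—so this still does not give $h_v\leq f_v$ coordinatewise. I expect the resolution the author intends is that the inequality $\bm h\leq\bm f$ in the lemma is the coordinatewise one, and one achieves it by choosing, for the finitely many $v\in S$, $h_v:=f_v-\varepsilon_v$ with $\sum_{v\in S}\varepsilon_v\leq\varepsilon$, and for $v\notin S$ taking $h_v$ to be a continuous function with $h_v\leq f_v$ everywhere and $\|f_v-h_v\|_{\sup}$ small—but $h_v\equiv -\|f_v\|_{\sup}$ works for that and lies in $C(X_v^{\rm an})$; the issue is only that infinitely many such $h_v$ are nonzero. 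Since $C(X)$ is the algebraic direct sum, that is not allowed, so the honest statement must be that we only need $h_v\leq f_v$ for $v\in S$ and $h_v=0$ (hence $h_v\leq f_v$ automatically once we note $f_v\ge 0$ is \emph{not} assumed)—hence the main obstacle, and the step I would spend the most care on, is exactly this bookkeeping: I would state and prove it as ``there exists a finite $S$, and $\bm h$ supported on $S$ with $h_v\leq f_v$ for $v\in S$, $\|f_v-h_v\|_{\sup}\leq\varepsilon/(2\sharp S)$, and for $v\notin S$ the contribution $\|f_v\|_{\sup}$ is summed into the $\ell^1$-error,'' and then absorb the sign discrepancy by noting that replacing $h_v$ ($v\in S$) by $\min\{h_v,f_v-\text{(tail)}/\sharp S\}$ if necessary restores $\bm h\leq\bm f$ globally. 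The verification that $\bm{h}\in C(X)$ (finitely supported, each component continuous and conjugation-invariant at $\infty$) and that $\|\bm f-\bm h\|_{\ell^1}\leq\varepsilon$ is then immediate from the triangle inequality over the finitely many places in $S$ plus the tail bound.
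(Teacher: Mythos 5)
Your opening step is exactly the paper's proof: since $\|\bm{f}\|_{\ell^1}<+\infty$, choose a finite $S\subset M_K$ with $\sum_{v\notin S}\|f_v\|_{\sup}\leq\varepsilon$ and set $\bm{h}:=\sum_{v\in S}f_v[v]$; then $\|\bm{f}-\bm{h}\|_{\ell^1}=\sum_{v\notin S}\|f_v\|_{\sup}\leq\varepsilon$, and on $S$ one even has $h_v=f_v$, so none of the $\varepsilon/(2\sharp S)$ bookkeeping is needed. Where your write-up breaks down is in the attempted repair of $\bm{h}\leq\bm{f}$ at the places outside $S$. Your final suggestion, replacing $h_v$ for $v\in S$ by $\min\{h_v,\,f_v-(\text{tail})/\sharp S\}$, cannot work: the order is coordinatewise, so altering the components indexed by $S$ has no effect on the inequalities $0=h_v\leq f_v$ that fail at $v\notin S$. (There is also a sign slip: $0\leq f_v$ fails where $f_v$ takes \emph{negative} values, not positive ones.) Indeed, the obstruction you circle around is irreparable for sign-indefinite $\bm{f}$: if $f_v$ is somewhere negative at infinitely many places (say $f_{v_i}\equiv -2^{-i}$ along an enumeration of $M_K^{\rm fin}$), then every $\bm{h}\in C(X)$ vanishes at cofinitely many of these places and $\bm{h}\leq\bm{f}$ is impossible. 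So, as written, your proposal does not prove the statement: it correctly isolates the one delicate point but then resolves it with a step that fails.

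For comparison, the paper's own proof is just the two-line truncation above, and its claim that $\bm{h}=\sum_{v\in S}f_v[v]$ "satisfies the required conditions" likewise needs $f_v\geq 0$ for $v\notin S$; this is harmless because the lemma is invoked in the paper only for $\bm{f}=\overline{D}-\overline{\mathscr{D}}^{\rm ad}$ with $(\mathscr{X},\overline{\mathscr{D}})\in\Mod_{\RR}(\overline{D})$, so that $\bm{f}\geq 0$ by the definition of $\Mod_{\RR}(\overline{D})$. Had you recorded this positivity (or restricted the conclusion $\bm{h}\leq\bm{f}$ to that case) instead of hunting for a fix, your argument would collapse to the paper's truncation and be complete.
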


\begin{proof}
Since $\sum_{v\in M_K}\|f_v\|_{\sup}<+\infty$, there is a finite subset $S\subset M_K$ such that
\[
\sum_{v\in (M_K\setminus S)}\|f_v\|_{\sup}\leq\varepsilon.
\]
Hence $\bm{h}:=\sum_{v\in S}f_v[v]$ satisfies the required conditions.
\end{proof}

\subsection{$\ell^1$-adelic $\RR$-Cartier divisors}
\label{subsec:definition:of:ell1:adelic}

Let $X$ be a normal, projective, and geometrically connected $K$-variety.
The natural homomorphisms
\begin{equation}
C_{\rm tot}(X)\to \aDiv_{\RR}^{\rm tot}(X),\quad \bm{f}\mapsto (0,\bm{f}),
\end{equation}
and
\begin{equation}
\label{eqn:definition:of:zeta}
\zeta:\aDiv_{\RR}^{\rm tot}(X)\to\Div_{\RR}(X),\quad \overline{D}=\left(D,\sum_{v\in M_K}g_v^{\overline{D}}[v]\right)\mapsto \zeta(\overline{D})=D,
\end{equation}
form an exact sequence
\begin{equation}
0\to C_{\rm tot}(X)\to\aDiv_{\RR}^{\rm tot}(X)\xrightarrow{\zeta}\Div_{\RR}(X)\to 0.
\end{equation}

Let $\KK$ and $\KK'$ be either $\RR$, $\QQ$, or $\ZZ$.
Given a $\overline{D}\in\aDiv_{\KK}^{\rm tot}(X)$, we set
\begin{equation}
\Mod_{\KK'}(\overline{D}):=\left\{\left(\mathscr{X},(\mathscr{D},g_{\infty})\right)\,:\,\begin{array}{l} \text{$(\mathscr{X},\mathscr{D})$ is an $O_K$-model of $(X,D)$, $(\mathscr{D},g_{\infty})$} \\ \text{$\in\aDiv_{\KK'}(\mathscr{X})$, and $\overline{\mathscr{D}}^{\rm ad}\leq\overline{D}$} \end{array}\right\}.
\end{equation}
We call $\overline{D}\in\aDiv_{\KK}^{\rm tot}(X)$ an \emph{adelic $\KK$-Cartier divisor} if there exists an $(\mathscr{X},\overline{\mathscr{D}})\in\Mod_{\RR}(\overline{D})$ such that $\overline{D}-\overline{\mathscr{D}}^{\rm ad}\in C(X)$.
Denote by $\aDiv_{\KK}(X)$ the $\KK$-module of all the adelic $\KK$-Cartier divisors on $X$.
As before, we will write $\aDiv(X):=\aDiv_{\ZZ}(X)$.
For a $\overline{D}\in\aDiv_{\RR}(X)$, there are a nonempty open subset $U$ of $\Spec(O_K)$ and an $(\mathscr{X},\mathscr{D})\in\Mod_{\RR}(\overline{D})$ such that $g_v^{\overline{D}}=g_v^{(\mathscr{X},\mathscr{D})}$ for every $v\in U$.
In this case, we call the couple $(\mathscr{X}_U,\mathscr{D}_U)$ a \emph{$U$-model of definition} for $\overline{D}$ (see \cite[Definition~4.1.1]{MoriwakiAdelic} and \cite[Notation and terminology~4]{IkomaDiff1}).
Given a $\overline{D}\in\aDiv_{\RR}(X)$ and a $\mathcal{V}\in\BC_{\RR}(X)$, we define
\begin{equation}
\avol\left(\overline{D};\mathcal{V}\right):=\limsup_{\substack{m\in\ZZ, \\ m\to+\infty}}\frac{\als\left(m\overline{D};m\mathcal{V}\right)}{m^{\dim X+1}/(\dim X+1)!},
\end{equation}
which is finite as in \cite[section~2.5]{IkomaDiff1}.

\begin{proposition}
\label{prop:definition:of:ell1:adelic}
Let $\KK=\text{$\RR$ or $\QQ$}$.
For any $\overline{D}\in\aDiv_{\KK}^{\rm tot}(X)$, the following are equivalent.
\begin{enumerate}
\item There exists an $(\mathscr{X},\overline{\mathscr{D}})\in\Mod_{\KK}(\overline{D})$ such that $\left\|\overline{D}-\overline{\mathscr{D}}^{\rm ad}\right\|_{\ell^1}<+\infty$.
\item For any $(\mathscr{X},\overline{\mathscr{D}})\in\Mod_{\RR}(\overline{D})$, $\left\|\overline{D}-\overline{\mathscr{D}}^{\rm ad}\right\|_{\ell^1}<+\infty$.
\item For any $\varepsilon>0$, there exists an $(\mathscr{X}_{\varepsilon},\overline{\mathscr{D}}_{\varepsilon})\in\Mod_{\KK}(\overline{D})$ such that $\left\|\overline{D}-\overline{\mathscr{D}}_{\varepsilon}^{\rm ad}\right\|_{\ell^1}\leq\varepsilon$.
\item For any $\varepsilon>0$, there exists an $\overline{D}_{\varepsilon}\in\aDiv_{\KK}(X)$ such that $\zeta(\overline{D}_{\varepsilon})=\zeta(\overline{D})$, $\overline{D}_{\varepsilon}\leq\overline{D}$, and $\left\|\overline{D}-\overline{D}_{\varepsilon}\right\|_{\ell^1}\leq\varepsilon$.
\end{enumerate}
\end{proposition}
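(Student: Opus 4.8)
The plan is to prove the equivalence by showing $(1)\Leftrightarrow(2)$, $(3)\Leftrightarrow(4)$ and $(1)\Leftrightarrow(3)$; two ingredients carry the argument, the rest being bookkeeping with the triangle inequality for $\|\cdot\|_{\ell^1}$. The first ingredient is a \emph{comparison lemma}: for any two $O_K$-models $(\mathscr{X}_1,\mathscr{D}_1)$ and $(\mathscr{X}_2,\mathscr{D}_2)$ of $(X,D)$ with $\mathscr{D}_i\in\Div_{\RR}(\mathscr{X}_i)$, one has $\|\overline{\mathscr{D}}_1^{\rm ad}-\overline{\mathscr{D}}_2^{\rm ad}\|_{\ell^1}<+\infty$. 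The second is that an adelic $D$-Green function arising from a model, perturbed by an element of $\aC(X)$, can be rebuilt as the adelization of a $\KK$-model at arbitrarily small $\ell^1$-distance; this combines Lemma~\ref{lem:approximation:theorem:continuous:functions} with the density of $\QQ$-model functions, hence a fortiori of $\KK$-model functions, in $C(X_v^{\rm an})$ for the Berkovich topology.

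For the comparison lemma I would take a normal $O_K$-model $\mathscr{X}'$ dominating both $\mathscr{X}_1$ and $\mathscr{X}_2$ by birational morphisms $\pi_1,\pi_2$. Since the model $D$-Green function is insensitive to such a pull-back, $g_v^{(\mathscr{X}_1,\mathscr{D}_1)}-g_v^{(\mathscr{X}_2,\mathscr{D}_2)}=g_v^{(\mathscr{X}',\pi_1^{\ast}\mathscr{D}_1)}-g_v^{(\mathscr{X}',\pi_2^{\ast}\mathscr{D}_2)}$ on $X_v^{\rm an}$ for every $v\in M_K^{\rm fin}$. The $\RR$-Cartier divisor $\pi_1^{\ast}\mathscr{D}_1-\pi_2^{\ast}\mathscr{D}_2$ has trivial generic fibre, so it is a finite $\RR$-combination of prime divisors lying in special fibres and is therefore supported over finitely many primes; hence this difference of Green functions vanishes on $X_v^{\rm an}$ for all but finitely many $v$, and for each of the finitely many remaining places it is a continuous function on the compact space $X_v^{\rm an}$, hence bounded; adding the single bounded Archimedean term and summing gives the finiteness. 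Granting this, $(1)\Rightarrow(2)$ follows from $\|\overline{D}-\overline{\mathscr{D}}^{\rm ad}\|_{\ell^1}\leq\|\overline{D}-\overline{\mathscr{D}}_0^{\rm ad}\|_{\ell^1}+\|\overline{\mathscr{D}}_0^{\rm ad}-\overline{\mathscr{D}}^{\rm ad}\|_{\ell^1}$ for an arbitrary $(\mathscr{X},\overline{\mathscr{D}})\in\Mod_{\RR}(\overline{D})$ and the $\KK$-model $(\mathscr{X}_0,\overline{\mathscr{D}}_0)$ provided by (1), and $(2)\Rightarrow(1)$ follows because any $\KK$-model is in particular an $\RR$-model, so (2) applied to a member of $\Mod_{\KK}(\overline{D})$ (nonempty in the range where the statement has content) gives the required finite distance.

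For $(1)\Rightarrow(3)$, start from $(\mathscr{X},\overline{\mathscr{D}})\in\Mod_{\KK}(\overline{D})$ with $\overline{D}-\overline{\mathscr{D}}^{\rm ad}=(0,\bm{f})$, $\bm{f}\in\aC(X)$, $\bm{f}\geq 0$, and fix $\varepsilon>0$. By Lemma~\ref{lem:approximation:theorem:continuous:functions} there is $\bm{h}\in C(X)$ with $0\leq\bm{h}\leq\bm{f}$, $\|\bm{f}-\bm{h}\|_{\ell^1}\leq\varepsilon/2$, and only finitely many nonzero components. Absorb $h_\infty$ into the Archimedean Green function of $\overline{\mathscr{D}}$, and at each of the finitely many finite $v$ with $h_v\neq 0$ pick, by density, a $\KK$-model function $\psi_v\leq h_v$ with $\|h_v-\psi_v\|_{\sup}$ as small as required, approximating \emph{from below} (by subtracting a small constant model function) so as to preserve effectivity. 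Realizing all the $\psi_v$ by a single vertical $\KK$-Cartier divisor on a model $\mathscr{X}'$ dominating $\mathscr{X}$ yields $(\mathscr{X}',\overline{\mathscr{D}}')\in\Mod_{\KK}(\overline{D})$ with $\|\overline{D}-\overline{\mathscr{D}}'^{\rm ad}\|_{\ell^1}\leq\varepsilon$, which is (3); and $(3)\Rightarrow(1)$ is the case $\varepsilon=1$. For $(3)\Rightarrow(4)$, assuming (3) choose for each $\varepsilon$ some $(\mathscr{X}_\varepsilon,\overline{\mathscr{D}}_\varepsilon)\in\Mod_{\KK}(\overline{D})$ at $\ell^1$-distance $\leq\varepsilon/2$, write $\overline{D}-\overline{\mathscr{D}}_\varepsilon^{\rm ad}=(0,\bm{f}_\varepsilon)$ with $\bm{f}_\varepsilon\in\aC(X)$, $\bm{f}_\varepsilon\geq 0$, use Lemma~\ref{lem:approximation:theorem:continuous:functions} to get $0\leq\bm{h}_\varepsilon\leq\bm{f}_\varepsilon$ in $C(X)$ with $\|\bm{f}_\varepsilon-\bm{h}_\varepsilon\|_{\ell^1}\leq\varepsilon/2$, and put $\overline{D}_\varepsilon:=\overline{\mathscr{D}}_\varepsilon^{\rm ad}+(0,\bm{h}_\varepsilon)$; then $\overline{D}_\varepsilon-\overline{\mathscr{D}}_\varepsilon^{\rm ad}\in C(X)$ and $\overline{\mathscr{D}}_\varepsilon^{\rm ad}\leq\overline{D}_\varepsilon$, so $\overline{D}_\varepsilon\in\aDiv_{\KK}(X)$, while $\zeta(\overline{D}_\varepsilon)=\zeta(\overline{D})$, $\overline{D}_\varepsilon\leq\overline{D}$ and $\|\overline{D}-\overline{D}_\varepsilon\|_{\ell^1}\leq\varepsilon/2$. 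The implication $(4)\Rightarrow(3)$ is obtained by repeating the $(1)\Rightarrow(3)$ construction, after first extracting from $\overline{D}_\varepsilon$ (via the definition of $\aDiv_{\KK}(X)$) an $\RR$-model at finite $\ell^1$-distance and upgrading it, exactly as in $(2)\Rightarrow(1)$, to a $\KK$-model at finite distance.

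The main obstacle is precisely the step turning the finitely many surviving finite-place continuous functions into honest $\KK$-model functions. It relies on the density of $\QQ$-model functions in $C(X_v^{\rm an})$ for the Berkovich topology; it must be performed \emph{from below} --- by subtracting small constant $\QQ$-model functions --- so that the effectivity constraint $\overline{\mathscr{D}}'^{\rm ad}\leq\overline{D}$ is not destroyed; and when $\KK=\QQ$ every coefficient must be kept rational, which in turn forces one, whenever an $\RR$-model enters, to replace its $\RR$-Cartier part by a $\QQ$-Cartier extension of $D$ (a finite $\ell^1$-cost by the comparison lemma) and to correct over finitely many places by vertical $\QQ$-Cartier divisors (again via density). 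With these analytic and integrality points settled, everything else is triangle-inequality bookkeeping together with the $\|\cdot\|_{\ell^1}$-density of $C(X)$ in $\aC(X)$.
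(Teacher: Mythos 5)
Your proposal is correct in substance and shares the paper's two elementary ingredients, but it routes the key approximation step differently. Your comparison lemma for $(1)\Rightarrow(2)$ --- pull both models back to a common normal dominating model, note that the difference of the pulled-back divisors is vertical and hence supported over finitely many primes, and bound the finitely many remaining discrepancies (continuous functions on the compact spaces $X_v^{\rm an}$, plus one Archimedean term) --- is exactly the paper's argument, and your $(3)\Rightarrow(4)$ construction $\overline{D}_{\varepsilon}:=\overline{\mathscr{D}}_{\varepsilon}^{\rm ad}+(0,\bm{h}_{\varepsilon})$ is the same truncation device (Lemma~\ref{lem:approximation:theorem:continuous:functions}) that the paper uses for $(1)\Rightarrow(4)$. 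The divergence is that the paper then simply cites Moriwaki's approximation theorem for $(3)\Leftrightarrow(4)$, whereas you prove $(1)\Rightarrow(3)$ and $(4)\Rightarrow(3)$ directly by invoking the density of $\QQ$-model functions in $C(X_v^{\rm an})$ --- density in the supremum norm, by the way, not ``for the Berkovich topology'' --- realizing the finitely many approximants by a single vertical $\KK$-Cartier divisor on a dominating model and approximating from below by subtracting constants that are rational multiples of $\log\sharp\widetilde{K}_v$. What your route buys is an explicit treatment of the effectivity constraint and of the $\KK=\QQ$ integrality issue (upgrading an $\RR$-model to a $\QQ$-model via a $\QQ$-Cartier extension of $D$ plus vertical corrections over finitely many places), points which the paper's citation silently absorbs; the cost is that the density of model functions is itself a nontrivial theorem that you use without justification, so your argument essentially unfolds the very result the paper quotes rather than replacing it, and it should be accompanied by a citation (Gubler/Yuan, or Moriwaki's adelic approximation theorem). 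Finally, both you and the paper treat $(2)\Rightarrow(1)$ as immediate; strictly speaking this presupposes $\Mod_{\KK}(\overline{D})\neq\emptyset$, a point you at least flag in passing.
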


\begin{proof}
The implications (2) $\Rightarrow$ (1) and (3) $\Rightarrow$ (1) are obvious.
The equivalence (3) $\Leftrightarrow$ (4) results from the approximation theorem (see \cite[Theorem~4.1.3]{MoriwakiAdelic}).

(1) $\Rightarrow$ (2): It suffices to show that for any $(\mathscr{X}',\overline{\mathscr{D}}')\in\Mod_{\RR}(\overline{D})$
\begin{equation}
\label{eqn:definition:of:ell1:adelic:1}
\left\|\overline{\mathscr{D}}^{\rm ad}-\overline{\mathscr{D}}^{\prime{\rm ad}}\right\|_{\ell^1}<+\infty.
\end{equation}
Let $\mathscr{X}''$ be a normal $O_K$-model of $X$ that dominates both $\mathscr{X}$ and $\mathscr{X}'$.
Let $\mu:\mathscr{X}''\to \mathscr{X}$ and $\mu':\mathscr{X}''\to\mathscr{X}'$ be the dominant morphisms.
Then
\[
\left(\mu^*\overline{\mathscr{D}}\right)^{\rm ad}=\overline{\mathscr{D}}^{\rm ad}\quad\text{and}\quad\left(\mu^{\prime *}\overline{\mathscr{D}}\right)^{\prime{\rm ad}}=\overline{\mathscr{D}}^{\prime{\rm ad}}
\]
(see \cite[Proposition~2.1.4]{MoriwakiAdelic}).
We write
\[
D=a_1Z_1+\dots+a_rZ_r
\]
with $a_i\in\RR$ and prime Weil divisors $Z_i$.
Let $\mathscr{Z}_i$ be the Zariski closure of $Z_i$ in $\mathscr{X}''$.
Since
\[
\mu^*\mathscr{D}-\sum_{i=1}^ra_i\mathscr{Z}_i\quad\text{and}\quad\mu^{\prime *}\mathscr{D}^{\prime}-\sum_{i=1}^ra_i\mathscr{Z}_i
\]
are both vertical, one can find a nonempty open subset $U\subset\Spec(O_K)$ such that $\left(\mu^*\mathscr{D}\right)_U=\left(\mu^{\prime *}\mathscr{D}'\right)_U$.
Hence we have \eqref{eqn:definition:of:ell1:adelic:1}.

(1) $\Rightarrow$ (4): Set $(0,\bm{f}):=\overline{D}-\overline{\mathscr{D}}^{\rm ad}$.
By Lemma~\ref{lem:approximation:theorem:continuous:functions}, there exists an $\bm{f}_{\varepsilon}\in C(X)$ such that $\bm{f}_{\varepsilon}\leq\bm{f}$ and such that $\left\|\bm{f}-\bm{f}_{\varepsilon}\right\|_{\ell^1}\leq\varepsilon$.
Set
\[
\overline{D}_{\varepsilon}:=\overline{\mathscr{D}}^{\rm ad}+(0,\bm{f}_{\varepsilon}).
\]
Then $\overline{D}_{\varepsilon}\in\aDiv_{\KK}(X)$, $\overline{D}_{\varepsilon}\leq\overline{D}$, and $\left\|\overline{D}-\overline{D}_{\varepsilon}\right\|_{\ell^1}=\left\|\bm{f}-\bm{f}_{\varepsilon}\right\|_{\ell^1}\leq\varepsilon$.
\end{proof}

\begin{definition}
Let $\KK=\text{$\RR$, $\QQ$, or $\ZZ$}$.
We call an element $\overline{D}\in\aDiv_{\KK}^{\rm tot}(X)$ an \emph{$\ell^1$-adelic $\KK$-Cartier divisor} on $X$ if there exists an $(\mathscr{X},\overline{\mathscr{D}})\in\Mod_{\RR}(\overline{D})$ such that $\left\|\overline{D}-\overline{\mathscr{D}}^{\rm ad}\right\|_{\ell^1}<+\infty$.
We denote by $\aDiv_{\KK}^{\ell^1}(X)$ the $\KK$-module of all the $\ell^1$-adelic $\KK$-Cartier divisors on $X$.
If $\KK=\ZZ$, then the subscript $\ZZ$ will be omitted as usual.

Moreover, we set
\begin{equation}
\aBDiv_{\KK,\RR}^{\ell^1}(X):=\aDiv_{\KK}^{\ell^1}(X)\times\BC_{\RR}(X).
\end{equation}
\end{definition}

Let $\PicSch_{X/K}$ be the Picard scheme of $X$ and let $\PicSch^0_{X/K}$ be the neutral component of $\PicSch_{X/K}$.
Let $\Pic(X)=\PicSch_{X/K}(K)$ be the Picard group of $X$, and let
\begin{equation}
\NS(X):=\PicSch_{X/K}(\overline{K})/\PicSch_{X/K}^0(\overline{K})
\end{equation}
be the N\'eron--Severi group of $X$.
By Severi's theorem of the base, $\NS(X)$ is a finitely generated $\ZZ$-module, and, since $\PicSch^0_{X/K}$ is an abelian variety over $K$ (see for example \cite[Theorem~5.4]{KleimanFGA}), $\PicSch_{X/K}^0(K)$ is also a finitely generated $\ZZ$-module by the Mordell-Weil theorem.
Since
\[
\PicSch_{X/K}^0(\overline{K})\cap\PicSch_{X/K}(K)=\PicSch_{X/K}^0(K),
\]
we obtain an exact sequence
\begin{equation}
0\to\PicSch_{X/K}^0(K)\to\Pic(X)\to\NS(X).
\end{equation}
Hence $\Pic(X)$ is also a finitely generated $\ZZ$-module.

Let $\widehat{P}_{\RR}(X)$ (respectively, $P_{\RR}(X)$) be the $\RR$-subspace of $\aDiv_{\RR}^{\ell^1}(X)$ (respectively, $\Div_{\RR}(X)$) generated by the principal divisors $\widehat{(\phi)}$ (respectively, $(\phi)$) for $\phi\in\Rat(X)^{\times}$.
Let $\Pic_{\RR}(X):=\Pic(X)\otimes_{\ZZ}\RR$ be the $\RR$-vector space of $\RR$-line bundles on $X$.
By \cite[Proposition~II.6.15]{Hart77}, the sequence
\begin{equation}
0\to P_{\RR}(X)\to\Div_{\RR}(X)\xrightarrow{\OO_X}\Pic_{\RR}(X)\to 0
\end{equation}
is exact.
So, if we set
\begin{equation}
\Cl_{\RR}(X):=\Div_{\RR}(X)/P_{\RR}(X),
\end{equation}
then $\Cl_{\RR}(X)=\Pic_{\RR}(X)$ is a finite-dimensional $\RR$-vector space.

\begin{definition}
We define
\[
\aCl_{\RR}^{\ell^1}(X):=\aDiv_{\RR}^{\ell^1}(X)/\widehat{P}_{\RR}(X).
\]
\end{definition}

\begin{lemma}
The sequence
\[
0\to\aC(X)\to\aCl_{\RR}^{\ell^1}(X)\xrightarrow{\zeta}\Cl_{\RR}(X)\to 0
\]
is exact.
\end{lemma}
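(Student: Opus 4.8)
The plan is to obtain the sequence by quotienting the short exact sequence obtained by restricting the ambient one
\[
0\to C_{\rm tot}(X)\to\aDiv_{\RR}^{\rm tot}(X)\xrightarrow{\zeta}\Div_{\RR}(X)\to 0
\]
to $\ell^1$-adelic divisors. So I would first establish that
\[
0\to\aC(X)\to\aDiv_{\RR}^{\ell^1}(X)\xrightarrow{\zeta}\Div_{\RR}(X)\to 0
\]
is exact. Surjectivity of $\zeta$ is immediate: any $D\in\Div_{\RR}(X)$ extends to some $O_K$-model $(\mathscr{X},\mathscr{D})$, and, equipping it with any $D$-Green function $g_{\infty}$ on $X_{\infty}^{\rm an}$, the adelization $\overline{\mathscr{D}}^{\rm ad}$ lies in $\aDiv_{\RR}(X)\subseteq\aDiv_{\RR}^{\ell^1}(X)$ with $\zeta(\overline{\mathscr{D}}^{\rm ad})=D$. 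Since $\ker\zeta$ on $\aDiv_{\RR}^{\rm tot}(X)$ equals $C_{\rm tot}(X)$, exactness at the middle term reduces to the statement that for $\bm{f}\in C_{\rm tot}(X)$ the element $(0,\bm{f})$ is $\ell^1$-adelic precisely when $\|\bm{f}\|_{\ell^1}<+\infty$; this holds because the Green-function part of the adelization of a vertical arithmetic divisor on an $O_K$-model is bounded and vanishes at all but finitely many places (hence is of finite $\ell^1$-norm), and in particular the trivial model has adelization $0$.

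Next I would descend along the compatible subspaces. Since $\zeta(\widehat{(\phi)})=(\phi)$ for $\phi\in\Rat(X)^{\times}$, the map $\zeta$ carries $\widehat{P}_{\RR}(X)$ onto $P_{\RR}(X)$, and the kernel of $\zeta|_{\widehat{P}_{\RR}(X)}$ is $\widehat{P}_{\RR}(X)\cap\aC(X)$; so the first display restricts to a short exact sequence $0\to\widehat{P}_{\RR}(X)\cap\aC(X)\to\widehat{P}_{\RR}(X)\to P_{\RR}(X)\to 0$. Applying the nine lemma to this sequence, the sequence of the preceding paragraph, and the vertical inclusions, I get the exact sequence
\[
0\to\aC(X)/(\widehat{P}_{\RR}(X)\cap\aC(X))\to\aCl_{\RR}^{\ell^1}(X)\xrightarrow{\bar{\zeta}}\Cl_{\RR}(X)\to 0,
\]
$\bar{\zeta}$ being the map induced by $\zeta$. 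Thus everything comes down to identifying $\widehat{P}_{\RR}(X)\cap\aC(X)$ and showing it is $\{0\}$.

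This last point is where the real work lies, and it is the step I expect to be the main obstacle. An element of $\widehat{P}_{\RR}(X)\cap\aC(X)$ has the form $\widehat{(\psi)}$ with $\psi\in\Rat(X)^{\times}\otimes_{\ZZ}\RR$ and $(\psi)=\zeta(\widehat{(\psi)})=0$. Since $X$ is normal and geometrically connected over the perfect field $K$ it is geometrically integral, so $H^0(X,\OO_X)=K$ and $\OO_X(X)^{\times}=K^{\times}$; by flatness of $\RR$ over $\ZZ$ the kernel of $\Rat(X)^{\times}\otimes_{\ZZ}\RR\to\Div_{\RR}(X)$ is $K^{\times}\otimes_{\ZZ}\RR$, whence $\psi\in K^{\times}\otimes_{\ZZ}\RR$ and $\widehat{(\psi)}$ is the element of $\aC(X)$ given place by place by the locally constant functions attached to $\psi$. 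It then remains to verify that every such element is already trivial in $\aDiv_{\RR}^{\ell^1}(X)$; this uses the precise normalization of the principal adelic divisor $\widehat{(\cdot)}$, and once it is in hand the pieces above assemble into the asserted exact sequence.
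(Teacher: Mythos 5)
Your reduction is sound up to the last step, and it follows essentially the same route as the paper: the paper also starts from the exactness of $0\to\aC(X)\to\aDiv_{\RR}^{\ell^1}(X)\xrightarrow{\zeta}\Div_{\RR}(X)\to 0$ and then passes to the quotient by identifying $\zeta^{-1}(P_{\RR}(X))$ with $\widehat{P}_{\RR}(X)+\aC(X)$; your nine-lemma bookkeeping is just a more explicit version of this, and your arguments for surjectivity of $\zeta$ and for exactness at the middle term are fine.

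The genuine gap is the step you yourself flag and postpone: you never prove $\widehat{P}_{\RR}(X)\cap\aC(X)=\{0\}$, and with the standard normalization of principal adelic divisors used in this framework, namely $\widehat{(\phi)}=\bigl((\phi),\sum_{v\in M_K}(-\log|\phi|_v^2)[v]\bigr)$, that intersection is not zero. Your own analysis correctly shows its elements are of the form $\widehat{(\psi)}$ with $\psi\in K^{\times}\otimes_{\ZZ}\RR$; but for a non-torsion constant such as $\phi=2$ one has $\widehat{(2)}=\bigl(0,\sum_{v}(-\log|2|_v^2)[v]\bigr)$, whose Green functions are nonzero constants at the places over $2$ and at $\infty$ (the product formula forces a weighted sum of these constants to vanish, not the functions themselves), so $\widehat{(2)}$ is a nonzero element of $\aC(X)$ lying in $\widehat{P}_{\RR}(X)$. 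Consequently the verification you defer to the end cannot go through as stated: injectivity of $\aC(X)\to\aCl_{\RR}^{\ell^1}(X)$ fails unless one also kills the classes $\widehat{(\psi)}$, $\psi\in K^{\times}\otimes_{\ZZ}\RR$, or reinterprets the first map. To be fair, the paper's own proof glosses over precisely this point by asserting the sum $\zeta^{-1}(P_{\RR}(X))=\widehat{P}_{\RR}(X)\oplus\aC(X)$ to be direct without justification; you have isolated the delicate step correctly, but since you neither prove it nor record the obstruction above, your proposal does not yet establish the lemma as stated.
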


\begin{proof}
Obviously, the sequence
\[
0\to\aC(X)\to\aDiv_{\RR}^{\ell^1}(X)\xrightarrow{\zeta}\Div_{\RR}(X)\to 0
\]
is exact.
If $\zeta(\overline{D})\in P_{\RR}(X)$, then $D=(\phi)$ for a $\phi\in\Rat(X)^{\times}\otimes_{\ZZ}\RR$ or $D=0$.
Hence $\zeta^{-1}(P_{\RR}(X))=\widehat{P}_{\RR}(X)\oplus\aC(X)$, which infers the required result.
\end{proof}

We fix a section $\iota:\Cl_{\RR}(X)\to\aCl_{\RR}^{\ell^1}(X)$ of $\zeta$ and a norm $\|\cdot\|$ on the finite-dimensional $\RR$-vector space $\Cl_{\RR}(X)$.
We can then define a norm on $\aCl_{\RR}^{\ell^1}(X)$ as
\begin{equation}
\left\|\overline{D}\right\|_{\iota,\|\cdot\|}:=\left\|D\right\|+\left\|\overline{D}-\iota(D)\right\|_{\ell^1}
\end{equation}
for $\overline{D}\in\aCl_{\RR}^{\ell^1}(X)$, where we regard $\overline{D}-\iota(D)\in\aC(X)$.

\begin{proposition}
Let $\iota:\Cl_{\RR}(X)\to\aCl_{\RR}^{\ell^1}(X)$ be a section of $\zeta$ and let $\|\cdot\|$ be a norm on $\Cl_{\RR}(X)$.
\begin{enumerate}
\item $\left(\aCl_{\RR}^{\ell^1}(X),\|\cdot\|_{\iota,\|\cdot\|}\right)$ is a Banach space.
\item Let $\iota':\Cl_{\RR}(X)\to\aCl_{\RR}^{\ell^1}(X)$ be another section and let $\|\cdot\|'$ be another norm.
Then $\|\cdot\|_{\iota',\|\cdot\|'}$ is equivalent to $\|\cdot\|_{\iota,\|\cdot\|}$.
\end{enumerate}
\end{proposition}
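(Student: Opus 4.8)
The plan is to reduce both assertions to two elementary facts: the $\ell^1$-direct sum of two Banach spaces is again a Banach space, and all norms on a finite-dimensional $\RR$-vector space are equivalent (and any linear map out of such a space is bounded). Everything else is bookkeeping with the exact sequence $0\to\aC(X)\to\aCl_{\RR}^{\ell^1}(X)\xrightarrow{\zeta}\Cl_{\RR}(X)\to 0$ established just above, and the fact, recalled in section~\ref{subsec:space:of:continuous:functions}, that $(\aC(X),\|\cdot\|_{\ell^1})$ is a Banach algebra, hence in particular complete.

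For part (1), I would first use the section $\iota$ to produce an $\RR$-linear isomorphism
\[
\aCl_{\RR}^{\ell^1}(X)\xrightarrow{\ \sim\ }\Cl_{\RR}(X)\oplus\aC(X),\qquad \overline{D}\mapsto\bigl(\zeta(\overline{D}),\,\overline{D}-\iota(\zeta(\overline{D}))\bigr),
\]
with inverse $(x,\bm{f})\mapsto\iota(x)+(0,\bm{f})$; this is well defined precisely by the exactness of the above sequence. Under this isomorphism the norm $\|\cdot\|_{\iota,\|\cdot\|}$ is carried exactly to the $\ell^1$-product norm $(x,\bm{f})\mapsto\|x\|+\|\bm{f}\|_{\ell^1}$. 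Since $(\Cl_{\RR}(X),\|\cdot\|)$ is finite-dimensional, hence complete, and $(\aC(X),\|\cdot\|_{\ell^1})$ is complete, a Cauchy sequence for the product norm decomposes coordinatewise into Cauchy sequences in the two factors, which converge; therefore $\aCl_{\RR}^{\ell^1}(X)$ is complete.

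For part (2), I would compare the two norms by the triangle inequality. First, on the finite-dimensional space $\Cl_{\RR}(X)$ there is a constant $c\geq 1$ with $c^{-1}\|x\|\leq\|x\|'\leq c\|x\|$. Next, since $\zeta\circ\iota=\zeta\circ\iota'=\id$, the difference $\iota-\iota'$ maps $\Cl_{\RR}(X)$ into $\ker\zeta=\aC(X)$; being a linear map out of a finite-dimensional space it is bounded, so $\|\iota(x)-\iota'(x)\|_{\ell^1}\leq M\|x\|$ for some $M\geq 0$. Then for $\overline{D}\in\aCl_{\RR}^{\ell^1}(X)$ with $D:=\zeta(\overline{D})$, writing $\overline{D}-\iota'(D)=(\overline{D}-\iota(D))+(\iota(D)-\iota'(D))$ gives
\[
\|\overline{D}\|_{\iota',\|\cdot\|'}=\|D\|'+\|\overline{D}-\iota'(D)\|_{\ell^1}\leq c\|D\|+\|\overline{D}-\iota(D)\|_{\ell^1}+M\|D\|\leq\max\{c+M,1\}\,\|\overline{D}\|_{\iota,\|\cdot\|},
\]
and the reverse inequality follows by exchanging the roles of $(\iota,\|\cdot\|)$ and $(\iota',\|\cdot\|')$. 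Hence the two norms are equivalent.

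There is no genuine obstacle in this argument; the only points worth stating explicitly are that $\aC(X)$ has already been shown to be complete (so the product in part (1) is Banach) and that $\iota-\iota'$ takes values in $\aC(X)$ and is automatically bounded (so the constant $M$ in part (2) exists). The rest is the standard handling of $\ell^1$-direct sums and of norms on finite-dimensional spaces.
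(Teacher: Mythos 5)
Your proof is correct and follows essentially the same route as the paper: completeness is obtained by splitting a Cauchy sequence via $\iota$ into its $\Cl_{\RR}(X)$-part and its $\aC(X)$-part (your isometric identification with the $\ell^1$-product makes this explicit), and equivalence of norms comes from the equivalence of norms on the finite-dimensional $\Cl_{\RR}(X)$ together with the boundedness of $\iota-\iota'$, which the paper realizes concretely by choosing a basis $A_1,\dots,A_l$ and estimating coordinatewise. The only difference is presentational, so nothing further is needed.
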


\begin{proof}
(1): If $\left(\overline{D}_n\right)_{n\geq 1}$ is a Cauchy sequence in $\aCl_{\RR}^{\ell^1}(X)$, then $\left(\zeta(\overline{D}_n)\right)_{n\geq 1}$ is a Cauchy sequence in $\Cl_{\RR}(X)$, and converges to an $E\in\Cl_{\RR}(X)$.
Set $(0,\bm{f}_n):=\overline{D}_n-\iota(\zeta(\overline{D}_n))$.
The sequence $(\bm{f}_n)_{n\geq 1}$ is then a Cauchy sequence in $\aC(X)$, and converges to a $\bm{g}\in\aC(X)$.
The sequence $\left(\overline{D}_n\right)_{n\geq 1}$ then converges to $\iota(E)+(0,\bm{g})$.

(2): It suffices to show $\|\cdot\|_{\iota',\|\cdot\|'}\leq C\|\cdot\|_{\iota,\|\cdot\|}$ for a $C>0$.
We choose a basis $A_1,\dots,A_l$ for $\Cl_{\RR}(X)$ and set
\[
\|a_1A_1+\dots+a_lA_l\|_1:=|a_1|+\dots+|a_l|.
\]
We can find a constant $C_1\geq 1$ such that $\|\cdot\|'\leq C_1\|\cdot\|$ and such that $\|\cdot\|_1\leq C_1\|\cdot\|$.
We set $(0,\bm{f}_i):=\iota(A_i)-\iota'(A_i)$ for each $i$, and set
\[
C_2:=\max_{1\leq i\leq l}\left\{\left\|\iota(A_i)-\iota'(A_i)\right\|_{\ell^1},1\right\}.
\]
Then, for any $\overline{D}\in\aCl_{\RR}^{\ell^1}(X)$ with $D=a_1A_1+\dots+a_lA_l$,
\begin{align*}
\left\|\overline{D}\right\|_{\iota',\|\cdot\|'} & = \|D\|'+\left\|\overline{D}-\iota'(D)\right\|_{\ell^1} \\
&\leq C_1\|D\|+\|\overline{D}-\iota(D)\|_{\ell^1}+\sum_{i=1}^l|a_i|\|\iota(A_i)-\iota'(A_i)\|_{\ell^1} \\
&\leq C_1\|D\|+\|\overline{D}-\iota(D)\|_{\ell^1}+C_2\|\overline{D}\|_1\leq 2C_1C_2\|\overline{D}\|_{\iota,\|\cdot\|}.
\end{align*}
\end{proof}

\subsection{Arithmetic volume function}
\label{subsec:arithmetic:volume:function}

The following is a key idea to introduce the notion of $\ell^1$-adelic $\RR$-Cartier divisors.

\begin{lemma}
\label{lem:difference:adding:adelic:sequence}
Let $\mathscr{X}$ be a normal, projective, and geometrically connected arithmetic variety over $\Spec(O_K)$, and let $\overline{\mathscr{D}}\in\aDiv_{\RR}(\mathscr{X})$.
Suppose that every irreducible component of $\mathscr{D}$ is Cartier.
Let $U=U_{(\mathscr{X},\mathscr{D})}$ be a nonempty open subset of $\Spec(O_K)$ having the following properties.
\begin{enumerate}
\item[(a)] $\pi_U:\mathscr{X}_U\to U$ is geometrically reduced and geometrically irreducible.
\item[(b)] For every $v\in U$, $\ord_{\pi_U^{-1}(v)}(\mathscr{D})=0$.
\end{enumerate}
Then, for every $v\in U$ and $\phi\in H^0(D)\setminus\{0\}$, one has
\[
\inf_{x\in X_v^{\rm an}}\left\{g_v^{(\mathscr{X},\mathscr{D})}(x)-\log|\phi|^2(x)\right\}\in(2\log\sharp\widetilde{K}_v)\ZZ.
\]
\end{lemma}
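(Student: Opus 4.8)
The plan is to identify $g_v^{(\mathscr{X},\mathscr{D})}(x)-\log|\phi|^2(x)$ with the value of a single model Green function and to compute its infimum by splitting off the vertical contributions. Since conditions (a) and (b) only involve the model over $U$, I would fix a place $v\in U$ and $\phi\in H^0(D)\setminus\{0\}$ and work throughout with $\mathscr{X}_U$; for $v\in U$ the model Green function $g_v^{(\mathscr{X}_U,\,\cdot\,)}$ is given by the same formula as $g_v^{(\mathscr{X},\,\cdot\,)}$, and $g_v^{(\mathscr{X},\mathscr{D})}=g_v^{(\mathscr{X}_U,\mathscr{D}_U)}$. Regarding $\phi$ as a nonzero rational function on the normal integral scheme $\mathscr{X}_U$, put $\mathscr{E}:=\mathscr{D}_U+\zdiv_{\mathscr{X}_U}(\phi)$, an $\RR$-Cartier divisor on $\mathscr{X}_U$ with $\mathscr{E}|_X=D+(\phi)\geq 0$. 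As $\phi$ is itself a local equation of $\zdiv_{\mathscr{X}_U}(\phi)$ everywhere, we have $g_v^{(\mathscr{X}_U,\zdiv_{\mathscr{X}_U}(\phi))}(x)=-\log|\phi|^2(x)$, so by the additivity of model Green functions in the divisor,
\[
g_v^{(\mathscr{X},\mathscr{D})}(x)-\log|\phi|^2(x)=g_v^{(\mathscr{X}_U,\mathscr{E})}(x)\qquad(x\in X_v^{\rm an}).
\]

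Next I would decompose $\mathscr{E}=\mathscr{E}_{\horiz}+\mathscr{E}_{\ver}$ into its horizontal and vertical parts. By (a), every fibre $\pi_U^{-1}(w)$ $(w\in U)$ is a reduced, irreducible Cartier divisor (pulled back from the closed point $w$ of the regular scheme $U$), with generic point $\xi_w$; by (b), $\mathscr{D}_U$ has no vertical component, so $\mathscr{E}_{\ver}=\sum_{w\in U}\ord_{\xi_w}(\phi)\,\pi_U^{-1}(w)$, whose coefficient along $\pi_U^{-1}(v)$ is the integer $m:=\ord_{\xi_v}(\phi)$. Then $\mathscr{E}_{\horiz}=\mathscr{E}-\mathscr{E}_{\ver}$ is again $\RR$-Cartier, and $g_v^{(\mathscr{X}_U,\mathscr{E})}=g_v^{(\mathscr{X}_U,\mathscr{E}_{\horiz})}+\sum_{w}\ord_{\xi_w}(\phi)\,g_v^{(\mathscr{X}_U,\pi_U^{-1}(w))}$ on $X_v^{\rm an}$. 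For $w\neq v$, the fibre $\pi_U^{-1}(w)$ is disjoint from $\widetilde{\mathscr{X}}_v$, which contains $r_v^{\mathscr{X}}(x)$ for every $x\in X_v^{\rm an}$, so $g_v^{(\mathscr{X}_U,\pi_U^{-1}(w))}\equiv 0$; for $w=v$, reducedness of the fibre (again (a)) makes $\varpi_v$ a local equation of $\pi_U^{-1}(v)$, so $g_v^{(\mathscr{X}_U,\pi_U^{-1}(v))}(x)=-\log|\varpi_v|_v^2=2\log\sharp\widetilde{K}_v$ is constant. Hence
\[
g_v^{(\mathscr{X},\mathscr{D})}(x)-\log|\phi|^2(x)=g_v^{(\mathscr{X}_U,\mathscr{E}_{\horiz})}(x)+2m\log\sharp\widetilde{K}_v,
\]
and, since $m\in\ZZ$, it remains only to show $\inf_{x\in X_v^{\rm an}}g_v^{(\mathscr{X}_U,\mathscr{E}_{\horiz})}(x)=0$.

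For the lower bound, $\mathscr{E}_{\horiz}$ is effective and $\RR$-Cartier, hence locally around any point of $\mathscr{X}_U$ it is a nonnegative $\RR$-linear combination of effective Cartier divisors; taking a local equation $f'$ of $\mathscr{E}_{\horiz}$ around $y=r_v^{\mathscr{X}}(x)$ accordingly, $f'$ is a product of functions regular at $y$ with nonnegative exponents, so $|f'|(x)\leq 1$ and $g_v^{(\mathscr{X}_U,\mathscr{E}_{\horiz})}(x)=-\log|f'|^2(x)\geq 0$. For the opposite inequality I would evaluate at the point $x_{\xi_v}\in\Gamma(X_v^{\rm an})$ attached to the generic point $\xi_v$ of $\pi_U^{-1}(v)$ (which, by irreducibility of the fibre, is the only point of $\Gamma(X_v^{\rm an})$): since $\mathscr{E}_{\horiz}$ is horizontal, $\xi_v\notin\Supp(\mathscr{E}_{\horiz})$, so $\ord_{\xi_v}(\mathscr{E}_{\horiz})=0$ and $|f'|(x_{\xi_v})=(\sharp\widetilde{K}_v)^{-\ord_{\xi_v}(\mathscr{E}_{\horiz})/\ord_{\xi_v}(\varpi_v)}=1$, i.e.\ $g_v^{(\mathscr{X}_U,\mathscr{E}_{\horiz})}(x_{\xi_v})=0$. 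The step I expect to need the most care is this lower bound: one must check that an effective $\RR$-Cartier divisor on the normal scheme $\mathscr{X}_U$ is, locally, a nonnegative $\RR$-linear combination of effective Cartier divisors — a rational polyhedral cone argument, which is where the hypothesis that the components of $\mathscr{D}$ are Cartier should enter — and therefore has a nonnegative model Green function; the remaining steps are routine given the additivity of Green functions and the explicit descriptions of $r_v^{\mathscr{X}}$ and of $\Gamma(X_v^{\rm an})$ recalled in section~\ref{subsubsec:Berkovich:analytic:space}.
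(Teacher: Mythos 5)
Your argument is correct, but it takes a genuinely different route from the paper's. The paper does not split off the vertical part: it writes $\mathscr{D}|_{\mathscr{X}_{K_v^{\circ}}}=a_1\mathscr{D}_1+\dots+a_r\mathscr{D}_r$ with prime \emph{Cartier} components (this is precisely where the hypothesis that the components of $\mathscr{D}$ are Cartier is consumed, since it furnishes local equations $f_{i,\lambda}$ on an affine cover), and then applies the maximum modulus principle of Lemma~\ref{lem:maximum:modulus:principle} chart by chart to $|\phi|\cdot\prod_i|f_{i,\lambda}|^{a_i}$, concluding that the infimum on each chart is attained at the unique point of $\Gamma(X_v^{\rm an})$ and equals $2\ord_{\widetilde{\mathscr{X}}_v}(\phi)\log\sharp\widetilde{K}_v$, using (a) for $\ord_{\widetilde{\mathscr{X}}_v}(\varpi_v)=1$ and (b) for $\ord_{\widetilde{\mathscr{X}}_v}(f_{i,\lambda})=0$. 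You instead isolate the vertical constant $2\ord_{\xi_v}(\phi)\log\sharp\widetilde{K}_v$ and reduce to showing that the Green function of the effective horizontal part has infimum $0$, proving nonnegativity pointwise and vanishing at the Gauss point $x_{\xi_v}$ --- the same point the paper singles out, and with the same resulting value of the infimum. The step you flag is indeed fine: near any point the cone of coefficient vectors $(c_j)$ making $\sum_j c_j\zdiv(f_j)$ effective is a rational polyhedral cone, hence generated by rational vectors, and after clearing denominators normality of the model turns each rational generator into $\zdiv$ of a regular function; note, however, that this uses only normality, not the Cartier-components hypothesis --- that hypothesis enters only the paper's chart construction, so your proof in fact never needs it and establishes a slightly more general statement. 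What the paper's route buys is that it leans entirely on the already-proved Lemma~\ref{lem:maximum:modulus:principle} (whose proof via \cite[Lemma~2.3(3)]{IkomaRem} encapsulates the same ``effective divisor implies regular function by normality'' mechanism) and needs no separate decomposition lemma; what your route buys is a cleaner structural picture (an integral vertical constant plus a nonnegative horizontal Green function vanishing at $x_{\xi_v}$) and independence from the hypothesis on the components of $\mathscr{D}$.
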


\begin{proof}
By assumption, every irreducible component of $\mathscr{D}|_{\mathscr{X}_{K_v^{\circ}}}$ is Cartier, so we can write
\[
\mathscr{D}|_{\mathscr{X}_{K_v^{\circ}}}=a_1\mathscr{D}_1+\dots+a_r\mathscr{D}_r
\]
with $a_i\in\RR$ and prime Cartier divisors $\mathscr{D}_i$.

We choose a finite affine open covering $\left(\mathscr{U}_{\lambda}\right)_{\lambda}$ of $\mathscr{X}_{K_v^{\circ}}$ such that $\mathscr{U}_{\lambda}\cap\widetilde{\mathscr{X}}_v\neq\emptyset$ and $\mathscr{D}_i\cap\mathscr{U}_{\lambda}$ is principal with equation $f_{i,\lambda}$ for each $\lambda$.
We set $\mathscr{U}_{\lambda}=\Spec(\mathscr{A}_{\lambda})$ with finitely generated and integrally closed $K_v^{\circ}$-algebra $\mathscr{A}_{\lambda}$, and set $U_{\lambda}:=\Spec(\mathscr{A}_{\lambda}\otimes_{K_v^{\circ}}K)$.
We then have
\[
X_v^{\rm an}=\bigcup_{\lambda}\left(U_{\lambda}\right)_{v,\mathscr{U}_{\lambda}}^{\rm an}\quad\text{and}\quad
\psi_{\lambda}:=\phi\cdot f_{1,\lambda}^{\lfloor a_1\rfloor}\cdots f_{r,\lambda}^{\lfloor a_r\rfloor}\in\mathscr{A}_{\lambda}
\]
for every $\phi\in H^0(D)\setminus\{0\}$ and $\lambda$.

By Lemma~\ref{lem:maximum:modulus:principle}, the function
\[
\left(U_{\lambda}\right)_{v,\mathscr{U}_{\lambda}}^{\rm an}\to\RR,\quad x\mapsto |\psi_{\lambda}|(x)\cdot |f_{1,\lambda}|^{a_1-\lfloor a_1\rfloor}(x)\cdots |f_{r,\lambda}|^{a_r-\lfloor a_r\rfloor}(x),
\]
attains its maximum at the single point in $\Gamma(X_v^{\rm an})\cap \left(U_{\lambda}\right)_{v,\mathscr{U}_{\lambda}}^{\rm an}$ that corresponds to the fiber $\widetilde{\mathscr{X}}_v$.
Let $\varpi_v$ be a uniformizer of $K_v$.
Since
\[
\ord_{\widetilde{\mathscr{X}}_v}(\varpi_v)=1\quad\text{and}\quad\ord_{\widetilde{\mathscr{X}}_v}(f_{1,\lambda})=\cdots=\ord_{\widetilde{\mathscr{X}}_v}(f_{r,\lambda})=0,
\]
we have
\begin{align*}
\frac{1}{2}\inf_{x\in \left(U_{\lambda}\right)_{v,\mathscr{U}_{\lambda}}^{\rm an}}\left\{g_v^{(\mathscr{X},\mathscr{D})}(x)-\log|\phi|(x)\right\} & = \frac{\ord_{\widetilde{\mathscr{X}}_v}\left(\phi\cdot f_{1,\lambda}^{a_1}\cdots f_{r,\lambda}^{a_r}\right)}{\ord_{\widetilde{\mathscr{X}}_v}(\varpi_v)}\cdot\log\sharp\widetilde{K}_v \\
& = \ord_{\widetilde{\mathscr{X}}_v}(\phi)\log\sharp\widetilde{K}_v\in (\log\sharp\widetilde{K}_v)\ZZ
\end{align*}
for every $\lambda$.
We have thus proved the lemma.
\end{proof}

\begin{proposition}
\label{prop:difference:adding:continuous:function}
Let $X$ be a normal, projective, and geometrically connected $K$-variety and let $\mu:\widetilde{X}\to X$ be a resolution of singularities of $X$.
Let $\overline{D}\in\aDiv_{\RR}(X)$, and let $\bm{a}=\sum_{v\in M_K}a_v[v]\in C_{\rm tot}(\Spec(K))$ with $\bm{a}\geq 0$.
\begin{enumerate}
\item[(I)] Let $U$ be a nonempty open subset of $\Spec(O_K)$ over which a model of definition for $\overline{D}$ exists.
\suspend{enumerate}
We choose an $O_K$-model $(\widetilde{\mathscr{X}},\widetilde{\mathscr{D}})$ of $(\widetilde{X},\mu^*D)$ such that $(\widetilde{\mathscr{X}}_U,\widetilde{\mathscr{D}}_U)$ gives a $U$-model of definition for $\mu^*\overline{D}$ and such that every irreducible component of $\widetilde{\mathscr{D}}$ is Cartier.
\resume{enumerate}
\item[(II)] Let $U_{(\widetilde{\mathscr{X}},\widetilde{\mathscr{D}})}$ be a nonempty open subset of $U$ such that $\pi:\widetilde{\mathscr{X}}_{U_{(\widetilde{\mathscr{X}},\widetilde{\mathscr{D}})}}\to U_{(\widetilde{\mathscr{X}},\widetilde{\mathscr{D}})}$ is smooth and such that $\ord_{\pi^{-1}(v)}(\widetilde{\mathscr{D}})=0$ for every $v\in U_{(\widetilde{\mathscr{X}},\widetilde{\mathscr{D}})}$.
\item[(III)] $U_{\bm{a}}:=\left\{v\in M_K^{\rm fin}\,:\,a_v<2\log\sharp\widetilde{K}_v\right\}$.
\end{enumerate}
We set
\[
\bm{a}':=\sum_{v\notin U_{(\widetilde{\mathscr{X}},\widetilde{\mathscr{D}})}\cap U_{\bm{a}}}a_v[v].
\]
Then the following holds.
\begin{enumerate}
\item If a $\bm{b}\in C_{\rm tot}(\Spec(K))$ satisfies $\bm{b}\geq\bm{a}$, then $U_{\bm{b}}\subset U_{\bm{a}}$.
\item For any $\mathcal{V}\in\BC_{\RR}(X)$, one has
\[
\aHzf\left(\overline{D}+(0,\bm{a});\mathcal{V}\right)=\aHzf\left(\overline{D}+(0,\bm{a}');\mathcal{V}\right).
\]
\item If $\sharp(M_K^{\rm fin}\setminus U_{\bm{a}})$ is finite (in particular, if $\bm{a}$ is a bounded sequence), then $\ah\left(\overline{D}+(0,\bm{a});\mathcal{V}\right)$ is finite for every $\mathcal{V}\in\BC_{\RR}(X)$ and $\ast=\text{s, ss}$.
\end{enumerate}
\end{proposition}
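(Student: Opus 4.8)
The plan is to prove the three assertions in turn. Assertion~(1) is immediate: if $\bm{b}\geq\bm{a}$, then $b_v\geq a_v$ for every $v\in M_K$, so every $v$ with $b_v<2\log\sharp\widetilde{K}_v$ also satisfies $a_v<2\log\sharp\widetilde{K}_v$; that is, $U_{\bm{b}}\subset U_{\bm{a}}$. Assertion~(2) is the substantive point, and it will be deduced from Lemma~\ref{lem:difference:adding:adelic:sequence} applied to the model $(\widetilde{\mathscr{X}},\widetilde{\mathscr{D}})$ of $(\widetilde{X},\mu^{*}D)$; assertion~(3) will then follow formally.

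For~(2), I would first observe that $0$ lies in each $\aHzf$ and that $H^0(D;\mathcal{V})$ does not involve $\bm{a}$, so only the nonzero $\phi\in H^0(D;\mathcal{V})$ need be compared; regarded as a rational function, such a $\phi$ lies in $H^0(\mu^{*}D)$. Adding $(0,\bm{a})$ only rescales the $v$-adic supremum norms, by the factor $e^{-a_v/2}\leq 1$ for each $v$, i.e. $\|\phi\|_{v,\sup}^{\overline{D}+(0,\bm{a})}=e^{-a_v/2}\|\phi\|_{v,\sup}^{\overline{D}}$ (and likewise for $\bm{a}'$, cf. section~\ref{subsubsec:model:functions}). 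Since $\bm{a}$ and $\bm{a}'$ coincide outside the set of finite places $S:=U_{(\widetilde{\mathscr{X}},\widetilde{\mathscr{D}})}\cap U_{\bm{a}}$, the inclusion $\aHzf(\overline{D}+(0,\bm{a});\mathcal{V})\supset\aHzf(\overline{D}+(0,\bm{a}');\mathcal{V})$ is immediate from $\bm{a}\geq\bm{a}'$, and the reverse inclusion reduces to the following: for $v\in S$ and $\phi\in H^0(D;\mathcal{V})\setminus\{0\}$, if $\|\phi\|_{v,\sup}^{\overline{D}}\leq e^{a_v/2}$ then $\|\phi\|_{v,\sup}^{\overline{D}}\leq 1$.

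To prove this, I would invoke Lemma~\ref{lem:difference:adding:adelic:sequence}. For $v\in U$ the map of analytic spaces $\widetilde{X}_v^{\mathrm{an}}\to X_v^{\mathrm{an}}$ induced by $\mu$ is surjective, and the Green function pulls back, with $g_v^{\mu^{*}\overline{D}}=g_v^{(\widetilde{\mathscr{X}},\widetilde{\mathscr{D}})}$ on $U$; hence $\|\phi\|_{v,\sup}^{\overline{D}}=\|\mu^{*}\phi\|_{v,\sup}^{\mu^{*}\overline{D}}$ is computed over $\widetilde{X}_v^{\mathrm{an}}$ through the model $(\widetilde{\mathscr{X}},\widetilde{\mathscr{D}})$, which over $U_{(\widetilde{\mathscr{X}},\widetilde{\mathscr{D}})}$ meets the hypotheses of the lemma by~(I) (every component of $\widetilde{\mathscr{D}}$ is Cartier) and~(II) (smoothness — hence geometrically reduced fibres, and, after a harmless further shrinking, geometrically irreducible ones — together with $\ord_{\pi^{-1}(v)}(\widetilde{\mathscr{D}})=0$). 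Therefore
\[
-2\log\|\phi\|_{v,\sup}^{\overline{D}}=\inf_{x\in\widetilde{X}_v^{\mathrm{an}}}\bigl\{g_v^{(\widetilde{\mathscr{X}},\widetilde{\mathscr{D}})}(x)-\log|\mu^{*}\phi|^{2}(x)\bigr\}\in(2\log\sharp\widetilde{K}_v)\ZZ,
\]
so $\|\phi\|_{v,\sup}^{\overline{D}}$ is an integral power of $\sharp\widetilde{K}_v$; and since $v\in U_{\bm{a}}$ gives $e^{a_v/2}<\sharp\widetilde{K}_v$, any such power that is $\leq e^{a_v/2}$ is automatically $\leq 1$, which is exactly the implication sought. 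Granting~(2), assertion~(3) follows: by~(2), together with $a_{\infty}'=a_{\infty}$, the sets $\aHzsm$ and $\aHz$ attached to $\overline{D}+(0,\bm{a})$ and to $\overline{D}+(0,\bm{a}')$ coincide, so $\ah(\overline{D}+(0,\bm{a});\mathcal{V})=\ah(\overline{D}+(0,\bm{a}');\mathcal{V})$ for $\ast=\mathrm{s},\mathrm{ss}$; and when $\sharp(M_K^{\mathrm{fin}}\setminus U_{\bm{a}})<+\infty$, the set $M_K^{\mathrm{fin}}\setminus S$ is finite (being contained in the union of the finite sets $\Spec(O_K)\setminus U_{(\widetilde{\mathscr{X}},\widetilde{\mathscr{D}})}$ and $M_K^{\mathrm{fin}}\setminus U_{\bm{a}}$), so $\bm{a}'$ has finite support, $(0,\bm{a}')\in C(X)$, and hence $\overline{D}+(0,\bm{a}')\in\aDiv_{\RR}(X)$, for which $\ah(\,\cdot\,;\mathcal{V})$ is finite by the theory of adelically normed vector spaces (Proposition~\ref{prop:definition:of:adelically:normed:vector:spaces}(2); cf. \cite[section~2.5]{IkomaDiff1}). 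The parenthetical case is clear, since a bounded $\bm{a}$ has $a_v<2\log\sharp\widetilde{K}_v$ for all but finitely many $v$.

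The hard part will be the reduction in~(2) to Lemma~\ref{lem:difference:adding:adelic:sequence}: one must pass to the resolution $\widetilde{X}$ in order to dispose of a model with smooth special fibres and no vertical component of the divisor, transport the supremum norm from $X_v^{\mathrm{an}}$ to $\widetilde{X}_v^{\mathrm{an}}$, and match the infimum appearing in that lemma with $-2\log\|\phi\|_{v,\sup}^{\overline{D}}$. Once the values $\|\phi\|_{v,\sup}^{\overline{D}}$ are pinned to the discrete set $\sharp\widetilde{K}_v^{\ZZ}$, the remaining arithmetic using the bound $a_v<2\log\sharp\widetilde{K}_v$ for $v\in U_{\bm{a}}$ is routine.
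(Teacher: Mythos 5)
Your proposal is correct and follows essentially the same route as the paper: reduce to the places $v\in U_{(\widetilde{\mathscr{X}},\widetilde{\mathscr{D}})}\cap U_{\bm{a}}$, transport the supremum over $X_v^{\rm an}$ to $\widetilde{X}_v^{\rm an}$ through the model $(\widetilde{\mathscr{X}},\widetilde{\mathscr{D}})$, and combine Lemma~\ref{lem:difference:adding:adelic:sequence} with the bound $a_v<2\log\sharp\widetilde{K}_v$, your only deviation being the harmless multiplicative rephrasing (the norm lies in $(\sharp\widetilde{K}_v)^{\ZZ}$) of the paper's additive statement that the infimum lies in $(2\log\sharp\widetilde{K}_v)\ZZ$. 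The deduction of (3) from (2), via finiteness of $M_K^{\rm fin}\setminus(U_{(\widetilde{\mathscr{X}},\widetilde{\mathscr{D}})}\cap U_{\bm{a}})$ and the finiteness of $\ah$ for genuine adelic $\RR$-Cartier divisors, likewise matches the paper's argument.
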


\begin{proof}
The assertion (1) is obvious.

(2): Since $\bm{a}\geq 0$, the inclusion $\supset$ is obvious.
Suppose $v\in U_{(\widetilde{\mathscr{X}},\widetilde{\mathscr{D}})}\cap U_{\bm{a}}$; hence, in particular,
\begin{equation}
\label{eqn:adding:continuous:functions:1}
0\geq -a_v>-2\log\sharp\widetilde{K}_v.
\end{equation}
If $\phi\in H^0(\mu^*D;\mathcal{V})\setminus\{0\}=H^0(D;\mathcal{V})\setminus\{0\}$ satisfies
\[
g_v^{\overline{D}}(x)+a_v-\log|\phi|^2(x)\geq 0
\]
for every $x\in X_v^{\rm an}$, then
\[
g_v^{(\widetilde{\mathscr{X}},\widetilde{\mathscr{D}})}(x')-\log|\phi|^2(x')\geq -a_v
\]
for every $x'\in\widetilde{X}_v^{\rm an}$.
By Lemma~\ref{lem:difference:adding:adelic:sequence} and \eqref{eqn:adding:continuous:functions:1}, we have
\[
\inf_{x'\in\widetilde{X}_v^{\rm an}}\left\{g_v^{(\widetilde{\mathscr{X}},\widetilde{\mathscr{D}})}(x')-\log|\phi|^2(x')\right\}=\inf_{x\in X_v^{\rm an}}\left\{g_v^{\overline{D}}(x)-\log|\phi|^2(x)\right\}\geq 0.
\]
Hence $\phi\in\aHzf\left(\overline{D}+(0,\bm{a});\mathcal{V}\right)$ implies $\phi\in\aHzf\left(\overline{D}+(0,\bm{a}');\mathcal{V}\right)$.

If $M_K\setminus U_{\bm{a}}$ is finite, then so is $M_K\setminus (U_{(\widetilde{\mathscr{X}},\widetilde{\mathscr{D}})}\cap U_{\bm{a}})$.
Hence, the assertion (2) implies the assertion (3) (see \cite[Proposition~4.3.1(3)]{MoriwakiAdelic}).
\end{proof}

\begin{proposition}
\label{prop:basic:ah:of:ell1:adelic:divisors}
Let $X$ be a normal, projective, and geometrically connected $K$-variety and let $\ast=\text{s or ss}$.
\begin{enumerate}
\item To each $\overline{D}\in\aDiv_{\RR}(X)$, one can assign a constant $\delta(\overline{D})>0$, which depends only on $\overline{D}$ and $X$, such that
\[
0\leq\ah\left(\overline{D}+(0,\bm{f});\mathcal{V}\right)-\ah\left(\overline{D};\mathcal{V}\right)\leq\left(\frac{3}{2}\left\|\bm{f}\right\|_{\ell^1}+\delta(\overline{D})\right)\dim_{\QQ}H^0(D;\mathcal{V}).
\]
for every $\bm{f}\in\aC(X)$ and $\mathcal{V}\in\BC_{\RR}(X)$.
Moreover, one can assume that
\[
\delta(t\overline{D})=\delta(\overline{D})
\]
holds for every $t\in \RR\setminus\{0\}$.
\item For any $(\overline{D};\mathcal{V})\in\aBDiv_{\RR,\RR}^{\ell^1}(X)$, $\aHzs{\ast}\left(\overline{D};\mathcal{V}\right)$ is a finite set.
\item For any $(\overline{D};\mathcal{V})\in\aBDiv_{\RR,\RR}^{\ell^1}(X)$, $\left(H^0(D;\mathcal{V}),(\|\cdot\|_{v,\sup}^{\overline{D}})_{v\in M_K}\right)$ is an adelically normed $K$-vector space.
\end{enumerate}
\end{proposition}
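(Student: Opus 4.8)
The plan is to deduce (2) and (3) from (1), which is the substantive assertion.

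For (2): by the definition of $\aDiv_{\RR}^{\ell^1}(X)$ we may write $\overline{D}=\overline{\mathscr{D}}^{\rm ad}+(0,\bm{f})$ with $(\mathscr{X},\overline{\mathscr{D}})\in\Mod_{\RR}(\overline{D})$ and $\bm{f}\in\aC(X)$ (necessarily $\bm{f}\geq 0$, since $\overline{\mathscr{D}}^{\rm ad}\leq\overline{D}$). As $\overline{\mathscr{D}}^{\rm ad}\in\aDiv_{\RR}(X)$, the number $\ah(\overline{\mathscr{D}}^{\rm ad};\mathcal{V})$ is finite by the formalism of \cite{IkomaDiff1}, and (1) then bounds $\ah(\overline{D};\mathcal{V})$ above by $\ah(\overline{\mathscr{D}}^{\rm ad};\mathcal{V})+\bigl(\tfrac32\|\bm{f}\|_{\ell^1}+\delta(\overline{\mathscr{D}}^{\rm ad})\bigr)\dim_{\QQ}H^0(D;\mathcal{V})<+\infty$, so $\aHzs{\ast}(\overline{D};\mathcal{V})$ is finite. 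For (3): each $\|\cdot\|_{v,\sup}^{\overline{D}}$ is a $(K_v,|\cdot|_v)$-norm, non-Archimedean when $v$ is finite; the $O_K$-module $H^0(\mathscr{D};\mathcal{V})$ of integral sections of the model is of full rank in $H^0(D;\mathcal{V})$ and, because $\bm{f}\geq 0$, is contained in $\aHzf(\overline{D};\mathcal{V})$, so condition~(1) of Proposition~\ref{prop:definition:of:adelically:normed:vector:spaces} holds; condition~(2) there holds by part~(2) just proved. Hence $\bigl(H^0(D;\mathcal{V}),(\|\cdot\|_{v,\sup}^{\overline{D}})_{v\in M_K}\bigr)$ is an adelically normed $K$-vector space.

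For (1) itself, the idea is a reduction to finitely many places. After replacing $X$ by a resolution of singularities $\mu\colon\widetilde{X}\to X$ (which changes neither $H^0(D;\mathcal{V})=H^0(\mu^{*}D;\mathcal{V})$ nor the supremum norms, since $g_v^{\mu^{*}\overline{D}}=\mu^{*}g_v^{\overline{D}}$), fix an $O_K$-model $(\mathscr{X},\mathscr{D})$ of $(X,D)$ with every component of $\mathscr{D}$ Cartier and a nonempty open $U\subseteq\Spec(O_K)$ over which $(\mathscr{X}_U,\mathscr{D}_U)$ is a model of definition for $\overline{D}$ satisfying the hypotheses of Lemma~\ref{lem:difference:adding:adelic:sequence}; by that lemma, for $v\in U$ the values of $\|\cdot\|_{v,\sup}^{\overline{D}}$ lie in $(\sharp\widetilde{K}_v)^{\ZZ}$, so that rescaling this norm by any factor in $\bigl[(\sharp\widetilde{K}_v)^{-1},\sharp\widetilde{K}_v\bigr]$ leaves $\aHzf$ unchanged at $v$. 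Put $\bm{c}:=\sum_{v\in M_K}\|f_v\|_{\sup}[v]$, a nonnegative family of constant functions with $\|\bm{c}\|_{\ell^1}=\|\bm{f}\|_{\ell^1}$; since $-\bm{c}\leq\bm{f}\leq\bm{c}$ and the local supremum norm is monotone in the local Green function (compare \eqref{eqn:elementary:property:of:ah:norm:increase}), one has $\aHzf(\overline{D}+(0,\bm{f});\mathcal{V})\subseteq\aHzf(\overline{D}+(0,\bm{c});\mathcal{V})$ and likewise at the Archimedean place, so it suffices to compare $\ah(\overline{D}\pm(0,\bm{c});\mathcal{V})$ with $\ah(\overline{D};\mathcal{V})$ for the constant family $\bm{c}$. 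Now Proposition~\ref{prop:difference:adding:continuous:function}(2), applied with $\bm{a}=\bm{c}$, shows that $\aHzf(\overline{D}+(0,\bm{c});\mathcal{V})$ — and hence, the Archimedean norm being unaffected, $\aHzs{\ast}(\overline{D}+(0,\bm{c});\mathcal{V})$ — coincides with $\aHzs{\ast}(\overline{D}+(0,\bm{c}');\mathcal{V})$, where $\bm{c}'$ is supported on the finite set $T$ consisting of $\infty$, of the (finitely many, depending only on $\overline{D}$) places outside $U_{(\widetilde{\mathscr{X}},\widetilde{\mathscr{D}})}$, and of the finite $v$ with $\|f_v\|_{\sup}\geq 2\log\sharp\widetilde{K}_v$. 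One then compares $\ah(\overline{D}+(0,\bm{c}');\mathcal{V})$ with $\ah(\overline{D};\mathcal{V})$ one place at a time over $T$, using \eqref{eqn:elementary:property:of:ah:rescaling} at $\infty$, Lemma~\ref{lem:adelic:rescaling:adelically:normed:vector:spaces} at the finite places of $T$, and \eqref{eqn:elementary:property:of:ah:exact:sequence} to pass between the successive finite-place truncations; the opposite estimate is obtained by the same argument (the quantization lemma being used to pass from $\overline{D}$ back down by the same constant family, rather than to relax it).

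It remains to add up the per-place rescaling costs — of size $\bigl\lceil\tfrac{\|f_v\|_{\sup}/2}{-\log|p|_v}\bigr\rceil\log p+2$ at a finite $v\in T$ lying over $p=p_v$, and $\tfrac12\|f_\infty\|_{\sup}+2$ at $\infty$ — together with the finitely many $3\rk+2\log(\rk)!$ error terms from \eqref{eqn:elementary:property:of:ah:exact:sequence}: the contributions of the places of $T$ depending only on $\overline{D}$ are absorbed into $\delta(\overline{D})$, while for the places $v$ with $\|f_v\|_{\sup}\geq 2\log\sharp\widetilde{K}_v$ one uses $\log p_v\leq\log\sharp\widetilde{K}_v\leq\tfrac12\|f_v\|_{\sup}$ together with $\sharp\{v:\|f_v\|_{\sup}\geq 2\log 2\}\leq\|\bm{f}\|_{\ell^1}/(2\log 2)$ and $-\log|p|_v\geq\log p$ to bound the remaining sum by a constant multiple of $\|\bm{f}\|_{\ell^1}$; the precise coefficient $\tfrac32$ then comes out of a careful bookkeeping. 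Since the combinatorial data (the finite set of places outside $U_{(\widetilde{\mathscr{X}},\widetilde{\mathscr{D}})}$, the quantization structure) is unchanged when $\overline{D}$ is replaced by $t\overline{D}$, one may arrange $\delta(t\overline{D})=\delta(\overline{D})$. The main obstacle is exactly this reduction to a finite set of places: a naive place-by-place rescaling would contribute at least $2\dim_{\QQ}H^0(D;\mathcal{V})$ at each of infinitely many places, so one genuinely needs the quantization of Lemma~\ref{lem:difference:adding:adelic:sequence}, in the form of Proposition~\ref{prop:difference:adding:continuous:function}, to see that outside a finite set the perturbation has no effect whatsoever on the finite-place data.
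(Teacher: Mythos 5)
Your route is the paper's route: reduce to the nonnegative constant family $\bm{c}=\sum_v\|f_v\|_{\sup}[v]$, pass to a resolution and a model with Cartier components so that Lemma~\ref{lem:difference:adding:adelic:sequence} (through Proposition~\ref{prop:difference:adding:continuous:function}(2)) kills the perturbation at all places of $U_{(\widetilde{\mathscr{X}},\widetilde{\mathscr{D}})}$ with $\|f_v\|_{\sup}<2\log\sharp\widetilde{K}_v$, and then rescale place by place over the remaining finite set, absorbing the $\overline{D}$-dependent places into $\delta(\overline{D})$; your deductions of (2) and (3) from (1) are exactly what the paper leaves as ``obvious''. Two points in your write-up of (1) need repair, though both are fixable by simply following the rescaling-only bookkeeping.

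First, you invoke \eqref{eqn:elementary:property:of:ah:exact:sequence} ``to pass between the successive finite-place truncations'' and propose to absorb the resulting $3\rk+2\log(\rk)!$ terms. That step is both unnecessary and unusable here: all the objects being compared have the \emph{same} underlying module $H^0(D;\mathcal{V})$ (after Proposition~\ref{prop:difference:adding:continuous:function}(2) even the finite-part unit balls coincide outside the exceptional set), so only norms at finitely many places are rescaled, and Lemma~\ref{lem:adelic:rescaling:adelically:normed:vector:spaces} together with \eqref{eqn:elementary:property:of:ah:rescaling} already gives costs that are linear in $\dim_{\QQ}H^0(D;\mathcal{V})$; by contrast the $2\log(\rk)!$ term grows like $\rk\log\rk$ and the number of times you would incur it depends on $\bm{f}$, so it cannot be absorbed into $\delta(\overline{D})\dim_{\QQ}H^0(D;\mathcal{V})$. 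Second, the constant $\tfrac32$ does not come out of your counting: bounding the ``$+2$'' per bad place by $\sharp\{v:\|f_v\|_{\sup}\geq 2\log 2\}\leq\|\bm{f}\|_{\ell^1}/(2\log 2)$ yields a coefficient of roughly $1+1/\log 2$, not $\tfrac32$. The paper gets $\tfrac32$ by splitting the bad finite places into those with $\log p_v\geq 2$ (where $\lceil\cdot\rceil\log p_v+2\leq \tfrac12\|f_v\|_{\sup}+2\log p_v\leq\tfrac32\|f_v\|_{\sup}$, using $2\log p_v\leq\|f_v\|_{\sup}/[\widetilde{K}_v:\FF_{p_v}]$) and the finitely many places over $p\leq 7$, which are shunted into the $\bm{f}$-independent set $S_3'$ and absorbed into $\delta(\overline{D})$. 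Also note the quantization keeps $\aHzf$ unchanged only for $\|f_v\|_{\sup}<2\log\sharp\widetilde{K}_v$ (strict), not for the closed range of rescaling factors you state, and the asserted lower bound $0\leq\ah(\overline{D}+(0,\bm{f});\mathcal{V})-\ah(\overline{D};\mathcal{V})$ really pertains to $\bm{f}\geq 0$ (your two-sided comparison with $\pm\bm{c}$ is the honest substitute, as in the paper's applications).
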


\begin{proof}
(1): Set
\begin{equation}
\bm{a}:=\sum_{v\in M_K}\|f_v\|_{\sup}[v]\in\aC(\Spec(K)).
\end{equation}
For each $v\in M_K^{\rm fin}$, we denote by $p_v$ the prime number satisfying $p_v\ZZ=\mathfrak{p}_v\cap\ZZ$.

Let $\mu:\widetilde{X}\to X$ be a resolution of singularities of $X$ and let $U$ be a nonempty open subset of $\Spec(O_K)$ over which a model of definition for $\overline{D}$ exists.
Let $(\widetilde{\mathscr{X}},\widetilde{\mathscr{D}})$ be an $O_K$-model of $(\widetilde{X},\mu^*D)$ such that $(\widetilde{\mathscr{X}}_U,\widetilde{\mathscr{D}}_U)$ gives a $U$-model of definition for $\mu^*\overline{D}$ and such that every irreducible component of $\widetilde{\mathscr{D}}$ is Cartier.

We choose the two nonempty open subsets $U_{(\widetilde{\mathscr{X}},\widetilde{\mathscr{D}})}$ and $U_{\bm{a}}$ as in Proposition~\ref{prop:difference:adding:continuous:function}; namely,
\begin{itemize}
\item $U_{(\widetilde{\mathscr{X}},\widetilde{\mathscr{D}})}$ is chosen to satisfy that $U_{(\widetilde{\mathscr{X}},\widetilde{\mathscr{D}})}\subset U$, that $\pi:\widetilde{\mathscr{X}}_{U_{(\widetilde{\mathscr{X}},\widetilde{\mathscr{D}})}}\to U_{(\widetilde{\mathscr{X}},\widetilde{\mathscr{D}})}$ is smooth, and that $\ord_{\pi^{-1}(v)}(\widetilde{\mathscr{D}})=0$ for every $v\in U_{(\widetilde{\mathscr{X}},\widetilde{\mathscr{D}})}$, and
\item $U_{\bm{a}}:=\left\{v\in M_K^{\rm fin}\,:\,a_v<2\log\sharp\widetilde{K}_v\right\}$.
\end{itemize}
We divide $M_K^{\rm fin}$ into three disjoint subsets: $S_1:=U_{(\widetilde{\mathscr{X}},\widetilde{\mathscr{D}})}\cap U_{\bm{a}}$,
\[
S_2:=\left\{v\in M_K^{\rm fin}\,:\,\text{$2\log\sharp\widetilde{K}_v\leq a_v$ and $2\leq\log(p_v)$}\right\},
\]
and $S_3:=M_K^{\rm fin}\setminus (S_1\cup S_2)$.
Note that only $S_1$ is an infinite subset and $S_3$ is contained in a finite subset
\begin{equation}
S_3':=\left(M_K\setminus U_{(\widetilde{\mathscr{X}},\widetilde{\mathscr{D}})}\right)\cup\{v\in M_K^{\rm fin}\,:\,\log(p_v)<2\},
\end{equation}
which is determined only by $U_{(\widetilde{\mathscr{X}},\widetilde{\mathscr{D}})}$.
Put
\begin{equation}
\bm{a}':=\sum_{v\in S_2}a_v[v]+\sum_{v\in S_3}a_v[v].
\end{equation}
By Proposition~\ref{prop:difference:adding:continuous:function}(2), Lemma~\ref{lem:adelic:rescaling:adelically:normed:vector:spaces}, and \eqref{eqn:elementary:property:of:ah:rescaling}, we have
\begin{align}
& \ah\left(\overline{D}+(0,\bm{f});\mathcal{V}\right)-\ah\left(\overline{D};\mathcal{V}\right) \label{eqn:basic:ah:of:ell1:adelic:divisors:1}\\
& \quad \leq \ah\left(\overline{D}+(0,\bm{a});\mathcal{V}\right)-\ah\left(\overline{D};\mathcal{V}\right) \nonumber\\
& \quad =\ah\left(\overline{D}+(0,\bm{a}');\mathcal{V}\right)-\ah\left(\overline{D};\mathcal{V}\right) \nonumber\\
& \quad \leq\left(\sum_{v\in S_2\cup S_3}\left(\left\lceil\frac{\|f_v\|_{\sup}}{-2\log|p_v|_v}\right\rceil\log(p_v)+2\right)+\frac{\|f_{\infty}\|_{\sup}}{2}+2\right)\dim_{\QQ}H^0(D;\mathcal{V}). \nonumber
\end{align}
We can estimate the sum with respect to $S_2$ as
\begin{align}
&\sum_{v\in S_2}\left(\left\lceil\frac{\|f_v\|_{\sup}}{-2\log|p_v|_v}\right\rceil\log(p_v)+2\right) \label{eqn:basic:ah:of:ell1:adelic:divisors:2}\\
&\qquad \leq\sum_{v\in S_2}\left(\frac{\|f_v\|_{\sup}}{2\ord_v(p_v)[\widetilde{K}_v:\FF_{p_v}]}+2\log(p_v)\right) \nonumber\\
&\qquad \leq\sum_{v\in S_2}\left(\frac{\|f_v\|_{\sup}}{2\ord_v(p_v)[\widetilde{K}_v:\FF_{p_v}]}+\frac{\|f_v\|_{\sup}}{[\widetilde{K}_v:\FF_{p_v}]}\right) \leq\frac{3}{2}\sum_{v\in S_2}\|f_v\|_{\sup} \nonumber
\end{align}
and the sum with respect to $S_3$ as
\begin{equation}
\label{eqn:basic:ah:of:ell1:adelic:divisors:4}
\sum_{v\in S_3}\left(\left\lceil\frac{\|f_v\|_{\sup}}{-2\log|p_v|_v}\right\rceil\log(p_v)+2\right)\leq\sum_{v\in S_3}\left(\frac{1}{2}\|f_v\|_{\sup}+\log(p_v)+2\right).
\end{equation}
Hence, if we set $p_{\infty}:=1$ and
\begin{equation}
\label{eqn:basic:ah:of:ell1:adelic:divisors:3}
\delta(\overline{D}):=\sum_{v\in S_3'}(\log(p_v)+2),
\end{equation}
then we obtain
\[
\ah\left(\overline{D}+(0,\bm{f});\mathcal{V}\right)-\ah\left(\overline{D};\mathcal{V}\right)\leq\left(\frac{3}{2}\|\bm{f}\|_{\ell^1}+\delta(\overline{D})\right)\dim_{\QQ}H^0(D;\mathcal{V})
\]
by \eqref{eqn:basic:ah:of:ell1:adelic:divisors:1}, \eqref{eqn:basic:ah:of:ell1:adelic:divisors:2}, and \eqref{eqn:basic:ah:of:ell1:adelic:divisors:4}.
Since the constant $\delta(\overline{D})$ depends only on $U_{(\widetilde{\mathscr{X}},\widetilde{\mathscr{D}})}$, we have $\delta(t\overline{D})=\delta(\overline{D})$ for every $t\in\RR\setminus\{0\}$.

The assertion (2) is obvious from the assertion (1).
The assertion (3) follows from the assertion (2) and the fact that $\aHzf(\overline{D};\mathcal{V})$ contains $H^0(\mathscr{D};\mathcal{V})$ for any $(\mathscr{X},\overline{\mathscr{D}})\in\Mod(\overline{D})$.
\end{proof}

\begin{proposition}
\label{prop:finiteness:of:arithmetic:volumes}
Let $\ast=\text{s or ss}$.
For any $(\overline{D};\mathcal{V})\in\aBDiv_{\RR,\RR}^{\ell^1}(X)$,
\[
\limsup_{\substack{m\in\ZZ, \\ m\to+\infty}}\frac{\ah\left(m\overline{D};m\mathcal{V}\right)}{m^{\dim X+1}/(\dim X+1)!}
\]
is finite.
\end{proposition}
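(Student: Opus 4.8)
The plan is to reduce the statement to the corresponding finiteness for arithmetic volumes of adelic $\RR$-Cartier divisors arising from $O_K$-models, which is already available (see section~\ref{subsec:definition:of:ell1:adelic} and \cite[section~2.5]{IkomaDiff1}), by absorbing the difference between $\overline{D}$ and a model divisor---bounded in $\ell^1$-norm by the very definition of an $\ell^1$-adelic divisor---through Proposition~\ref{prop:basic:ah:of:ell1:adelic:divisors}(1).

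First, since $\overline{D}\in\aDiv_{\RR}^{\ell^1}(X)$, I would pick an $(\mathscr{X},\overline{\mathscr{D}})\in\Mod_{\RR}(\overline{D})$ with $c:=\left\|\overline{D}-\overline{\mathscr{D}}^{\rm ad}\right\|_{\ell^1}<+\infty$ and write $(0,\bm{f}_0):=\overline{D}-\overline{\mathscr{D}}^{\rm ad}\in\aC(X)$, so that $\|\bm{f}_0\|_{\ell^1}=c$. For each $m\in\ZZ_{>0}$ one then has $m\overline{D}=m\overline{\mathscr{D}}^{\rm ad}+(0,m\bm{f}_0)$ with $m\overline{\mathscr{D}}^{\rm ad}\in\aDiv_{\RR}(X)$ and $m\bm{f}_0\in\aC(X)$. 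Applying Proposition~\ref{prop:basic:ah:of:ell1:adelic:divisors}(1) to $m\overline{\mathscr{D}}^{\rm ad}$ and the function $m\bm{f}_0$, and using the scale-invariance $\delta(m\overline{\mathscr{D}}^{\rm ad})=\delta(\overline{\mathscr{D}}^{\rm ad})$ recorded there, I would obtain, for $\ast=\text{s},\text{ss}$,
\[
\ah\left(m\overline{D};m\mathcal{V}\right)\leq\ah\left(m\overline{\mathscr{D}}^{\rm ad};m\mathcal{V}\right)+\left(\frac{3}{2}mc+\delta(\overline{\mathscr{D}}^{\rm ad})\right)\dim_{\QQ}H^0(mD;m\mathcal{V}).
\]
Since $\dim_{\QQ}H^0(mD;m\mathcal{V})\leq[K:\QQ]\dim_{K}H^0(mD)=O(m^{\dim X})$ by the standard polynomial bound on the dimensions of spaces of global sections on a projective variety, the last term is $O(m^{\dim X+1})$; hence after dividing by $m^{\dim X+1}/(\dim X+1)!$ it stays bounded as $m\to+\infty$.

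For the main term, $\overline{\mathscr{D}}^{\rm ad}\in\aDiv_{\RR}(X)$, so for $\ast=\text{s}$ one has $\limsup_{m\to+\infty}\als\left(m\overline{\mathscr{D}}^{\rm ad};m\mathcal{V}\right)\big/\big(m^{\dim X+1}/(\dim X+1)!\big)=\avol\left(\overline{\mathscr{D}}^{\rm ad};\mathcal{V}\right)<+\infty$ by the finiteness recalled in section~\ref{subsec:definition:of:ell1:adelic}, and for $\ast=\text{ss}$ the inclusion $\aHz\subset\aHzsm$ gives $\alss\leq\als$, so the same bound applies. Combining this with the displayed estimate and taking $\limsup$ over $m$ yields the assertion. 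There is no serious obstacle in this argument; the only point requiring care is ensuring that the error term coming from the change of Green functions is of order at most $m^{\dim X+1}$, which is exactly what is guaranteed by the scale-invariance of $\delta(\cdot)$ in Proposition~\ref{prop:basic:ah:of:ell1:adelic:divisors}(1) combined with the polynomial growth of $\dim_{\QQ}H^0(mD;m\mathcal{V})$.
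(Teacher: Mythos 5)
Your proposal is correct and follows essentially the same route as the paper: the paper also chooses an adelic $\RR$-Cartier divisor $\overline{D}_0\in\aDiv_{\RR}(X)$ with $\zeta(\overline{D}_0)=\zeta(\overline{D})$ and $\overline{D}_0\leq\overline{D}$ (your $\overline{\mathscr{D}}^{\rm ad}$ is precisely such a choice), applies Proposition~\ref{prop:basic:ah:of:ell1:adelic:divisors}(1) together with the homogeneity $\delta(m\overline{D}_0)=\delta(\overline{D}_0)$, and concludes from the finiteness of $\avol(\overline{D}_0;\mathcal{V})$ and the polynomial growth of $\dim_{\QQ}H^0(mD;m\mathcal{V})$ (the paper phrases the error term via $\vol(D;\mathcal{V})$, which is only a cosmetic difference).
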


\begin{proof}
Take a $\overline{D}_0\in\aDiv_{\RR}(X)$ such that $\zeta(\overline{D}_0)=\zeta(\overline{D})$ and $\overline{D}_0\leq\overline{D}$.
By Proposition~\ref{prop:basic:ah:of:ell1:adelic:divisors}(1), we have
\begin{align*}
&\limsup_{\substack{m\in\ZZ, \\ m\to+\infty}}\frac{\ah\left(m\overline{D};m\mathcal{V}\right)}{m^{\dim X+1}/(\dim X+1)!} \\
&\qquad \leq\limsup_{\substack{m\in\ZZ, \\ m\to+\infty}}\frac{\ah\left(m\overline{D}_0;m\mathcal{V}\right)}{m^{\dim X+1}/(\dim X+1)!} \\
&\qquad\qquad +(\dim X+1)\limsup_{\substack{m\in\ZZ, \\ m\to+\infty}}\left(\frac{3}{2}\left\|\overline{D}-\overline{D}_0\right\|_{\ell^1}+\frac{\delta(m\overline{D}_0)}{m}\right)\frac{\dim_{\QQ}(mD;m\mathcal{V})}{m^{\dim X}/(\dim X)!} \\
&\qquad \leq\avol(\overline{D}_0;\mathcal{V})+\frac{3}{2}(\dim X+1)[K:\QQ]\left\|\overline{D}-\overline{D}_0\right\|_{\ell^1}\vol(D;\mathcal{V})<+\infty.
\end{align*}
\end{proof}

\begin{definition}
Given a $(\overline{D};\mathcal{V})\in\aBDiv_{\RR,\RR}^{\ell^1}(X)$, we define
\begin{equation}
\label{defn:arithmetic:volume:of:ell1pairs}
\avol(\overline{D};\mathcal{V}):=\limsup_{\substack{m\in\ZZ, \\ m\to+\infty}}\frac{\als\left(m\overline{D};m\mathcal{V}\right)}{m^{\dim X+1}/(\dim X+1)!}.
\end{equation}
By Proposition~\ref{prop:finiteness:of:arithmetic:volumes}, $\avol(\overline{D};\mathcal{V})$ is finite and
\begin{equation}
\label{eqn:difference:adding:cont:fcn}
0\leq\avol(\overline{D};\mathcal{V})-\avol(\overline{D}_0;\mathcal{V})\leq\frac{3}{2}(\dim X+1)[K:\QQ]\vol(D;\mathcal{V})\cdot\left\|\overline{D}-\overline{D}_0\right\|_{\ell^1}
\end{equation}
for every $\overline{D}_0\in\aDiv_{\RR}(X)$ with $\zeta(\overline{D}_0)=\zeta(\overline{D})$ and $\overline{D}_0\leq\overline{D}$.
Moreover, we can easily observe
\begin{equation}
\avol(\overline{D};\mathcal{V})=\limsup_{\substack{m\in\ZZ, \\ m\to+\infty}}\frac{\alss\left(m\overline{D};m\mathcal{V}\right)}{m^{\dim X+1}/(\dim X+1)!}.
\end{equation}
\end{definition}

\begin{proposition}
\label{prop:definition:of:height}
Let $X$ be a normal, projective, and geometrically connected $K$-variety, let $\overline{D}=\left(D,\sum_{v\in M_K}g_v^{\overline{D}}[v]\right)\in\aDiv_{\RR}^{\ell^1}(X)$, and let $x\in X(\overline{K})$.
The infinite sum
\[
\Delta:=\sum_{v\in M_K^{\rm fin}}\sum_{\substack{w\in M_{\kappa(x)}^{\rm fin}, \\ w|v}}[\kappa(x)_w:K_v]g_v^{\overline{D}}(x^w)+\sum_{\sigma:\kappa(x)\to\CC}g_{\infty}^{\overline{D}}(x^{\sigma})
\]
then converges, where the limit is taken with respect to the net indexed by all the finite subsets of $M_K^{\rm fin}$, $x^w\in X_v^{\rm an}$ is a point corresponding to $(\kappa(x),|\cdot|_w)$, and $x^{\sigma}\in X_{\infty}^{\rm an}$ is a point defined as $\Spec(\CC)\xrightarrow{\sigma}\Spec(\kappa(x))\xrightarrow{x}X$.
\end{proposition}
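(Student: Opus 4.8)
The plan is to use the defining property of $\aDiv_{\RR}^{\ell^1}(X)$ to split $\Delta$ into a ``model part'' that is visibly a finite sum and a ``correction part'' that converges absolutely because of the $\ell^1$-bound; the $\ell^1$-hypothesis is exactly what makes the correction part behave. Throughout I assume $x\notin\Supp(D)$, which is needed for the summands of $\Delta$ to be defined. First I would pick an $(\mathscr{X},\overline{\mathscr{D}})\in\Mod_{\RR}(\overline{D})$ with $\left\|\overline{D}-\overline{\mathscr{D}}^{\rm ad}\right\|_{\ell^1}<+\infty$ and set $(0,\bm{f}):=\overline{D}-\overline{\mathscr{D}}^{\rm ad}$, so that $\bm{f}=\sum_{v\in M_K}f_v[v]\in\aC(X)$ and, at every place $v$, $g_v^{\overline{D}}$ equals the $v$-component of $\overline{\mathscr{D}}^{\rm ad}$ plus $f_v$ (the finite-place components of $\overline{\mathscr{D}}^{\rm ad}$ being the model Green functions $g_v^{(\mathscr{X},\mathscr{D})}$). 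Substituting this into the formula for $\Delta$ gives $\Delta=\Delta_1+\Delta_2$, where $\Delta_1$ is the same expression with each $g_v^{\overline{D}}$ replaced by the $v$-component of $\overline{\mathscr{D}}^{\rm ad}$, and $\Delta_2$ is the same expression with each $g_v^{\overline{D}}$ replaced by $f_v$.

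Next I would show $\Delta_1$ is a finite sum of real numbers. The archimedean part runs over the finitely many embeddings $\sigma:\kappa(x)\to\CC$ and each term is finite, so it causes no trouble. For the finite-place part, let $\mathscr{C}$ be the Zariski closure of $x$ in $\mathscr{X}$ with its reduced structure; it is a one-dimensional integral scheme, finite over $\Spec(O_K)$, with $\mathscr{C}_K=\Spec(\kappa(x))$. Since $x\notin\Supp(D)=\Supp(\mathscr{D})\cap X$, the curve $\mathscr{C}$ is not contained in $\Supp(\mathscr{D})$, so $\mathscr{C}\cap\Supp(\mathscr{D})$ is a finite set of closed points, and only finitely many places $w$ of $\kappa(x)$ have center $r_v^{\mathscr{X}}(x^w)$ in that set. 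For any other $w$, a local equation $f'$ of $\mathscr{D}$ around $r_v^{\mathscr{X}}(x^w)$ has $\ord_Z(f')=\ord_Z(\mathscr{D})=0$ for every prime divisor $Z$ through $r_v^{\mathscr{X}}(x^w)$, hence $|f'|(x^w)=1$ and $g_v^{(\mathscr{X},\mathscr{D})}(x^w)=-\log|f'|^2(x^w)=0$. Thus all but finitely many summands of $\Delta_1$ vanish. (This is the familiar fact that a fixed algebraic point has nonzero local heights only at finitely many places; one may also appeal directly to \cite{MoriwakiAdelic}.)

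Finally I would handle $\Delta_2$ by a crude estimate: $|f_v(x^w)|_{\infty}\leq\|f_v\|_{\sup}$ and $|f_{\infty}(x^{\sigma})|_{\infty}\leq\|f_{\infty}\|_{\sup}$, combined with the degree identities $\sum_{w\mid v}[\kappa(x)_w:K_v]=[\kappa(x):K]$ and $\sharp\{\sigma:\kappa(x)\to\CC\}=[\kappa(x):\QQ]$, which give
\[
\sum_{v\in M_K^{\rm fin}}\ \sum_{w\mid v}[\kappa(x)_w:K_v]\,|f_v(x^w)|_{\infty}\ +\ \sum_{\sigma:\kappa(x)\to\CC}|f_{\infty}(x^{\sigma})|_{\infty}\ \leq\ [\kappa(x):\QQ]\,\|\bm{f}\|_{\ell^1}\ <\ +\infty .
\]
Hence $\Delta_2$ is an absolutely convergent series, and therefore $\Delta=\Delta_1+\Delta_2$ converges along the net of finite subsets of $M_K^{\rm fin}$ (the $\Delta_1$-part stabilizes and the $\Delta_2$-part converges absolutely). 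The one step that needs genuine care is the vanishing $g_v^{(\mathscr{X},\mathscr{D})}(x^w)=0$ for $w$ whose center avoids $\Supp(\mathscr{D})$, when $\mathscr{D}$ is only $\RR$-Cartier; this comes down to the local-intersection-number interpretation of $g_v^{(\mathscr{X},\mathscr{D})}(x^w)$ and is standard, so the proof is essentially a matter of bookkeeping once the decomposition $\Delta=\Delta_1+\Delta_2$ is set up.
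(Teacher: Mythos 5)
Your proof is correct and follows essentially the same route as the paper: choose $(\mathscr{X},\overline{\mathscr{D}})\in\Mod_{\RR}(\overline{D})$, split $\Delta$ into the model part plus the correction $(0,\bm{f})$, and control the correction by the tail estimate $[\kappa(x):K]\sum_{v\notin S_0}\|f_v\|_{\sup}$ coming from $\|\bm{f}\|_{\ell^1}<+\infty$. The only (cosmetic) difference is that the paper identifies the model part with the finite local intersection expression $2\sum_i a_i\log\sharp(O_{\kappa(x)}(\mathscr{D}_i)/O_{\kappa(x)})$ by citing Moriwaki, whereas you verify its finiteness directly by noting that the center of $x^w$ avoids $\Supp(\mathscr{D})$ for all but finitely many $w$.
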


\begin{proof}
Let $(\mathscr{X},\overline{\mathscr{D}})\in\Mod(\overline{D})$ and let $(0,\bm{f}):=\overline{D}-\overline{\mathscr{D}}^{\rm ad}$.
We write
\[
\mathscr{D}=a_1\mathscr{D}_1+\dots+a_r\mathscr{D}_r
\]
with $a_i\in\RR$ and effective Cartier divisors $\mathscr{D}_i$.
Then
\begin{align*}
&\Delta-\sum_{\sigma:\kappa(x)\to\CC}g_{\infty}^{\overline{D}}(x^{\sigma}) \\
&\quad =\sum_{v\in M_K^{\rm fin}}\sum_{\substack{w\in M_{\kappa(x)}^{\rm fin}, \\ w|v}}[\kappa(x)_w:K_v]g_v^{(\mathscr{X},\mathscr{D})}(x^w)+\sum_{v\in M_K^{\rm fin}}\sum_{\substack{w\in M_{\kappa(x)}^{\rm fin}, \\ w|v}}[\kappa(x)_w:K_v]f_v(x^w) \\
&\quad =2\sum_{i=1}^ra_i\log\sharp(O_{\kappa(x)}(\mathscr{D}_i)/O_{\kappa(x)})+\sum_{v\in M_K^{\rm fin}}\sum_{\substack{w\in M_{\kappa(x)}^{\rm fin}, \\ w|v}}[\kappa(x)_w:K_v]f_v(x^w)
\end{align*}
(see \cite[section~2.3]{MoriwakiAdelic}).
Let $\varepsilon>0$.
Since $\bm{f}\in\aC(X)$, one can find a finite subset $S_0\subset M_K^{\rm fin}$ such that
\begin{multline*}
\left|\sum_{v\in S_1}\sum_{\substack{w\in M_{\kappa(x)}, \\ w|v}}[\kappa(x)_w:K_v]f_v(x^w)-\sum_{v\in S_2}\sum_{\substack{w\in M_{\kappa(x)}, \\ w|v}}[\kappa(x)_w:K_v]f_v(x^w)\right|_{\infty} \\
\leq [\kappa(x):K]\sum_{v\in M_K^{\rm fin}\setminus S_0}\|f_v\|_{\sup}\leq \varepsilon
\end{multline*}
for every finite subsets $S_1,S_2$ of $M_K^{\rm fin}$ such that $S_1\supset S_0$ and $S_2\supset S_0$.
So, by completeness of $\RR$, $\Delta$ converges.
\end{proof}

\begin{definition}
An $\ell^1$-adelic $\RR$-Cartier divisor $\overline{D}$ on $X$ determines a height function $h_{\overline{D}}:X(\overline{K})\to\RR$ by
\[
h_{\overline{D}}(x):=\frac{1}{[\kappa(x):\QQ]}\left(\frac{1}{2}\sum_{v\in M_K^{\rm fin}}\sum_{\substack{w\in M_{\kappa(x)}^{\rm fin}, \\ w|v}}[\kappa(x)_w:K_v]g_v^{\overline{D}}(x^w)+\frac{1}{2}\sum_{\sigma:\kappa(x)\to\CC}g_{\infty}^{\overline{D}}(x^{\sigma})\right),
\]
which is well-defined by Proposition~\ref{prop:definition:of:height} above, and belongs, up to $O(1)$, to the Weil height function corresponding to $D$.
Moreover, from the proof of Proposition~\ref{prop:definition:of:height}, one deduces
\begin{equation}
\sup_{x\in X(\overline{K})}\left|h_{\overline{D}}(x)-h_{\overline{D}'}(x)\right|\leq \frac{1}{2}\left\|\overline{D}-\overline{D}'\right\|_{\ell^1}
\end{equation}
for every $\overline{D},\overline{D}'\in\aDiv_{\RR}^{\ell^1}(X)$ with $\zeta(\overline{D})=\zeta(\overline{D}')$.

We abbreviate
\[
e_{\max}\left(\overline{D};\mathcal{V}\right):=e_{\max}\left(H^0(D;\mathcal{V}),(\|\cdot\|_{v,\sup}^{\overline{D}})_{v\in M_K}\right)
\]
(see \eqref{eqn:definition:of:e:max}), and define the \emph{essential minimum} of $\overline{D}$ as
\begin{equation}
\essmin_{x\in X(\overline{K})}h_{\overline{D}}(x)=\sup_{Y\subsetneq X}\inf_{x\in (X\setminus Y)(\overline{K})}h_{\overline{D}}(x),
\end{equation}
where the supremum is taken over all the closed proper subvarieties of $X$.
\end{definition}

\begin{lemma}
For any $\overline{D}\in\aDiv_{\RR}^{\ell^1}(X)$, we have
\[
\lim_{\substack{m\in\ZZ, \\ m\to+\infty}}\frac{e_{\max}\left(m\overline{D}\right)}{m}\leq\essmin_{x\in X(\overline{K})}h_{\overline{D}}(x)<+\infty.
\]
\end{lemma}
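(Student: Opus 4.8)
The plan is to show $\lim_{m}e_{\max}(m\overline{D})/m\leq\essmin_{x\in X(\overline{K})}h_{\overline{D}}(x)$ by producing, for each real $\lambda$ below that limit, one nonzero rational function that is small at every place and whose divisor — together with $\Supp(D)$ — carves out a proper closed subset off which the height exceeds $\lambda$; the bound $\essmin h_{\overline{D}}<+\infty$ is then a separate and routine point.

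First I would secure the existence of the limit. If $\phi\in\mathcal{F}^{\mu}(m\overline{D})\setminus\{0\}$ and $\psi\in\mathcal{F}^{\nu}(n\overline{D})\setminus\{0\}$, then $\phi\psi$ lies in $H^0((m+n)D)\setminus\{0\}$ (a product in the domain $\Rat(X)$), and submultiplicativity of the supremum norms $\|\cdot\|_{v,\sup}$ places it in $\mathcal{F}^{\mu+\nu}((m+n)\overline{D})$; hence $m\mapsto e_{\max}(m\overline{D})$ is superadditive. Each term is finite: by Proposition~\ref{prop:basic:ah:of:ell1:adelic:divisors}(2),(3) the datum $\big(H^0(mD),(\|\cdot\|_{v,\sup}^{m\overline{D}})_{v}\big)$ is an adelically normed $K$-vector space with $\aHzsm$ finite, so Proposition~\ref{prop:definition:of:adelically:normed:vector:spaces}(2) gives $e_{\max}(m\overline{D})<+\infty$. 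By Fekete's lemma, $\lim_{m}e_{\max}(m\overline{D})/m=\sup_{m}e_{\max}(m\overline{D})/m$ exists in $(-\infty,+\infty]$.

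For the inequality, fix $\lambda<\sup_{m}e_{\max}(m\overline{D})/m$ and choose $m_0\in\ZZ_{>0}$ with $e_{\max}(m_0\overline{D})>m_0\lambda$. Then $\mathcal{F}^{m_0\lambda}(m_0\overline{D})\neq 0$, so by the very definition of $\mathcal{F}^{m_0\lambda}$ there is a $\phi\in H^0(m_0D)\setminus\{0\}$ with $\|\phi\|_{v,\sup}^{m_0\overline{D}}\leq 1$ for all $v\in M_K^{\rm fin}$ and $\|\phi\|_{\infty,\sup}^{m_0\overline{D}}\leq e^{-m_0\lambda}$. Set $Y:=\Supp(D)\cup\Supp((\phi))$, a proper Zariski-closed subset of $X$. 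For $x\in(X\setminus Y)(\overline{K})$ we have $\phi(x)\in\kappa(x)^{\times}$ and $x^{w},x^{\sigma}\notin\Supp(D)$, so the defining property of the supremum norms, evaluated at these analytic points, gives
\[
g_{v}^{m_0\overline{D}}(x^{w})\geq 2\log|\phi|(x^{w})-2\log\|\phi\|_{v,\sup}^{m_0\overline{D}}\geq 2\log|\phi|(x^{w})\qquad(v\in M_K^{\rm fin},\ w\mid v),
\]
together with $g_{\infty}^{m_0\overline{D}}(x^{\sigma})\geq 2\log|\phi|(x^{\sigma})+2m_0\lambda$. I would then weight the finite-place inequalities by $[\kappa(x)_{w}:K_v]$, sum over $w\mid v$, over $v\in M_K^{\rm fin}$, and over $\sigma\colon\kappa(x)\hookrightarrow\CC$, and insert the outcome into the formula for $h_{m_0\overline{D}}(x)$; the rearrangements are legitimate because the full sums defining the height converge by Proposition~\ref{prop:definition:of:height}, only finitely many $w$ have $|\phi|(x^{w})\neq 1$, and the leftover nonnegative terms $-2\log\|\phi\|_{v,\sup}^{m_0\overline{D}}$ are thereby squeezed. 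The product formula for $\phi(x)\in\kappa(x)^{\times}$ annihilates the $\log|\phi|$-contribution, leaving $h_{m_0\overline{D}}(x)\geq m_0\lambda$, i.e.\ $h_{\overline{D}}(x)\geq\lambda$ (since $h_{m_0\overline{D}}=m_0h_{\overline{D}}$). As $x\in(X\setminus Y)(\overline{K})$ was arbitrary, $\essmin_{x\in X(\overline{K})}h_{\overline{D}}(x)\geq\lambda$; letting $\lambda\uparrow\sup_{m}e_{\max}(m\overline{D})/m$ finishes this half.

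Finally, for $\essmin h_{\overline{D}}<+\infty$ I would write $\overline{D}=\overline{\mathscr{D}}^{\rm ad}+(0,\bm{f})$ with $(\mathscr{X},\overline{\mathscr{D}})\in\Mod_{\RR}(\overline{D})$ and $\bm{f}\in\aC(X)$; the estimate $\sup_{x}|h_{\overline{D}}(x)-h_{\overline{\mathscr{D}}^{\rm ad}}(x)|\leq\tfrac12\|\bm{f}\|_{\ell^1}$ reduces the claim to $\essmin h_{\overline{\mathscr{D}}^{\rm ad}}<+\infty$, which is classical: with $D\sim_{\QQ}A-B$ for ample $A,B$ one has $h_{\overline{\mathscr{D}}^{\rm ad}}=h_A-h_B+O(1)$, $h_B$ is bounded below, and the essential minimum of an ample class is finite (pull back small points along a finite map to projective space). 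The real content is the third paragraph, and the only delicate bookkeeping there is handling the conditionally convergent infinite sum over $M_K^{\rm fin}$ inside the height so that the product formula may be applied term by term — exactly what Proposition~\ref{prop:definition:of:height} is built to license — so I expect the obstacle to be one of careful phrasing rather than of genuine difficulty, the finiteness assertion being entirely routine once the Weil-height comparison is in hand.
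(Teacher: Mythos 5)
Your argument for the main inequality is essentially the paper's own: Fekete's lemma identifies the limit with the supremum, and for $\lambda$ below it you pick a nonzero $\phi\in H^0(m_0D)$ with $\|\phi\|_{v,\sup}^{m_0\overline{D}}\leq 1$ at all finite places and $\|\phi\|_{\infty,\sup}^{m_0\overline{D}}\leq e^{-m_0\lambda}$, and deduce $h_{\overline{D}}\geq\lambda$ off a proper closed subset; the paper phrases this as "the Green functions of $m\overline{D}+\widehat{(\phi)}$ are nonnegative at the finite places and $\geq 2\lambda$ at infinity", which is exactly your weighted summation plus the product formula (your version in fact spells out, with the correct sign conventions, what the paper compresses into one display). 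Where you genuinely diverge is the finiteness of the essential minimum: the paper writes $\overline{D}=a_1\overline{D}_1+\dots+a_r\overline{D}_r+(0,\bm{f})$ with effective $\overline{D}_i\in\aDiv(X)$, rounds up the coefficients, and cites Boucksom--Chen (Proposition~2.6) for the Zariski density of points of bounded height for the resulting adelic $\ZZ$-divisor; you instead reduce to a single model divisor $\overline{\mathscr{D}}^{\rm ad}$ via the $\tfrac12\|\bm{f}\|_{\ell^1}$ height comparison and invoke the classical Weil-height-machine facts (comparison with ample heights, density of points of bounded naive height on $\PP^n$ pulled back through a finite surjection). Both routes are valid; the paper's is more self-contained relative to its own references, while yours stays at the level of a sketch of standard facts (e.g.\ that $h_A\leq c\,h_{\pi^*\OO(1)}+O(1)$ only off the support of an effective representative of $c\,\pi^*\OO(1)-A$, which is harmless for the essential minimum but should be said), so it would need either a citation or a couple of lines to be complete.
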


\begin{proof}
Note that $e_{\max}(\overline{D})=\min\left\{\lambda\in\RR\,:\,\aHzsm(\overline{D}+(0,2\lambda[\infty]))\neq\{0\}\right\}$ and
\[
\lim_{\substack{m\in\ZZ, \\ m\to+\infty}}\frac{e_{\max}(m\overline{D})}{m}=\sup_{m\in\ZZ_{>0}}\frac{e_{\max}(m\overline{D})}{m}
\]
by Fekete's lemma.
Let $\lambda\in\RR_{\geq 0}$, let $\phi\in\aHzsm(m\overline{D}+(0,2\lambda[\infty]))\setminus\{0\}$, and let $Z:=\Supp(mD+(\phi))$.
For every $x\in (X\setminus Z)(\overline{K})$, we have
\[
h_{\overline{D}}(x)\geq\frac{1}{2m}\inf_{x\in (X\setminus Z)_{\infty}^{\rm an}}g_{\infty}^{m\overline{D}}(x)\geq\frac{\lambda}{m}.
\]
Hence we have the first inequality.

To show the second inequality, we write
\[
\overline{D}=a_1\overline{D}_1+\dots+a_r\overline{D}_r+(0,\bm{f})
\]
such that $a_i\in\RR$, $\overline{D}_i\in\aDiv(X)$, $\bm{f}\in\aC(X)$, and $\overline{D}_i$ are all effective (see \cite[Proposition~2.4.2(1)]{MoriwakiZar}).
We set
\[
\overline{D}':=\lceil a_1\rceil\overline{D}_1+\dots+\lceil a_r\rceil\overline{D}_r\quad\text{and}\quad\Sigma:=\bigcup_{i=1}^r\Supp(D_i).
\]
By \cite[Proposition~2.6]{Boucksom_Chen}, $\left\{x\in (X\setminus\Sigma)(\overline{K})\,:\, h_{\overline{D}'}(x)\leq C\right\}$ is Zariski dense in $X$ for a constant $C$.

If $x\in (X\setminus\Sigma)(\overline{K})$, then $h_{\overline{D}}(x)\leq h_{\overline{D}'}(x)+\|\bm{f}\|_{\ell^1}$.
Hence
\[
\left\{x\in X(\overline{K})\,:\,h_{\overline{D}}(x)\leq C+\|\bm{f}\|_{\ell^1}\right\}\supset\left\{x\in (X\setminus\Sigma)(\overline{K})\,:\, h_{\overline{D}'}(x)\leq C\right\},
\]
and the left-hand side is also Zariski dense in $X$.
It implies that the essential minimum is bounded from above by $C+\|\bm{f}\|_{\ell^1}$.
\end{proof}

\begin{lemma}
\label{lem:the:case:volume:is:zero}
For any $(\overline{D};\mathcal{V})\in\aBDiv_{\RR,\RR}(X)$, one has
\[
0\leq\avol(\overline{D};\mathcal{V})\leq (\dim X+1)[K:\QQ]\vol(D;\mathcal{V})\max\left\{\lim_{\substack{m\in\ZZ, \\ m\to+\infty}}\frac{e_{\max}(m\overline{D};m\mathcal{V})}{m},0\right\}.
\]
\end{lemma}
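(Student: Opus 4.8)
The lower bound is trivial: $0\in\aHzsm(m\overline D;m\mathcal V)$ for every $m$, so $\als(m\overline D;m\mathcal V)\geq 0$ and hence $\avol(\overline D;\mathcal V)\geq 0$.

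For the upper bound the plan is to use the elementary observation that adding the constant model function $2\lambda$ at the archimedean place multiplies every archimedean sup-norm by $e^{\lambda}$, together with the rescaling inequality \eqref{eqn:elementary:property:of:ah:rescaling}. Concretely, I would set $\lambda_0:=\lim_{m\to+\infty}e_{\max}(m\overline D;m\mathcal V)/m$, fix an arbitrary real $\lambda>\max\{\lambda_0,0\}$, prove the bound $\avol(\overline D;\mathcal V)\leq\lambda(\dim X+1)[K:\QQ]\vol(D;\mathcal V)$, and then let $\lambda$ decrease to $\max\{\lambda_0,0\}$. First I would note that $e_{\max}(m\overline D;m\mathcal V)<m\lambda$ for all large $m$, so $\mathcal F^{m\lambda}\bigl(H^0(mD;m\mathcal V),(\|\cdot\|_{v,\sup}^{m\overline D})_v\bigr)=0$; unwinding the definition of $\mathcal F^{\lambda}$ and of the twist $\overline V\mapsto\overline V(\lambda[v])$, this says precisely that $\aHzsm\bigl(m\overline D+(0,2m\lambda[\infty]);m\mathcal V\bigr)=\{0\}$, hence $\als\bigl(m\overline D+(0,2m\lambda[\infty]);m\mathcal V\bigr)=0$. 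Now the adelically normed spaces attached to $m\overline D$ and to $m\overline D+(0,2m\lambda[\infty])$ share the same underlying finitely generated $O_K$-module $\aHzf(m\overline D;m\mathcal V)$, of $\ZZ$-rank $\dim_{\QQ}H^0(mD;m\mathcal V)$, and differ only in the archimedean norm, by the factor $e^{-m\lambda}$; so \eqref{eqn:elementary:property:of:ah:rescaling} applied to this $\ZZ$-module gives, for all large $m$,
\[
\als(m\overline D;m\mathcal V)\leq(m\lambda+2)\,\dim_{\QQ}H^0(mD;m\mathcal V).
\]
Dividing by $m^{\dim X+1}/(\dim X+1)!$, taking $\limsup_{m\to+\infty}$, using $\dim_{\QQ}H^0(mD;m\mathcal V)=[K:\QQ]\dim_KH^0(mD;m\mathcal V)$ and the definition of $\vol(D;\mathcal V)$, and discarding the $O(m^{\dim X})$ contribution of the summand $2$, I obtain $\avol(\overline D;\mathcal V)\leq\lambda(\dim X+1)[K:\QQ]\vol(D;\mathcal V)$; letting $\lambda\downarrow\max\{\lambda_0,0\}$ finishes the proof, and when $\max\{\lambda_0,0\}=0$ this combines with the lower bound to give $\avol(\overline D;\mathcal V)=0$.

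I do not expect a genuine obstacle; the points needing care are (i) the identification $\mathcal F^{m\lambda}(\cdot)=0\iff\aHzsm(m\overline D+(0,2m\lambda[\infty]);m\mathcal V)=\{0\}$, and (ii) checking the hypotheses of \eqref{eqn:elementary:property:of:ah:rescaling}, namely that passing from $m\overline D+(0,2m\lambda[\infty])$ to $m\overline D$ merely rescales the archimedean norm on the fixed $O_K$-lattice $\aHzf(m\overline D;m\mathcal V)$. One should also recall why $\lim_{m\to+\infty}e_{\max}(m\overline D;m\mathcal V)/m$ exists and is finite: the function $m\mapsto e_{\max}(m\overline D;m\mathcal V)$ is superadditive, since the submultiplicativity of the sup-norms together with $\nu(s\otimes t)=\nu(s)+\nu(t)$ shows that a product of small sections is small for the sum of the divisors and base conditions, so Fekete's lemma applies; and it is bounded above by $\essmin_{x\in X(\overline K)}h_{\overline D}(x)<+\infty$ via the monotonicity $e_{\max}(m\overline D;m\mathcal V)\leq e_{\max}(m\overline D)$ and the lemma above.
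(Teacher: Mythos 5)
Your proof is correct, but it takes a different route from the paper's. The paper disposes of the upper bound in one stroke by quoting Gillet--Soul\'e's counting inequality (\cite[Proposition~6]{Gillet_Soule91}), which bounds $\als\left(m\overline{D};m\mathcal{V}\right)$ directly by $\max\{e_{\max}(m\overline{D};m\mathcal{V}),0\}\cdot\rk H^0(mD;m\mathcal{V})$ plus an $O\!\left(\rk H^0(mD)\log\rk H^0(mD)\right)$ error, and then divides by $m^{\dim X+1}$ and passes to the limit exactly as you do at the end. You instead introduce an auxiliary $\lambda>\max\{\lambda_0,0\}$, observe that $e_{\max}(m\overline{D};m\mathcal{V})<m\lambda$ forces $\mathcal{F}^{m\lambda}=0$, i.e.\ the lattice $\aHzf(m\overline{D};m\mathcal{V})$ has no nonzero point of archimedean norm $\leq e^{-m\lambda}$, and then recover the bound $\als\left(m\overline{D};m\mathcal{V}\right)\leq(m\lambda+2)\dim_{\QQ}H^0(mD;m\mathcal{V})$ from the rescaling inequality \eqref{eqn:elementary:property:of:ah:rescaling} applied to the two norms $e^{m\lambda}\|\cdot\|_{\infty}$ and $\|\cdot\|_{\infty}$ on that lattice, finishing by letting $\lambda\downarrow\max\{\lambda_0,0\}$. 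This is legitimate: the only place the base condition and the finite places enter is through the lattice $\aHzf(m\overline{D};m\mathcal{V})$, which is finitely generated of $\ZZ$-rank $\dim_{\QQ}H^0(mD;m\mathcal{V})$ by Proposition~\ref{prop:basic:ah:of:ell1:adelic:divisors}(3), and the paper itself applies \eqref{eqn:elementary:property:of:ah:rescaling} to such lattices in exactly this way. What your route buys is self-containedness (no appeal to Gillet--Soul\'e beyond the already-quoted inequality of Yuan), at the cost of the extra $\varepsilon$-room limit in $\lambda$ and the need to justify that $\lim_m e_{\max}(m\overline{D};m\mathcal{V})/m$ exists (your Fekete/superadditivity remark, which parallels the unproved use of Fekete in the lemma preceding this one); the paper's citation gives the non-asymptotic inequality directly in terms of $e_{\max}$ itself. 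Two small remarks: the twist interpretation $\mathcal{F}^{m\lambda}=0\iff\aHzsm\left(m\overline{D}+(0,2m\lambda[\infty]);m\mathcal{V}\right)=\{0\}$ depends on the sign convention for how adding a constant at $\infty$ rescales the sup-norm, but your argument does not actually need it, since you can (and in effect do) apply \eqref{eqn:elementary:property:of:ah:rescaling} directly to the fixed lattice with the two archimedean norms; and the degenerate case $H^0(mD;m\mathcal{V})=0$ (so $e_{\max}=-\infty$) is harmless in both arguments.
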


\begin{proof}
By Gillet--Soul\'e's formula \cite[Proposition~6]{Gillet_Soule91}, we have
\begin{align*}
0\leq \als\left(m\overline{D};m\mathcal{V}\right) &\leq \max\left\{e_{\max}(m\overline{D};m\mathcal{V}),0\right\}\cdot\rk H^0(mD;m\mathcal{V}) \\
& \qquad\qquad\qquad\qquad +2\left(\rk H^0(mD)+\log(\rk H^0(mD))!\right)
\end{align*}
for every $m\in\ZZ_{>0}$.
Therefore,
\begin{align*}
0\leq\avol(\overline{D};\mathcal{V}) &\leq (\dim X+1)[K:\QQ]\max\left\{\lim_{\substack{m\in\ZZ, \\ m\to+\infty}}\frac{e_{\max}(m\overline{D};m\mathcal{V})}{m},0\right\} \\
&\qquad\qquad\qquad\qquad\qquad\qquad\qquad \cdot\limsup_{\substack{m\in\ZZ, \\ m\to+\infty}}\frac{\dim_KH^0(mD;m\mathcal{V})}{m^{\dim X}/(\dim X)!} \\
&=(\dim X+1)[K:\QQ]\vol(D;\mathcal{V})\max\left\{\lim_{\substack{m\in\ZZ, \\ m\to+\infty}}\frac{e_{\max}(m\overline{D};m\mathcal{V})}{m},0\right\}.
\end{align*}
\end{proof}

\begin{lemma}
\label{lem:zeta:value:fixed}
Let $(\overline{D};\mathcal{V})\in\aBDiv_{\RR,\RR}^{\ell^1}(X)$.
Let $\left(\overline{D}_n\right)_{n\geq 1}$ be an increasing sequence in $\aDiv_{\RR}^{\ell^1}(X)$ such that $\zeta(\overline{D}_n)=D$ and such that $\left\|\overline{D}-\overline{D}_n\right\|_{\ell^1}\to 0$ as $n\to+\infty$.
One then has
\[
\avol\left(\overline{D};\mathcal{V}\right)=\lim_{n\to+\infty}\avol\left(\overline{D}_n;\mathcal{V}\right).
\]
\end{lemma}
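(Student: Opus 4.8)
The plan is to combine the quantitative comparison \eqref{eqn:difference:adding:cont:fcn} with the approximation property Proposition~\ref{prop:definition:of:ell1:adelic}(4); the only slightly delicate point is that \eqref{eqn:difference:adding:cont:fcn} requires the comparison divisor to lie in $\aDiv_{\RR}(X)$, whereas a priori each $\overline{D}_n$ lies only in $\aDiv_{\RR}^{\ell^1}(X)$.

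First I would record that $\overline{D}_n\leq\overline{D}$ for every $n$. Since $\zeta(\overline{D}_n)=\zeta(\overline{D})=D$, the difference is of the form $\overline{D}-\overline{D}_n=(0,\bm{h}_n)$ with $\bm{h}_n\in\aC(X)$ and $\|\bm{h}_n\|_{\ell^1}=\|\overline{D}-\overline{D}_n\|_{\ell^1}\to 0$; moreover, because $(\overline{D}_n)_{n\geq 1}$ is increasing, for each $v\in M_K$ the continuous function $g_v^{\overline{D}_n}$ is nondecreasing in $n$ and converges uniformly on $X_v^{\rm an}$ to $g_v^{\overline{D}}$, hence $g_v^{\overline{D}_n}\leq g_v^{\overline{D}}$, i.e. $\bm{h}_n\geq 0$. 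By the monotonicity of $\als$, and therefore of $\avol$, with respect to $\leq$ (immediate from the definition of the supremum norms in section~\ref{subsubsec:model:functions}), it follows that $\avol(\overline{D}_n;\mathcal{V})\leq\avol(\overline{D};\mathcal{V})$ for every $n$, and hence $\limsup_{n\to+\infty}\avol(\overline{D}_n;\mathcal{V})\leq\avol(\overline{D};\mathcal{V})$.

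For the reverse inequality, for each $n$ I would apply Proposition~\ref{prop:definition:of:ell1:adelic}(4) to $\overline{D}_n$ to obtain an $\overline{E}_n\in\aDiv_{\RR}(X)$ with $\zeta(\overline{E}_n)=D$, $\overline{E}_n\leq\overline{D}_n$, and $\|\overline{D}_n-\overline{E}_n\|_{\ell^1}\leq 1/n$. Then $\overline{E}_n\leq\overline{D}_n\leq\overline{D}$, $\zeta(\overline{E}_n)=\zeta(\overline{D})$, and $\overline{E}_n\in\aDiv_{\RR}(X)$, so \eqref{eqn:difference:adding:cont:fcn} applied with $\overline{D}_0:=\overline{E}_n$ gives
\[
\avol(\overline{D};\mathcal{V})\leq\avol(\overline{E}_n;\mathcal{V})+\frac{3}{2}(\dim X+1)[K:\QQ]\vol(D;\mathcal{V})\,\|\overline{D}-\overline{E}_n\|_{\ell^1}.
\]
Using once more the monotonicity $\avol(\overline{E}_n;\mathcal{V})\leq\avol(\overline{D}_n;\mathcal{V})$ (since $\overline{E}_n\leq\overline{D}_n$) together with $\|\overline{D}-\overline{E}_n\|_{\ell^1}\leq\|\overline{D}-\overline{D}_n\|_{\ell^1}+1/n$, and then letting $n\to+\infty$, I obtain $\avol(\overline{D};\mathcal{V})\leq\liminf_{n\to+\infty}\avol(\overline{D}_n;\mathcal{V})$. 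Combined with the previous paragraph, this shows that $\lim_{n\to+\infty}\avol(\overline{D}_n;\mathcal{V})$ exists and equals $\avol(\overline{D};\mathcal{V})$, as desired.

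I do not expect a genuine obstacle here: once \eqref{eqn:difference:adding:cont:fcn} and Proposition~\ref{prop:definition:of:ell1:adelic}(4) are in hand, the argument is formal. The one place that uses the hypothesis that the sequence be increasing is the deduction $\overline{D}_n\leq\overline{D}$ — which is also what allows $\overline{D}_n$ to be compared to $\overline{D}$ from below through an element of $\aDiv_{\RR}(X)$; the remainder is bookkeeping of $\ell^1$-errors and repeated use of monotonicity.
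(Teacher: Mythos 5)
Your argument is correct and is essentially the paper's own proof: the paper simply compresses your use of Proposition~\ref{prop:definition:of:ell1:adelic}(4) into the phrase ``we can assume $\overline{D}_n\in\aDiv_{\RR}(X)$'' and then applies \eqref{eqn:difference:adding:cont:fcn} with $\overline{D}_0=\overline{D}_n$, exactly as you do with the auxiliary $\overline{E}_n$ together with monotonicity. Spelling out the two-sided estimate as you did is fine and introduces no gap.
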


\begin{proof}
Since $\left(\overline{D}_n\right)_{n\geq 1}$ is an increasing sequence, we can assume $\overline{D}_n\in\aDiv_{\RR}(X)$ for every $n\geq 1$ by Proposition~\ref{prop:definition:of:ell1:adelic}.
Hence, by \eqref{eqn:difference:adding:cont:fcn},
\[
\left|\avol(\overline{D};\mathcal{V})-\avol(\overline{D}_n;\mathcal{V})\right|\leq\frac{3}{2}(\dim X+1)[K:\QQ]\vol(D;\mathcal{V})\cdot\left\|\overline{D}-\overline{D}_n\right\|_{\ell^1}\to 0
\]
as $n\to+\infty$.
\end{proof}

\begin{proposition}
\label{prop:difference:adding:cont:fcn:final:version}
Let $(\overline{D};\mathcal{V})\in\aBDiv_{\RR,\RR}^{\ell^1}(X)$.
For any $\bm{f}\in\aC(X)$, we have
\[
\left|\avol\left(\overline{D}+(0,\bm{f});\mathcal{V}\right)-\avol\left(\overline{D};\mathcal{V}\right)\right|\leq\frac{1}{2}(\dim X+1)[K:\QQ]\vol(D;\mathcal{V})\cdot\|\bm{f}\|_{\ell^1}.
\]
\end{proposition}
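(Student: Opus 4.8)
The plan is to prove the estimate first for $\bm f\geq 0$ with $\overline D$ an honest adelic $\RR$-Cartier divisor, then to remove both restrictions by approximation and monotonicity. Write $D_m:=m^{\dim X+1}/(\dim X+1)!$. I shall use that $\dim_{\QQ}H^0(mD;m\mathcal V)=O(m^{\dim X})$ (this is built into the finiteness of $\vol(D;\mathcal V)$), that, since $\avol(\cdot\,;\mathcal V)$ is a $\limsup$ and $\avol(\overline D;\mathcal V)<+\infty$ (Proposition~\ref{prop:finiteness:of:arithmetic:volumes}), subadditivity of $\limsup$ gives $\avol(\overline E;\mathcal V)-\avol(\overline D;\mathcal V)\leq\limsup_{m\to+\infty}\bigl(\ah(m\overline E;m\mathcal V)-\ah(m\overline D;m\mathcal V)\bigr)/D_m$ for $\ast=\mathrm s,\mathrm{ss}$, and that $\avol(\cdot\,;\mathcal V)$ is monotone in the divisor among $\ell^1$-adelic divisors with a fixed $\zeta$-value (this follows from \eqref{eqn:difference:adding:cont:fcn} and Lemma~\ref{lem:zeta:value:fixed} by approximating the smaller divisor from below by honest adelic ones).

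\emph{Step 1 (the crux).} Take $\overline D\in\aDiv_{\RR}(X)$ and $\bm g\in\aC(X)$ with $\bm g\geq 0$. The inequality $0\leq\avol(\overline D+(0,\bm g);\mathcal V)-\avol(\overline D;\mathcal V)$ is \eqref{eqn:difference:adding:cont:fcn} with $\overline D_0=\overline D$. For the upper bound I re-run the argument behind Proposition~\ref{prop:basic:ah:of:ell1:adelic:divisors}(1) on the scaled datum $(m\overline D,\,m\bm g,\,m\mathcal V)$ for $m\in\ZZ_{>0}$: the resolution $\mu$ and the $O_K$-model occurring there can be chosen once and for all, so the finite set $S_3'$ is independent of $m$, while the splitting $M_K^{\rm fin}=S_1^{(m)}\sqcup S_2^{(m)}\sqcup S_3^{(m)}$ has $S_3^{(m)}\subseteq S_3'$ and $S_2^{(m)}=\{v:2\log\sharp\widetilde K_v\leq m\|g_v\|_{\sup},\ \log p_v\geq 2\}$. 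Exactly as in \eqref{eqn:basic:ah:of:ell1:adelic:divisors:1}--\eqref{eqn:basic:ah:of:ell1:adelic:divisors:4} (Proposition~\ref{prop:difference:adding:continuous:function}(2), Lemma~\ref{lem:adelic:rescaling:adelically:normed:vector:spaces}, \eqref{eqn:elementary:property:of:ah:rescaling}) one gets $\ah(m\overline D+(0,m\bm g);m\mathcal V)-\ah(m\overline D;m\mathcal V)\leq(\tfrac{m}{2}\|\bm g\|_{\ell^1}+\varepsilon_m)\dim_{\QQ}H^0(mD;m\mathcal V)$, where $\varepsilon_m=\sum_{v\in S_2^{(m)}}2\log p_v+c(\overline D)$ and $c(\overline D)$ depends only on $\overline D$. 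The point is that, rather than bounding $\sum_{v\in S_2^{(m)}}2\log p_v$ wastefully by a multiple of $m$ as in \eqref{eqn:basic:ah:of:ell1:adelic:divisors:2} (which is precisely what forces the non-optimal constant $\tfrac32$ there), one keeps it and notes $\varepsilon_m=o(m)$: for each fixed $v$ one has $\frac{2\log p_v}{m}\to 0$, and the defining inequality $2\log\sharp\widetilde K_v\leq m\|g_v\|_{\sup}$ of $S_2^{(m)}$ gives $\frac{2\log p_v}{m}\leq\frac{2\log\sharp\widetilde K_v}{m}\leq\|g_v\|_{\sup}$, so dominated convergence over $M_K^{\rm fin}$ (dominating function $v\mapsto\|g_v\|_{\sup}$) yields $\frac{1}{m}\sum_{v\in S_2^{(m)}}2\log p_v\to 0$. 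Dividing the displayed bound by $D_m$: the $\varepsilon_m$-term gives $o(m)\cdot O(m^{\dim X})/D_m\to 0$, while $\tfrac{m}{2}\|\bm g\|_{\ell^1}\dim_{\QQ}H^0(mD;m\mathcal V)/D_m$ has $\limsup$ equal to $\tfrac12(\dim X+1)[K:\QQ]\vol(D;\mathcal V)\|\bm g\|_{\ell^1}$; the subadditivity inequality then gives the claim (with $\als$ or equally with $\alss$).

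\emph{Steps 2 and 3 (bootstrapping).} For general $\overline D\in\aDiv_{\RR}^{\ell^1}(X)$ and $\bm g\geq 0$, choose an increasing sequence $\overline D_n\in\aDiv_{\RR}(X)$ with $\zeta(\overline D_n)=D$ and $\|\overline D-\overline D_n\|_{\ell^1}\to 0$ (Proposition~\ref{prop:definition:of:ell1:adelic}(4)); then $\overline D_n+(0,\bm g)$ increases to $\overline D+(0,\bm g)$ with the same $\zeta$, so Lemma~\ref{lem:zeta:value:fixed} carries the Step~1 estimate (valid for each $\overline D_n$, since $\zeta(\overline D_n)=D$) to the limit. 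Finally, for arbitrary $\bm f\in\aC(X)$ set $\bm a:=\sum_{v\in M_K}\|f_v\|_{\sup}[v]\in\aC(X)$, so that $\bm a\geq 0$, $\|\bm a\|_{\ell^1}=\|\bm f\|_{\ell^1}$, and $-\bm a\leq\bm f\leq\bm a$. Monotonicity gives $\avol(\overline D-(0,\bm a);\mathcal V)\leq\avol(\overline D+(0,\bm f);\mathcal V)\leq\avol(\overline D+(0,\bm a);\mathcal V)$, so $|\avol(\overline D+(0,\bm f);\mathcal V)-\avol(\overline D;\mathcal V)|$ is at most the larger of $\avol(\overline D+(0,\bm a);\mathcal V)-\avol(\overline D;\mathcal V)$ and $\avol(\overline D;\mathcal V)-\avol(\overline D-(0,\bm a);\mathcal V)$; applying Step~2 to the $\ell^1$-adelic bases $\overline D$ and $\overline D-(0,\bm a)$ (both with $\zeta$-value $D$) and the nonnegative perturbation $\bm a$ bounds each of these by $\tfrac12(\dim X+1)[K:\QQ]\vol(D;\mathcal V)\|\bm a\|_{\ell^1}$, which is the asserted bound.

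\emph{Main obstacle.} The only real work is Step~1: improving the constant $\tfrac32$ of Proposition~\ref{prop:basic:ah:of:ell1:adelic:divisors}(1) to the optimal $\tfrac12$. This is not achieved by a sharper place-by-place inequality — along the diagonal $m\to+\infty$ the rounding term at a single $v$ genuinely is comparable to $m\|g_v\|_{\sup}$ — but by observing that these rounding terms, summed over the variable (and possibly large) set $S_2^{(m)}$, amount to only $o(m)$ by dominated convergence and hence wash out after normalization by $m^{\dim X+1}$. The remaining steps are routine: approximation through Lemma~\ref{lem:zeta:value:fixed} and monotonicity of the arithmetic volume.
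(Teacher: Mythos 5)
Your proposal is correct, but it is organized differently from the paper's proof. The paper approximates in \emph{both} variables: it takes an increasing sequence $\overline{D}_n\in\aDiv_{\RR}(X)$ with $\zeta(\overline{D}_n)=D$ and an increasing sequence $\bm{f}_n\in C(X)$ (finitely supported, via Lemma~\ref{lem:approximation:theorem:continuous:functions}) converging to $\bm{f}$ in $\ell^1$, invokes the argument of \cite[Proposition~5.1.3]{MoriwakiAdelic} together with Lemma~\ref{lem:adelic:rescaling:adelically:normed:vector:spaces} to get the constant $\tfrac12$ in the model case with finitely many perturbed places (there the per-place rounding errors are $O(1)$ over a fixed finite set, hence wash out after dividing by $m^{\dim X+1}$), and then passes to the limit by Lemma~\ref{lem:zeta:value:fixed}. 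You instead avoid truncating $\bm{f}$: you re-run the estimate of Proposition~\ref{prop:basic:ah:of:ell1:adelic:divisors}(1) along the diagonal $(m\overline{D},m\bm{g},m\mathcal{V})$, observe that the model data and hence $S_3'$ can be fixed independently of $m$ (this is exactly the point behind $\delta(t\overline{D})=\delta(\overline{D})$ in the paper), and control the aggregate rounding error $\sum_{v\in S_2^{(m)}}2\log p_v$ over the \emph{variable} and possibly large set $S_2^{(m)}$ by dominated convergence, using $2\log p_v\leq 2\log\sharp\widetilde{K}_v\leq m\|g_v\|_{\sup}$ as the summable dominating bound; this is where the constant $\tfrac32$ of Proposition~\ref{prop:basic:ah:of:ell1:adelic:divisors}(1) sharpens to $\tfrac12$ asymptotically. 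You then only need the approximation in $\overline{D}$ (Proposition~\ref{prop:definition:of:ell1:adelic}(4) and Lemma~\ref{lem:zeta:value:fixed}) and a monotonicity sandwich with $\pm\bm{a}$, $\bm{a}=\sum_v\|f_v\|_{\sup}[v]$, to treat signed $\bm{f}$, whereas the paper absorbs the sign issue into the cited absolute-value estimate. Both routes rest on the same engine (the rescaling Lemma~\ref{lem:adelic:rescaling:adelically:normed:vector:spaces}, the $O(m^{\dim X})$ bound on $\dim_{\QQ}H^0(mD;m\mathcal{V})$, and the limit Lemma~\ref{lem:zeta:value:fixed}); yours is longer but self-contained within the present paper and makes explicit why the rounding terms are $o(m)$ uniformly over all finite places, while the paper's is shorter by reducing to finitely many places and outsourcing the core computation to Moriwaki.
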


\begin{proof}
Let $\left(\overline{D}_n\right)_{n\geq 1}$ be an increasing sequence in $\aDiv_{\RR}(X)$ such that $\zeta(\overline{D}_n)=D$ and such that $\left\|\overline{D}-\overline{D}_n\right\|_{\ell^1}\to 0$ as $n\to+\infty$, and let $\left(\bm{f}_n\right)_{n\geq 1}$ be an increasing sequence in $C(X)$ such that $\|\bm{f}-\bm{f}_n\|_{\ell^1}\to 0$ as $n\to+\infty$.
By the same arguments as in \cite[Proposition~5.1.3]{MoriwakiAdelic}, Lemma~\ref{lem:adelic:rescaling:adelically:normed:vector:spaces} implies
\[
\left|\avol\left(\overline{D}_n+(0,\bm{f}_n);\mathcal{V}\right)-\avol\left(\overline{D}_n;\mathcal{V}\right)\right|\leq\frac{1}{2}(\dim X+1)[K:\QQ]\vol(D;\mathcal{V})\cdot\|\bm{f}_n\|_{\ell^1}.
\]
By taking $n\to+\infty$, we have the required assertion by Lemma~\ref{lem:zeta:value:fixed}.
\end{proof}

\subsection{Continuity of the arithmetic volume function}\label{subsec:ContHomog}

The purpose of this section is to establish the global continuity of the arithmetic volume function over $\aBDiv_{\RR,\RR}^{\ell^1}(X)$ along the directions of $\ell^1$-adelic $\RR$-Cartier divisors (see Theorem~\ref{thm:Continuity_Est3}).
To begin with, we show the homogeneity of the arithmetic volume function in the following form.

\begin{lemma}
\label{prop:positive:homogeneity}
Let $\mathscr{X}$ be a projective arithmetic variety of dimension $d+1$ having smooth generic fiber $\mathscr{X}_{\QQ}$.
Let $\overline{\mathscr{D}}\in\aDiv_{\QQ}(\mathscr{X};C^{\infty})$ and let $\mathcal{V}\in\BC_{\RR}(\mathscr{X})$ with $\mathcal{V}\geq 0$.
For any $p\in\ZZ_{>0}$, one has
\[
\avol\left(p\overline{\mathscr{D}};p\mathcal{V}\right)=p^{\dim X+1}\avol\left(\overline{\mathscr{D}};\mathcal{V}\right).
\]
\end{lemma}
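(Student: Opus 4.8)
The inequality $\avol(p\overline{\mathscr{D}};p\mathcal{V})\leq p^{d+1}\avol(\overline{\mathscr{D}};\mathcal{V})$ (with $d:=\dim X$) is immediate: substituting $m\mapsto pm$ in the $\limsup$ defining $\avol(p\overline{\mathscr{D}};p\mathcal{V})$ and pulling out the factor $p^{d+1}$, one is left with the $\limsup$ of the defining sequence of $\avol(\overline{\mathscr{D}};\mathcal{V})$ along a subsequence. The real content is the reverse inequality, and the plan is to get it from the a priori more general identity
\[
 \avol\bigl(\overline{\mathscr{F}};\mathcal{W}\bigr)=\frac{1}{(e')^{d+1}}\,\avol\bigl(e'\overline{\mathscr{F}};e'\mathcal{W}\bigr)\qquad(\star)
\]
for all $\overline{\mathscr{F}}\in\aDiv_{\QQ}(\mathscr{X};C^{\infty})$, all $\mathcal{W}\in\BC_{\RR}(\mathscr{X})$ with $\mathcal{W}\geq 0$, and all $e'\in\ZZ_{>0}$ such that $e'\overline{\mathscr{F}}$ is integral. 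Granting $(\star)$, pick a common multiple $N$ of $p$ and of the denominator of $\overline{\mathscr{D}}$ and apply $(\star)$ to $(\overline{\mathscr{D}},\mathcal{V},N)$ and to $(p\overline{\mathscr{D}},p\mathcal{V},N/p)$ --- legitimate, since $p\mathcal{V}\geq 0$ and $N\overline{\mathscr{D}}$ is integral --- to obtain $N^{d+1}\avol(\overline{\mathscr{D}};\mathcal{V})=\avol(N\overline{\mathscr{D}};N\mathcal{V})=(N/p)^{d+1}\avol(p\overline{\mathscr{D}};p\mathcal{V})$, which is the lemma.

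To prove $(\star)$, set $\overline{\mathscr{M}}:=e'\overline{\mathscr{F}}$, an integral arithmetic divisor of $C^{\infty}$-type, put $\Sigma:=\{c_{\mathscr{X}}(\nu):\nu(\mathcal{W})>0\}$ (a finite set), and write each integer $n\geq 1$ as $n=e'q+j$ with $0\leq j<e'$. Since $n\overline{\mathscr{F}}=\tfrac{n}{e'}\overline{\mathscr{M}}=q\overline{\mathscr{M}}+\tfrac{j}{e'}\overline{\mathscr{M}}$, an elementary rounding argument shows that $H^{0}(n\mathscr{F})=H^{0}\bigl(q\mathscr{M}+\lfloor\tfrac{j}{e'}\mathscr{M}\rfloor\bigr)$; that $\|\cdot\|_{\infty,\sup}^{n\overline{\mathscr{F}}}$ differs from $\|\cdot\|_{\infty,\sup}^{q\overline{\mathscr{M}}+\overline{\mathscr{M}}_{j}}$, for a fixed arithmetic divisor $\overline{\mathscr{M}}_{j}$ of $C^{\infty}$-type with underlying divisor $\lfloor\tfrac{j}{e'}\mathscr{M}\rfloor$, only by a fixed bounded continuous function $\bm{f}_{j}$ on $X_{\infty}^{\mathrm{an}}$; and that, because $\mathcal{W}\geq 0$, the constraint $\nu(n\mathscr{F}+(\phi))\geq n\nu(\mathcal{W})$ is automatic when $\nu(\mathcal{W})=0$ and, at the finitely many $\nu$ with $\nu(\mathcal{W})>0$ (all with center in $\Sigma$), takes the form $\nu\bigl(q\mathscr{M}+\lfloor\tfrac{j}{e'}\mathscr{M}\rfloor+(\phi)\bigr)\geq\nu\bigl(q(e'\mathcal{W})+\mathcal{V}_{j}\bigr)$ for a fixed $\RR$-base condition $\mathcal{V}_{j}$ with positive part in $\Sigma$. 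Thus $\als(n\overline{\mathscr{F}};n\mathcal{W})=\als\bigl(q\overline{\mathscr{M}}+\overline{\mathscr{M}}_{j}+(0,\bm{f}_{j});\,q(e'\mathcal{W})+\mathcal{V}_{j}\bigr)$, and I would peel off the three fixed perturbations in turn: Theorem~\ref{thm:Continuity_Est2} (used in both directions, with a family containing $\overline{\mathscr{M}},\overline{\mathscr{M}}_{j}$, auxiliary $\pm\overline{\mathscr{M}}_{j}$, and $b=\pm 1$, noting that the positive part of $q(e'\mathcal{W})+\mathcal{V}_{j}$ lies in $\Sigma$ for $q\gg 0$) absorbs $\overline{\mathscr{M}}_{j}$ at cost $O(q^{d}\log q)$; the norm comparisons \eqref{eqn:elementary:property:of:ah:rescaling} and \eqref{eqn:elementary:property:of:ah:norm:increase} together with Snapper's bound $h^{0}=O(q^{d})$ absorb $(0,\bm{f}_{j})$ at cost $O(q^{d})$; and a bounded additive perturbation of an $\RR$-base condition changes $\ah$ by only $o(q^{d+1})$, absorbing $\mathcal{V}_{j}$. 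Hence $\als(n\overline{\mathscr{F}};n\mathcal{W})=\als(q\overline{\mathscr{M}};q(e'\mathcal{W}))+o(q^{d+1})$ uniformly in $j$; since $n=e'q+j\sim e'q$ and $\avol$ is finite, the $\limsup$ over $n$ may be computed along the subsequence $n=e'q$, so that
\[
 \limsup_{n\to\infty}\frac{\als(n\overline{\mathscr{F}};n\mathcal{W})}{n^{d+1}/(d+1)!}=\limsup_{q\to\infty}\frac{\als(q\overline{\mathscr{M}};q(e'\mathcal{W}))}{(e'q)^{d+1}/(d+1)!}=\frac{1}{(e')^{d+1}}\,\avol(e'\overline{\mathscr{F}};e'\mathcal{W}),
\]
i.e.\ $(\star)$; the identical computation with $\alss$ in place of $\als$ yields the same value.

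I expect the genuine obstacle to be the bookkeeping underlying the identity for $\als(n\overline{\mathscr{F}};n\mathcal{W})$: recasting the arithmetic-volume count attached to the non-integral divisor $n\overline{\mathscr{F}}$, at an arbitrary level $n$, as the count attached to an honest integral arithmetic divisor plus a perturbation that is \emph{independent of $q$} in each of its three parts (divisorial, metric, base-condition), which is exactly what makes the uniform model estimate of Theorem~\ref{thm:Continuity_Est2} applicable; the hypothesis $\mathcal{W}\geq 0$ is used precisely here, to keep the induced base conditions $q(e'\mathcal{W})+\mathcal{V}_{j}$ with positive part inside the fixed finite set $\Sigma$ and to make the fractional parts of $\tfrac{j}{e'}\mathscr{M}$ drop out of the constraints. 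The one input not directly furnished by the material above --- the $o(q^{d+1})$-invariance of $\ah$ under a bounded additive change of the base condition --- follows from the finiteness of $\Sigma$ and Snapper-type bounds, in the spirit of the base-condition estimates of \cite{IkomaDiff1} and section~\ref{subsec:base:conditions}.
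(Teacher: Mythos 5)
Your outer skeleton is sound and matches the paper's: the easy direction by restricting the $\limsup$ to the subsequence $m\mapsto pm$, the reduction to the case where the multiple is integral (your deduction from $(\star)$ is exactly the paper's Claim~\ref{clm:proof:of:positive:homogeneity}), and the comparison of each residue class $n=e'q+j$ with the subsequence $n=e'q$, using $\mathcal{W}\geq 0$ to drop from $n\mathcal{W}$ to $qe'\mathcal{W}$ and Theorem~\ref{thm:Continuity_Est2} to kill a bounded divisorial perturbation. The gap is in the central bookkeeping identity. First, the archimedean discrepancy $\bm{f}_j$ you introduce is \emph{not} a bounded continuous function: $\tfrac{j}{e'}g_{\infty}^{\overline{\mathscr{M}}}-g_{\infty}^{\overline{\mathscr{M}}_j}$ is a Green function for the nonzero effective fractional-part divisor $\tfrac{j}{e'}\mathscr{M}-\lfloor\tfrac{j}{e'}\mathscr{M}\rfloor$, so it tends to $+\infty$ along its support, and precisely in the direction you need (bounding the small sections of $n\overline{\mathscr{F}}$ by those of the integral object) no constant rescaling works; the rescaling inequality \eqref{eqn:elementary:property:of:ah:rescaling} is therefore not applicable, and a Bernstein--Markov patch via Lemma~\ref{lem:Bernstein:Markov:2} produces constants of size $C^{q}$, i.e.\ an error $O(q^{d+1})$, which is of the same order as the main term. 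Second, $\lfloor\tfrac{j}{e'}\mathscr{M}\rfloor$ need not be Cartier on the model $\mathscr{X}$ (which is not assumed regular, nor even normal), so the arithmetic divisor $\overline{\mathscr{M}}_j$ you want to absorb with Theorem~\ref{thm:Continuity_Est2} -- a statement about Hermitian line bundles -- may simply not exist in this framework. Third, the step you yourself flag as extra input, that a bounded additive perturbation $\mathcal{V}_j$ of the base condition changes $\als$ by $o(q^{d+1})$, is not a consequence of ``finiteness of $\Sigma$ plus Snapper'': the naive index bound \eqref{eqn:elementary:property:of:ah:rescaling:2} fails because the relevant quotients can be infinite (e.g.\ for valuations trivial on $\QQ$), and a correct proof needs the quotient-norm machinery of the proof of Theorem~\ref{thm:Continuity_Est2} itself; as written it is an unproved lemma of comparable depth to the result you are using it for.

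All three difficulties are avoidable, and the paper avoids them by never asking for an exact (or two-sided) recasting: since only an upper bound along each residue class is needed, one fixes a single integral effective $\overline{\mathscr{E}}\in\aDiv(\mathscr{X})$ with $\overline{\mathscr{E}}\geq 0$ and $\overline{\mathscr{E}}\pm\overline{\mathscr{M}}\geq 0$, observes the monotone comparisons $n\overline{\mathscr{F}}\leq q\overline{\mathscr{M}}+\overline{\mathscr{E}}$ and $n\mathcal{W}\geq qe'\mathcal{W}$ (this is where $\mathcal{W}\geq 0$ enters, not in any rounding), so that
\[
\als\left(n\overline{\mathscr{F}};n\mathcal{W}\right)\leq\als\left(q\overline{\mathscr{M}}+\overline{\mathscr{E}};qe'\mathcal{W}\right)\leq\als\left(q\overline{\mathscr{M}}-\overline{\mathscr{E}};qe'\mathcal{W}\right)+O\bigl(q^{d}\log q\bigr)
\]
by a single application of Theorem~\ref{thm:Continuity_Est2}, and then passes to the $\limsup$. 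If you replace your ``exact identity plus three absorptions'' by this one-sided sandwich, your argument becomes the paper's proof; as it stands, the identity on which your proof of $(\star)$ rests is false as stated and its repair is not supplied.
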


\begin{proof}
First, we note the following.

\begin{claim}
\label{clm:proof:of:positive:homogeneity}
It suffices to show that, to each $\overline{\mathscr{D}}\in\aDiv_{\QQ}(\mathscr{X};C^{\infty})$, one can assign a $q_{\overline{\mathscr{D}}}\in\ZZ_{>0}$ such that the equality is true for all multiples of $q_{\overline{\mathscr{D}}}$.
\end{claim}

\begin{proof}[Proof of Claim~\ref{clm:proof:of:positive:homogeneity}]
For any $p\in\ZZ_{>0}$, one has
\begin{align*}
\avol\left(p\overline{\mathscr{D}};p\mathcal{V}\right) &=\frac{1}{(q_{\overline{\mathscr{D}}}q_{p\overline{\mathscr{D}}})^{\dim X+1}}\avol\left((pq_{\overline{\mathscr{D}}}q_{p\overline{\mathscr{D}}})\overline{D};(pq_{\overline{\mathscr{D}}}q_{p\overline{\mathscr{D}}})\mathcal{V}\right) \\
&=p^{\dim X+1}\avol\left(\overline{D};\mathcal{V}\right).
\end{align*}
\end{proof}

By Claim~\ref{clm:proof:of:positive:homogeneity}, it suffices to show the equality for every $p\in\ZZ_{>0}$ with
\[
\overline{\mathscr{D}}':=p\overline{\mathscr{D}}\in\aDiv(\mathscr{X};C^{\infty}).
\]
We fix an $\overline{\mathscr{E}}\in\aDiv(\mathscr{X})$ such that $\overline{\mathscr{E}}\geq 0$ and $\overline{\mathscr{E}}\pm\overline{\mathscr{D}}'\geq 0$.
By Theorem~\ref{thm:Continuity_Est2}, there is a constant $C>0$ such that
\[
0\leq\als\left(\OO_{\mathscr{X}}(m\overline{\mathscr{D}}'+\overline{\mathscr{E}});n\mathcal{V}\right)-\als\left(\OO_{\mathscr{X}}(m\overline{\mathscr{D}}'-\overline{\mathscr{E}});n\mathcal{V}\right)\leq Cm^{\dim X}(1+\log(m))
\]
for $m\in\ZZ_{>0}$ and $n\in\ZZ_{\geq 0}$.
Hence, for each $r=1,2,\dots,p-1$, we obtain
\begin{align*}
\limsup_{\substack{m\in\ZZ, \\ m\to+\infty}}\frac{\als\left((pm+r)\overline{\mathscr{D}};(pm+r)\mathcal{V}\right)}{(pm+r)^{\dim X+1}/(\dim X+1)!} &\leq\limsup_{\substack{m\in\ZZ, \\ m\to+\infty}}\frac{\als\left(\left(m+\frac{r}{p}\right)\overline{\mathscr{D}}';pm\mathcal{V}\right)}{(pm)^{\dim X+1}/(\dim X+1)!} \\
&= \limsup_{\substack{m\in\ZZ, \\ m\to+\infty}}\frac{\als\left(\OO_{\mathscr{X}}(m\overline{\mathscr{D}}');pm\mathcal{V}\right)}{(pm)^{\dim X+1}/(\dim X+1)!}.
\end{align*}
Therefore,
\begin{align*}
\avol\left(\overline{\mathscr{D}};\mathcal{V}\right) &=\max_{0\leq r<p}\left\{\limsup_{\substack{m\in\ZZ, \\ m\to+\infty}}\frac{\als\left((pm+r)\overline{\mathscr{D}};(pm+r)\mathcal{V}\right)}{(pm+r)^{\dim X+1}/(\dim X+1)!}\right\} \\
&=\limsup_{\substack{m\in\ZZ, \\ m\to+\infty}}\frac{\als\left(pm\overline{\mathscr{D}};pm\mathcal{V}\right)}{(pm)^{\dim X+1}/(\dim X+1)!}=\frac{1}{p^{\dim X+1}}\avol\left(p\overline{\mathscr{D}};p\mathcal{V}\right).
\end{align*}
\end{proof}

\begin{theorem}\label{thm:Continuity_Est3}
Let $X$ be a normal, projective, and geometrically connected $K$-variety.
Let $V$ be a finite-dimensional $\RR$-subspace of $\aDiv_{\RR}^{\ell^1}(X)$ endowed with a norm $\|\cdot\|_V$, let $\Sigma$ be a finite set of points on $X$, and let $B\in\RR_{>0}$.
For any $\varepsilon\in\RR_{>0}$, there exists a $\delta\in\RR_{>0}$ such that
\[
 \left|\avol\left(\overline{D}+(0,\bm{f});\mathcal{V}\right)-\avol\left(\overline{E};\mathcal{V}\right)\right|\leq\varepsilon
\]
for every $\overline{D},\overline{E}\in V$ with $\max\left\{\left\|\overline{D}\right\|_V,\left\|\overline{E}\right\|_V\right\}\leq B$ and $\left\|\overline{D}-\overline{E}\right\|_V\leq\delta$, $\bm{f}\in \aC(X)$ with $\|\bm{f}\|_{\ell^1}\leq\delta$, and $\mathcal{V}\in\BC_{\RR}(X)$ with $\{c_X(\nu)\,:\,\nu(\mathcal{V})>0\}\subset\Sigma$.
\end{theorem}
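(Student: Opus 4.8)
The plan is to reduce Theorem~\ref{thm:Continuity_Est3} to the fundamental estimate Theorem~\ref{thm:Continuity_Est2}, the homogeneity Lemma~\ref{prop:positive:homogeneity}, and the continuity along continuous-function directions in Proposition~\ref{prop:difference:adding:cont:fcn:final:version}, by a sequence of reductions in the spirit of Moriwaki's proof of continuity of the arithmetic volume. Throughout put $d:=\dim X$.

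First I would dispose of the perturbation $(0,\bm{f})$. Since $\zeta$ is bounded linear and $\zeta(V)$ is finite-dimensional, $\{\zeta(\overline{D}):\overline{D}\in V,\ \|\overline{D}\|_V\leq B\}$ is bounded in $\Div_{\RR}(X)$, hence dominated by a fixed divisor; by monotonicity of the geometric volume, $\vol(\zeta(\overline{D});\mathcal{V})\leq\vol(\zeta(\overline{D}))\leq M_0$ uniformly in $\overline{D}$ and $\mathcal{V}$. Proposition~\ref{prop:difference:adding:cont:fcn:final:version} then bounds $\bigl|\avol(\overline{D}+(0,\bm{f});\mathcal{V})-\avol(\overline{D};\mathcal{V})\bigr|$ by $\tfrac12(d+1)[K:\QQ]M_0\,\|\bm{f}\|_{\ell^1}$, so it remains to prove that $\overline{D}\mapsto\avol(\overline{D};\mathcal{V})$ is equicontinuous on $\{\|\overline{D}\|_V\leq B\}$, uniformly over the admissible $\mathcal{V}$. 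Next I would pass to a model: fix a resolution $\mu\colon\widetilde{X}\to X$; since $H^0(D;\mathcal{V})$ and $H^0(\mu^{*}D;\mathcal{V})$ are literally the same adelically normed $K$-vector space, $\avol(\overline{D};\mathcal{V})=\avol(\mu^{*}\overline{D};\mathcal{V})$. Fix a basis $\overline{B}_1,\dots,\overline{B}_r$ of $V$ and, for a small $\eta>0$ to be chosen later, a common $O_K$-model $\widetilde{\mathscr{X}}$ of $\widetilde{X}$ together with $\QQ$-arithmetic divisors $\overline{\mathscr{B}}_i$ of $C^{\infty}$-type with $\mu^{*}\overline{B}_i=\overline{\mathscr{B}}_i^{\mathrm{ad}}+(0,\bm{g}_i)$ and $\|\bm{g}_i\|_{\ell^1}\leq\eta$ (Proposition~\ref{prop:definition:of:ell1:adelic}). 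Applying Proposition~\ref{prop:difference:adding:cont:fcn:final:version} once more and rounding the coefficient vectors, this reduces matters to comparing the model arithmetic volumes $\avol(\bm{x}\cdot\overline{\pmb{\mathscr{B}}};\mathcal{V})$ and $\avol(\bm{y}\cdot\overline{\pmb{\mathscr{B}}};\mathcal{V})$ for $\bm{x},\bm{y}\in\QQ^r$ bounded by some $B'$ with $\|\bm{x}-\bm{y}\|_1$ small (the model and adelic volumes agreeing on adelizations, since the finite-place conditions for an adelization amount to integrality; the extension to $\RR$-coefficients is routine as in the treatment of $\RR$-divisors in \cite{IkomaDiff1}).

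For the core estimate, after replacing $\mathcal{V}$ by $\max\{\mathcal{V},0\}$ (which does not change $H^0(\cdot;\mathcal{V})$) and clearing denominators so that $p\bm{x},p\bm{y}\in\ZZ^r$ for some $p\in\ZZ_{>0}$, I would use Lemma~\ref{prop:positive:homogeneity} to pass to $p\overline{\mathscr{D}}=(p\bm{x})\cdot\overline{\pmb{\mathscr{B}}}$ and $p\overline{\mathscr{E}}=(p\bm{y})\cdot\overline{\pmb{\mathscr{B}}}$, and interpolate one coordinate at a time. Applying Theorem~\ref{thm:Continuity_Est2} $r$ times, with $\overline{\pmb{\mathscr{L}}}=\overline{\pmb{\mathscr{B}}}$ and the auxiliary continuous bundle equal to $\overline{\mathscr{B}}_i$ at the $i$-th step (and in both directions, to get a two-sided bound), one controls $\bigl|\als(mp\overline{\mathscr{D}};mp\mathcal{V})-\als(mp\overline{\mathscr{E}};mp\mathcal{V})\bigr|$ by a quantity of the form $C\bigl((mpB'+2m\|\bm{c}\|_1)^{d}m\|\bm{c}\|_1+r(mpB'+2m\|\bm{c}\|_1)^{d}\log(mpB'+2m\|\bm{c}\|_1)\bigr)$ with $\bm{c}=p\bm{x}-p\bm{y}$, where the constant $C$ depends only on $\widetilde{\mathscr{X}}$, $\overline{\pmb{\mathscr{B}}}$ and the finite center set $\{c_{\widetilde{X}}(\nu):\nu(\mathcal{V})>0\}$, but — crucially — not on $\mathcal{V}$ itself. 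Dividing by $m^{d+1}/(d+1)!$, letting $m\to+\infty$, and dividing by $p^{d+1}$, this yields
\[
\bigl|\avol(\overline{\mathscr{D}};\mathcal{V})-\avol(\overline{\mathscr{E}};\mathcal{V})\bigr|\leq C'\,(B'+2\|\bm{x}-\bm{y}\|_1)^{d}\,\|\bm{x}-\bm{y}\|_1
\]
uniformly in $\mathcal{V}$. Choosing $\eta$ small in terms of $\varepsilon$ and $M_0$, and then $\delta$ small in terms of $\eta$, $\varepsilon$, $B'$ and $C'$, completes the proof.

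The hard part is the passage to a smooth model demanded by Theorem~\ref{thm:Continuity_Est2}. When $\Sigma$ meets $\Sing(X)$ the centers $\{c_{\widetilde{X}}(\nu):\nu(\mathcal{V})>0\}$ no longer form a finite set: they lie over $\Sigma$, hence in a divisorial subset of $\widetilde{X}$, whereas Theorem~\ref{thm:Continuity_Est2} — and the distortion estimate of section~\ref{subsec:comparison:of:norms} underlying it, which needs an ample auxiliary line bundle with pointwise positive metric carrying a small section that \emph{avoids} a finite $\Sigma$ (see property (d) and Claim~\ref{clm:main:exact:sequence} in the proof of Theorem~\ref{thm:Continuity_Est2}) — is formulated for finite $\Sigma$. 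Reconciling these requires choosing the resolution, the auxiliary ample bundle and its small section compatibly: for instance, taking the small section to be pulled back from a section on a model of $X$ that avoids the genuinely finite $\Sigma$ there, which forces one to rerun the argument of Theorem~\ref{thm:Continuity_Est2} with a merely nef-and-big auxiliary bundle and to verify the Bergman-kernel estimates survive, or else to argue via a suitable partial resolution. By contrast the remaining ingredients — the uniformity of the constant in $\mathcal{V}$ (immediate from the explicit dependence in Theorem~\ref{thm:Continuity_Est2}), the bookkeeping over the bounded family in $V$, and the $\QQ$-to-$\RR$ extension — are routine.
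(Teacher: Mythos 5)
Your proposal follows the paper's route almost step for step: the perturbation $(0,\bm{f})$ is absorbed by Proposition~\ref{prop:difference:adding:cont:fcn:final:version} together with a uniform bound on $\vol$ over the ball in $V$; the basis of $V$ is approximated in $\ell^1$ by adelizations of $C^{\infty}$-type arithmetic $\RR$-divisors on a common model via the Stone--Weierstrass theorem and Proposition~\ref{prop:definition:of:ell1:adelic}; and the core of the matter is a Lipschitz-type estimate for $\avol(\bm{a}\cdot\overline{\pmb{\mathscr{A}}}^{\rm ad};\mathcal{V})$ in $\bm{a}$, uniform in $\mathcal{V}$, obtained from Theorem~\ref{thm:Continuity_Est2} together with the homogeneity Lemma~\ref{prop:positive:homogeneity} and a $\QQ\to\RR$ limit. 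This last step is exactly the paper's Proposition~\ref{prop:Continuity_Est4}; the only cosmetic difference is that the paper enlarges $V$ so as to have an \emph{effective} basis and interpolates through the coordinatewise maximum $\bm{a}''=\max\{\bm{a},\bm{a}'\}$ (so that Theorem~\ref{thm:Continuity_Est2} is applied with a single auxiliary bundle), whereas you interpolate one coordinate at a time; both give the same kind of bound, and the $\RR$-coefficient case is handled in the paper by the same monotone approximation you invoke.

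Concerning what you call the hard part: the paper does none of the work you anticipate there. It disposes of the reduction in one sentence --- ``we may assume that $X$ is smooth'' --- using only that $H^0(D;\mathcal{V})=H^0(\mu^*D;\mathcal{V})$ isometrically for a resolution $\mu:\widetilde{X}\to X$ (normality of $X$), and thereafter it applies Proposition~\ref{prop:Continuity_Est4} with the same finite set $\Sigma$, now read as a set of points of the smooth variety. So relative to the paper you have not diverged in method; you have stopped at a step the paper treats as immediate. Your underlying worry is nonetheless a genuine subtlety that the paper does not address explicitly: for $\nu$ with $\nu(\mathcal{V})>0$ and $c_X(\nu)\in\Sigma$, the center $c_{\widetilde X}(\nu)$ only satisfies $\mu(c_{\widetilde X}(\nu))=c_X(\nu)$, so as $\mathcal{V}$ varies the centers on $\widetilde X$ range over $\mu^{-1}(\Sigma)$, which is infinite as soon as $\Sigma$ meets the locus where $\mu$ is not an isomorphism; and the finiteness of $\Sigma$ is genuinely used in the proof of Theorem~\ref{thm:Continuity_Est2} (the section $s_0$ of the ample $\overline{\mathscr{A}}$ must avoid all centers, which is what makes the exactness in Claim~\ref{clm:main:exact:sequence}(2) work, and an ample section cannot avoid a positive-dimensional fiber). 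Thus your proposal is essentially the paper's proof with this reduction left open; if you want to match the paper you should either restrict attention to $\Sigma$ in the smooth locus (where $\mu$ can be taken to be an isomorphism over $\Sigma$ and the reduction is harmless), or carry out one of the repairs you sketch (e.g.\ pulling back $\overline{\mathscr{A}}$ and $s_0$ from a model of $X$ and reproving Theorem~\ref{thm:Continuity_Est2} for such a nef-and-big auxiliary bundle), neither of which appears in the paper.
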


We need the following.

\begin{proposition}
\label{prop:Continuity_Est4}
Let $\mathscr{X}$ be a projective arithmetic variety of dimension $d+1$ such that $\mathscr{X}_{\QQ}$ is smooth.
Let $\overline{V}=(V,\|\cdot\|_V)$ be a couple of a finite-dimensional $\RR$-subspace $V$ of $\aDiv_{\RR}(\mathscr{X};C^{\infty})$ and a norm $\|\cdot\|_{V}$ on $V$, and let $\Sigma$ be a finite set of points on $\mathscr{X}$.
There then exists a positive constant $C_{\overline{V},\Sigma}>0$ such that
\[
\left|\avol\left(\overline{\mathscr{D}};\mathcal{V}\right)-\avol\left(\overline{\mathscr{D}}';\mathcal{V}\right)\right|\leq C_{\overline{V},\Sigma}\max\left\{\left\|\overline{\mathscr{D}}\right\|_V^d,\left\|\overline{\mathscr{D}}'\right\|_V^d\right\}\cdot\left\|\overline{\mathscr{D}}-\overline{\mathscr{D}}'\right\|_V.
\]
for every $\overline{\mathscr{D}},\overline{\mathscr{D}}'\in V$ and $\mathcal{V}\in\BC_{\RR}(\mathscr{X})$ with $\left\{c_{\mathscr{X}}(\nu)\,:\,\nu(\mathcal{V})>0\right\}\subset\Sigma$.
\end{proposition}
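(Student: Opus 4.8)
The plan is to reduce, by finite-dimensional linear algebra, to a one-parameter Lipschitz estimate for $\avol(\overline{\mathscr{F}}+t\overline{\mathscr{A}};\mathcal{V})$ along a fixed ample Hermitian line bundle $\overline{\mathscr{A}}$, and then to prove that estimate by rounding real coefficients to integers and invoking Theorem~\ref{thm:Continuity_Est2}. First I would note that, since $\{c_{\mathscr{X}}(\nu):\nu(\mathcal{V})>0\}\subset\Sigma$ and $H^0(\mathscr{D};\mathcal{V})$ is unaffected by deleting the non-positive coefficients of $\mathcal{V}$, one may assume $\mathcal{V}\geq 0$. Next, the image of $V$ in $\Div_{\RR}(\mathscr{X})$ is finite-dimensional, hence spanned by finitely many $\ZZ$-Cartier divisors, which I would equip with $C^{\infty}$-metrics; writing each $\overline{\mathscr{D}}\in V$ as a real combination of these plus a term $(0,f_{\overline{\mathscr{D}}})$ with $f_{\overline{\mathscr{D}}}$ a $C^{\infty}$-function depending linearly on $\overline{\mathscr{D}}$, the functions $f_{\overline{\mathscr{D}}}$ span a finite-dimensional space, a basis of which gives further (trivial) $C^{\infty}$-Hermitian line bundles. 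Adjoining an ample, effective $C^{\infty}$-Hermitian line bundle $\overline{\mathscr{A}}$ with positive metric and $\overline{\mathscr{A}}\pm\overline{\mathscr{N}}\geq 0$ for every member $\overline{\mathscr{N}}$ of this finite family (take $\overline{\mathscr{A}}$ a large power of a very ample line bundle with a suitably scaled positive metric), I obtain a finite-dimensional subspace $V'\supseteq V+\RR\overline{\mathscr{A}}$ of $\aDiv_{\RR}(\mathscr{X};C^{\infty})$ spanned by $C^{\infty}$-Hermitian line bundles $\overline{\mathscr{N}}_0:=\overline{\mathscr{A}},\overline{\mathscr{N}}_1,\dots,\overline{\mathscr{N}}_q$. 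By equivalence of norms in finite dimension it suffices to treat $V'$ with the $\ell^1$-norm $\bigl\|\sum_i x_i\overline{\mathscr{N}}_i\bigr\|_V:=\sum_i|x_i|$, so from now on $V=V'$ with this norm.

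\emph{Reduction to the direction of $\overline{\mathscr{A}}$.} For $\overline{\mathscr{D}}=\sum_i x_i\overline{\mathscr{N}}_i$ and $\overline{\mathscr{D}}'=\sum_i x_i'\overline{\mathscr{N}}_i$ put $t:=\|\overline{\mathscr{D}}-\overline{\mathscr{D}}'\|_V$. From $-\overline{\mathscr{A}}\leq\overline{\mathscr{N}}_i\leq\overline{\mathscr{A}}$ one gets $\overline{\mathscr{D}}'-t\overline{\mathscr{A}}\leq\overline{\mathscr{D}}\leq\overline{\mathscr{D}}'+t\overline{\mathscr{A}}$, so by monotonicity of $\avol$ (immediate from the definitions) it is enough to prove
\[
0\leq\avol\left(\overline{\mathscr{F}}+t\overline{\mathscr{A}};\mathcal{V}\right)-\avol\left(\overline{\mathscr{F}};\mathcal{V}\right)\leq C_{\overline{V},\Sigma}\max\left\{\left\|\overline{\mathscr{F}}\right\|_V,\left\|\overline{\mathscr{F}}+t\overline{\mathscr{A}}\right\|_V\right\}^{d}t
\]
for all $\overline{\mathscr{F}}\in V$ and $t\in\RR_{\geq 0}$: applying this with $\overline{\mathscr{F}}=\overline{\mathscr{D}}'$ and with $\overline{\mathscr{F}}=\overline{\mathscr{D}}'-t\overline{\mathscr{A}}$, and using $\|\overline{\mathscr{F}}\|_V,\|\overline{\mathscr{F}}+t\overline{\mathscr{A}}\|_V\leq 3\max\{\|\overline{\mathscr{D}}\|_V,\|\overline{\mathscr{D}}'\|_V\}$ in both cases, gives the proposition.

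\emph{The core estimate.} One may assume $\overline{\mathscr{F}}=\sum_i z_i\overline{\mathscr{N}}_i\neq 0$ (if $\overline{\mathscr{F}}=0$ then $\avol(0;\mathcal{V})=0$ because $\ah(\OO_{\mathscr{X}};m\mathcal{V})$ is bounded, and $\avol(t\overline{\mathscr{A}};\mathcal{V})\leq Ct^{d+1}$ follows from Theorem~\ref{thm:Continuity_Est2} with $\overline{\pmb{\mathscr{L}}}=(\overline{\mathscr{A}})$). For $m\in\ZZ_{>0}$ choose $\bm{y}=\bm{y}(m)\in\ZZ^{q+1}$ with $\|m\bm{z}-\bm{y}\|_{\infty}\leq 1$, and set $b:=\lceil mt\rceil$; for $\overline{\mathscr{F}}\neq 0$ one has $\|\bm{y}\|_1>0$ for all large $m$, which is all that is needed below. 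Using $-\overline{\mathscr{A}}\leq\overline{\mathscr{N}}_i\leq\overline{\mathscr{A}}$, the effectivity of $\overline{\mathscr{A}}$, and the monotonicity of $\ah$, one has $m\overline{\mathscr{F}}+mt\overline{\mathscr{A}}\leq\sum_i y_i\overline{\mathscr{N}}_i+(b+q+1)\overline{\mathscr{A}}$ and $m\overline{\mathscr{F}}\geq\sum_i y_i\overline{\mathscr{N}}_i-(q+1)\overline{\mathscr{A}}$, whence
\begin{multline*}
\ah\left(m\overline{\mathscr{F}}+mt\overline{\mathscr{A}};m\mathcal{V}\right)-\ah\left(m\overline{\mathscr{F}};m\mathcal{V}\right) \\
\leq\ah\Bigl(\textstyle\sum_i y_i\overline{\mathscr{N}}_i+(b+q+1)\overline{\mathscr{A}};m\mathcal{V}\Bigr)-\ah\Bigl(\textstyle\sum_i y_i\overline{\mathscr{N}}_i-(q+1)\overline{\mathscr{A}};m\mathcal{V}\Bigr).
\end{multline*}
Inserting $\ah(\sum_i y_i\overline{\mathscr{N}}_i;m\mathcal{V})$ between the two terms on the right and applying Theorem~\ref{thm:Continuity_Est2} twice with $\overline{\pmb{\mathscr{L}}}=(\overline{\mathscr{N}}_0,\dots,\overline{\mathscr{N}}_q)$ — once with $\bm{a}=\bm{y}$ and auxiliary coefficient $b+q+1$, once with $\bm{a}=(y_0-q-1,y_1,\dots,y_q)$, so that $\bm{a}\cdot\overline{\pmb{\mathscr{L}}}=\sum_i y_i\overline{\mathscr{N}}_i-(q+1)\overline{\mathscr{A}}$ (legitimate because $\overline{\mathscr{A}}=\overline{\mathscr{N}}_0$), and auxiliary coefficient $q+1$ — the middle term cancels. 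Since $\|\bm{y}\|_1\leq m\|\overline{\mathscr{F}}\|_V+(q+1)$, $b\leq mt+1$, and $\|\overline{\mathscr{F}}\|_V+t\leq 3\max\{\|\overline{\mathscr{F}}\|_V,\|\overline{\mathscr{F}}+t\overline{\mathscr{A}}\|_V\}$ (because $t=\|t\overline{\mathscr{A}}\|_V\leq\|\overline{\mathscr{F}}\|_V+\|\overline{\mathscr{F}}+t\overline{\mathscr{A}}\|_V$), this produces, for fixed $\overline{\mathscr{F}}$ and $t$, a bound of the form $C'\,m^{d+1}\max\{\cdots\}^{d}t+O(m^{d}\log m)$ with $C'$ coming only from Theorem~\ref{thm:Continuity_Est2}, hence depending only on $\overline{V}$ and $\Sigma$. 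Dividing by $m^{d+1}/(d+1)!$, using $\limsup_m(a_m-b_m)\geq\limsup_m a_m-\limsup_m b_m$, and letting $m\to\infty$ yields the core estimate with $C_{\overline{V},\Sigma}=(d+1)!\,C'$.

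\emph{Main obstacle.} The one genuine difficulty is the clash between the integer coefficients required by Theorem~\ref{thm:Continuity_Est2} and the real coefficients of divisors in $V$; it is resolved by the rounding step, where each $O(1)$ rounding error is absorbed into a bounded multiple of the fixed ample $\overline{\mathscr{A}}$ and then cancelled against the identical term on the opposite side of the inequality. What remains — organizing the $\limsup$'s and checking that every auxiliary choice ($\overline{\mathscr{N}}_i$, $\overline{\mathscr{A}}$, the equivalent norm) is determined by $\overline{V}$ and $\Sigma$ alone — is routine book-keeping.
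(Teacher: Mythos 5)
Your argument is correct, and it reaches the proposition by a genuinely different route in its second half. The paper also reduces to a finite family of effective $C^{\infty}$-Hermitian divisors with the $\ell^1$-coordinate norm and uses Theorem~\ref{thm:Continuity_Est2} as the key input, but from there it proceeds in three stages: the Lipschitz bound is first proved for integer coefficient vectors (comparing $\bm{a}$ with $\bm{a}''=\max\{\bm{a},\bm{a}'\}$ and feeding $m\bm{a}$, $m\max_i b_i$ into Theorem~\ref{thm:Continuity_Est2}), then transported to rational coefficients via the homogeneity lemma (Lemma~\ref{prop:positive:homogeneity}), and finally extended to real coefficients by squeezing between increasing and decreasing rational approximations (Claim~\ref{clm:Continuity:Est:4:1}), using monotonicity of $\avol(\cdot;\mathcal{V})$. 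You instead sandwich $\overline{\mathscr{D}}$ between $\overline{\mathscr{D}}'\pm t\overline{\mathscr{A}}$ along a single fixed ample effective direction and handle real coefficients directly inside the $\limsup$, rounding $m\bm{z}$ to integers at each level $m$ and absorbing the $O(1)$ rounding errors into bounded multiples of $\overline{\mathscr{A}}$, with two applications of Theorem~\ref{thm:Continuity_Est2} and a cancelling middle term; this dispenses with both the homogeneity lemma and the rational-approximation step, at the cost of a little more bookkeeping (the construction of $\overline{\mathscr{A}}$ with $\overline{\mathscr{A}}\pm\overline{\mathscr{N}}_i\geq 0$, and the separate treatment of $\overline{\mathscr{F}}=0$). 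Two small points deserve a word but are not gaps: you should take the spanning family $\overline{\mathscr{N}}_0,\dots,\overline{\mathscr{N}}_q$ linearly independent (or define the $\ell^1$-``norm'' as an infimum over representations) so that $\|\cdot\|_V$ is well defined, and your repeated use of monotonicity of $\ah(\cdot;\mathcal{V})$ and $\avol(\cdot;\mathcal{V})$ under effective differences, together with the identification of $\ah$ of an arithmetic $\ZZ$-divisor with $\ah$ of the associated Hermitian line bundle compatibly with base conditions, is exactly the level of routine fact the paper itself uses without comment (e.g.\ in the chain of inequalities of Claim~\ref{clm:Continuity:Est:4:1}).
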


\begin{proof}
By extending $(V,\|\cdot\|_V)$ if necessary, we may assume that $V$ has a basis $\overline{\mathscr{A}}_1,\dots,\overline{\mathscr{A}}_r\in\aDiv(\mathscr{X};C^{\infty})$ such that $\overline{\mathscr{A}}_1,\dots,\overline{\mathscr{A}}_r$ are all effective.
We set
\[
\left\|a_1\overline{\mathscr{A}}_1+\dots+a_r\overline{\mathscr{A}}_r\right\|_1:=|a_1|+\dots+|a_r|
\]
for $a_1,\dots,a_r\in\RR$, and set
\[
\overline{\mathscr{D}}=\bm{a}\cdot\overline{\pmb{\mathscr{A}}},\quad\overline{\mathscr{D}}'=\bm{a}'\cdot\overline{\pmb{\mathscr{A}}},\quad\text{and}\quad\overline{\mathscr{A}}:=\overline{\mathscr{A}}_1+\dots+\overline{\mathscr{A}}_r.
\]
If $\bm{a}'=0$, then we can see $\avol\left(\overline{\mathscr{D}};\mathcal{V}\right)\leq C\|\bm{a}\|_1^{d+1}$ for
\begin{equation}
C:=\max\left\{1,\avol\left(\overline{\mathscr{A}}\right)\right\}
\end{equation}
by using Lemma~\ref{prop:positive:homogeneity}, so that we can assume that both $\bm{a}$ and $\bm{a}'$ are nonzero.

First, we assume $\bm{a},\bm{a}'\in\ZZ^r$ and $\bm{b}:=\bm{a}'-\bm{a}\geq 0$.
By Theorem~\ref{thm:Continuity_Est2}, we get a constant $C'\geq C$ depending only on $\overline{\pmb{\mathscr{A}}}$, $\Sigma$, and $\mathscr{X}$ such that
\begin{align*}
0 & \leq \als\left(m\OO_{\mathscr{X}}(\overline{\mathscr{D}}_2);m\mathcal{V}\right)-\als\left(m\OO_{\mathscr{X}}(\overline{\mathscr{D}}_1);m\mathcal{V}\right) \\
& \leq \als\left(\OO_{\mathscr{X}}(m\bm{a}\cdot\overline{\pmb{\mathscr{A}}}+m\max_i\left\{b_i\right\}\overline{\mathscr{A}});m\mathcal{V}\right)-\als\left(\OO_{\mathscr{X}}(m\bm{a}\cdot\overline{\pmb{\mathscr{A}}});m\mathcal{V}\right) \\
& \leq C'm^d(\|\bm{a}\|_1+\|\bm{b}\|_1)^d\left(m\|\bm{b}\|_1+\log(m\|\bm{a}\|_1)\right)
\end{align*}
for every $m\in\ZZ_{>0}$.
Hence
\begin{equation}
\label{eqn:continuity:est:4:1}
\avol\left(\overline{\mathscr{D}}_1;\mathcal{V}\right)\leq\avol\left(\overline{\mathscr{D}}_2;\mathcal{V}\right)\leq\avol\left(\overline{\mathscr{D}}_1;\mathcal{V}\right)+C'(\|\bm{a}\|_1+\|\bm{b}\|_1)^d\|\bm{b}\|_1.
\end{equation}
For general $\bm{a},\bm{a}'\in\ZZ^r$, we set $\bm{a}'':=\max\left\{\bm{a},\bm{a}'\right\}$ and $\overline{\mathscr{D}}'':=\bm{a}''\cdot\overline{\pmb{\mathscr{A}}}$.
By \eqref{eqn:continuity:est:4:1}
\begin{align*}
&\left|\avol\left(\overline{\mathscr{D}};\mathcal{V}\right)-\avol\left(\overline{\mathscr{D}}';\mathcal{V}\right)\right| \\
& \quad \leq\left|\avol\left(\overline{\mathscr{D}}'';\mathcal{V}\right)-\avol\left(\overline{\mathscr{D}};\mathcal{V}\right)\right|+\left|\avol\left(\overline{\mathscr{D}}'';\mathcal{V}\right)-\avol\left(\overline{\mathscr{D}}';\mathcal{V}\right)\right| \\
& \quad \leq C'(\|\bm{a}\|_1+\|\bm{a}''-\bm{a}\|_1)^d\|\bm{a}''-\bm{a}\|_1+C'(\|\bm{a}'\|_1+\|\bm{a}''-\bm{a}'\|_1)^d\|\bm{a}''-\bm{a}'\|_1 \\
& \quad \leq 2^dC'\max\left\{\|\bm{a}\|_1^d,\|\bm{a}'\|_1^d\right\}\|\bm{a}-\bm{a}'\|_1.
\end{align*}
Therefore, by using Lemma~\ref{prop:positive:homogeneity}, we can verify that the estimate is also true for every $\bm{a},\bm{a}'\in\QQ^r$.

Next, we show the estimate for every $\bm{a},\bm{a}'\in\RR^r$.

\begin{claim}
\label{clm:Continuity:Est:4:1}
Let $\left(\bm{p}^{(n)}\right)_{n\geq 1}$ be a sequence in $\QQ^r$ that converges to $\bm{a}\in\RR^r$.
Then
\[
\lim_{n\to+\infty}\avol\left(\bm{p}^{(n)}\cdot\overline{\pmb{\mathscr{A}}};\mathcal{V}\right)=\avol\left(\bm{a}\cdot\overline{\pmb{\mathscr{A}}};\mathcal{V}\right).
\]
\end{claim}

\begin{proof}[Proof of Claim~\ref{clm:Continuity:Est:4:1}]
Let $\left(\bm{b}^{(n)}\right)_{n\geq 1}$ and $\left(\bm{c}^{(n)}\right)_{n\geq 1}$ be two sequences in $\QQ^r$ such that
\[
b_i^{(1)}\leq b_i^{(2)}\leq\dots \leq b_i^{(n)}\leq \dots\leq a_i\leq \dots\leq c_i^{(n)}\leq \dots\leq c_i^{(2)}\leq c_i^{(1)}
\]
and $\lim_{n\to+\infty}\left|c_i^{(n)}-b_i^{(n)}\right|=0$ for $i=1,2,\dots,r$.
We then have
\begin{multline*}
\avol\left(\bm{b}^{(1)}\cdot\overline{\pmb{\mathscr{A}}};\mathcal{V}\right)\leq\avol\left(\bm{b}^{(2)}\cdot\overline{\pmb{\mathscr{A}}};\mathcal{V}\right)\leq\dots\leq \avol\left(\bm{b}^{(n)}\cdot\overline{\pmb{\mathscr{A}}};\mathcal{V}\right)\leq\dots \\
\leq\avol\left(\bm{a}\cdot\overline{\pmb{\mathscr{A}}};\mathcal{V}\right) \\
\leq\dots\leq\avol\left(\bm{c}^{(n)}\cdot\overline{\pmb{\mathscr{A}}};\mathcal{V}\right)\leq\dots\leq\avol\left(\bm{c}^{(2)}\cdot\overline{\pmb{\mathscr{A}}};\mathcal{V}\right)\leq\avol\left(\bm{c}^{(1)}\cdot\overline{\pmb{\mathscr{A}}};\mathcal{V}\right)
\end{multline*}
and
\[
\lim_{n\to+\infty}\left(\avol\left(\bm{c}^{(n)}\cdot\overline{\pmb{\mathscr{A}}};\mathcal{V}\right)-\avol\left(\bm{b}^{(n)}\cdot\overline{\pmb{\mathscr{A}}};\mathcal{V}\right)\right)=0
\]
by the above arguments.
Hence we have the required claim.
\end{proof}

We choose two sequences $\left(\bm{p}^{(n)}\right)_{n\geq 1}$ and $\left(\bm{q}^{(n)}\right)_{n\geq 1}$ in $\QQ^r$ such that $\lim_{n\to+\infty}\bm{p}^{(n)}=\bm{a}$ and $\lim_{n\to+\infty}\bm{q}^{(n)}=\bm{a}'$, respectively.
Then
\[
\left|\avol\left(\bm{p}^{(n)}\cdot\overline{\pmb{\mathscr{A}}};\mathcal{V}\right)-\avol\left(\bm{q}^{(n)}\cdot\overline{\pmb{\mathscr{A}}};\mathcal{V}\right)\right|\leq C\max\left\{\left\|\bm{p}^{(n)}\right\|_1^d,\left\|\bm{q}^{(n)}\right\|_1^d\right\}\left\|\bm{p}^{(n)}-\bm{q}^{(n)}\right\|_1
\]
for every $n\geq 1$ by the previous argument.
Taking $n\to+\infty$, we obtain the required result.
\end{proof}

\begin{proof}[Proof of Theorem~\ref{thm:Continuity_Est3}]
We may assume that $X$ is smooth.
In fact, let $\mu:\widetilde{X}\to X$ be a resolution of singularities of $X$, and regard $V$ as an $\RR$-subspace of $\aDiv_{\RR}^{\ell^1}(\widetilde{X})$ via $\aDiv_{\RR}^{\ell^1}(X)\to\aDiv_{\RR}^{\ell^1}(\widetilde{X})$.
Since $X$ is normal, we have $\avol\left(\mu^*\overline{D};\mathcal{V}\right)=\avol\left(\overline{D};\mathcal{V}\right)$ for every $\overline{D}\in V$ and $\mathcal{V}\in\BC_{\RR}(X)$.
Let $\overline{A}_1,\dots,\overline{A}_r\in\aDiv_{\RR}^{\ell^1}(X)$ be a basis for $V$, put
\[
\left\|a_1\overline{A}_1+\dots+a_r\overline{A}_r\right\|_1:=|a_1|+\dots+|a_r|
\]
for $a_1,\dots,a_r\in\RR$, and suppose that $\|\cdot\|_V$ is given as $\|\cdot\|_1$.
We can easily find a constant $B'\in\RR_{>0}$ such that $\vol(D)\leq B'$ for every $\overline{D}\in V$ with $\|\overline{D}\|_1\leq B$.

We put
\begin{equation}
\delta':=\frac{\varepsilon}{2(\dim X+1)[K:\QQ]B'(B+1)},
\end{equation}
and fix, for each $i$, $(\mathscr{X},\overline{\mathscr{A}}_i)\in\Mod_{\RR}(\overline{A}_i)$ such that $\overline{\mathscr{A}}_i\in\aDiv_{\RR}(\mathscr{X};C^{\infty})$ and such that $\left\|\overline{A}_i-\overline{\mathscr{A}}_i^{\rm ad}\right\|_{\ell^1}\leq\delta'$ by using the Stone--Weierstrass theorem and Proposition~\ref{prop:definition:of:ell1:adelic}.
Proposition~\ref{prop:difference:adding:cont:fcn:final:version} implies that
\begin{multline}
\left|\avol\left(\bm{a}\cdot\overline{\bm{A}}+(0,\bm{f});\mathcal{V}\right)-\avol\left(\bm{a}\cdot\overline{\pmb{\mathscr{A}}}^{\rm ad};\mathcal{V}\right)\right| \\
\leq\frac{1}{2}(\dim X+1)[K:\QQ]B'(\|\bm{a}\|_1+1)\delta\leq\frac{\varepsilon}{4}
\end{multline}
holds for every $\bm{a}\in\RR^r$ with $\|\bm{a}\|_1\leq B$, $\bm{f}\in\aC(X)$ with $\|\bm{f}\|_{\ell^1}\leq\delta'$, and $\mathcal{V}\in\BC_{\RR}(X)$.

Thanks to Proposition~\ref{prop:Continuity_Est4}, there is a constant $C_{\overline{\pmb{\mathscr{A}}},\Sigma}>0$ such that
\[
\left|\avol\left(\bm{a}\cdot\overline{\pmb{\mathscr{A}}}^{\rm ad};\mathcal{V}\right)-\avol\left(\bm{a}'\cdot\overline{\pmb{\mathscr{A}}}^{\rm ad};\mathcal{V}\right)\right|\leq C_{\overline{\pmb{\mathscr{A}}},\Sigma}\max\left\{\|\bm{a}\|_1^d,\|\bm{a}'\|_1^d\right\}\left\|\bm{a}-\bm{a}'\right\|_1
\]
for every $\bm{a},\bm{a}'\in\RR^r$ and $\mathcal{V}\in\BC_{\RR}(X)$ with $\left\{c_X(\nu)\,:\,\nu(\mathcal{V})>0\right\}\subset\Sigma$, so, if we set
\begin{equation}
\delta:=\min\left\{\delta',\frac{\varepsilon}{2C_{\pmb{\mathscr{A}},\Sigma}B^d}\right\},
\end{equation}
then
\begin{equation}
\left|\avol\left(\bm{a}\cdot\overline{\pmb{\mathscr{A}}}^{\rm ad};\mathcal{V}\right)-\avol\left(\bm{a}'\cdot\overline{\pmb{\mathscr{A}}}^{\rm ad};\mathcal{V}\right)\right|\leq\frac{\varepsilon}{2}
\end{equation}
for every $\bm{a},\bm{a}'\in\RR^r$ with $\max\{\|\bm{a}\|_1,\|\bm{a}'\|_1\}\leq B$ and $\|\bm{a}-\bm{a}'\|_1\leq\delta$.
All in all, we have
\begin{align*}
& \left|\avol\left(\bm{a}\cdot\overline{\bm{A}}+(0,\bm{f});\mathcal{V}\right)-\avol\left(\bm{a}'\cdot\overline{\bm{A}};\mathcal{V}\right)\right| \\
& \quad\leq\left|\avol\left(\bm{a}\cdot\overline{\bm{A}}+(0,\bm{f});\mathcal{V}\right)-\avol\left(\bm{a}\cdot\overline{\pmb{\mathscr{A}}}^{\rm ad};\mathcal{V}\right)\right| \\
& \qquad+\left|\avol\left(\bm{a}\cdot\overline{\pmb{\mathscr{A}}}^{\rm ad};\mathcal{V}\right)-\avol\left(\bm{a}'\cdot\overline{\pmb{\mathscr{A}}}^{\rm ad};\mathcal{V}\right)\right| +\left|\avol\left(\bm{a}'\cdot\overline{\pmb{\mathscr{A}}}^{\rm ad};\mathcal{V}\right)-\avol\left(\bm{a}'\cdot\overline{\bm{A}};\mathcal{V}\right)\right| \\
& \quad\leq\varepsilon
\end{align*}
as required.
\end{proof}

Theorem~\ref{thm:Continuity_Est3} implies the following corollaries.

\begin{corollary}
For a $(\overline{D};\mathcal{V})\in\aBDiv_{\RR,\RR}^{\ell^1}(X)$, the following are equivalent.
\begin{enumerate}
\item $\avol\left(\overline{D};\mathcal{V}\right)>0$.
\item For any $\overline{A}\in\aDiv_{\RR}^{\ell^1}(X)$ with $\avol\left(\overline{A}\right)>0$, there exists a $t\in\RR_{>0}$ such that $(\overline{D}-t\overline{A};\mathcal{V})\geq 0$.
\end{enumerate}
\end{corollary}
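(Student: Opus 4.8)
The statement is the arithmetic ``big cone'' criterion for pairs, and the plan is to deduce it from the continuity theorem (Theorem~\ref{thm:Continuity_Est3}), the positive homogeneity of $\avol(\,\cdot\,;\mathcal{V})$, and the existence of a single $\ell^1$-adelic $\RR$-Cartier divisor of positive arithmetic volume. Throughout one may assume $\mathcal{V}\geq 0$: for $\phi\in H^0(D;\mathcal{V})$ one has $\nu\bigl(D+(\phi)\bigr)\geq 0$ automatically, so the negative coefficients of $\mathcal{V}$ impose no condition on the linear series.

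For (2)$\Rightarrow$(1) I would first produce a fixed $\overline{A}_0\in\aDiv_{\RR}^{\ell^1}(X)$ which is effective with $\avol(\overline{A}_0)>0$: choose an $O_K$-model $\mathscr{X}$ of $X$ and an arithmetically ample $\overline{\mathscr{A}}_0\in\aDiv_{\QQ}(\mathscr{X};C^{\infty})$ with $\overline{\mathscr{A}}_0\geq 0$, and put $\overline{A}_0:=\overline{\mathscr{A}}_0^{\rm ad}$, whose arithmetic volume is positive by the arithmetic Hilbert--Samuel formula. Applying (2) to $\overline{A}=\overline{A}_0$ yields $t\in\RR_{>0}$ with $(\overline{D}-t\overline{A}_0;\mathcal{V})\geq 0$, and, since $\overline{A}_0\geq 0$, we may shrink $t$ to a positive rational. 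Tensoring a small section $\psi\in\aHzsm(mt\overline{A}_0)$ with $mt\in\ZZ$ by the canonical section of the effective divisor $m(\overline{D}-t\overline{A}_0)$ gives an element of $\aHzsm(m\overline{D};m\mathcal{V})$, because $\nu\bigl(mD+(\psi)\bigr)=m\,\nu(D-tA_0)+\nu\bigl(mtA_0+(\psi)\bigr)\geq m\,\nu(\mathcal{V})$ by additivity and $\nu(D-tA_0)\geq\nu(\mathcal{V})$, and because the canonical section of an effective $\ell^1$-adelic $\RR$-Cartier divisor has supremum norm $\leq 1$ at every place. Hence $\als(mt\overline{A}_0)\leq\als(m\overline{D};m\mathcal{V})$ for all admissible $m$; dividing by $m^{\dim X+1}/(\dim X+1)!$ and passing to the limit, then invoking Lemma~\ref{prop:positive:homogeneity} after clearing the denominator of $t$, we get $\avol(\overline{D};\mathcal{V})\geq\avol(t\overline{A}_0)=t^{\dim X+1}\avol(\overline{A}_0)>0$.

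For (1)$\Rightarrow$(2), Theorem~\ref{thm:Continuity_Est3} with the base condition $\mathcal{V}$ held fixed, together with homogeneity, shows that $\{\overline{B}\in\aDiv_{\RR}^{\ell^1}(X):\avol(\overline{B};\mathcal{V})>0\}$ is an open cone, and it contains $\overline{D}$. Given $\overline{A}$ with $\avol(\overline{A})>0$, pick $s>0$ so small that $\avol(\overline{D}-s\overline{A};\mathcal{V})>0$. Approximating $\overline{D}-s\overline{A}$ from below and $\overline{A}$ from above by adelizations of $C^{\infty}$-arithmetic divisors on a common normal $O_K$-model $\mathscr{X}$ (using models of definition, the approximation theorem, and the Stone--Weierstrass theorem at the Archimedean place, and keeping the $\ell^1$-errors small enough that positivity of the volume persists), one reduces to: if $\overline{\mathscr{D}},\overline{\mathscr{A}}\in\aDiv_{\RR}(\mathscr{X};C^{\infty})$ satisfy $\avol(\overline{\mathscr{D}};\mathcal{V})>0$ and $\avol(\overline{\mathscr{A}})>0$, then $(\overline{\mathscr{D}}-t\overline{\mathscr{A}};\mathcal{V})\geq 0$ for some $t\in\RR_{>0}$ — note that the valuation part of ``$\geq 0$'' is unaffected by the reductions, since adding continuous functions to Green functions does not change $\zeta$. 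This last assertion is the arithmetic Kodaira lemma for pairs: $\avol(\overline{\mathscr{D}};\mathcal{V})>0$ forces $\lim_{m}e_{\max}(m\overline{\mathscr{D}};m\mathcal{V})/m>0$ by the contrapositive of Lemma~\ref{lem:the:case:volume:is:zero}, and from this one extracts, for $N\gg 1$, a section of $N\overline{\mathscr{D}}$ whose divisor dominates $\overline{\mathscr{A}}$ and vanishes along each center of $N\mathcal{V}$ to the prescribed order; I would invoke the version established in \cite{IkomaDiff1}, or reprove it by combining the arithmetic bigness of $\overline{\mathscr{D}}$ with the Snapper-type and change-of-norm estimates of section~\ref{subsec:FundEst}.

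The crux, and the only genuinely hard point, is this last arithmetic Kodaira step. The continuity theorem delivers only \emph{positivity of the arithmetic volume} of $\overline{D}-t\overline{A}$, whereas (2) asks for an \emph{honest effective representative}; bridging the two requires an arithmetic Fujita/Kodaira mechanism for pairs, and it is exactly here that the base condition $\mathcal{V}$ must be carried along so that the section produced is effective \emph{and} realizes the prescribed vanishing at the points of $\Sigma$.
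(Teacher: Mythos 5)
Your (2)\,$\Rightarrow$\,(1) direction is essentially the intended easy argument (shrink $t$ to a rational, multiply small sections of $mt\overline{A}_0$ by the effective representative of $m(\overline{D}-t\overline{A}_0)$, check the valuation condition via \eqref{eqn:elementary:valuation:c}, and invoke Lemma~\ref{prop:positive:homogeneity}); the only thing to patch is that if the paper's relation $(\,\cdot\,;\mathcal{V})\geq 0$ for pairs is taken up to a principal twist $\widehat{(\phi)}$, you should first replace $\overline{D}$ by $\overline{D}+\widehat{(\phi)}$, which leaves $\avol$ unchanged (Corollary~\ref{cor:invariant:under:rational:fcn}). The genuine gap is in (1)\,$\Rightarrow$\,(2): after correctly using Theorem~\ref{thm:Continuity_Est3} (with $\Sigma$ the set of centers of $\mathcal{V}$) to get $\avol\left(\overline{D}-s\overline{A};\mathcal{V}\right)>0$ for small $s>0$, you declare that passing from positivity of the volume to the required effectivity needs an ``arithmetic Kodaira/Fujita mechanism for pairs'' which you leave to \cite{IkomaDiff1} or to an unexecuted sketch. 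As written, the key step of the implication is therefore not proved, and your proposed route through $e_{\max}$ (contrapositive of Lemma~\ref{lem:the:case:volume:is:zero}, then ``extract a section of $N\overline{\mathscr{D}}$ whose divisor dominates $\overline{\mathscr{A}}$'') is itself unjustified: positivity of $\lim_m e_{\max}(m\overline{\mathscr{D}};m\mathcal{V})/m$ only produces sections that become small after an Archimedean rescaling, and gives no domination of $\overline{\mathscr{A}}$; that domination must come from having subtracted $s\overline{A}$ beforehand, i.e.\ from the continuity step you have already performed.

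Moreover the difficulty is misdiagnosed, which is why the paper can present this as an immediate corollary of Theorem~\ref{thm:Continuity_Est3} (no written proof is given there). By definition, $\avol\left(\overline{E};\mathcal{V}\right)=\limsup_m \als\left(m\overline{E};m\mathcal{V}\right)\big/\left(m^{\dim X+1}/(\dim X+1)!\right)$, and $\als\left(m\overline{E};m\mathcal{V}\right)$ counts the $\phi\in\Rat(X)^{\times}$ with $m\overline{E}+\widehat{(\phi)}\geq 0$ and $\nu(mE+(\phi))\geq m\nu(\mathcal{V})$ for all $\nu$. Hence $\avol\left(\overline{D}-s\overline{A};\mathcal{V}\right)>0$ already forces $\als\left(m(\overline{D}-s\overline{A});m\mathcal{V}\right)>0$ for some $m$, and any single nonzero such $\phi$ gives $\left(\overline{D}-s\overline{A}+\tfrac{1}{m}\widehat{(\phi)};\mathcal{V}\right)\geq 0$, which is exactly what (2) requires with $t=s$; no Fujita-type approximation, no reduction to a common model, and no Snapper or change-of-norm estimates are needed beyond Theorem~\ref{thm:Continuity_Est3} itself. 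Note also that (2) must indeed be understood up to such a principal twist: since $\avol$ is invariant under adding $\widehat{(\phi)}$ while literal effectivity of $D-tA$ is not, a strictly literal reading of $(\overline{D}-t\overline{A};\mathcal{V})\geq 0$ would make the implication (1)\,$\Rightarrow$\,(2) false, so the ``honest effective representative'' your sketch aims to construct is both unobtainable in that strict sense and unnecessary in the correct one.
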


\begin{corollary}
\label{cor:positive:homogeneity:general}
For any $(\overline{D};\mathcal{V})\in\aBDiv_{\RR,\RR}^{\ell^1}(X)$ and $p\in\RR_{>0}$, one has
\[
\avol\left(p\overline{D};p\mathcal{V}\right)=p^{\dim X+1}\avol\left(\overline{D};\mathcal{V}\right).
\]
\end{corollary}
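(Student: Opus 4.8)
The plan is to prove the identity first for $p\in\QQ_{>0}$ and then to bootstrap to arbitrary $p\in\RR_{>0}$ by a monotone squeezing argument. Two harmless reductions come first. Since $D+(\phi)\geq 0$ forces $\nu(D+(\phi))\geq 0$ for every $\nu$, the constraint $\nu(D+(\phi))\geq\nu(\mathcal{V})$ is vacuous whenever $\nu(\mathcal{V})\leq 0$; hence $H^0(D;\mathcal{V})$, and therefore both $\als(\,\cdot\,;\mathcal{V})$ and $\avol(\,\cdot\,;\mathcal{V})$, depend only on the positive part $\sum_{\nu(\mathcal{V})>0}\nu(\mathcal{V})[\nu]$ of $\mathcal{V}$, and since that positive part scales correctly under $\mathcal{V}\mapsto p\mathcal{V}$ for $p>0$, we may assume $\mathcal{V}\geq 0$. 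Replacing $X$ by a resolution of singularities $\mu\colon\widetilde{X}\to X$ and using $\avol(\mu^{*}\overline{D};\mathcal{V})=\avol(\overline{D};\mathcal{V})$ (valid since $X$ is normal, cf.\ the reduction at the beginning of the proof of Theorem~\ref{thm:Continuity_Est3}), we may also assume $X$ is smooth.

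The rational case is treated in layers. For $\overline{\mathscr{D}}\in\aDiv_{\QQ}(\mathscr{X};C^{\infty})$ and $p\in\ZZ_{>0}$ the identity is Lemma~\ref{prop:positive:homogeneity}; writing $p=a/b$ with $a,b\in\ZZ_{>0}$ and applying that lemma to $(\overline{\mathscr{D}},\mathcal{V})$ with scale $a$ and to $((a/b)\overline{\mathscr{D}},(a/b)\mathcal{V})$ with scale $b$ gives it for all $p\in\QQ_{>0}$. Next, for $\overline{\mathscr{D}}\in\aDiv_{\RR}(\mathscr{X};C^{\infty})$ one chooses a finite-dimensional $\RR$-subspace $V\subset\aDiv_{\RR}(\mathscr{X};C^{\infty})$ containing $\overline{\mathscr{D}}$ and spanned over $\RR$ by elements of $\aDiv_{\QQ}(\mathscr{X};C^{\infty})$ (possible: fix a $C^{\infty}$ Green function for each prime component of $\mathscr{D}$, the remaining error being a $C^{\infty}$ function, i.e.\ an element of $\aDiv_{\ZZ}(\mathscr{X};C^{\infty})$), approximates $\overline{\mathscr{D}}$ by $\overline{\mathscr{D}}_n\in V\cap\aDiv_{\QQ}(\mathscr{X};C^{\infty})$, and passes the equality $\avol(p\overline{\mathscr{D}}_n^{\,\mathrm{ad}};p\mathcal{V})=p^{\dim X+1}\avol(\overline{\mathscr{D}}_n^{\,\mathrm{ad}};\mathcal{V})$ to the limit via Proposition~\ref{prop:Continuity_Est4} applied with the two fixed base conditions $p\mathcal{V}$ and $\mathcal{V}$, whose centers lie in one fixed finite set. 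Finally, a general $\overline{D}\in\aDiv_{\RR}^{\ell^{1}}(X)$ is written $\overline{D}=\overline{\mathscr{D}}^{\,\mathrm{ad}}+(0,\bm{f})$ with $\overline{\mathscr{D}}\in\aDiv_{\RR}(\mathscr{X};C^{\infty})$, $\zeta(\overline{\mathscr{D}}^{\,\mathrm{ad}})=D$, and $\|\bm{f}\|_{\ell^{1}}$ arbitrarily small; Proposition~\ref{prop:difference:adding:cont:fcn:final:version}, together with $\vol(pD;p\mathcal{V})\leq p^{\dim X}\vol(D;0)$, bounds $\bigl|p^{-(\dim X+1)}\avol(p\overline{D};p\mathcal{V})-\avol(\overline{D};\mathcal{V})\bigr|$ by a constant depending only on $D$ and $K$ times $\|\bm{f}\|_{\ell^{1}}$, uniformly in the rational $p$; letting $\|\bm{f}\|_{\ell^{1}}\to 0$ yields the rational case for every $(\overline{D};\mathcal{V})\in\aBDiv_{\RR,\RR}^{\ell^{1}}(X)$.

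For arbitrary $p\in\RR_{>0}$, fix $(\overline{D};\mathcal{V})$ and choose rationals $p'\leq p\leq p''$. Since $\mathcal{V}\geq 0$ we have $p'\mathcal{V}\leq p\mathcal{V}\leq p''\mathcal{V}$, and because $\mathcal{V}_1\leq\mathcal{V}_2$ implies $H^0(E;\mathcal{V}_2)\subseteq H^0(E;\mathcal{V}_1)$ and hence $\avol(\,\cdot\,;\mathcal{V}_2)\leq\avol(\,\cdot\,;\mathcal{V}_1)$, monotonicity gives
\[
\avol\bigl(p\overline{D};p''\mathcal{V}\bigr)\leq\avol\bigl(p\overline{D};p\mathcal{V}\bigr)\leq\avol\bigl(p\overline{D};p'\mathcal{V}\bigr).
\]
Now apply Theorem~\ref{thm:Continuity_Est3} with $V=\RR\overline{D}$ (normed so that $\|\overline{D}\|_V=1$), with $\Sigma=\{c_X(\nu):\nu(\mathcal{V})>0\}$, and with $\bm{f}=0$: as $p'\to p$ one has $\|p\overline{D}-p'\overline{D}\|_V=|p-p'|\to 0$ and the centers of $p'\mathcal{V}$ remain in $\Sigma$, so $\avol(p\overline{D};p'\mathcal{V})\to\avol(p'\overline{D};p'\mathcal{V})=(p')^{\dim X+1}\avol(\overline{D};\mathcal{V})$ by the rational case, and similarly for $p''$. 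Squeezing and letting $p',p''\to p$ gives $\avol(p\overline{D};p\mathcal{V})=p^{\dim X+1}\avol(\overline{D};\mathcal{V})$, as desired. The step I expect to be the main obstacle is precisely this last passage: Theorem~\ref{thm:Continuity_Est3} and every continuity estimate in the paper control $\avol$ only along the directions of divisors and of $\aC(X)$, never along the base condition, so $p\mathcal{V}$ cannot be approximated by $p_n\mathcal{V}$ directly; the elementary monotonicity of $\avol$ in $\mathcal{V}$ is what makes the squeeze legitimate, and one must take care that every intermediate base condition ($p'\mathcal{V}$, $p''\mathcal{V}$, $p\mathcal{V}$) has its centers inside the single fixed set $\Sigma$, so that Theorem~\ref{thm:Continuity_Est3} and Proposition~\ref{prop:Continuity_Est4} apply with constants uniform across the approximation.
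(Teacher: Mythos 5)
Your proposal is correct and follows essentially the same route as the paper: rational $p$ is handled by reducing to $C^{\infty}$ arithmetic divisors on a model (Lemma~\ref{prop:positive:homogeneity}) and passing to the limit with the continuity results, and real $p$ is obtained from the rational case by monotonicity of $\avol$ in the base condition (valid after the reduction to $\mathcal{V}\geq 0$) together with Theorem~\ref{thm:Continuity_Est3} applied with the fixed finite set of centers. The only differences are organizational: you layer the approximation through Propositions~\ref{prop:Continuity_Est4} and \ref{prop:difference:adding:cont:fcn:final:version} where the paper does it in one step via Theorem~\ref{thm:Continuity_Est3}, and you carry out a two-sided squeeze where the paper proves one inequality explicitly and calls the other obvious.
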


\begin{proof}
We may assume that $X$ is smooth.
Let $V$ be a finite-dimensional $\RR$-subspace of $\aBDiv_{\RR,\RR}^{\ell^1}(X)$ such that $V$ has a basis $\overline{A}_1,\dots,\overline{A}_r\in\aDiv_{\QQ}^{\ell^1}(X)$ and such that $\overline{D}=\bm{a}\cdot\overline{\bm{A}}\in V$ for an $\bm{a}\in\RR^r$.
Let $\left(\bm{b}^{(n)}\right)_{n\geq 1}$ be a sequence in $\QQ^r$ that converges to $\bm{a}$.
By the Stone--Weierstrass theorem and Proposition~\ref{prop:definition:of:ell1:adelic}, one finds, for each $i$, a sequence $\left((\mathscr{X}_n,\overline{\mathscr{A}}_{in})\right)_{n\geq 1}$ in $\Mod_{\QQ}(\overline{A}_i)$ such that $\overline{\mathscr{A}}_{in}\in\aDiv_{\QQ}(\mathscr{X}_n;C^{\infty})$, such that $\overline{\mathscr{A}}_{i1}^{\rm ad}\leq\overline{\mathscr{A}}_{i2}^{\rm ad}\leq\dots$, and such that $\left\|\overline{A}_i-\overline{\mathscr{A}}_{in}^{\rm ad}\right\|_{\ell^1}\to 0$ as $n\to+\infty$.
By Lemma~\ref{prop:positive:homogeneity},
\[
\avol\left(p\bm{b}^{(n)}\cdot\overline{\pmb{\mathscr{A}}}_n^{\rm ad};p\mathcal{V}\right)=p^{\dim X+1}\avol\left(\bm{b}^{(n)}\cdot\overline{\pmb{\mathscr{A}}}_n^{\rm ad};\mathcal{V}\right)
\]
for $p\in\QQ_{>0}$ and $n\geq 1$.
Taking $n\to+\infty$ (Theorem~\ref{thm:Continuity_Est3}), we obtain the equality for every $p\in\QQ_{>0}$.

To show the corollary, we note that the inequality $\leq$ is obvious.
We choose an decreasing sequence $(q_n)_{n\geq 1}$ in $\QQ_{>0}$ that converges to $p$.
Then
\[
\avol\left(q_n\overline{D};p\mathcal{V}\right)\geq\avol\left(q_n\overline{D};q_n\mathcal{V}\right)=q_n^{\dim X+1}\avol\left(\overline{D};\mathcal{V}\right)
\]
for $n\geq 1$.
By taking $n\to+\infty$, we conclude the proof by Theorem~\ref{thm:Continuity_Est3}.
\end{proof}

\begin{corollary}
\label{cor:invariant:under:rational:fcn}
For any $(\overline{D};\mathcal{V})\in\aBDiv_{\RR,\RR}^{\ell^1}(X)$ and $\phi\in\Rat(X)^{\times}\otimes_{\ZZ}\RR$, one has
\[
\avol\left(\overline{D}+\widehat{(\phi)};\mathcal{V}\right)=\avol\left(\overline{D};\mathcal{V}\right).
\]
\end{corollary}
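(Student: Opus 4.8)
The plan is to prove the identity first for a genuine rational function $\phi\in\Rat(X)^{\times}$, there by exhibiting a place-by-place isometry on global sections, and then to upgrade to arbitrary $\phi\in\Rat(X)^{\times}\otimes_{\ZZ}\RR$ by means of the positive homogeneity (Corollary~\ref{cor:positive:homogeneity:general}) and the continuity theorem (Theorem~\ref{thm:Continuity_Est3}).

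For $\phi\in\Rat(X)^{\times}$, I would first observe that $\widehat{(\phi)}$ is the adelization of the principal divisor of $\phi$ on any normal $O_K$-model of $X$, so $\widehat{(\phi)}\in\aDiv(X)\subset\aDiv_{\RR}^{\ell^1}(X)$ and hence $\overline{D}+\widehat{(\phi)}\in\aDiv_{\RR}^{\ell^1}(X)$. Next I would consider the $K$-linear map
\[
T\colon H^0(D;\mathcal{V})\longrightarrow H^0(D+(\phi);\mathcal{V}),\qquad \chi\longmapsto\phi^{-1}\chi .
\]
Because $(\phi^{-1}\chi)=(\chi)-(\phi)$, one has $D+(\phi)+(\phi^{-1}\chi)=D+(\chi)$ as $\RR$-Cartier divisors, so $D+(\phi)+(\phi^{-1}\chi)\geq 0$ if and only if $D+(\chi)\geq 0$, and $\nu\bigl(D+(\phi)+(\phi^{-1}\chi)\bigr)=\nu\bigl(D+(\chi)\bigr)$ for every $\nu\in\DV(\Rat(X))$ by linearity of $\nu$; hence $T$ is a well-defined bijection with inverse $\chi'\mapsto\phi\chi'$. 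Then, directly from the definition of $\widehat{(\phi)}$ and of the local supremum norms, I would check that $\|\phi^{-1}\chi\|_{v,\sup}^{\overline{D}+\widehat{(\phi)}}=\|\chi\|_{v,\sup}^{\overline{D}}$ for every $v\in M_K$ and every $\chi$. Thus $T$ is an isometry of adelically normed $K$-vector spaces, so it restricts to bijections $\aHzsm(\overline{D};\mathcal{V})\xrightarrow{\sim}\aHzsm(\overline{D}+\widehat{(\phi)};\mathcal{V})$ and $\aHz(\overline{D};\mathcal{V})\xrightarrow{\sim}\aHz(\overline{D}+\widehat{(\phi)};\mathcal{V})$, whence $\als(\overline{D};\mathcal{V})=\als(\overline{D}+\widehat{(\phi)};\mathcal{V})$ and likewise for $\alss$. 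Applying this with $(\overline{D},\mathcal{V},\phi)$ replaced by $(m\overline{D},m\mathcal{V},\phi^{m})$ and using $\widehat{(\phi^{m})}=m\widehat{(\phi)}$, I would obtain $\als\bigl(m(\overline{D}+\widehat{(\phi)});m\mathcal{V}\bigr)=\als\bigl(m\overline{D};m\mathcal{V}\bigr)$ for all $m\in\ZZ_{>0}$, and then $\avol\bigl(\overline{D}+\widehat{(\phi)};\mathcal{V}\bigr)=\avol\bigl(\overline{D};\mathcal{V}\bigr)$ would follow from \eqref{defn:arithmetic:volume:of:ell1pairs}.

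To pass to $\phi=\phi_{1}^{a_{1}}\cdots\phi_{r}^{a_{r}}$ with $\phi_{i}\in\Rat(X)^{\times}$ and $a_{i}\in\RR$, so that $\widehat{(\phi)}=a_{1}\widehat{(\phi_{1})}+\dots+a_{r}\widehat{(\phi_{r})}$, I would handle the rational and real cases in turn. If all $a_{i}\in\QQ$, pick a common denominator $q\in\ZZ_{>0}$, so that $\phi^{q}\in\Rat(X)^{\times}$ and $q\widehat{(\phi)}=\widehat{(\phi^{q})}$; the case already proved gives $\avol\bigl(q\overline{D}+\widehat{(\phi^{q})};q\mathcal{V}\bigr)=\avol\bigl(q\overline{D};q\mathcal{V}\bigr)$, and applying Corollary~\ref{cor:positive:homogeneity:general} to both sides — noting $q(\overline{D}+\widehat{(\phi)})=q\overline{D}+\widehat{(\phi^{q})}$ — and dividing by $q^{\dim X+1}$ yields the corollary for such $\phi$. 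For general $a_{i}\in\RR$, set $\overline{A}_{i}:=\widehat{(\phi_{i})}$ and $V:=\langle\overline{D},\overline{A}_{1},\dots,\overline{A}_{r}\rangle_{\RR}\subset\aDiv_{\RR}^{\ell^1}(X)$ with any norm, choose $\bm{b}^{(n)}\in\QQ^{r}$ with $\bm{b}^{(n)}\to\bm{a}=(a_{1},\dots,a_{r})$, and note that the divisors $\overline{D}+\bm{b}^{(n)}\cdot\overline{\bm{A}}$ lie in a fixed bounded subset of $V$ and converge to $\overline{D}+\widehat{(\phi)}$, while $\Sigma:=\{c_{X}(\nu):\nu(\mathcal{V})>0\}$ is a fixed finite set; Theorem~\ref{thm:Continuity_Est3} (with $\bm{f}=0$) then gives $\avol\bigl(\overline{D}+\bm{b}^{(n)}\cdot\overline{\bm{A}};\mathcal{V}\bigr)\to\avol\bigl(\overline{D}+\widehat{(\phi)};\mathcal{V}\bigr)$, and since the left-hand side equals $\avol(\overline{D};\mathcal{V})$ for every $n$ by the rational case, the corollary follows.

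The only genuinely substantive point is the isometry claim in the first part: one must verify at once that $T$ respects the effectivity and valuation constraints cutting out $H^0(\,\cdot\,;\mathcal{V})$ and each of the local supremum norms. Both verifications are short — the first from linearity of discrete valuations, the second from the very definition of $\widehat{(\phi)}$ — so no real obstacle is expected; the remaining steps are purely formal consequences of the homogeneity and continuity results already established.
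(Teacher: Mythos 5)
Your proposal is correct and follows essentially the same route as the paper: reduce to a genuine rational function (where the invariance is immediate from the bijection $\chi\mapsto\phi^{-1}\chi$ on small sections), use the homogeneity of Corollary~\ref{cor:positive:homogeneity:general} to cover rational exponents, and pass to real exponents by approximating in the finite-dimensional subspace spanned by $\overline{D}$ and the $\widehat{(\phi_i)}$ via Theorem~\ref{thm:Continuity_Est3}. The paper's proof is simply a compressed version of this, leaving the isometry and common-denominator steps implicit.
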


\begin{proof}
We write $\phi=\phi_1^{a_1}\cdots\phi_r^{a_r}$ with $a_i\in\RR$ and $\phi_i\in\Rat(X)$.
Let $V$ be the $\RR$-subspace of $\aDiv_{\RR}^{\ell^1}(X)$ generated by $\phi_1,\dots,\phi_r$.
For each $i$, we choose a sequence $\left(b_i^{(n)}\right)_{n\geq 1}$ in $\QQ$ such that $b_i^{(n)}\to a_i$ as $n\to+\infty$.
By homogeneity (Corollary~\ref{cor:positive:homogeneity:general}), we have
\[
\avol\left(\overline{D}+\sum_{i=1}^rb_i^{(n)}\widehat{(\phi_i)};\mathcal{V}\right)=\avol\left(\overline{D};\mathcal{V}\right)
\]
for every $n\geq 1$.
Taking $n\to+\infty$, we obtain the required assertion by Theorem~\ref{thm:Continuity_Est3}.
\end{proof}

\begin{corollary}
For each $\mathcal{V}\in\BC_{\RR}(X)$, the arithmetic volume function induces a continuous function $\aCl_{\RR}^{\ell^1}(X)\to\RR_{\geq 0}$, $\overline{D}\mapsto\avol\left(\overline{D};\mathcal{V}\right)$.
\end{corollary}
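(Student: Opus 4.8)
The plan is to prove two things: that $\avol(\,\cdot\,;\mathcal{V})$ factors through the quotient $\aCl_{\RR}^{\ell^1}(X)=\aDiv_{\RR}^{\ell^1}(X)/\widehat{P}_{\RR}(X)$, and that the induced function is continuous (it automatically takes values in $\RR_{\geq 0}$ since $\avol$ is always nonnegative). The factorisation is immediate from Corollary~\ref{cor:invariant:under:rational:fcn}: if $\overline{D}_1,\overline{D}_2\in\aDiv_{\RR}^{\ell^1}(X)$ have the same class, then $\overline{D}_1=\overline{D}_2+\widehat{(\phi)}$ for some $\phi\in\Rat(X)^{\times}\otimes_{\ZZ}\RR$, whence $\avol(\overline{D}_1;\mathcal{V})=\avol(\overline{D}_2;\mathcal{V})$. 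For continuity, the topology on $\aCl_{\RR}^{\ell^1}(X)$ is that of any of the equivalent norms $\|\cdot\|_{\iota,\|\cdot\|}$, so I am free to fix a linear section $\iota:\Cl_{\RR}(X)\to\aCl_{\RR}^{\ell^1}(X)$ of $\zeta$ and a norm $\|\cdot\|$ on the finite-dimensional space $\Cl_{\RR}(X)$, and work with $\|\overline{D}\|_{\iota,\|\cdot\|}=\|\zeta(\overline{D})\|+\|\overline{D}-\iota(\zeta(\overline{D}))\|_{\ell^1}$, the second term being read off in $\aC(X)$ via the exact sequence $0\to\aC(X)\to\aCl_{\RR}^{\ell^1}(X)\xrightarrow{\zeta}\Cl_{\RR}(X)\to 0$.

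Next I would set up coordinates adapted to a chosen base point. Fix a basis $E_1,\dots,E_l$ of $\Cl_{\RR}(X)$ and lifts $\overline{A}_1,\dots,\overline{A}_l\in\aDiv_{\RR}^{\ell^1}(X)$ of $\iota(E_1),\dots,\iota(E_l)$. For any class $\overline{D}$ with $\zeta(\overline{D})=\bm{a}\cdot\bm{E}$ there is a unique $\bm{g}\in\aC(X)$ with $\overline{D}$ represented by $\bm{a}\cdot\overline{\bm{A}}+(0,\bm{g})$, and then $\|\overline{D}\|_{\iota,\|\cdot\|}=\|\bm{a}\cdot\bm{E}\|+\|\bm{g}\|_{\ell^1}$; consequently $\|\overline{D}-\overline{D}_0\|_{\iota,\|\cdot\|}=\|(\bm{a}-\bm{a}_0)\cdot\bm{E}\|+\|\bm{g}-\bm{g}_0\|_{\ell^1}$ if $\overline{D}_0$ has data $(\bm{a}_0,\bm{g}_0)$. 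To prove continuity at $\overline{D}_0$, I would put $\widetilde{A}_0:=\bm{a}_0\cdot\overline{\bm{A}}+(0,\bm{g}_0)\in\aDiv_{\RR}^{\ell^1}(X)$ (which represents the class $\overline{D}_0$), let $V:=\langle\overline{A}_1,\dots,\overline{A}_l,\widetilde{A}_0\rangle_{\RR}$ equipped with any norm $\|\cdot\|_V$, and set $\Sigma:=\{c_X(\nu):\nu(\mathcal{V})>0\}$, a finite set since $\mathcal{V}$ is a finite formal sum, and $B:=\|\widetilde{A}_0\|_V+1$. Given $\varepsilon>0$, I apply Theorem~\ref{thm:Continuity_Est3} to $(V,\|\cdot\|_V)$, $\Sigma$ and $B$ to get $\delta_0>0$, fix a constant $c_1\geq 1$ with $\|\bm{a}'\cdot\overline{\bm{A}}\|_V\leq c_1\|\bm{a}'\cdot\bm{E}\|$ for all $\bm{a}'\in\RR^l$ (equivalence of norms on a finite-dimensional space), and set $\delta:=\min\{\delta_0,\delta_0/c_1,1/c_1\}$.

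Finally, for $\overline{D}$ with $\|\overline{D}-\overline{D}_0\|_{\iota,\|\cdot\|}<\delta$, writing $\zeta(\overline{D})=\bm{a}\cdot\bm{E}$ and $\overline{D}-\iota(\zeta(\overline{D}))=(0,\bm{g})$, I decompose the representative $\bm{a}\cdot\overline{\bm{A}}+(0,\bm{g})$ of $\overline{D}$ as $\overline{E}+(0,\bm{f})$ with $\overline{E}:=\widetilde{A}_0+(\bm{a}-\bm{a}_0)\cdot\overline{\bm{A}}\in V$ and $\bm{f}:=\bm{g}-\bm{g}_0\in\aC(X)$. Then $\|\overline{E}-\widetilde{A}_0\|_V\leq c_1\|(\bm{a}-\bm{a}_0)\cdot\bm{E}\|\leq c_1\|\overline{D}-\overline{D}_0\|_{\iota,\|\cdot\|}<\delta_0$, $\|\bm{f}\|_{\ell^1}\leq\|\overline{D}-\overline{D}_0\|_{\iota,\|\cdot\|}<\delta_0$, and $\max\{\|\overline{E}\|_V,\|\widetilde{A}_0\|_V\}\leq B$; Theorem~\ref{thm:Continuity_Est3} therefore gives $|\avol(\overline{E}+(0,\bm{f});\mathcal{V})-\avol(\widetilde{A}_0;\mathcal{V})|\leq\varepsilon$. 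Since $\overline{E}+(0,\bm{f})$ and $\widetilde{A}_0$ represent the classes $\overline{D}$ and $\overline{D}_0$, the factorisation established in the first step yields $|\avol(\overline{D};\mathcal{V})-\avol(\overline{D}_0;\mathcal{V})|\leq\varepsilon$, proving continuity. The only substantive ingredient is Theorem~\ref{thm:Continuity_Est3}; the remaining, mild difficulty is the bookkeeping of arranging a finite-dimensional $V\subset\aDiv_{\RR}^{\ell^1}(X)$ that simultaneously surjects onto $\Cl_{\RR}(X)$ and contains a representative of the base point, and then translating the $\aCl_{\RR}^{\ell^1}$-distance into the triple $(\overline{D},\overline{E},\bm{f})$ that Theorem~\ref{thm:Continuity_Est3} consumes. (One could instead phrase this step through the open mapping theorem applied to the surjection $V\times\aC(X)\to\aCl_{\RR}^{\ell^1}(X)$, but the explicit shape of $\|\cdot\|_{\iota,\|\cdot\|}$ makes the direct estimate equally short.)
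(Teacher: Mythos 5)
Your proposal is correct and takes essentially the same route as the paper: well-definedness comes from Corollary~\ref{cor:invariant:under:rational:fcn}, and continuity is obtained by transporting the quotient norm $\|\cdot\|_{\iota,\|\cdot\|}$ to representatives of the form (finite-dimensional subspace) $+\ \aC(X)$ and invoking Theorem~\ref{thm:Continuity_Est3}. Your one refinement --- adjoining the base-point representative $\widetilde{A}_0=\bm{a}_0\cdot\overline{\bm{A}}+(0,\bm{g}_0)$ to the finite-dimensional subspace $V$ so that the hypotheses of Theorem~\ref{thm:Continuity_Est3} hold verbatim even when $\bm{g}_0$ is large --- is sound bookkeeping that the paper's terse proof leaves implicit.
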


\begin{proof}
By using Corollary~\ref{cor:invariant:under:rational:fcn}, we can obtain the required map.
To show the continuity, let $q:\aDiv_{\RR}^{\ell^1}(X)\to\aCl_{\RR}^{\ell^1}(X)$ be the natural projection and fix a section $\iota':\Cl_{\RR}(X)\to\aDiv_{\RR}^{\ell^1}(X)$ of $\zeta$.
Let $V$ be the image of $\iota'$ and let $\|\cdot\|$ be a norm on $\Cl_{\RR}(X)$.
Set
\[
\left\|\overline{D}\right\|_{\iota',\|\cdot\|}:=\left\|\zeta(\overline{D})\right\|+\left\|\overline{D}-\iota'\circ\zeta(\overline{D})\right\|_{\ell^1}
\]
for $\overline{D}\in V\oplus\aC(X)$, and set $\iota:=q\circ\iota'$.
We then have $\left\|\overline{D}\right\|_{\iota',\|\cdot\|}=\left\|q(\overline{D})\right\|_{\iota,\|\cdot\|}$ for every $\overline{D}\in V$.
Hence the assertion results from Theorem~\ref{thm:Continuity_Est3}.
\end{proof}

\section*{Acknowledgement}
This work was supported by JSPS KAKENHI Grant Number 16K17559.
The author is grateful to Professors Namikawa, Yoshikawa, and Moriwaki and Kyoto University for the financial supports.

%%%
\bibliography{ikoma}

\def\cprime{$'$}
\begin{thebibliography}{10}

\bibitem{BerkovichBook}
Vladimir~G. Berkovich.
\newblock {\em Spectral theory and analytic geometry over non-{A}rchimedean
  fields}, volume~33 of {\em Mathematical Surveys and Monographs}.
\newblock American Mathematical Society, Providence, RI, 1990.

\bibitem{Bombieri_Gubler}
Enrico Bombieri and Walter Gubler.
\newblock {\em Heights in {D}iophantine geometry}, volume~4 of {\em New
  Mathematical Monographs}.
\newblock Cambridge University Press, Cambridge, 2006.

\bibitem{Boucksom_Chen}
S{\'e}bastien Boucksom and Huayi Chen.
\newblock Okounkov bodies of filtered linear series.
\newblock {\em Compositio Mathematica}, 147(4):1205--1229, 2011.

\bibitem{Gaudron07}
{\'E}ric Gaudron.
\newblock Pentes des fibr\'es vectoriels ad\'eliques sur un corps global.
\newblock {\em Rend. Semin. Mat. Univ. Padova}, 119:21--95, 2008.

\bibitem{Gillet_Soule91}
H.~Gillet and C.~Soul{\'e}.
\newblock On the number of lattice points in convex symmetric bodies and their
  duals.
\newblock {\em Israel Journal of Mathematics}, 74(2-3):347--357, 1991.

\bibitem{Hart77}
Robin Hartshorne.
\newblock {\em Algebraic geometry}.
\newblock Springer-Verlag, New York, 1977.
\newblock Graduate Texts in Mathematics, No. 52.

\bibitem{IkomaDiff1}
Hideaki Ikoma.
\newblock Adelic cartier divisors with base conditions and the
  {B}onnesen--{D}iskant-type inequalities.
\newblock preprint available at \url{http://arxiv.org/abs/1602.02336}, 2016.

\bibitem{IkomaRem}
Hideaki Ikoma.
\newblock Remarks on the arithmetic restricted volumes and the arithmetic base
  loci.
\newblock {\em Publications of the Research Institute for Mathematical
  Sciences}, 52(4):435Ð495, 2016.

\bibitem{Kleiman}
Steven~L. Kleiman.
\newblock Toward a numerical theory of ampleness.
\newblock {\em Annals of Mathematics. Second Series}, 84:293--344, 1966.

\bibitem{KleimanFGA}
Steven~L. Kleiman.
\newblock The {P}icard scheme.
\newblock In {\em Fundamental algebraic geometry}, volume 123 of {\em Math.
  Surveys Monogr.}, pages 235--321. Amer. Math. Soc., Providence, RI, 2005.

\bibitem{MoriwakiCont}
Atsushi Moriwaki.
\newblock Continuity of volumes on arithmetic varieties.
\newblock {\em J. Algebraic Geom.}, 18(3):407--457, 2009.

\bibitem{MoriwakiContExt}
Atsushi Moriwaki.
\newblock Continuous extension of arithmetic volumes.
\newblock {\em Int. Math. Res. Not. IMRN}, 2009(19):3598--3638, 2009.

\bibitem{MoriwakiZar}
Atsushi Moriwaki.
\newblock Zariski decompositions on arithmetic surfaces.
\newblock {\em Publ. Res. Inst. Math. Sci.}, 48(4):799--898, 2012.

\bibitem{MoriwakiAdelic}
Atsushi Moriwaki.
\newblock Adelic divisors on arithmetic varieties.
\newblock {\em Mem. Amer. Math. Soc.}, 242(1144):v+122, 2016.

\bibitem{RudinTrilogy2}
Walter Rudin.
\newblock {\em Real and complex analysis}.
\newblock McGraw-Hill Book Co., New York, third edition, 1987.

\bibitem{Yuan09}
Xinyi Yuan.
\newblock On volumes of arithmetic line bundles.
\newblock {\em Compositio Mathematica}, 145(6):1447--1464, 2009.

\bibitem{ZhangAdelic}
Shouwu Zhang.
\newblock Small points and adelic metrics.
\newblock {\em J. Algebraic Geom.}, 4(2):281--300, 1995.

\end{thebibliography}
\bibliographystyle{plain}

\end{document}